\let\cal\mathcal
\def\Ascr{{\cal A}}
\def\Bscr{{\cal B}}
\def\Cscr{{\cal C}}
\def\Dscr{{\cal D}}
\def\Escr{{\cal E}}
\def\Lscr{{\cal L}}
\def\Mscr{{\cal M}}
\def\Oscr{{\cal O}}
\def\Pscr{{\cal P}}
\def\Rscr{{\cal R}}
\def\Sscr{{\cal S}}
\def\Vscr{{\cal V}}
\def\Wscr{{\cal W}}
\let\blb\mathbb
\def\QQ{{\blb Q}}
\def \PP{{\blb P}}
\def \ZZ{{\blb Z}}
\def \NN{{\blb N}}
\def \HH{{\blb H}}
\def\Id{\operatorname{id}}
\def\Res{\operatorname{Res}}
\def\Mod{\operatorname{Mod}}
\def\mod{\operatorname{mod}}
\def\Gr{\operatorname{Gr}}
\def\QGr{\operatorname{QGr}}
\def\qgr{\operatorname{qgr}}
\def\gr{\operatorname{gr}}
\def\Qch{\operatorname{Qch}}
\def\coh{\mathop{\text{\upshape{coh}}}}
\def\gr{\operatorname {gr}}
\def\Ext{\operatorname {Ext}}
\def\Hom{\operatorname {Hom}}
\def\ker{\operatorname {ker}}
\def\Tor{\operatorname {Tor}}
\def\Gal{\operatorname {Gal}}
\def\Pic{\operatorname {Pic}}
\def\r{\rightarrow}
\def\l{\leftarrow}
\DeclareMathOperator{\Proj}{Proj}
\DeclareMathOperator{\Pro}{Pro}
\DeclareMathOperator{\Alg}{Alg}
\DeclareMathOperator{\GrAlg}{GrAlg}
\DeclareMathOperator{\Tors}{Tors}
\DeclareMathOperator{\tors}{tors}
\DeclareMathOperator{\Aut}{Aut}
\let\dirlim\injlim
\let\invlim\projlim
\newtheorem{lemma}{Lemma}[section]
\newtheorem{proposition}[lemma]{Proposition}
\newtheorem{theorem}[lemma]{Theorem}
\newtheorem{corollary}[lemma]{Corollary}
\newtheorem{convention}[lemma]{Convention}
\newtheorem{lemmas}{Lemma}[subsection]
\newtheorem{propositions}[lemmas]{Proposition}
\newtheorem{theorems}[lemmas]{Theorem}
\newtheorem{corollarys}[lemmas]{Corollary}
\newtheorem*{sublemma}{Sublemma}
\theoremstyle{definition}
\newtheorem{definition}[lemma]{Definition}
\newtheorem{examples}[lemmas]{Example}
\newtheorem{definitions}[lemmas]{Definition}
\theoremstyle{remark}
\newtheorem{remark}[lemma]{Remark}
\newtheorem{remarks}[lemmas]{Remark}
\newdimen\uboxsep \uboxsep=1ex
\def\uboxn#1{\vtop to 0pt{\hrule height 0pt depth 0pt\vskip\uboxsep
\hbox to 0pt{\hss #1\hss}\vss}}
\def\uboxs#1{\vbox to 0pt{\vss\hbox to 0pt{\hss #1\hss}
\vskip\uboxsep\hrule height 0pt depth 0pt}}
\def\Pol{Polishchuk }
\let\invlim\projlim
\numberwithin{equation}{section}
\title{Non-commutative quadrics.}
\author{Michel Van den Bergh}
\address{Universiteit Hasselt\\ Universitaire Campus\\ 3590 Diepenbeek}
\thanks{The author is a senior researcher at the FWO}
\email{michel.vandenbergh@uhasselt.be}
 \subjclass{Primary 19S99; Secondary 16S32} 
\keywords{non-commutative geometry, non-commutative quadrics, deformations}
\begin{document}
\begin{abstract}
  In this paper we describe non-commutative versions of
  $\PP^1\times \PP^1$. These contain the class of non-commutative deformations
of $\PP^1\times \PP^1$.
\end{abstract}
\maketitle
\setcounter{tocdepth}{1}
\tableofcontents
\section{Introduction}
Throughout $k$ is a field.
In this paper we perform the $\ZZ$-algebra version of the
classification of 3-dimensional cubic regular algebras in
\cite{AS,ATV1,ATV2} (see below for unexplained terminology). 
In doing so we were inspired by \cite{Bondal} which essentially treats
the quadratic case.

Our main motivation for doing this classification is to describe
the non-commutative deformations of $\PP^1\times\PP^1$ (see Theorem \ref{new---1} below). 

Recall \cite{AS} that an AS-regular algebra is a graded $k$-algebra 
$A=k\oplus A_1\oplus A_2\oplus\cdots$ satisfying the following conditions
\begin{enumerate}
\item $\dim A_{i}$ is bounded by a polynomial in $i$.
\item The projective dimension  of $k$ is finite. 
\item There is exactly one $i$ for which  $\Ext^i_A(k,A)$ is non-vanishing
and for this $i$ we have $\dim \Ext^i_A(k,A)=1$. 
\end{enumerate}
Three dimensional regular algebras generated in degree one were classified
in \cite{AS,ATV1,ATV2} and in general in \cite{Steph1,Steph2}. It was
discovered that they are intimately connected to plane elliptic curves. 

There are two possibilities for a three dimensional regular
algebra $A$ generated in degree one.
\begin{enumerate}
\item $A$ is defined by three generators satisfying three quadratic relations
(the ``quadratic case'').
\item $A$ is defined by two generators satisfying two cubic relations
(``the cubic case'').
\end{enumerate}
In \cite{AS} it is shown that all 3-dimensional regular algebras
are obtained by specialization from a number ``generic'' regular algebras.
These generic regular algebras depend on at most two parameters. 

If $A$ is a non-commutative graded algebra then one defines $\QGr(A)$
as the category of graded right $A$-modules modulo finite dimensional
ones \cite{AZ}. One should think of $\QGr(A)$ as (the category of
quasi-coherent sheaves over) the non-com\-mu\-tative $\Proj$ of $A$. 

If $A$ is a quadratic 3-dimensional regular algebra then it has the 
Hilbert series of a polynomial ring in three variables. Therefore
it is reasonable to define a non-commutative $\PP^2$ as  $\QGr(A)$
for such an $A$. There are good reasons why this is the correct
definition. See e.g.\ \cite{Bondal} and also the discussion below. 

For a cubic 3-dimensional regular algebra $A$ it is convenient to look at
its $2$-Veronese $A^{(2)}=\bigoplus_n A_{2n}$. One has $\QGr(A)\cong
\QGr(A^{(2)})$ (see e.g.\ Lemma \ref{ref-3.5-5} for a more general
version). Thus $A$ and $A^{(2)}$ describe the same non-commutative
space.

The 2-Veronese of $A$
has Hilbert series $(1-t^4)/(1-t)^4$ which is the
same as the Hilbert series of the homogeneous coordinate ring of
$\PP^1\times \PP^1$ for the Plucker embedding $\PP^1\times
\PP^1\subset \PP^3$. Thus one may wonder if it is correct to
define a non-commutative $\PP^1\times \PP^1$ (or quadric) as $\QGr(A)$
for a cubic 3-dimensional regular algebra. We claim this is not so and
we will now give some motivation for this.

If $X$ is quasi-compact quasi-separated scheme then according to
\cite{lowenvdb2,lowenvdb1} the obstruction theory  for the
deformation theory of $\Qch(X)$ is given by the Hochschild cohomology
groups $\HH^{2}(X)$, $\HH^{3}(X)$  of $X$ where $\HH^i(X)=\Ext^i_{\Oscr_{X\times
    X}}(\Oscr_X,\Oscr_X)$. If $X$ is a smooth $k$-variety then the
Hochschild cohomology of $X$ can be computed using the HKR
decomposition: $\HH^n(X)=\oplus_{i+j=n} H^i(X,\wedge^j T_X)$.
A trite computation shows 
\[
\dim \HH^2(\PP^2)=10,\qquad \dim \HH^3(\PP^2)=0
\]
and
\[
\dim \HH^2(\PP^1\times \PP^1)=9,\qquad \dim \HH^3(\PP^1\times \PP^1)=0
\]
Thus in both cases the deformation theory is unobstructed. To
estimate the actual numbers of parameters we have to subtract the
dimensions of the automorphism groups of $\PP^2$ and $\PP^1\times\PP^1$ which
are respectively $8$ and $6$. So the expected number of parameters
for a non-commutative $\PP^2$ is $10-8=2$ and for a non-commutative
$\PP^1\times \PP^1$ it is $9-6=3$. Hence whereas in the case of 
$\PP^2$, 3-dimensional regular algebras have the required amount of freedom
this is not the case for $\PP^1\times \PP^1$. 

\medskip

The solution to this problem is presented in this paper. The idea
(taken from \cite{Bondal}) is that instead of graded algebras we
should look at ``$\ZZ$-algebras''.  I.e.\ algebras
$A=\oplus_{ij\in\ZZ} A_{ij}$ satisfying $A_{ij}A_{jk}\subset A_{ik}$
and possessing local units in $A_{ii}$ (see \S\ref{ref-3-2} below).
If $B$ is a $\ZZ$-graded algebra then we may define a corresponding
$\ZZ$-algebra $\check{B}$ via $\check{B}_{ij}=B_{j-i}$. Conversely
for a $\ZZ$-algebra to be obtained from a graded algebra it is necessary
and sufficient for it to be ``$1$-periodic''. I.e.\ there
should be an identification $A_{ij}\cong A_{i+1,j+1}$ compatible with the multiplication.

The basic definitions and results from the theory of graded rings
extend readily to $\ZZ$-algebras and in particular we may define
$\ZZ$-algebra analogues of 3-dimensional quadratic and cubic regular
algebras (see \S\ref{ref-4-6} below).  The classification of
3-di\-men\-sional quadratic regular algebras was carried out in
\cite{Bondal}. In \S\ref{ref-4.2-11} we review this classification and
then in \S\ref{ref-5-23} we move on to the classification
3-dimensional cubic regular algebras.

Here are the main classification results.
\begin{proposition} (see \cite{Bondal})
 The three dimensional regular quadratic
$\ZZ$-algebras are classified in 
terms of triples $(C,\Lscr_0,\Lscr_1)$ where either 
\begin{enumerate}
\item (the ``linear''
case)
$(C,\Lscr_0,\Lscr_1)\cong (\PP^2,\Oscr(1),\Oscr(1))$;
or else
\item  (the ``elliptic'' case)
\begin{enumerate}
\item $C$ is a curve which is embedded as a divisor of degree 3 in $\PP^2$ by
the global sections of $\Lscr_0$ and $\Lscr_1$.
\item 
$\deg(\Lscr_0\mid E)=\deg (\Lscr_1\mid E)$ for every irreducible $E$
component of $C$.
\item $\Lscr_0\not\cong \Lscr_1$.
\end{enumerate}
\end{enumerate}
\end{proposition}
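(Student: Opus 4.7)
The plan is to adapt the classical Artin--Schelter--Tate--Van den Bergh classification of $3$-dimensional quadratic AS-regular algebras to the $\ZZ$-algebra setting, using the geometry of point modules. Since we have three generators in each $A_{i,i+1}$ and three quadratic relations in $A_{i,i+2}$, the $\ZZ$-algebra is determined (up to isomorphism) by these three-dimensional vector spaces $V_i = A_{i,i+1}$ together with the three-dimensional relation subspaces $R_i \subset V_i \otimes V_{i+1}$. So one can work with each pair $(V_i, V_{i+1}, R_i)$ separately and then show everything glues consistently.

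First I would introduce point modules for a $\ZZ$-algebra, i.e.\ cyclic modules $M = \bigoplus_{i \ge 0} M_i$ with $\dim_k M_i = 1$ generated in degree $0$, and parametrize them. The space of candidate point modules in degrees $\{0,1,2\}$ sits in $\PP(V_0^\ast) \times \PP(V_1^\ast)$ as the locus $\Gamma \subset \PP(V_0^\ast) \times \PP(V_1^\ast)$ where the three relations in $R_0$ vanish. Because $A$ is regular of the right Hilbert series, a Koszul-type dimension count forces $\Gamma$ to be either all of a graph of an isomorphism $\PP(V_0^\ast)\xrightarrow{\sim}\PP(V_1^\ast)$ (this is the linear case, giving $(\PP^2,\Oscr(1),\Oscr(1))$), or else a divisor of bidegree $(1,1) + (\text{curve})$ whose two projections collapse to the same curve $C \hookrightarrow \PP(V_i^\ast)$, $i=0,1$. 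Regularity together with the Hilbert series forces $C$ to be a divisor of degree $3$, and the restrictions of $\Oscr(1)$ to $C$ along each projection yield the line bundles $\Lscr_0,\Lscr_1$.

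Next I would verify the three elliptic-case conditions. Condition (a) is the Hilbert-series/regularity calculation just mentioned. Condition (b) on the equality of degrees on each irreducible component follows by examining the projections $\Gamma \to C$ from both sides: the two projections have the same degree on each component because they both come from the embedding of $\Gamma$ as a $(1,1) \cdot (\text{curve})$ divisor in $\PP^2 \times \PP^2$. Condition (c) that $\Lscr_0\not\cong \Lscr_1$ separates the genuinely ``elliptic'' case from the linear one: were $\Lscr_0 \cong \Lscr_1$, one could realize the whole $\Gamma$ as the graph of the identity, which would force the linear case via an extension-of-scalars argument.

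For the reverse direction one must, given a triple $(C,\Lscr_0,\Lscr_1)$ satisfying the conditions, actually produce the regular $\ZZ$-algebra. One propagates the data to a sequence $(\Lscr_i)_{i\in\ZZ}$ using the correspondence $\Gamma \subset C \times C$ obtained from the two embeddings, define a ``twisted homogeneous coordinate $\ZZ$-algebra'' by $A_{ij} = H^0(C, \Lscr_i \otimes \cdots \otimes \Lscr_{j-1})$ (suitably interpreted via the correspondence), and then check AS-regularity. The main obstacle I expect is precisely this last verification: proving that the algebra built from geometric data has finite global dimension and the correct $\Ext$ behavior, which in the graded case is handled by the explicit resolution coming from the Koszul dual, and which here requires promoting that argument to the $\ZZ$-algebra setting without using $1$-periodicity. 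Once regularity is established, uniqueness of the correspondence on both sides yields a bijection between isomorphism classes, completing the classification.
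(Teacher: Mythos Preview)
Your proposal has the right overall shape but contains a genuine error in the reverse direction that would make the argument fail.

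You write that from a triple $(C,\Lscr_0,\Lscr_1)$ one should define the algebra by $A_{ij}=H^0(C,\Lscr_i\otimes\cdots\otimes\Lscr_{j-1})$ and then check AS-regularity. In the elliptic case this is the twisted homogeneous coordinate $\ZZ$-algebra, which the paper calls $B$, and it is \emph{not} AS-regular: by Riemann--Roch on the genus-one curve $C$ one has $\dim B_{i,i+n}=3n$ for $n>0$, whereas a three-dimensional quadratic regular algebra must have $\dim A_{i,i+n}=\binom{n+2}{2}$. The correct construction (which the paper spells out just after the statement) is to set $V_i=H^0(C,\Lscr_i)$, take $R_i=\ker\bigl(H^0(C,\Lscr_i)\otimes H^0(C,\Lscr_{i+1})\to H^0(C,\Lscr_i\otimes\Lscr_{i+1})\bigr)$, and let $A$ be the $\ZZ$-algebra freely generated by the $V_i$ modulo the $R_i$. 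One then has a surjection $A\to B$ whose kernel is generated by a sequence of normalizing elements $g_i\in A_{i,i+3}$, and it is $A$, not $B$, whose regularity one must verify. Relatedly, the helix $(\Lscr_i)_i$ is propagated via the algebraic relation $\Lscr_i\otimes\Lscr_{i+1}^{-2}\otimes\Lscr_{i+2}\cong\Oscr_C$ (equivalently $\Lscr_n=\Lscr_0\otimes(\Lscr_1\otimes\Lscr_0^{-1})^{\otimes n}$), not via a correspondence $\Gamma\subset C\times C$ as you suggest.

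On the forward direction, your description of the point scheme $\Gamma\subset\PP(V_0^\ast)\times\PP(V_1^\ast)$ as ``a divisor of bidegree $(1,1)+(\text{curve})$'' is garbled; $\Gamma$ is cut out by three bilinear forms in $\PP^2\times\PP^2$ and is either the graph of an isomorphism (linear case) or a curve whose two projections embed it as a cubic in each $\PP^2$. Your arguments for conditions (b) and (c) are also too loose: in the paper's treatment (carried out in full for the cubic analogue in \S5), condition (b) is extracted from a careful analysis of when the defining tensor $w_0\in R_0\otimes V_2\cap V_0\otimes R_1$ is ``geometric'', and (c) from showing that $\Lscr_0\cong\Lscr_1$ forces the relations to take the linear form. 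The paper's route, following Bondal, first reduces the algebra to the single non-degenerate tensor $w_0$ (the helix method), then attaches the geometry to $w_0$; your point-module route is a legitimate alternative in spirit, but you would still need to supply these steps with actual proofs.
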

\begin{proposition} (Proposition \ref{ref-5.1.2-26} in the text)
\label{ref-1.2-0}
The three-dimensional cubic regular $\ZZ$-algebras  are classified 
in terms of quadruples
$(C,\Lscr_0,\Lscr_1,\Lscr_2)$ where either:
\begin{enumerate}
\item  (the ``linear'' case) $(C,\Lscr_0,\Lscr_1,\Lscr_2)\cong
(\PP^1\times \PP^1,\Oscr(1,0),\Oscr(0,1),\Oscr(1,0))$;
  or else
\item  (the
``elliptic'' case) \begin{enumerate}
\item $C$ is a
curve which is embedded as a divisor of degree $(2,2)$ in $\PP^1\times
\PP^1$ by the global
sections of $(\Lscr_0,\Lscr_1)$ and $(\Lscr_1, \Lscr_2)$.
\item  $\deg (\Lscr_0\mid
E)=\deg(\Lscr_2\mid E)$ for every irreducible component $E$ of $C$.
\item  $\Lscr_0\not\cong \Lscr_2$.
\end{enumerate}
\end{enumerate}
\end{proposition}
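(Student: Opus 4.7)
The plan is to adapt the geometric strategy of Artin--Tate--Van den Bergh (ATV) and Bondal for the quadratic case to the cubic $\ZZ$-algebra setting.

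First, I would extract geometric data from the algebra. Given a three-dimensional cubic regular $\ZZ$-algebra $A$, set $V_i:=A_{i,i+1}$; these are two-dimensional, and the cubic relations form a two-dimensional subspace $R_i\subset V_i\otimes V_{i+1}\otimes V_{i+2}$ (because the expected Hilbert series for a cubic algebra forces $\dim A_{i,i+3}=6$). Viewing elements of $R_i$ as trilinear forms gives a closed subscheme
\[
\Gamma_i\subset \PP(V_i^*)\times\PP(V_{i+1}^*)\times\PP(V_{i+2}^*).
\]
Projecting away from the middle factor, a $2\times 2$ determinant computation (the analog of the point-scheme analysis in \cite{ATV1}) shows that either $\Gamma_i$ is the full triple product (the linear case) or its image in $\PP(V_i^*)\times\PP(V_{i+2}^*)$ is a curve $C_i$ cut out by a divisor of bidegree $(2,2)$, over which $\Gamma_i$ is the graph of a morphism to $\PP(V_{i+1}^*)$. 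The candidate line bundles $\Lscr_j$ are the pullbacks of the tautological $\Oscr(1)$'s from each factor; regularity of $A$ enters through the flatness of these projections.

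Next, I would compare $\Gamma_i$ with $\Gamma_{i+1}$. They share the factors $\PP(V_{i+1}^*)$ and $\PP(V_{i+2}^*)$, and a compatibility coming from associativity of multiplication in $A$ forces the respective projections to agree. This glues all the $C_i$ into a single curve $C$ carrying the sequence $\Lscr_0,\Lscr_1,\Lscr_2$, which immediately gives condition (a). Condition (b) then follows from the invariance of the shift structure of $A$ when restricted to each irreducible component of $C$, by an argument essentially identical to that used for the corresponding graded 3-dimensional regular algebras. Condition (c) distinguishes the elliptic case from the linear one.

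Conversely, from a quadruple $(C,\Lscr_0,\Lscr_1,\Lscr_2)$ satisfying (a)--(c) I would reconstruct $A$ as a twisted coordinate $\ZZ$-algebra: take $V_i:=H^0(C,\Lscr_i)$, set
\[
R_i:=\ker\bigl(V_i\otimes V_{i+1}\otimes V_{i+2}\to H^0(C,\Lscr_i\otimes\Lscr_{i+1}\otimes\Lscr_{i+2})\bigr),
\]
and quotient the tensor $\ZZ$-algebra on the $V_i$ by the two-sided ideal generated by the $R_i$. The main obstacle is then proving AS-regularity of the reconstructed algebra---exactness of the expected Koszul-type resolution of each simple module at vertex $i$. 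This is the $\ZZ$-algebra analog of the central flatness theorem of \cite{ATV1}, and is typically established by a combination of Hilbert-series bookkeeping together with an explicit verification that the cubic relations cut out a flat family over the moduli of geometric data. A second delicate point is the matching of the two $(2,2)$-embeddings via $(\Lscr_0,\Lscr_1)$ and $(\Lscr_1,\Lscr_2)$ onto the same curve $C$, which is essential in both directions of the correspondence.
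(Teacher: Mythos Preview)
Your broad strategy---extract a point-scheme from the relations, then reconstruct the algebra from the geometry---is the right one and matches the paper, but the paper organizes the argument through an intermediate object you omit, and your version has two concrete gaps.

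The paper's key device is the \emph{quintuple} $(V_0,V_1,V_2,V_3,kw)$, where $w$ spans the one-dimensional space $W_0=R_0\otimes V_3\cap V_0\otimes R_1\subset V_0\otimes V_1\otimes V_2\otimes V_3$. The entire $\ZZ$-algebra is recovered from this single tensor via a cyclic rotation $\Rscr$ (Theorem~\ref{ref-5.2.2-30}), so there is no need to study all the $\Gamma_i$ separately and glue them along shared factors as you propose. The paper then isolates exactly which quintuples arise from quadrics by a nondegeneracy condition called \emph{geometricity}: for every pair of adjacent indices $j,j+1\pmod 4$ and nonzero $\phi_j,\phi_{j+1}$, the contraction $\langle\phi_j\otimes\phi_{j+1},w\rangle$ does not vanish. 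This is precisely what forces the projections $(p_0,p_1)$ and $(p_1,p_2)$ from $\Gamma_{012}$ to be closed embeddings; its failure is excluded by a Gr\"obner-basis computation showing the Hilbert function would be too large. Your outline has no analogue of this step, and the verification of condition~(b) in the paper goes through it as well: if the degree condition failed, $R_0$ would contain a decomposable relation $u\otimes v$, contradicting geometricity (Lemma~\ref{ref-5.4.9-46}). Your appeal to ``the invariance of the shift structure'' is too vague to replace this.

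Second, you project $\Gamma_i$ onto the \emph{outer} factors $\PP(V_i^*)\times\PP(V_{i+2}^*)$, but the proposition asserts the $(2,2)$-embeddings come from $(\Lscr_0,\Lscr_1)$ and $(\Lscr_1,\Lscr_2)$, i.e.\ from \emph{adjacent} factors. The geometricity condition controls exactly those, and the outer projection $(p_0,p_2)$ is not in general a closed embedding (already in the linear case $\Lscr_0\cong\Lscr_2$ and it collapses). For the converse direction you also need to say how $(C,\Lscr_0,\Lscr_1,\Lscr_2)$ extends to $\Lscr_i$ for all $i$; the paper does this by the elliptic-helix relation $\Lscr_i\otimes\Lscr_{i+1}^{-1}\otimes\Lscr_{i+2}^{-1}\otimes\Lscr_{i+3}\cong\Oscr_C$ and then checks (Lemma~\ref{ref-5.5.1-52}) that all shifted quadruples remain admissible. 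Your construction of $R_i$ for all $i$ presupposes this without stating it.
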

Assume that $C$ is a smooth elliptic curve. Isomorphism classes
of elliptic curves are parametrized by the $j$-invariant. 
Furthermore $\dim\Aut(C)=1$,
$\dim \Pic(C)=1$. Hence the number of parameters in the quadratic case
is $1[j(C)]+1[\Pic(C)]+1[\Pic(C)]-1[\Aut(C)]=2$. In the cubic case
we find that the number is $3$. Thus both in the quadratic and cubic
case we obtain the expected number of parameters for a non-com\-mu\-ta\-tive
$\PP^2$, resp.\ $\PP^1\times\PP^1$. 

If $k$ is algebraically closed of characteristic different from
three then it is
shown in \cite{Bondal} that a quadratic 3-dimensional regular
$\ZZ$-algebra is $1$-periodic (see Theorem \ref{ref-4.2.2-16} below) and
hence is obtained from a graded algebra. Hence there is no added
generality in working with $\ZZ$-algebras. However there is no
analogous result for cubic algebras (see \S\ref{ref-5.6-58}). So in this
case we really need $\ZZ$-algebras.

Nonetheless we have the following result.
\begin{proposition} \label{ref-1.3-1}
(see Proposition \ref{ref-6.6-64} below)
  Assume that $k$ is an algebraically closed field of characteristic
  different from two. Let $A$ be a cubic 3-dimensional regular
  $\ZZ$-algebra. Let $A^\epsilon$ be the $2$-Veronese of $A$ defined
by $A^\epsilon_{ij}=A_{2i,2j}$. Then there exists a $\ZZ$-graded algebra $B$
such that $\check{B}\cong A^\epsilon$. Furthermore there exists a 4-dimensional
Artin-Schelter regular algebra $D$ with Hilbert series $1/(1-t)^4$ together
with a regular normal element $C\in D_2$ such that $B\cong D/(C)$. 
\end{proposition}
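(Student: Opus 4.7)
The plan proceeds in two stages: first, to show that the $2$-Veronese $A^\epsilon$ is $1$-periodic (yielding the $\ZZ$-graded algebra $B$ with $\check{B} \cong A^\epsilon$); second, to construct the $4$-dimensional AS-regular algebra $D$ together with a normal regular element $C \in D_2$ satisfying $B \cong D/(C)$.

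For the first stage, I invoke the classification of Proposition \ref{ref-1.2-0}. The cubic $\ZZ$-algebra $A$ is determined by geometric data $(E, \Lscr_0, \Lscr_1, \Lscr_2)$ which extends canonically to a bi-infinite sequence of line bundles $(\Lscr_i)_{i \in \ZZ}$ on $E$, together with automorphism data implementing the shift. The hypothesis that $\deg(\Lscr_0|_F) = \deg(\Lscr_2|_F)$ on each irreducible component $F \subset E$ should imply that the shift-by-$2$ operation on the sequence is induced by a single $\tau \in \Aut(E)$ preserving the components, with $\Lscr_{i+2} \cong \tau^*\Lscr_i$ for all $i$. Consequently the data $(\Lscr_{2i})_{i \in \ZZ}$ underlying $A^\epsilon$ is $1$-periodic at the level of isomorphism classes. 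To promote this to a genuine $1$-periodic structure on $A^\epsilon$ (coherent isomorphisms $A^\epsilon_{ij} \cong A^\epsilon_{i+1,j+1}$ compatible with multiplication), one must choose compatible isomorphisms of line bundles; the hypothesis $\charact(k) \neq 2$ enters through the need to extract a square root of an element of $\Pic^0(E)$ that obstructs a naive lift (the $2$-torsion of $\Pic^0(E)$ is finite, so such a square root exists under our characteristic assumption). Once this is arranged, $A^\epsilon \cong \check{B}$ for a $\ZZ$-graded algebra $B$ with Hilbert series $(1+t)/(1-t)^3 = (1-t^2)/(1-t)^4$.

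For the second stage, the task is to realize $B$ as $D/(C)$. Since $B$ has $4$ generators in degree $1$ and $\dim B_2 = 9$, its quadratic relations span a $7$-dimensional subspace of $B_1 \otimes B_1$. The plan is to identify one distinguished quadratic relation as the vanishing of a normal regular element $C$ and to define $D$ by the remaining $6$ quadratic relations. Geometrically, $C$ should be the non-commutative counterpart of the defining equation of the $(2,2)$-divisor $E \subset \PP^1 \times \PP^1$; in the commutative limit this recovers $D = k[x_0,x_1,x_2,x_3]$ with $C = x_0x_3 - x_1x_2$, the Segre quadric defining $\PP^1 \times \PP^1 \hookrightarrow \PP^3$.

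The main obstacle will be verifying that $D$, so defined, is indeed AS-regular with Hilbert series $1/(1-t)^4$ and that $C$ is a regular normal element. I would approach this via Koszul duality: $B$ is Koszul (as the $2$-Veronese of a cubic Koszul algebra $A$), and one shows that $D$ inherits Koszul-ness, with Koszul dual a Frobenius algebra of the expected Hilbert series, which forces $D$ to be AS-regular of global dimension $4$. Normality of $C$ in $D$ should be transparent from its geometric construction, and regularity (non-zero-divisor property) of $C$ then follows from a Hilbert series comparison: the identity $h_{D/(C)} = h_B = (1-t^2) h_D$ forces multiplication by $C$ to be injective in every degree.
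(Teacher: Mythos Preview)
Your first stage is essentially the paper's Proposition \ref{ref-5.6.1-60}: one shows $A\cong A(2)$ by exhibiting $\sigma\in\Aut(C)$ with $\sigma^\ast\Bscr=\Bscr\otimes\Lscr_2\otimes\Lscr_0^{-1}$ for $\Bscr\in\Pic^2(C)$, taking $\sigma=\eta(\Ascr)$ (Theorem \ref{ref-4.2.3-17}) where $\Ascr^{\otimes -2}=\Lscr_2\otimes\Lscr_0^{-1}$. The square root in $\Pic^0(C)$ is exactly where $\charact k\neq 2$ enters. One small correction: it is not ``the data $(\Lscr_{2i})_i$'' that underlies $A^\epsilon$; rather, the $2$-periodicity of $A$ itself yields $A^\epsilon\cong A^\epsilon(1)$, and then Lemma \ref{ref-3.4-4} gives $B$.

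Your second stage, however, has a genuine gap and departs from the paper's argument. You propose to choose a ``distinguished'' relation among the seven quadratic relations of $B$ and let $D$ be defined by the remaining six, but you give no mechanism for making this choice. Not every six-dimensional subspace of the relation space will produce an AS-regular $D$, nor will an arbitrary complementary relation be normal in the resulting $D$; you need a principle that singles out the correct splitting, and the geometric heuristic (``the defining equation of $E$'') does not by itself furnish one. Moreover, your Koszul argument rests on the claim that $A$ is Koszul, but $A$ has \emph{cubic} relations and its minimal resolution \eqref{ref-4.2-9} is not linear, so $A$ is not Koszul in the ordinary sense; the Koszulity of $B$ would need a separate justification.

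The paper proceeds quite differently. In \S\ref{ref-5.2-27} a canonical automorphism $\theta$ of $A$ of degree $4$ is constructed from the non-degenerate tensor $w_i\in W_i$. Theorem \ref{ref-6.1-62} (imported from \cite{BVdB}) then asserts that for $\alpha=\theta^{-1}$ there exists a \emph{hull}: a standard $4$-dimensional AS-regular $2\ZZ$-algebra $E$ together with a surjection $\phi:E\to A^\epsilon$ whose kernel is generated by a regular normalizing sequence $(C_{2i})_i$ inducing $\alpha$, and this hull is unique up to isomorphism once $\alpha$ is fixed. Because $\theta$ commutes (up to equivalence) with every automorphism of $A$, in particular with the degree-$2$ automorphism furnishing $A\cong A(2)$, Lemma \ref{ref-6.4-63} forces $E$ to be $1$-periodic, whence $E=\check{D}$ for a graded algebra $D$; the map $\phi$ then descends to $D\to B$ with kernel generated by a single normal regular $C\in D_2$. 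Thus the ``distinguished relation'' you seek is not identified combinatorially inside $B$ but arises from the canonical automorphism $\theta$ via the hull construction.
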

If we think of $\QGr(D)$ as a non-commutative $\PP^3$ then we have
embedded our non-commutative $\PP^1\times\PP^1$ (represented by $A$)
as a divisor in a non-commutative $\PP^3$ (represented by $D$).  
 
We will also discuss a translation principle
for non-commutative $\PP^1\times \PP^1$'s.  If $A$ is a cubic
3-dimensional regular algebra with associated quadruple
$(C,\Lscr_0,\Lscr_1,\Lscr_2)$ (see Proposition \ref{ref-1.2-0}) then following
a similar definition in \cite{Bondal} we define the elliptic
helix associated to
$(C,\Lscr_0,\Lscr_1,\Lscr_2)$ as a sequence of
line-bundles $(\Lscr_i)_{i\in \ZZ}$  on $C$ satisfying the
relation
\[
\Lscr_i\otimes \Lscr^{-1}_{i+1}\otimes
  \Lscr^{-1}_{i+2}\otimes \Lscr_{i+3}=\Oscr_E
\]
To simplify the exposition let us assume that $\Lscr_0,\Lscr_1,\Lscr_2$
are sufficiently generic such that there are no equalities of the form
$\Lscr_{2i}\cong \Lscr_{2j+1}$. One may then check that all quadruples
of the form $(C,\Lscr_0,\Lscr_{2n+1},\Lscr_2)$ satisfy the hypotheses of
Proposition \ref{ref-1.2-0} and hence they define a cubic 3-dimensional
regular $\ZZ$-algebra which we denote by $T^n A$.  Furthermore
the elliptic helix associated to $(C,\Lscr_0,\Lscr_{2n+1},\Lscr_2)$ is 
\[
(C,\ldots, \Lscr_{2n-1},\Lscr_0,\Lscr_{2n+1},\Lscr_2,\ldots)
\]
with $\Lscr_0$ occurring in position zero. In other words we have
shifted the odd part of the original elliptic helix $2n$ places to the left.
\begin{theorem} (a simplified version of Theorem \ref{ref-7.1-66} below) 
Let $A$ be a cubic 3-dimensional regular $\ZZ$-algebra.
Then $\QGr(T^nA)\cong
\QGr(A)$.
\end{theorem}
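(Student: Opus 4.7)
The plan is to produce the equivalence $\QGr(A)\simeq\QGr(T^nA)$ via a reconstruction principle for $\QGr$ of a $\ZZ$-algebra from an ``ample'' sequence of objects, in the spirit of the analogous results for graded rings in \cite{AZ}: if $(Q_i)_{i\in\ZZ}$ is an ample sequence in an abelian category $\Cscr$ and one puts $B_{ij}=\Hom_{\Cscr}(Q_j,Q_i)$, then $\Cscr\simeq\QGr(B)$. Applied to the canonical sequence $P_i=\pi(e_iA)$ inside $\QGr(A)$, this recovers $A$ itself; it therefore suffices to build a second ample sequence $(P'_i)$ in the \emph{same} category $\QGr(A)$ whose homomorphism $\ZZ$-algebra is isomorphic to $T^nA$.

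The classification of Proposition \ref{ref-1.2-0} dictates the recipe. Since the quadruples associated to $A$ and $T^nA$ differ only in their odd entries (the odd part of the helix is shifted by $2n$ positions), I would set $P'_{2i}=P_{2i}$ and construct each $P'_{2i+1}$ as a ``twist'' of $P_{2i+1}$ whose restriction to $C$ realises $\Lscr_{2(i+n)+1}$ rather than $\Lscr_{2i+1}$. The most natural way to produce these twists is iteratively, using the shift autoequivalence on $\QGr(A)$ induced by rotating the helix two steps at a time, which is built into the regularity of $A$. An alternative, perhaps more flexible, route uses Proposition \ref{ref-1.3-1}: the $2$-Veronese $A^\epsilon$ is a quotient $D/(C)$ of a non-commutative $\PP^3$, and one can exploit multiplication by appropriate powers of the normal element $C\in D_2$ to realise the desired twist inside $\QGr(A^\epsilon)\simeq\QGr(A)$.

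Two verifications then remain. First, one must identify $\Hom_{\QGr(A)}(P'_j,P'_i)$ with $(T^nA)_{ij}$; by the classification this reduces to matching the associated geometric quadruple, which by construction is $(C,\Lscr_0,\Lscr_{2n+1},\Lscr_2)$, and ultimately to a computation of global sections of explicit line bundles on $C$. Second---and this is the main obstacle---the sequence $(P'_i)$ must be shown to be ample in $\QGr(A)$. Since $(P'_i)$ differs from the original ample sequence only by finitely many twists in each window and the overall ``growth'' of the sequence is unchanged, one expects the requisite $\Ext$-vanishing and generation properties to be preserved, but making this precise will require translating these abstract conditions into cohomological statements on the curve $C$ and then invoking the degree constraints built into Proposition \ref{ref-1.2-0} together with the regularity of $A$. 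It is this passage---from abstract ampleness inside $\QGr(A)$ to concrete sheaf cohomology on $C$---that will require the most care.
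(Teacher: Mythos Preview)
Your strategy is plausible in outline, but the paper takes a considerably more direct route that sidesteps both difficulties you flag. Rather than constructing new objects inside $\QGr(A)$ and then verifying ampleness, the paper works entirely at the level of $\ZZ$-algebras and shows that the $2$-Veroneses $A^\epsilon$ and $(TA)^\epsilon$ are \emph{isomorphic as $\ZZ$-algebras}. Since $\QGr(-)\cong\QGr((-)^\epsilon)$ by Lemma~\ref{ref-3.5-5}, this immediately gives $\QGr(A)\cong\QGr(TA)$, and iteration handles general~$n$. The key observation is that the helix relation $\Lscr_i\otimes\Lscr_{i+1}^{-1}\otimes\Lscr_{i+2}^{-1}\otimes\Lscr_{i+3}\cong\Oscr_C$ yields $\Lscr_{2i}\otimes\Lscr_{2i+1}\cong\Lscr_{2i-1}\otimes\Lscr_{2i+2}$, and hence (via Lemma~\ref{ref-5.4.8-42}) canonical isomorphisms $A_{2i,2i+2}\cong H^0(\Lscr_{2i-1})\otimes H^0(\Lscr_{2i+2})$. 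The paper packages this through an auxiliary quadric $A^\omega$ attached to the quadruple $(C,\Lscr_{-1},\Lscr_2,\Lscr_1)$ and checks directly that these degree-two identifications assemble into an algebra isomorphism $A^\epsilon\cong A^{\omega,\epsilon}$ (Lemma~\ref{ref-7.3-70}); verifying that relations go to relations is a short computation showing that the image of $R_{2i}$ is $V_{2i-1}\otimes\wedge^2 V_{2i+2}$. Then $TA=A(1)^\omega$, so $(TA)^\epsilon\cong A(1)^\epsilon=A^o$, and Lemma~\ref{ref-3.5-5} finishes.

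Your plan, by contrast, has a genuine gap at the construction of $P'_{2i+1}$. The ``shift autoequivalence induced by rotating the helix two steps'' moves \emph{all} indices simultaneously; it does not produce an object whose restriction to $C$ is $\Lscr_{2(i+n)+1}$ while leaving the even objects $P_{2i}$ fixed. Nor is it clear how powers of the normal element $C\in D_2$ would accomplish this: $C$ generates the kernel of $D\twoheadrightarrow A^\epsilon$ and is normalizing for the automorphism $\theta^{-1}$, so its action is again uniform across the helix rather than selective on odd positions. Without a concrete candidate for $P'_{2i+1}$ in $\QGr(A)$, neither the identification of Hom-spaces with $(T^nA)_{ij}$ nor the ampleness verification can begin. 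The paper's argument avoids this entirely: it never needs to name such objects, because the comparison happens at the algebra level where the helix relation does all the work.
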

This paper fits in an
ongoing program to understand non-commutative Del Pezzo surfaces. In
the commutative case $\PP^1\times\PP^1$ is special 
as it is not obtained by blowing up $\PP^2$.   For some approaches
to non-commutative Del Pezzo surfaces see \cite{AKO,EG,VdB19}.

\medskip

In the final section of this paper we make rigid the heuristic deformation
theoretic arguments presented above by proving the following result
\begin{theorem} \label{new---1}(a compressed version of Theorems \ref{ref-8.1.1-72} and
  \ref{ref-8.1.2-73} below). Let $(R,m)$ be a complete commutative
  noetherian local ring with $k=R/m$ If $\Cscr=\coh(\PP^2_k)$ and
  $\Dscr$ is an $R$-deformation of $\Cscr$ then $\Dscr=\qgr(\Ascr)$
  where $\Ascr$ is an $R$-family of three dimensional quadratic
  regular algebras. Similarly if $\Cscr=\coh(\PP^1_k\times \PP^1_k)$
  and $\Dscr$ is an $R$-deformation of $\Cscr$ then
  $\Dscr=\qgr(\Ascr)$ where $\Ascr$ is an $R$-family of three
  dimensional cubic regular algebras.
\end{theorem}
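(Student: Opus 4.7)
The plan is to prove the theorem by lifting a suitable exceptional-type collection from $\mathcal{C}$ to $\mathcal{D}$ and then forming an $R$-linear Hom-$\mathbb{Z}$-algebra which turns out to be an $R$-family of regular $\mathbb{Z}$-algebras of the kind classified in the earlier sections. The framework for this is the deformation theory of abelian categories of \cite{lowenvdb2,lowenvdb1}, which was invoked already in the heuristic discussion of the introduction: obstructions to lifting objects and morphisms live in Hochschild/Ext groups, and for $\mathbb{P}^2$ and $\mathbb{P}^1\times\mathbb{P}^1$ these have the right vanishing properties.

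First I would reduce to infinitesimal deformations by writing $R=\varprojlim R/m^n$ and arguing inductively over square-zero extensions $R/m^{n+1}\twoheadrightarrow R/m^n$. On each step one needs to lift a fixed set of generating line bundles: for the $\mathbb{P}^2$ case the sheaves $\mathcal{O}(i)$, $i\in\mathbb{Z}$; for the $\mathbb{P}^1\times\mathbb{P}^1$ case the sheaves $\mathcal{O}(a,0)$ and $\mathcal{O}(a,1)$, $a\in\mathbb{Z}$ (this is exactly the collection that gives the Beilinson-type $\mathbb{Z}$-algebra rather than a genuinely graded one, which is why the $\mathbb{Z}$-algebra formalism is natural here). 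The obstruction to lifting such an invertible object lives in $\Ext^2(\mathcal{L},\mathcal{L})=H^2(X,\mathcal{O}_X)=0$ and lifts are unique up to (non-canonical) isomorphism because $H^1(X,\mathcal{O}_X)=0$. Passing to the limit gives compatible lifts $\mathcal{O}_\mathcal{D}(i)$, resp.\ $\mathcal{O}_\mathcal{D}(a,b)$, in $\mathcal{D}$.

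Next I would define
\[
\mathcal{A}_{ij}=\Hom_{\mathcal{D}}(\mathcal{O}_\mathcal{D}(-j),\mathcal{O}_\mathcal{D}(-i))
\]
in the $\mathbb{P}^2$-case, and the analogous Hom-$\mathbb{Z}$-algebra built from the pairs $(\mathcal{O}_\mathcal{D}(a,0),\mathcal{O}_\mathcal{D}(a,1))$ in the $\mathbb{P}^1\times\mathbb{P}^1$-case. The vanishing of the relevant higher Ext groups on $X$ together with the flatness condition in the definition of an $R$-deformation forces each $\mathcal{A}_{ij}$ to be a flat (in fact free) $R$-module that reduces to the corresponding Hom space on $X$ modulo $m$. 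Since the Hilbert function is preserved by flatness, $\mathcal{A}\otimes_R k$ is the classical $\mathbb{Z}$-algebra associated to $\mathbb{P}^2$ (resp.\ $\mathbb{P}^1\times\mathbb{P}^1$), which is a $3$-dimensional quadratic (resp.\ cubic) AS-regular $\mathbb{Z}$-algebra. By Nakayama-type lifting arguments the presentation by generators and relations, the projective dimension of the "point modules at $i$", and the required $\Ext$-conditions on AS-regularity all lift from the special fiber to $R$. Finally, to recover $\mathcal{D}$ from $\mathcal{A}$, I would construct an $R$-linear exact functor $\qgr(\mathcal{A})\to\mathcal{D}$ sending the standard projectives to the $\mathcal{O}_\mathcal{D}(i)$, and check it is an equivalence after reducing modulo $m$ (where it becomes the usual Beilinson equivalence), then conclude over $R$ by flatness and Nakayama for abelian categories.

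The main technical obstacle will be step three: verifying the AS-regularity conditions \emph{over $R$} rather than just on the special fiber. In particular, one must show that the Ext groups $\Ext^\bullet_{\mathcal{A}}(k_i,\mathcal{A})$ computed in the $R$-family coincide with what they should be, and that no "extra" Ext appears after deformation. Here the flatness of the individual $\mathcal{A}_{ij}$ and the compatibility of Ext with flat base change in appropriate bounded ranges are essential; this is the sort of argument where one must be careful with $m$-adic completeness and with the noetherian hypothesis on $R$. Once AS-regularity is established over $R$, the final equivalence $\qgr(\mathcal{A})\simeq\mathcal{D}$ follows comparatively formally from the lifting uniqueness in step two and from Nakayama.
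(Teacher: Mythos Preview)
Your strategy is essentially the paper's: lift the sequence $\Oscr(i)$ (resp.\ the interleaved $\Oscr(k,k),\Oscr(k,k+1)$) to flat objects in $\Dscr$, form the $\ZZ$-algebra of Hom-spaces, and verify regularity over $R$ by reduction modulo $m$.  The execution differs in two places worth noting.  First, rather than building a functor $\qgr(\Ascr)\to\Dscr$ by hand and checking it is an equivalence via Nakayama, the paper packages the passage from the lifted sequence $(\Lscr_i)_i$ to the equivalence $\Dscr\cong\qgr(\Ascr)$ in the notion of a \emph{strongly ample} sequence: once one shows (Theorem~\ref{ref-8.2.10-81}) that strong ampleness lifts from $\Cscr$ to $\Dscr$, the equivalence is immediate from the $\ZZ$-algebra version of Artin--Zhang/Polishchuk.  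This also absorbs your ``pass to the limit'' step, which is not automatic and in the paper is handled by the Grothendieck existence theorem (Proposition~\ref{ref-8.2-79}) for categories with a strongly ample sequence.  Second, the step you flag as the main obstacle---checking AS-regularity over $R$---is in fact the most elementary part of the paper's argument: one writes down the candidate Koszul complex \eqref{ref-8.4-84} for $\Sscr_i$ over $R$, observes that all terms are $R$-projective and that it becomes exact after $\otimes_R k$, and concludes exactness over $R$ by Nakayama (Lemma~\ref{ref-8.2.8-78}).  No direct computation of $\Ext^\bullet_{\Ascr}(k_i,\Ascr)$ over $R$ is needed; the definition of ``$R$-family'' only asks that the fibers be regular, and exactness of \eqref{ref-8.4-84} is preserved under any base change because the complex is $R$-split.
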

The undefined notions in the statement of this result will of course
be introduced below. In particular we will have to make sense of
the notion of a deformation of an abelian category.  Infinitesimal
deformations of abelian categories where defined in~\cite{lowenvdb1} and
from this one may define non-infinitesimal deformations by a suitable
limiting procedure. The theoretical foundation of this is the expos\'e
by Jouanolou \cite{Joua}. The details are provided in \cite{VdBdef}.
\section{Acknowledgment}
This is a sligthly updated version of a paper written in 2001
which was circulated privately.  Some of the
results were announced in \cite{VdBSt}.

The author
wishes to thank Michael Artin, Alexey Bondal, Paul Smith and Toby Stafford
for useful discussions.  In addition he thanks the referee for his
careful reading of the manuscript and for spotting numerous typos.
\section{Reminder on $I$-algebras} 
\label{ref-3-2}
This section has some duplication with similar basic material
in the recent papers
\cite{GS11,GS2} and \cite{Sierra}. 

Let $I$ be a set.
Abstractly an \emph{$I$-algebra} \cite{Bondal}  $A$ is a pre-additive category
whose objects are indexed by $I$.

It will be convenient for us to spell this definition out concretely.
We view an $I$-algebra as a
ring $A$ (without unit) together with a decomposition $A=\oplus_{ij\in
  I} A_{ij}$ (here $A_{ij}=\Hom_A(j,i)$) such that the multiplication
has the property $A_{ij}A_{jk}\subset A_{ik}$ and $A_{ij}A_{kl}=0$ if
$j\neq k$.  The identity morphisms $i\r i$ yield local units, denote
by $e_{i}$, such that if $a\in A_{ij}$ then $e_{i}a=a=ae_{j}$. In the
sequel we denote the category of $I$-algebras by $\Alg(I)$.

In the same vein a \emph{right $A$-module} will be an ordinary right $A$-module $M$ together
with a decomposition $M=\oplus_i M_i$ such that $M_i A_{ij}\subset M_j$,
$e_i$ act as unit on $M_i$ and $M_i A_{jk}=0$ if $i\neq j$.
We denote the category of right $A$ modules by $\Gr(A)$. 
It is easy to see that $\Gr(A)$ is a Grothendieck category \cite{stenstrom}. 
We will write
$\Hom_A(-,-)$ for $\Hom_{\Gr(A)}(-,-)$.

The obvious definitions of related concepts  such as left modules, bimodules,
ideals, etc\dots which we will use below
are left to the reader.

Let $A\in \Alg(I)$. Then for $i\in I$ we put $P_{i,A}=e_i A\in \Gr(A)$. If $A$ is clear from the context then we write
$P_i$ for $P_{i,A}$. Obviously
$
\Hom_A(P_i,M)=M_i
$
and
$
\Hom_A(P_i,P_j)=A_{ji}
$.
In particular $P_i$ is projective. It is
easy to see that $(P_i)_{i\in I}$ is a set of projective generators
for $\Gr(A)$.

 Let $J\subset I$ be an
inclusion of sets.
  The $J$-Veronese of $A$ is defined as
$B=\oplus_{i,j \in J} A_{ij}$. The  restriction functor
$\Res:\Gr(A)\r \Gr(B)$ is defined by  $\oplus_{i\in I} M_i\mapsto
\oplus_{i\in J} M_j$. Its 
left adjoint is denoted by\footnote{This left adjoint can be constructed
explicitly as the right exact functor which sends $P_{i,B}$ to $P_{i,A}$.} $-\otimes_B A$. 

It is easy to see that the composition $\Res(-\otimes_B A )$ is the identity. Applying this to $K$-projective complexes \cite{spaltenstein} we find 
that $\Res(-
\overset{L}{\otimes}_B A )$ is the identity on $D(\Gr(A))$.

\medskip

\begin{definition} $A$ is \emph{noetherian} if $\Gr(A)$ is a locally
  noetherian Grothendieck category or, equivalently, if all $P_i$ are
  noetherian objects in $\Gr(A)$.
\end{definition}
\begin{convention}
\label{ref-3.2-3}
  In this paper we will use the convention that if $\text{Xyz}(\cdots)$ is an
  abelian category then $\text{xyz}(\cdots)$ denotes the full subcategory of
  $\text{Xyz}(\cdots)$ whose objects are given by the noetherian objects. 
\end{convention}
Following this convention we let $\gr(A)$ stand for the  category of noetherian $A$-modules for a noetherian $I$-algebra $A$.

\medskip

If $I=G$ is a group then we denote by
$\GrAlg(G)$ the category of $G$-graded algebras. There is an obvious
functor $(\check{-}):\GrAlg(G)\r \Alg(G)$ which sends a $G$-graded algebra
$A=\oplus_{g\in G} A_g$ to the $G$-algebra $\check{A}$ with
$\check{A}_{gh}=A_{g^{-1}h}$. 
Note that trivially $\Gr(\check{A})=\Gr(A)$. 

It follows that $I$-algebras
are generalizations of $I$-graded rings when $I$ is a
group. In fact most general result for graded rings
generalize directly to $I$-algebras. We use such results without
further comment.

It will often happen that $\check{A}\cong\check{B}$ as $G$-algebras
 whereas $A\not\cong B$ as
graded rings. This is closely related to the notion of a Zhang twist
\cite{Zhang}. Recall that if $A$ is a $G$-graded algebra then a
\emph{Zhang-system} is a set of graded isomorphisms $(\tau_g)_{g\in
  G}:A\r A$ of abelian groups satisfying
$\tau_g(a\tau_h(b))=\tau_g(a)\tau_{gh}(b)$ for homogeneous elements
$a,b$ in $A$ with $a\in A_h$. A Zhang twist allows one to define a new
multiplication on $A$ by $a\cdot b=a\tau_g(b)$ for $a\in A_g$, $b\in
A$. One denotes the resulting graded ring by $A_\tau$ and calls it the
\emph{Zhang-twist} of $A$ with respect to $\tau$.
\begin{proposition} Assume that $A,B$ are two $G$-graded rings. Then
  $\check{A}\cong \check{B}$ if and only if $B$ is isomorphic to a
  Zhang twist of $A$.
\end{proposition}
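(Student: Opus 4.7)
The plan is to extract concretely what an isomorphism $\check A\cong\check B$ of $G$-algebras says, match the resulting data with the definition of a Zhang system, and then verify the axioms in both directions.

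First I would observe that by the definition of $\check{(-)}$, giving an isomorphism $\phi\colon\check A\to\check B$ of $G$-algebras amounts to giving, for each pair $g,h\in G$, an abelian group isomorphism $\phi_{g,h}\colon A_{g^{-1}h}\to B_{g^{-1}h}$ such that
\[
\phi_{g,h}(a)\,\phi_{h,k}(b)=\phi_{g,k}(ab)
\]
for all $a\in A_{g^{-1}h}$, $b\in A_{h^{-1}k}$, and all $g,h,k\in G$.

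For the direction ``$B\cong A_\tau\Rightarrow\check A\cong\check B$'' I would assume $B=A_\tau$ (so that $B$ and $A$ share their underlying graded abelian group), set $\phi_{g,h}(a):=\tau_g^{-1}(a)$, and verify the composition rule by a direct application of the Zhang axiom. Namely, with $\alpha=g$, $\beta=g^{-1}h$, $x=\tau_g^{-1}(a)$ and $y=\tau_h^{-1}(b)$ the axiom yields $\tau_g\bigl(\phi_{g,h}(a)\,\tau_{g^{-1}h}(\phi_{h,k}(b))\bigr)=ab$, which after applying $\tau_g^{-1}$ and rewriting the left hand side using the twisted multiplication of $A_\tau$ is exactly $\phi_{g,h}(a)\,\phi_{h,k}(b)=\phi_{g,k}(ab)$.

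For the converse, given $\phi\colon\check A\xrightarrow{\sim}\check B$, I would single out the ``base row'' $\psi_d:=\phi_{1,d}\colon A_d\to B_d$ and use it to transport the multiplication of $B$ back to $A$, producing a graded ring $A'$ with $A'\cong B$ and the same underlying graded abelian group as $A$. The remaining data in $\phi$ can then be repackaged as graded abelian group isomorphisms $\tau_g\colon A\to A$ via $\tau_g|_{A_d}:=\psi_d^{-1}\phi_{g,gd}$, with $\tau_1=\id$. Specializing the composition rule of $\phi$ to $g=1$ identifies the multiplication of $A'$ as $a\ast_{A'}b=a\,\tau_d^{-1}(b)$ for $a\in A_d$, so $A'=A_\rho$ with $\rho_g:=\tau_g^{-1}$; substituting this formula back into the composition rule for general $g$ and then applying $\tau_g^{-1}$ turns the relation, after the obvious relabelling of variables, into exactly the Zhang axiom for $\rho$.

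The main conceptual point is the converse, namely recognizing that the ``extra'' data in $\phi$ beyond the zeroth row $\psi$ is precisely a Zhang cocycle for $\rho=\tau^{-1}$; once this reorganization is in place, both verifications reduce to short manipulations with the Zhang axiom.
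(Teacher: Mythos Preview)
Your argument is correct: unwinding the definition of $\check{(-)}$ exactly as you do, the composition law $\phi_{g,h}(a)\phi_{h,k}(b)=\phi_{g,k}(ab)$ is equivalent to the Zhang cocycle condition for $\rho_g=\tau_g^{-1}$, and both directions reduce to this translation. The paper itself does not give a proof at all---it simply refers the reader to \cite{Sierra}---so there is nothing to compare your approach against; you have supplied a complete direct argument where the paper only provides a citation.
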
 
\begin{proof} See \cite{Sierra}.
\end{proof}
If $A$ is an $I$-algebra and  $G$ is a group which acts on $I$ then
for  $g\in G$  we define
$A(g)$ by $A(g)_{ij}=A_{g(i),g(j)}$. $A(g)$ is again  an
$I$-algebra. Similarly if $M\in \Gr(A)$ then we define
$M(g)_i=M_{g(i)}$ and with this definition $M(g)\in \Mod(A(g))$. If $A$, $B$ are
$I$-algebras then a morphism $A\r B$ of degree $g$ is a morphism of
$I$-algebras $A\r B(g)$. $A$ is said to be \emph{$g$-periodic} if it
possesses an automorphism of degree $g$.

Now we will assume that $I=\ZZ$ and we let $\ZZ$ acts on itself
by translation.
If $B$ is a $\ZZ$-graded ring then  clearly  $\check{B}$ is $1$-periodic. The
following lemma shows that the converse is true.
\begin{lemma} 
\label{ref-3.4-4}
Let $A$ be a $1$-periodic $\ZZ$-algebra. Then $A$ is of the form $\check{B}$
  for a $\ZZ$-graded ring $B$.
\end{lemma}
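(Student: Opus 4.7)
The plan is to use the $1$-periodicity isomorphism to transfer the bimultiplication of $A$ to an honest $\ZZ$-grading. Let $\phi : A \to A(1)$ denote the isomorphism of $\ZZ$-algebras expressing $1$-periodicity; it consists of compatible isomorphisms $\phi_{ij} : A_{ij} \to A_{i+1,j+1}$ preserving multiplication and local units. In particular $\phi^n : A_{ij} \to A_{i+n,j+n}$ is well-defined for every $n \in \ZZ$.

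First I would define $B = \bigoplus_{n \in \ZZ} B_n$ by $B_n = A_{0,n}$, and multiplication
\[
a \cdot b := a\,\phi^n(b) \in A_{0,n+m} = B_{n+m}
\]
for $a \in B_n$, $b \in B_m$ (using $\phi^n(b) \in A_{n,n+m}$ so the product on the right lives in $A_{0,n+m}$). Associativity comes for free from $\phi$ being a $\ZZ$-algebra morphism: for $a \in B_n$, $b \in B_m$, $c \in B_p$,
\[
a\cdot(b\cdot c) = a\,\phi^n\!\bigl(b\,\phi^m(c)\bigr) = a\,\phi^n(b)\,\phi^{n+m}(c) = (a\cdot b)\cdot c.
\]
The element $e_0 \in A_{0,0} = B_0$ is a two-sided unit since $\phi^n(e_0) = e_n$ acts as the identity on $A_{0,n}$ on the right, and $e_0$ acts as the identity on the left by the local unit axiom.

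Next I would exhibit an isomorphism $\check{B} \cong A$ of $\ZZ$-algebras. By definition $\check{B}_{ij} = B_{j-i} = A_{0,j-i}$, and I would take
\[
\psi_{ij} := \phi^i : \check{B}_{ij} = A_{0,j-i} \longrightarrow A_{i,j}.
\]
Each $\psi_{ij}$ is an isomorphism since $\phi$ is. Compatibility with the multiplication is a direct check: for $a \in \check{B}_{ij}$ and $b \in \check{B}_{jk}$,
\[
\psi_{ik}(a \cdot b) = \phi^i\!\bigl(a\,\phi^{j-i}(b)\bigr) = \phi^i(a)\,\phi^j(b) = \psi_{ij}(a)\,\psi_{jk}(b),
\]
again using that $\phi$ preserves multiplication, and $\psi$ sends $e_i \in \check{B}_{ii}$ to $e_i \in A_{ii}$.

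There is no real obstacle here; the only content is spotting that the shift $\phi^n$ is what converts the bimultiplication $A_{0,n} \otimes A_{n,n+m} \to A_{0,n+m}$ into a graded multiplication $B_n \otimes B_m \to B_{n+m}$. Once the formula $a\cdot b = a\,\phi^n(b)$ is written down, associativity, unitality and the identification $\check{B} \cong A$ are all one-line verifications.
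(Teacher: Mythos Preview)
Your proof is correct and follows exactly the same approach as the paper: define $B_n = A_{0,n}$ with multiplication $a\cdot b = a\,\phi^n(b)$, then set $\psi = \phi^i$ on $\check{B}_{ij}$. You have in fact filled in the associativity, unit, and multiplicativity verifications that the paper leaves as ``easy to check''.
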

\begin{proof}
Let $\phi:A\r A(1)$ be an isomorphism. We view $\phi$ as a map
$A_{ij}\r A_{i+1,j+1}$ for $i,j\in \ZZ$. Hence $\phi^n$ becomes a map
$A_{ij}\r A_{i+n,j+n}$. 

We define $B_i=A_{0,i}$ and $B=\oplus_i B_i$. We make $B$ into a
graded ring by defining the multiplication $b_ib_j=b_i\phi^i(b_j)$ for
$b_i\in B_i$, $b_j\in B_j$. 

Now we claim $\check{B}\cong A$. One has
$\check{B}_{ij}=B_{j-i}=A_{0,j-i}$. We define $\psi:\check{B}\r A$ as
$\phi^i$ on $\check{B}_{ij}$. It is easy to check that $\psi$ is an
isomorphism. 
\end{proof}

 Let $A\in \Alg(\ZZ)$ and assume that
$A$ is noetherian. We borrow a number of definitions from \cite{AZ}.
Let $M\in \Gr(A)$. We say that $M$ is is \emph{left}, resp.\ 
\emph{right bounded} if $M_i=0$ for $i\ll 0$ resp. $i\gg 0$. We say that
$M$ is \emph{bounded} if $M$ is both left and right bounded. We say
$M$ is \emph{torsion} if it is a direct limit of right bounded objects.
We denote the corresponding category by $\Tors(A)$. Following
\cite{AZ} we also put $\QGr(A)=\Gr(A)/\Tors(A)$. 
If $A$ is noetherian then following Convention \ref{ref-3.2-3} we
introduce $\qgr(A)$ and $\tors(A)$. Note that if $M\in \tors(A)$
then $M$ is right bounded, just as in the ordinary graded case. 
It is also easy to see that
$\qgr(A)$ is equal to $\gr(A)/\tors(A)$.

We put $A_{\ge 0}=\oplus_{j\ge i} A_{ij}$ and similarly $A_{\le
  0}=\oplus_{j\le i} A_{ij}$. These are both $\ZZ$-subalgebras of $A$.
We say that $A$ is \emph{positively graded} if $A=A_{\ge 0}$. In the
  sequel we will only be concerned with positively graded $\ZZ$-algebras.

  If $A$ is $k$-linear then we say that $A$ \emph{connected} if it is
  positively graded, each $A_{ij}$ if finite dimensional and
  $A_{ii}=k$ for all $i$. In that case we let $S_i=S_{i,A}$ be the
  unique simple quotients of $P_i$ (we write $S_i=S_{i,A}$ if $A$ is
  not in doubt). Note that $S_i$ is naturally an $A$-bimodule.  We say
  that $A$ is \emph{generated in degree one} if it is positively
  graded and generated as $\ZZ$-algebra by $(A_{i,i+1})_i$.

We will use the following result.
\begin{lemma}
\label{ref-3.5-5}
Let $A\in\Alg(\ZZ)$ be noetherian and generated in degree one
and let $J$ be an infinite subset of $I=\ZZ$ which is not bounded
above.  Let $B$ be the $J$-Veronese of $A$. Then $B$ is also noetherian
and furthermore the functors
$\Res:\Gr(A)\r \Gr(B)$ and $-\otimes_B A$ defines inverse equivalences
between $\QGr(A)$ and $\QGr(B)$ (for the notation $\QGr(B)$ to make
sense we identify $J$ with $\ZZ$ or $\NN$ as an ordered set).
\end{lemma}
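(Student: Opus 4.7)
The strategy rests on a cofinal-annihilation principle: if $x \in M_l$ for some $M \in \Gr(A)$ and $l \in I$, and $x \cdot A_{l, j^*} = 0$ for some $j^* \geq l$, then the cyclic $A$-submodule $xA$ is right bounded. Indeed, generation in degree one gives $A_{l, k} = A_{l, j^*} \cdot A_{j^*, k}$ for $k \geq j^*$, forcing $(xA)_k = 0$ there. This is the key input, and I expect the main obstacle — without \emph{generated in degree one} an annihilating degree does not propagate upwards, and neither the noetherianity transfer nor the torsion identifications below would go through.

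For the noetherianity of $B$ it suffices (by definition) to show that each $P_{j, B} = e_j B$ with $j \in J$ is noetherian. Given an ascending chain of $B$-submodules $N_1 \subseteq N_2 \subseteq \cdots \subseteq P_{j, B}$, pass to the $A$-submodules $N_k A \subseteq P_{j, A}$; this chain stabilizes by noetherianity of $A$. A direct computation for $l \in J$,
\[
(N_k A)_l = \sum_{m \in J,\, m \leq l} (N_k)_m A_{ml} = \sum_{m \in J,\, m \leq l} (N_k)_m B_{ml} = (N_k)_l,
\]
(using that $N_k$ is $B$-supported, so contributions from $m \notin J$ vanish, and that $A_{ml} = B_{ml}$ for $m, l \in J$) shows $\Res(N_k A) = N_k$, so applying $\Res$ to the stabilized chain $N_k A$ recovers stabilization of the original chain.

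For the equivalence, I plan to verify that both $\Res$ and $-\otimes_B A$ preserve torsion and then invoke the adjunction. Preservation for $\Res$ is immediate since $J$ is unbounded above, so right-bounded objects restrict to right-bounded objects, and $\Res$ commutes with colimits. For $-\otimes_B A$: given $N \in \Gr(B)$ right bounded with $N_j = 0$ for $j \in J$, $j \geq j_0$, pick $j^* \in J$ with $j^* \geq j_0$; any $n \in N_j$ ($j < j^*$) viewed in $M := N \otimes_B A$ satisfies $n \cdot A_{j, j^*} \subseteq M_{j^*} = \Res(M)_{j^*} = N_{j^*} = 0$, so by the cofinal-annihilation principle $nA$ is right bounded. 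Thus $M$ is a union of right-bounded submodules, hence torsion, and by colimit-preservation this extends to arbitrary torsion $N$. Next, the counit $\eta_M : \Res(M) \otimes_B A \to M$ of the adjunction satisfies $\Res(\eta_M) = \id$ by the triangle identity, so its kernel $K$ and cokernel $C$ vanish in $J$-degrees; applying the cofinal-annihilation principle to each element of $K$ or $C$ (annihilation at a cofinal $j^* \in J$ is automatic) makes both torsion, so $\eta_M$ becomes an isomorphism in $\QGr(A)$. The unit $\id_{\Gr(B)} \to \Res \circ (-\otimes_B A)$ is the identity by the fact already noted in the paper, so $\Res$ and $-\otimes_B A$ descend to $\QGr$'s and are mutually inverse equivalences there.
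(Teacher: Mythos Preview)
Your approach mirrors the paper's almost exactly: your ``cofinal-annihilation principle'' is the paper's Sublemma (if $\Res(M)$ is torsion then $M$ is torsion) phrased elementwise, and your noetherianity argument unpacks the paper's one-line deduction from $\Res(-\otimes_B A)=\id$. There is, however, a gap in the final step. Showing that $-\otimes_B A$ sends torsion objects to torsion objects is \emph{not} enough to conclude that it descends to a functor $\QGr(B)\r\QGr(A)$, because $-\otimes_B A$ is only right exact. If $N'\hookrightarrow N$ is a monomorphism in $\Gr(B)$ with torsion cokernel $C$, the induced map $N'\otimes_B A\r N\otimes_B A$ has kernel a quotient of $\Tor_1^B(C,A)$, and you need this to be torsion as well for the map to become an isomorphism in $\QGr(A)$. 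The paper addresses this explicitly by requiring that $\Tor^B_i(-,A)$ preserve torsion for $i=0,1$.

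The fix is immediate from your own principle. Since $\Res$ is exact and $\Res(-\otimes_B A)=\id$ on $\Gr(B)$, applying $\Res$ to a projective resolution of $N$ shows that $\Res(\Tor_i^B(N,A))=0$ for all $i>0$ and all $N$; your cofinal-annihilation argument (every homogeneous element is killed in some $J$-degree above it, hence generates a right-bounded submodule) then forces $\Tor_i^B(N,A)$ to be torsion. With this in hand, the composite of $-\otimes_B A$ with the quotient functor $\Gr(A)\r\QGr(A)$ is exact and annihilates torsion, so it factors through $\QGr(B)$, and your verification of the unit and counit completes the argument.
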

\begin{proof}
We already know that 
$\Res(-\otimes_B A)$ is the identity. From this we
easily deduce that $B$ is noetherian and that $\Res$ preserves
noetherian objects. 

To prove that $-\otimes_B A$ gives a well defined functor $\QGr(B)\r
\QGr(A)$ we need to prove that $\Tor^B_i(-,A)$ preserves torsion
objects for $i=0,1$. Looking at projective resolutions we see
$\Res(\Tor^B_i(-,A))=0$ for $i>0$ and $\Res(-\otimes_B
A)=\Id_{\Gr(B)}$. Hence is sufficient to prove the following sublemma.
\begin{sublemma} Assume that $M\in \Gr(A)$ is such that $\Res(M)$ is
  torsion. Then $M$ itself is torsion.
\end{sublemma}
\begin{proof} Since everything is compatible with filtered colimits,
  it suffices to consider the case that $M$ is noetherian. Then
  $\Res(M)$ is also noetherian and hence right bounded.

Let $m\in M_k$. We need to show that $mA_{kl}=0$ for $l\gg 0$.  
Since $\Res(M)$ is right bounded there exists $j\in J$, $j\ge k$
such that  $mA_{kj}=0$. Thus we have $m A_{kj}A_{jl}=0$ for all
$l$. If $l\ge j$ then since $A$ is
generated in degree one we have $A_{kj}A_{jl}=A_{kl}$ which proves
what we want. 
\end{proof}
To prove the asserted equivalence of categories me must show that for
any $M\in \Gr(A)$ the canonical map $\Res(M)\otimes_B A\r M$ has
torsion kernel and cokernel. Since both the kernel and cokernel have
zero restriction this follows from the sublemma.
\end{proof}
\section{Artin-Schelter regular $\ZZ$-algebras}
\label{ref-4-6}
\subsection{Definition and motivation}
Let $k$ be a field and let $A$ be a $\ZZ$-algebra defined over $k$. 
We make the following tentative definition. 
We say that $A$ is (AS-)regular if the following conditions are satisfied.
\begin{enumerate}
\item $A$ is connected.
\item $\dim A_{ij}$ is bounded by a polynomial in $j-i$.
\item The projective dimension of $S_{i,A}$  is finite and bounded by
  a number independent of $i$.
\item For every $i$, $\sum_{j,k}\dim\Ext^j_{\Gr(A)}(S_{k,A},P_{i,A})=1$. 
\end{enumerate}
It is clear that this definition generalizes the notion of
AS-regularity for ordinary graded algebras \cite{AS} in the sense that
if $B$
is a graded algebra then it is AS-regular if and only if $\check{B}$
is AS-regular in the above sense.
 Below we write $S_i=S_{i,A}$, $P_i=P_{i,A}$.
 \begin{definitions} 
\label{ref-4.1.1-7} Let $A$ be a regular algebra. Inspired by
   \cite{Bondal} we say that $A$ is a \emph{three dimensional quadratic
     regular algebra} if the minimal resolution of $S_i$ has the form
\begin{equation}
\label{ref-4.1-8}
0\r P_{i+3}\r P^3_{i+2}\r P^3_{i+1}\r  P_i\r S_i\r 0.
\end{equation}
We say that $A$ is a  \emph{three dimensional cubic regular algebra} if the minimal resolution
  of $S_i$ has the form 
\begin{equation}
\label{ref-4.2-9}
0\r P_{i+4}\r P^2_{i+3}\r P^2_{i+1}\r  P_i\r S_i\r 0
\end{equation}
\end{definitions}
\begin{definitions}
\label{ref-4.1.2-10}
A non-commutative $\PP^2$ is a category of the form $\QGr(A)$ with $A$
a three dimensional quadratic regular $\ZZ$-algebra. A non-commutative
$\PP^1\times \PP^1$ (or quadric)  is a category of the form $\QGr(A)$ with $A$
a three dimensional cubic regular $\ZZ$-algebra. 
\end{definitions}
For the motivation of this definition we refer to the introduction.
\subsection{The work of Bondal and \Pol}
\label{ref-4.2-11}
As an introduction to non-commutative quadrics we first discuss (and
slightly generalize) some results from \cite{Bondal}.  Some of the
arguments will only be sketched since we will repeat them in greater
detail for quadrics.

From \eqref{ref-4.1-8} one easily obtains the following property
\begin{itemize}
\item[(P1)]
\[
\dim A_{ij}=
\begin{cases}
\frac{(j-i+1)(j-i+2)}{2}&\text{if  $j\ge i$}\\
0&\text{otherwise}
\end{cases}
\]
\end{itemize}
In addition closer inspection of \eqref{ref-4.1-8} also yields:
\begin{itemize}
\item[(P2)]
Define $V_i=A_{i,i+1}$. Then $A$ is generated by the $(V_i)_i$.
\item[(P3)]
Put $R_i=\ker(V_i\otimes V_{i+1}\r A_{i,i+2})$.  Then the relations
between the $V_i$ in $A$ are generated by the $R_i$.
\item[(P4)]
Dimension counting
reveals that  $\dim R_i=3$.  Put $W_i=R_i\otimes V_{i+2}\cap
V_i\otimes R_{i+1}$. Using dimension counting once again we find that
$\dim W_i=1$. Then $W_i$ is a non-degenerate tensor, both as element
of $R_i\otimes V_{i+2}$ and as element of $V_i\otimes R_{i+1}$. 
\end{itemize}
\begin{propositions} (see \cite{Bondal})
\label{ref-4.2.1-12}
 The three dimensional regular quadratic
$\ZZ$-algebras are classified in 
terms of triples $(C,\Lscr_0,\Lscr_1)$ where either
\begin{enumerate}
\item (the ``linear''
case)
$(C,\Lscr_0,\Lscr_1)\cong (\PP^2,\Oscr(1),\Oscr(1))$;
or else
\item  (the ``elliptic'' case)
\begin{enumerate}
\item $C$ is embedded as a divisor of degree 3 in $\PP^2$ by
the global sections of $\Lscr_0$ and $\Lscr_1$.
\item 
$\deg(\Lscr_0\mid E)=\deg (\Lscr_1\mid E)$ for every irreducible $E$
component of $C$.
\item $\Lscr_0\not\cong \Lscr_1$.
\end{enumerate}
\end{enumerate}
\end{propositions}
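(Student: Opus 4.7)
My plan is to follow the strategy of \cite{ATV1} adapted to the $\ZZ$-algebra setting as sketched in \cite{Bondal}. For the forward direction, I first read off properties (P1)--(P4) from the minimal resolution \eqref{ref-4.1-8}: the relations sit in three-dimensional spaces $R_i \subset V_i \otimes V_{i+1}$, and the one-dimensional intersection $W_i = (R_i \otimes V_{i+2}) \cap (V_i \otimes R_{i+1})$ must be non-degenerate when viewed either in $R_i \otimes V_{i+2}$ or in $V_i \otimes R_{i+1}$; the non-degeneracy is forced by exactness of \eqref{ref-4.1-8} in internal degrees $i{+}2$ and $i{+}3$, since a degeneracy of $W_i$ would produce an unexpected syzygy.

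Next I extract the geometric data. Choose a generator $w_i$ of $W_i$ and let $\Gamma_i \subset \PP(V_i^{\ast}) \times \PP(V_{i+1}^{\ast})$ be the subscheme cut out by $R_i$. Non-degeneracy of $w_i$ implies that the two projections $\pi_1 : \Gamma_i \to \PP(V_i^{\ast})$ and $\pi_2 : \Gamma_i \to \PP(V_{i+1}^{\ast})$ are finite. A standard dichotomy then emerges: either $\Gamma_i = \PP(V_i^{\ast}) \cong \PP^2$ with $\pi_1,\pi_2$ isomorphisms (the linear case, in which $C = \PP^2$ and $\Lscr_0 = \Lscr_1 = \Oscr(1)$), or $\Gamma_i$ maps isomorphically onto a divisor $C$ of degree three in $\PP^2$ (the elliptic case), in which I put $\Lscr_0 = \pi_1^{\ast}\Oscr(1)|_C$ and $\Lscr_1 = \pi_2^{\ast}\Oscr(1)|_C$. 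The equal-degree condition $\deg(\Lscr_0|E) = \deg(\Lscr_1|E)$ on each irreducible component $E$ comes from the symmetric role of the two factors in $W_i$ (they must have the same degree of ramification along $E$), while the condition $\Lscr_0\not\cong\Lscr_1$ is required to exclude the limit where the elliptic case degenerates to the linear one. I also check independence of $i$: the higher tensor $W_i$ together with $W_{i+1}$ provides canonical identifications of the data extracted at successive indices, so that a single triple $(C, \Lscr_0, \Lscr_1)$ is well-defined.

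For the converse, given admissible data $(C,\Lscr_0,\Lscr_1)$, I extend to an elliptic helix $(\Lscr_i)_{i\in\ZZ}$ on $C$ by iterating the recursion dictated by the geometric form of $W_i$, set $V_i := H^0(C,\Lscr_i)$, let $R_i \subset V_i \otimes V_{i+1}$ be the kernel of the multiplication map $V_i \otimes V_{i+1} \to H^0(C,\Lscr_i \otimes \Lscr_{i+1})$, and form the $\ZZ$-algebra $A$ generated by the $V_i$ modulo the $R_i$. The part I expect to be the main obstacle is showing that $A$ has the correct Hilbert series and satisfies (P1)--(P4), hence the minimal resolution \eqref{ref-4.1-8}. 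The strategy is to construct an explicit Koszul-type complex of sheaves on $C$ built from the helix, verify its exactness on $C$ (using the degree hypotheses and the equal-degree condition on components), and then lift this to exactness of the candidate resolution in $\Gr(A)$ by induction on $j-i$, using the identification $W_i = R_i \otimes V_{i+2} \cap V_i \otimes R_{i+1}$ to bootstrap from the geometric side to the algebraic side. The linear case is handled separately, identifying $A$ with $\check{B}$ for $B$ a polynomial ring in three variables (up to Zhang twist).
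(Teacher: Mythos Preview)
The paper does not give a proof of this proposition; it cites \cite{Bondal} and only recalls the two constructions (triple $\rightsquigarrow$ algebra via \eqref{ref-4.3-13}--\eqref{ref-4.5-15}, and algebra $\rightsquigarrow$ triple via the locus of $R_0$ in $\PP(V_0^\ast)\times\PP(V_1^\ast)$). A detailed proof following the same template is given only for the cubic analogue in \S\ref{ref-5-23}, so that is the relevant benchmark for your outline.

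Your overall architecture matches that benchmark, but there is a genuine gap at the step ``non-degeneracy of $w_i$ implies that the two projections $\pi_1,\pi_2$ are finite (or closed embeddings)''. In the paper's cubic treatment this implication is \emph{not} a consequence of non-degeneracy alone: non-degeneracy of $w$ characterizes \emph{prequadrics} (Theorem \ref{ref-5.2.2-30}), whereas the stronger \emph{geometric} condition (\S\ref{ref-5.3-33}) is what forces the projections of $\Gamma$ to be embeddings, and the passage from ``regular algebra'' to ``geometric tensor'' is a separate argument carried out by a Gr\"obner basis computation exhibiting a Hilbert-series contradiction. Your sketch conflates these two notions. Likewise, your justifications for conditions (b) and (c) are heuristic: in the cubic case the equal-degree condition is characterized by a zero-divisor criterion (Lemma \ref{ref-5.4.9-46}), and $\Lscr_0\not\cong\Lscr_2$ is deduced by identifying the relations explicitly when the line bundles coincide (proof of Lemma \ref{ref-5.4.11-48} and the lemma following it); ``symmetric role of the two factors'' does not by itself pin down component-wise degree equality.

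For the converse you propose a direct Koszul-type argument on $C$. The paper's route (in the cubic case, \S\ref{ref-5.5-50}) is different and more robust: one first forms the twisted homogeneous coordinate $\ZZ$-algebra $B=B(U)$ from the helix, proves its structural properties by Riemann--Roch and tameness (Lemmas \ref{ref-5.4.3-37}--\ref{ref-5.4.5-39}), then shows that the canonical surjection $A\to B$ has kernel generated by a sequence of regular normalizing elements $(g_i)_i$ (Theorem \ref{ref-5.5.6-53}), from which exactness of the candidate resolution \eqref{ref-5.9-55} follows. Your Koszul-on-$C$ plan would have to reproduce this bootstrapping, and it is not clear how ``exactness of a sheaf complex on $C$'' alone lifts to exactness of \eqref{ref-4.1-8} in $\Gr(A)$ without the intermediary of $B$ and the $(g_i)_i$.
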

Let us recall how $A$ is constructed from a triple
$(C,\Lscr_0,\Lscr_1)$. First we construct line bundles
$(\Lscr_i)_{i\in\ZZ}$ on $C$ via the relation
\begin{equation}
\label{ref-4.3-13}
\Lscr_i\otimes \Lscr_{i+1}^{\otimes -2} \otimes \Lscr_{i+2} \cong
\Oscr_C
\end{equation}
The sequence of line bundles  $(\Lscr_i)_i$ is the so-called ``elliptic helix'' associated
to $(C,\Lscr_0,\Lscr_1)$. Note that \eqref{ref-4.3-13} is equivalent to
\begin{equation}
\label{ref-4.4-14}
\Lscr_n=\Lscr_0\otimes (\Lscr_1\otimes\Lscr_0^{-1})^{\otimes n}
\end{equation}
In other words the $(\Lscr_i)_i$ form an arithmetic progression.

We put $V_i=H^0(C,\Lscr_i)$ and 
\begin{equation}
\label{ref-4.5-15}
R_i=\ker (H^0(C,\Lscr_0)\otimes H^0(C,\Lscr_1)\r H^0(C,\Lscr_0\otimes
\Lscr_1))
\end{equation}
Then we let $A$ be the $\ZZ$-algebra generated by the $V_i(=A_{i,i+1})$
subject to the relations $R_i\subset V_i\otimes V_{i+1}$.

Conversely if we have a three-dimensional quadratic regular $\ZZ$-algebra  then the
triple $(C,\Lscr_0,\Lscr_1)$ is constructed using a similar procedure
as in \cite{ATV1}. To be more precise let $(V_i)_i,(R_i)_i$ be  as
above. Then $R_0$ defines a closed subscheme $C$
of $\PP(V^\ast_0)\times \PP(V^\ast_1)\cong\PP^2\times \PP^2$. We let $\Lscr_0$,
$\Lscr_1$ be the inverse images $\Oscr(1)$ under the two projections
$C\r \PP^2$. 
\begin{theorems}
\label{ref-4.2.2-16}
Assume that the ground field $k$ is algebraically closed of
characteristic different from three
and that $B$ is a three dimensional quadratic regular $\ZZ$-algebra. Then $B\cong \check{A}$
for a quadratic three-dimensional regular algebras $A$. 
\end{theorems}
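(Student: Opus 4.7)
The plan is to use Lemma~\ref{ref-3.4-4} to reduce the assertion to proving that $B$ is $1$-periodic, that is, isomorphic to its shift $B(1)$. By the classification in Proposition~\ref{ref-4.2.1-12}, $B$ corresponds to a triple $(C,\Lscr_0,\Lscr_1)$, and combining the explicit construction with the arithmetic-progression identity~\eqref{ref-4.4-14} shows that $B(1)$ corresponds to $(C,\Lscr_1,\Lscr_2)$. Thus the problem reduces to producing an automorphism $\sigma\in\Aut(C)$ satisfying $\sigma^\ast\Lscr_1\cong\Lscr_0$ and $\sigma^\ast\Lscr_2\cong\Lscr_1$.

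Setting $\Mscr=\Lscr_1\otimes\Lscr_0^{-1}$, these two conditions are equivalent to
\[
\sigma^\ast\Mscr\cong\Mscr\quad\text{and}\quad \sigma^\ast\Lscr_0\cong\Lscr_0\otimes\Mscr^{-1}.
\]
In the linear case one takes $\sigma=\id_{\PP^2}$. In the elliptic case, condition (b) of Proposition~\ref{ref-4.2.1-12} says $\Mscr$ has degree zero on every irreducible component of $C$, so any translation on the smooth locus $C^{\mathrm{sm}}$ (with respect to its algebraic group structure) automatically fixes $\Mscr$. It therefore suffices to find $p\in C^{\mathrm{sm}}$ such that the corresponding translation $\tau_p$ satisfies $\tau_p^\ast\Lscr_0\otimes\Lscr_0^{-1}\cong\Mscr^{-1}$.

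Because $\Lscr_0$ has degree $3$ on each irreducible component of $C$, the assignment $p\mapsto \tau_p^\ast\Lscr_0\otimes\Lscr_0^{-1}$ is, up to sign, the multiplication-by-$3$ homomorphism from the identity components of $C^{\mathrm{sm}}$ into $\Pic^0(C)$. Over an algebraically closed ground field this map is surjective whenever multiplication by $3$ is nonzero on the relevant algebraic group: this is automatic when $C$ is a smooth elliptic curve or when its smooth locus is of multiplicative type (the nodal case, giving $\GG_m$-factors), but it fails for a cuspidal component in characteristic $3$, where $C^{\mathrm{sm}}$ contains a copy of $\GG_a$ on which multiplication by $3$ vanishes. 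The hypothesis $\charact k\neq 3$ is introduced precisely to rule this out.

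I expect the main obstacle to be the case-by-case verification of this surjectivity across the full list of degenerations of $C$ allowed by Proposition~\ref{ref-4.2.1-12}: a smooth plane cubic, an irreducible nodal or cuspidal cubic, a smooth or singular conic plus a line, three lines in general position, and three concurrent lines. In each case the Picard scheme and the group structure of $C^{\mathrm{sm}}$ must be identified, and in the reducible situations one must verify in addition that the desired translation on one component extends to a global automorphism of $C$ acting compatibly on the remaining components while still satisfying $\sigma^\ast\Lscr_0\cong\Lscr_0\otimes\Mscr^{-1}$. Once such a $\sigma$ is produced, the presentation of $B$ from \eqref{ref-4.5-15} immediately transports to an isomorphism $B\cong B(1)$ of $\ZZ$-algebras, and Lemma~\ref{ref-3.4-4} supplies the desired graded $A$.
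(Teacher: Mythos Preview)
Your overall strategy matches the paper's exactly: reduce via Lemma~\ref{ref-3.4-4} to showing $B\cong B(1)$, translate this via the classification into the existence of $\sigma\in\Aut(C)$ with $\sigma^\ast\Lscr_i\cong\Lscr_{i+1}$ (equivalently, your two conditions on $\Mscr$ and $\Lscr_0$), take $\sigma=\id$ in the linear case, and in the elliptic case produce $\sigma$ from a point of $\Pic^0(C)$, the obstruction being $3$-divisibility.

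The difference is in how the elliptic case is executed. You propose to build $\sigma$ as a translation on $C^{\mathrm{sm}}$ and to verify the required surjectivity by a case-by-case analysis of all degenerations of the plane cubic. The paper instead invokes a single black box, Theorem~\ref{ref-4.2.3-17} (from \cite{ATV2}), which supplies a homomorphism $\eta:\Pic^0(C)\to\Aut(C)$ satisfying $\eta(\Ascr)^\ast\Bscr=\Bscr\otimes\Ascr^{-b}$ with $b$ the \emph{total} degree of $\Bscr$. Since $\Lscr_0$ has total degree $3$, setting $\sigma=\eta(\Ascr)$ reduces everything to solving $\Ascr^{\otimes 3}=\Lscr_0\otimes\Lscr_1^{-1}$ in $\Pic^0(C)$, and the second condition \eqref{ref-4.9-21} then holds automatically. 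This handles all singular and reducible cases uniformly; the only residual issue is $3$-divisibility in $\Pic^0(C)$, which fails precisely when $\Pic^0(C)\cong\GG_a$ and $\charact k=3$.

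One concrete error in your sketch: you assert that $\Lscr_0$ has degree $3$ on each irreducible component of $C$. This is false in the reducible cases (e.g.\ a conic plus a line gives degrees $2$ and $1$; three lines give $1,1,1$), so the map $p\mapsto\tau_p^\ast\Lscr_0\otimes\Lscr_0^{-1}$ is not literally multiplication by $3$ on each factor of $C^{\mathrm{sm}}$, and a translation on a single component need not extend to an automorphism of $C$. These are exactly the complications that Theorem~\ref{ref-4.2.3-17} absorbs; without it your case analysis, while feasible, would have to be carried out in full (and you have not done so). Citing that theorem is the cleaner route.
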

This result is proved in \cite{Bondal} in characteristic zero using
some case by case analysis.
We give a streamlined proof based upon the following result from \cite{ATV2}. 
\begin{theorems}
\label{ref-4.2.3-17} \cite[Cor. 5.7, Lemma 5.10]{ATV2} Let $C$ be a cubic
  divisor in $\PP^2$ or a divisor of bidegree $(2,2)$ in $\PP^1\times
  \PP^1$. Let $\Pic^0(C)$ be the connected component of the identity
  in $\Pic(C)$,
  i.e. those line bundles which have degree zero on every component of
  $C$. Then there is a morphism of algebraic groups
  $\eta:\Pic^0(C)\r\Aut(C)$ with the following property: for $\Ascr\in
  \Pic^0(C)$ and $\Bscr \in \Pic(C)$.
\begin{equation}
\label{ref-4.6-18}
\eta(\Ascr)^\ast(\Bscr)=\Bscr\otimes \Ascr^{\otimes -b}
\end{equation}
where $b$ is the total degree of $\Bscr$.
\end{theorems}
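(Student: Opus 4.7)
My plan is to construct $\eta$ geometrically, case by case according to the type of $C$, and to verify the transformation property by a direct divisor-class calculation. The conceptual reason the formula should hold is that, on a curve with a group law on its smooth locus, translation acts trivially on $\Pic^0$ but shifts degree-$b$ classes by a multiple of $b$ times the translation parameter, which is precisely what the formula encodes.

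I would first handle the smooth elliptic case. Assume $C$ is a smooth cubic in $\PP^2$ or a smooth bidegree-$(2,2)$ curve in $\PP^1\times\PP^1$. Pick a base point $e\in C$ and use the Abel--Jacobi map $\mathrm{AJ}:\Pic^0(C)\xrightarrow{\sim} C$, $\Oscr_C(a-e)\mapsto a$, to transfer the group structure. Define $\eta(\Oscr_C(a-e))=\tau_a$, where $\tau_a(x)=x+a$ is translation. For $\Bscr=\Oscr_C(D)$ with $D=\sum n_i[x_i]$ of total degree $b=\sum n_i$, one has $\tau_a^{-1}([x_i])=[x_i-a]$, so $\tau_a^\ast\Bscr\otimes\Bscr^{-1}$ is the class of $\sum n_i([x_i-a]-[x_i])$, which under $\mathrm{AJ}$ maps to $-ba\in C$; since $\mathrm{AJ}(\Ascr^{\otimes -b})=-ba$ as well, the formula follows. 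For singular integral $C$ (nodal or cuspidal cubic, or an irreducible singular $(2,2)$-curve), $\Pic^0(C)$ is $\GG_m$ or $\GG_a$ and acts by translation on the smooth locus $C^{\mathrm{sm}}$; define $\eta(\Ascr)$ as the unique extension to an automorphism of $C$ fixing the singular points. The divisor calculation of the smooth case extends after moving representatives to $C^{\mathrm{sm}}$ by linear equivalence.

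For reducible or non-reduced $C$, the remaining possibilities must be enumerated explicitly: for a cubic divisor in $\PP^2$ these are line$+$conic, three lines in general position or concurrent, a double line plus a line, and a triple line; an analogous list applies for bidegree-$(2,2)$ divisors in $\PP^1\times\PP^1$. In each case $\Pic^0(C)$ decomposes as a product of copies of $\GG_m$ and $\GG_a$ indexed by components and their intersection data, and $\eta(\Ascr)$ is constructed componentwise, with the gluing at nodes prescribed by the constraint (built into the definition of $\Pic^0$) that $\Ascr$ has degree zero on every irreducible component. The transformation formula reduces componentwise to the preceding cases, since the total degree decomposes as $b=\sum_E\deg(\Bscr|E)$ and distributes correctly over the factors of $\Pic^0(C)$.

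The main obstacle is this final case analysis in the non-reduced setting, where $\Pic^0(C)$ acquires a unipotent part coming from nilpotents in $\Oscr_C^\times$ and one must produce automorphisms of $C$ that act trivially on $C_{\mathrm{red}}$ but non-trivially on the structure sheaf. A cleaner uniform approach avoiding this bookkeeping is to characterize $\eta$ by a universal property of the Poincar\'e line bundle $\Pscr$ on $C\times\Pic^0(C)$: after suitable rigidification, $\eta(\Ascr)$ is uniquely determined by its compatibility with $\Pscr$, and the transformation formula then becomes a global statement about how $\Pscr$ transforms under the $\Pic^0(C)$-action on $C$ by translations, handling all the degenerate cases simultaneously.
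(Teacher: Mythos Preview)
The paper does not prove this theorem. It is quoted with a citation to \cite[Cor.~5.7, Lemma~5.10]{ATV2} and then used as a black box in the proof of Theorem~\ref{ref-4.2.2-16}; there is no argument in the paper itself to compare your proposal against.

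As for the proposal on its own merits: the strategy is the standard one, and your smooth-elliptic computation is correct (the sign comes out right: with $\Ascr=\Oscr_C([a]-[e])$ and $\eta(\Ascr)=\tau_a$, one gets $\tau_a^\ast\Bscr\otimes\Bscr^{-1}\cong\Ascr^{\otimes -b}$ as required). The singular integral sketch is fine too. But what you have written is an outline, not a proof. The reducible and non-reduced cases are precisely where the content lies, and you have only gestured at them: saying that $\Pic^0(C)$ ``decomposes as a product of copies of $\GG_m$ and $\GG_a$'' and that $\eta$ is built ``componentwise'' hides all the work, since the gluing conditions at intersection points and the unipotent contribution from nilpotents are exactly what must be matched against automorphisms of the scheme $C$ (not just of $C_{\mathrm{red}}$). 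Your alternative via the Poincar\'e bundle is again only a suggestion; you would need to explain why a translation action of $\Pic^0(C)$ on $C$ exists at all for an arbitrary arithmetic-genus-one curve, which is essentially the statement to be proved. For a complete argument you should consult \cite{ATV2} directly, where the case analysis is carried out.
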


\begin{proof}[Proof of Theorem \ref{ref-4.2.2-16}]
Let  $B$ be as in the statement of the theorem. According to
Lemma \ref{ref-3.4-4} we must check that $B$ is isomorphic to $B(1)$. In
order to be able to do this we must know how to recognize when two
$\ZZ$-algebras $A,B$ are isomorphic, given their associated
triples. The answer to this question is given by the following easily
proved lemma.
\begin{sublemma}
Assume that $A,B$ are quadratic regular $\ZZ$-algebras. Let
$(C,\Lscr_0,\Lscr_1)$ and $(F,\Mscr_0,\Mscr_1)$ be their associated
triples. Then $A\cong B$ if and only if there exists an isomorphism
$\sigma:C\r F$ such that $\sigma^\ast \Mscr_i=\Lscr_i$.
\end{sublemma}
Hence to prove that $B$ and $B(1)$ are isomorphic we must know the
triple associated to $B(1)$. By construction (see \ref{ref-4.5-15}) if
$(C,(\Lscr_i)_i)$ is the elliptic helix associated to $B$ then
$(C,(\Lscr_{i+1}))_i$ is the elliptic helix associated to
$B(1)$. Hence it follows from the sublemma and the construction of
elliptic helices that $B\cong B(1)$ if and only if there exist
$\sigma\in\Aut(C)$ such that
\begin{equation}
\label{ref-4.7-19}
\Lscr_{i+1}\cong
\sigma^\ast \Lscr_i
\end{equation}
 for all $i$. It is easy to verify that this is
equivalent to the following conditions for the triple
$(C,\Lscr_0,\Lscr_1)$ associated to $B$.
\begin{align}
\label{ref-4.8-20}
&\Lscr_1=\sigma^\ast(\Lscr_0)\\
&\sigma^{\ast 2}\Lscr_0\otimes (\sigma^\ast \Lscr_0)^{-2}\otimes\label{ref-4.9-21}
  \Lscr_0=\Oscr_C
\end{align}
Hence one has to prove that there is an automorphism $\sigma$ of $C$
satisfying the conditions (\ref{ref-4.8-20}-\ref{ref-4.9-21}) above. If
$C=\PP^2$ then we may take $\sigma=\Id_C$. If $C$ is a curve
we will take $\sigma$ to be of the form $\eta(\Ascr)$ (with notations
from Theorem \ref{ref-4.2.3-17}) for suitable 
$\Ascr\in \Pic^0(C)$.  Then according to \eqref{ref-4.6-18} the first condition
\eqref{ref-4.8-20} translates into
\begin{equation}
\label{ref-4.10-22}
\Lscr_1=\Lscr_0\otimes \Ascr^{-3}
\end{equation}
and if this holds then the second condition \eqref{ref-4.9-21} is satisfied
automatically.

By Proposition \ref{ref-4.2.1-12}.2(c) $\Lscr_1$ and $\Lscr_0$ have the same degree on every component
of $C$. Hence $\Lscr_1\otimes \Lscr_0^{-1}\in \Pic^0(C)$.
Thus we must be able to divide by three in $\Pic^0(C)$. The only
possible problem arises when $\Pic^0(C)=k^+$ and $\operatorname{char}
k=3$ but this cases is excluded by the hypotheses.

Hence an $\Ascr$ as in \eqref{ref-4.10-22} exists. This finishes the proof.
\end{proof}
\begin{remarks} The condition that the ground field is algebraically
  closed is necessary for Theorem \ref{ref-4.2.2-16} to be
  true as the following example shows.

Assume that $k$ is not algebraically closed and let
  $C$ be a smooth elliptic curve over $k$ which has no complex
  multiplication over $\bar{k}$. Assume that $C$ has a
  rational point $o$. We may use $o$ to identify $C$ with $\Pic^0(C)$
  via the map $p\mapsto \Oscr((o)-(p))$. In this way $C$ becomes an
  algebraic group. Then $\Pic(C)$ may be identified with
  $\Pic(C_{\bar{k}})^{\Gal(\bar{k}/k)}$. The automorphisms of $C$ are
  of the form $\sigma_t:p\mapsto t+p$    and $\tau_t:p\mapsto t-p$ for
  $t\in C$.  If $d$ is a divisor on $C$ then  we denote by $|d|$
  the sum of $d$ as an element of $C$.

  If $\Lscr_0=\Oscr_C(d_0)$, $\Lscr_1=\Oscr_C(d_1)$ with $\deg d_i=3$
  then $\sigma_t^\ast \Lscr_0= \Lscr_1$ if and only if
  $|d_0|-3t=|d_1|$. Similarly we will have $\tau^\ast_t
  \Lscr_0=\Lscr_1$ if and only if $3t-|d_0|=|d_1|$. If we now choose
  $d_0$ and $d_1$ in such a way that neither $|d_0|-|d_1|$ nor
  $|d_0|+|d_1|$ is divisible by $3$ in $C$ then a suitable $t$ cannot
  exist and hence $\Lscr_0$ and $\Lscr_1$ are not in the same
  $\Aut(C)$ orbit. A concrete example can be made by assuming that $k$
  is a number field. Then $C$ is a finitely generated abelian group
  and hence there exists $q\in C$ which is not divisible by $3$. Now
  take $d_0=3(o)$ and $d_1=2(o)+(q)$.
\end{remarks}
\begin{remarks}
The hypothesis on the characteristic of $k$ is also necessary.
Assume that $k$ is algebraically closed of characteristic
  $3$ and let $C$ be a cuspidal elliptic curve. One may show that
  $\Aut(C)$ is isomorphic to the ring of matrices
$\bigl(\begin{smallmatrix} a & b \\
0 & 1
\end{smallmatrix}
\bigr)$
where $a\in k^\ast$, $b\in k^+$ and  $\Pic(C)$ is isomorphic to the
abelian group of column vectors 
$\bigl(\begin{smallmatrix}\mu \\ n  \end{smallmatrix}\bigr)$ with $\mu\in k^+$ and $n\in\ZZ$. The integer
$n$ is the degree of the corresponding line bundle. The action
$\Aut(C)$ on $\Pic(C)$ is given by matrix multiplication. From this it
is clear that the action of $\Aut(C)$ on $\Pic^3(C)$ is not
transitive. An exhaustive enumeration of all possibilities shows that
this is in fact the only counter example (in the algebraically closed case). So we could replace the
hypotheses of Theorem \ref{ref-4.2.2-16} by $\operatorname{char} k\neq 3$ or the triple
corresponding to $A$ is not cuspidal.
\end{remarks}
\subsection{Some more properties}
We state without proof some additional properties of 3-dimensional
quadratic AS-regular $\ZZ$-algebras. These results can be deduced
easily from the methods in \cite{ATV1,ATV2,AVdB,Bondal} and will be
discussed in more detail in the context of quadrics.

Assume that $(C,(\Lscr_i)_i)$ is the elliptic helix associated to
a three dimensional quadratic regular $\ZZ$-algebra $A$. Define
\[
B_{ij}=\Gamma(C,\Lscr_i\otimes \cdots \otimes \Lscr_{j-1})
\]
Then we have an obvious multiplication map $B_{ij}\times B_{jk}\r
B_{ik}$ and in this way $B=\oplus_{i,j} B_{ij}$ becomes a
$\ZZ$-algebra. Furthermore the construction of $A$ from its elliptic
helix (see \eqref{ref-4.5-15})  yields a canonical map $A\r B$ with kernel $K$.
\begin{propositions}
\begin{enumerate}
\item
The canonical map $A\r B$ is surjective. 
\item $K_{i,i+3}$ is one dimensional. Choose non-zero elements $g_i\in
  K_{i,i+3}$ elements. Then $K$ is generated by the $(g_i)_i$ both
  as left and as right ideal.
\item
The $g_i$ are normalizing elements in $A$ in the sense that there is
  an isomorphism $\phi:A\r A(3)$ such that for $a\in A_{i,j}$ we have
  $ag_j=g_i\phi(a)$.
\item $A$ and $B$ are noetherian. 
\item $\qgr(B)\cong \coh(C)$. 
\item $\qgr(A)$ is $\Ext$-finite.
\end{enumerate}
\end{propositions}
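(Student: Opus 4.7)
The plan is to follow the strategy of \cite{ATV1,ATV2}, adapted to $\ZZ$-algebras as in \cite{Bondal}. The numerical input combines two Hilbert-series computations: from the resolution \eqref{ref-4.1-8} we have $\dim A_{i,i+n}=(n+1)(n+2)/2$, while Riemann-Roch on the arithmetic-genus-one curve $C$ yields $\dim B_{i,i+n}=3n$ for $n\ge 1$ (each helix line bundle $\Lscr_j$ has $h^0=3$ and total degree three, since it embeds $C$ as a cubic). Hence $\dim K_{i,i+n}=(n-1)(n-2)/2$; in particular $K$ vanishes in degrees $\le 2$ and $\dim K_{i,i+3}=1$. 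For (1) I would first check that $B$ is generated in degree one by showing inductively that the multiplication $B_{i,i+n-1}\otimes B_{i+n-1,i+n}\to B_{i,i+n}$ is surjective: this is multiplication of global sections of sufficiently positive line bundles on the curve $C$, which reduces to standard $H^1$-vanishing estimates. Since $A$ is generated in degree one by property (P2) and the map $A_{i,i+1}\to B_{i,i+1}$ is an isomorphism by construction, surjectivity of $A\to B$ follows.

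For (2) and (3), I would extract distinguished generators $g_i\in K_{i,i+3}$ from the rightmost differential in the resolution \eqref{ref-4.1-8}, which supplies the normalizing structure essentially for free. To see that $K$ is generated by the $g_i$ as a two-sided ideal, one verifies inductively that $\sum_j g_j A_{j+3,i+n}$ and its left counterpart already fill out $K_{i,i+n}$; this is a Hilbert-series match that follows from the regularity of $g_i$ (no left or right annihilators), which can be read off the Koszul structure of \eqref{ref-4.1-8}. The normalizing property is then automatic: $\bigoplus_i k g_i$ is a rank-one $A$-bimodule, so its left and right $A$-structures must differ by an algebra automorphism $\phi:A\to A(3)$.

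For (4), I would first prove $B$ is noetherian by realising $\Gr(B)$ as a reflective subcategory of $\Qch(C)$ via the helix, in the spirit of twisted homogeneous coordinate rings \cite{AVdB}; noetherianity of $A$ then follows by lifting along the normal regular elements $g_i$ through a standard Artin-Rees argument. Item (5) is the corresponding analog of the Artin-Zhang theorem identifying $\qgr(B)$ with $\coh(C)$. Finally (6), the $\Ext$-finiteness of $\qgr(A)$, follows by d\'evissage along $0\to K\to A\to B\to 0$, combining $\Ext$-finiteness of $\coh(C)\cong\qgr(B)$ with the bimodule description of $K$ from (2)-(3). The main obstacle I anticipate is (3): turning the component-wise one-dimensionality of $K_{i,i+3}$ into a bona fide algebra automorphism $\phi:A\to A(3)$ requires a coherent choice of the $g_i$ compatible with multiplication in $A$, and this is where the full strength of the resolution \eqref{ref-4.1-8} has to be brought to bear.
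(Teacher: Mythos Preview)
The paper does not give a proof of this proposition: it is stated without proof, with the remark that the results ``can be deduced easily from the methods in \cite{ATV1,ATV2,AVdB,Bondal} and will be discussed in more detail in the context of quadrics'' (i.e.\ \S\ref{ref-5.5-50}). Your overall strategy---Riemann--Roch dimension counts on $C$, generation of $B$ in degree one via $H^1$-vanishing, then a Hilbert-series match to pin down $K$, noetherianity via \cite{AVdB}, and $\qgr(B)\cong\coh(C)$ via Artin--Zhang---is exactly the route those references take and matches the paper's treatment of the cubic analogue (Theorems \ref{ref-5.5.6-53}, \ref{ref-5.5.7-54} and Corollary \ref{ref-5.5.9-56}).

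There is one genuine confusion to correct. The generators $g_i\in K_{i,i+3}$ do \emph{not} come from the differentials in the resolution \eqref{ref-4.1-8}. The leftmost map there encodes the element $w_i\in W_i\subset R_i\otimes V_{i+2}$, which lies in the ideal of relations and hence is \emph{zero} in $A$; it cannot be a nonzero element of $K_{i,i+3}\subset A_{i,i+3}$. The $g_i$ are obtained purely from the dimension count $\dim K_{i,i+3}=\dim A_{i,i+3}-\dim B_{i,i+3}=10-9=1$, and their regularity and the normalizing automorphism are established by the analysis of the kernel $J$ of the map from the tensor algebra to $B$, as in \cite[Thms.~6.6, 6.8]{ATV1}; compare the cubic analogue in \S\ref{ref-5.5-50}. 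Your phrase ``can be read off the Koszul structure'' understates this step: regularity of the $g_i$ is precisely the technical heart of the ATV argument, and once you have it together with the two-sided generation in (2), the normalizing isomorphism $\phi$ in (3) follows formally (given $a\in A_{i,j}$, write $ag_j\in K_{i,j+3}=g_iA_{i+3,j+3}$ and use regularity to define $\phi(a)$ uniquely). Your bimodule formulation ``$\bigoplus_i kg_i$ is a rank-one $A$-bimodule'' is not quite right as stated, since that subspace is not closed under the $A$-action; the correct statement is that $K$ itself is an invertible $A$-bimodule.

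For (6) your d\'evissage through $B$ would work, but the paper (in the cubic case, Corollary \ref{ref-5.5.9-56}) instead derives $\Ext$-finiteness directly from the AS-regular property via the $\chi$-condition of \cite{AZ}, which is cleaner since it avoids analysing how $\Ext$ in $\qgr$ interacts with the ideal $K$.
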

\section{Non-commutative quadrics}
\label{ref-5-23}

\subsection{Generalities}
To classify cubic 3-dimensional AS-regular algebras
 we follow the program already
outlined the quadratic case.

Let $A$ be a three-dimensional cubic regular $\ZZ$-algebra. From
\eqref{ref-4.2-9} one easily obtains the following properties
\begin{itemize}
\item[(Q1)] The following holds for the dimensions of $A_{ij}$
\[
\dim A_{i,i+n}=
\begin{cases} 
0&\text{if $n<0$}\\
(k+1)^2&\text{if $n=2k$ and $n\ge 0$}\\
(k+1)(k+2)&\text{if $n=2k+1$ and $n\ge 0$}
\end{cases}
\]
\item[(Q2)] Put $V_i=A_{i,i+1}$. Then $A$ is generated   by the $V_i$.
\item[(Q3)] By (Q1,2) we have, $A_{i,i+2}=V_i\otimes
  V_{i+1}$. Furthermore the multiplication map $V_i\otimes V_{i+1}\otimes
  V_{i+2}\r A_{i,i+3}$ has two-dimensional kernel. Denote this kernel
  by $R_i$. The $R_i$ generate the relations of
  $A$.
\item[(Q4)] Dimension counting
reveals that  $\dim R_i=2$. Put $W_i=V_i\otimes R_{i+1}\cap R_i\otimes V_{i+3}$ inside
  $V_i\otimes V_{i+1}\otimes V_{i+2}\otimes V_{i+3}$. Using dimension
  again we obtain $\dim W_i=1$. Let
  $w_i$ be a non-zero element of $W_i$. The element $w_i$ is a
  rank two tensor, both as an element of $V_i\otimes R_{i+1}$ and as an
  element  of $R_{i}\otimes V_{i+3}$.
\end{itemize}
If no confusion is possible then we will refer to both a cubic
three-dimensional regular algebra $A$ and its associated category
$\QGr(A)$ as quadrics.

Let $X=\PP^1\times \PP^1$ be a quadric surface. On $X$ we have the
canonical line bundles
$\Oscr_X(m,n)=\Oscr_{\PP^1}(m)\boxtimes
 \Oscr_{\PP^1}(n)$. The following defines an ample sequence \cite{Polishchuk1}
 among the $\Oscr_X(m,n)$'s.
\begin{equation}
\label{ref-5.1-24}
\Oscr_X(n)=
\begin{cases} 
\Oscr_X(k,k) &\text{if $n=2k$}\\
\Oscr_X(k,k+1)&\text{if $n=2k+1$}
\end{cases}
\end{equation}
Put $A=\oplus_{ij}\Hom_X(\Oscr_X(-j),\Oscr_X(-i))$. According to
\cite{Polishchuk1} (see also \cite{AZ}) we have $\Qch(X)\cong \QGr(A)$. Furthermore it is
easy to check that $A$ is a three-dimensional cubic regular algebra.
We will refer to the algebra $A$ as \emph{the linear quadric}.  The
other three dimensional cubic regular algebras will be called
\emph{elliptic quadrics}. The motivation for this terminology will
become clear below.
\begin{examples}
\label{ref-5.1.1-25}
Let $A$ be a linear quadric. Then we may choose bases $x_i,y_i$ for
$V_i$ such that the relations in $A$ are given by 
\begin{align*}
x_i x_{i+1}y_{i+2}-y_i x_{i+1}x_{i+2}&=0\\
x_i y_{i+1}y_{i+2}-y_i y_{i+1} x_{i+2}&=0
\end{align*}
The $w_i$ corresponding to these relations is given by
\[
x_i x_{i+1}y_{i+2}y_{i+3}-y_i x_{i+1}x_{i+2}y_{i+3}
-x_i y_{i+1}y_{i+2}x_{i+3}+y_i y_{i+1} x_{i+2}x_{i+3}
\]
\end{examples}
Our aim this section is to prove an analogue for Proposition
\ref{ref-4.2.1-12}. 
\begin{propositions}
\label{ref-5.1.2-26}
The three-dimensional cubic regular $\ZZ$-algebras  are classified 
in terms of quadruples
$(C,\Lscr_0,\Lscr_1,\Lscr_2)$ where either:
\begin{enumerate}
\item  (the ``linear'' case) $(C,\Lscr_0,\Lscr_1,\Lscr_2)\cong
(\PP^1\times \PP^1,\Oscr(1,0),\Oscr(0,1),\Oscr(1,0))$;
  or else
\item  (the
``elliptic case'') \begin{enumerate}
\item $C$ is a
curve which is embedded as a divisor of degree $(2,2)$ in $\PP^1\times
\PP^1$ by the global
sections of both $(\Lscr_0,\Lscr_1)$ and $(\Lscr_1, \Lscr_2)$.
\item  $\deg (\Lscr_0\mid
E)=\deg(\Lscr_2\mid E)$ for every irreducible component $E$ of $C$.
\item  $\Lscr_0\not\cong \Lscr_2$.
\end{enumerate}
\end{enumerate}
\end{propositions}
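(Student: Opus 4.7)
The plan is to adapt, in the cubic setting, the scheme of proof used in \cite{ATV1,ATV2} and in Bondal's treatment of Proposition \ref{ref-4.2.1-12}. Two constructions need to be set up, algebra-to-geometry and geometry-to-algebra, and then checked to be mutually inverse.

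For the forward direction, starting from a cubic regular $\ZZ$-algebra $A$, I would extract the data as follows. Set $V_i=A_{i,i+1}$ and $R_i\subset V_i\otimes V_{i+1}\otimes V_{i+2}$; by (Q1)--(Q4), both are $2$-dimensional. The relations $R_i$ cut out a closed subscheme $\Gamma_i\subset \PP(V_i^\ast)\times\PP(V_{i+1}^\ast)\times\PP(V_{i+2}^\ast)\cong (\PP^1)^3$. Using the rank-$2$ property of $w_i$ from (Q4), I would show that the projection $\Gamma_i\to \PP(V_i^\ast)\times\PP(V_{i+2}^\ast)$ is either an isomorphism onto a divisor of bidegree $(2,2)$ (the elliptic case) or surjects onto the full $\PP^1\times\PP^1$ (the linear case). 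Call this image $C$ and set $\Lscr_j=\Oscr_{\PP(V_j^\ast)}(1)|_C$ for $j=i,i+2$. The delicate technical step is coherence: one must check that the embeddings obtained from $\Gamma_i$ and $\Gamma_{i+1}$ yield compatible data, so that a single curve $C$ carries a whole sequence of line bundles $(\Lscr_i)_{i\in\ZZ}$. This compatibility should follow from the fact that $w_i\in V_i\otimes R_{i+1}\cap R_i\otimes V_{i+3}$, so the $i$-th and $(i+1)$-st point-schemes agree on their common factor.

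For the reverse direction, given a quadruple $(C,\Lscr_0,\Lscr_1,\Lscr_2)$, I would first extend to an elliptic helix $(\Lscr_n)_{n\in\ZZ}$ via the recursion
\[
\Lscr_{n+3}=\Lscr_{n+1}\otimes\Lscr_{n+2}\otimes\Lscr_n^{-1},
\]
define $V_i=H^0(C,\Lscr_i)$ and
\[
R_i=\ker\bigl(H^0(\Lscr_i)\otimes H^0(\Lscr_{i+1})\otimes H^0(\Lscr_{i+2})\r H^0(\Lscr_i\otimes\Lscr_{i+1}\otimes\Lscr_{i+2})\bigr),
\]
and let $A$ be the $\ZZ$-algebra generated by the $V_i$ modulo the $R_i$ (with the linear case handled in parallel via $C=\PP^1\times\PP^1$).

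The main obstacle is showing that this $A$ is indeed a three-dimensional cubic regular $\ZZ$-algebra: one must verify the Hilbert series (Q1) and exhibit the minimal resolution \eqref{ref-4.2-9}. The standard approach is to introduce the ``twisted'' homogeneous coordinate ring $B_{ij}=H^0(C,\Lscr_i\otimes\cdots\otimes\Lscr_{j-1})$, establish surjectivity of the canonical map $A\r B$, and locate the kernel: it should be generated by normalizing elements $g_i\in A_{i,i+4}$ (the cubic analogue of the degree-$3$ normalizing elements of Section 2.3). The Hilbert series of $A$ is then computed inductively via the exact sequences built from the $g_i$, and regularity follows by a Koszul-type argument. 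Conditions (b) and (c) of the classification are precisely what is needed to make this counting work: (c) forces the degree in which the kernel of $A\r B$ first appears to be $4$ and not lower, while (b) guarantees that the relevant $H^0$'s have the expected dimensions on every component, so that surjectivity onto $B$ holds. Once these properties are in place, the two constructions are seen to be mutually inverse by tracing through the definitions: starting from $A$, the reconstructed $V_i$ and $R_i$ are literally those used in the classification, and starting from a quadruple, the embedded point scheme of the reconstructed $A$ recovers $(C,\Lscr_0,\Lscr_1,\Lscr_2)$ by uniqueness of the projection analysis above.
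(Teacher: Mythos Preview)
Your overall architecture matches the paper's: extract geometric data from $A$ via a point-scheme construction, build $A$ from a quadruple via the twisted coordinate ring $B$ and normalizing elements $g_i\in A_{i,i+4}$, and verify the two constructions are inverse. The paper organizes this through an intermediate notion of \emph{quintuple} $(V_0,V_1,V_2,V_3,kw_0)$ and shows (Theorem \ref{ref-5.5.10-57}) that the functors
\[
\{\text{quadrics}\}\xrightarrow{F}\{\text{geometric quintuples}\}\xrightarrow{E}\{\text{admissible or linear quadruples}\}\xrightarrow{A}\{\text{quadrics}\}
\]
are equivalences. Your direct passage from $A$ to the quadruple is essentially the composite $EF$.

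There is, however, a real gap in your forward direction. You write that ``the rank-$2$ property of $w_i$ from (Q4)'' yields that the projection of $\Gamma_i$ is an isomorphism onto a $(2,2)$-divisor (or onto all of $\PP^1\times\PP^1$). This is not sufficient. The non-degeneracy in (Q4) only says that $(A,(W_i)_i)$ is a \emph{prequadric}; it does not by itself force the projections $\Gamma_{012}\to\PP(V_j^\ast)\times\PP(V_{j+1}^\ast)$ to be closed embeddings. For that one needs the stronger condition the paper calls \emph{geometric}: $\langle\phi_j\otimes\phi_{j+1},w\rangle\neq 0$ for all non-zero $\phi_j,\phi_{j+1}$ and all adjacent pairs $(j,j+1)$ mod $4$. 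Proving that a genuine quadric (as opposed to a mere prequadric) satisfies this is a separate, non-formal step: the paper does it by a Gr\"obner basis computation showing that if geometricity fails then $\dim A_{1,8}\ge 22>20$, contradicting (Q1). Your sketch does not account for this step, and without it one cannot conclude that $C$ embeds as a $(2,2)$-divisor.

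Two smaller points. First, the relevant projections are to \emph{adjacent} factors $\PP(V_i^\ast)\times\PP(V_{i+1}^\ast)$, matching the proposition's requirement that $(\Lscr_0,\Lscr_1)$ and $(\Lscr_1,\Lscr_2)$ give the embeddings; your $\PP(V_i^\ast)\times\PP(V_{i+2}^\ast)$ does not directly produce the stated data. Second, your reading of condition (c) is off: $\Lscr_0\not\cong\Lscr_2$ is not about the degree in which $\ker(A\to B)$ appears, but is the ``not prelinear'' condition---if $\Lscr_0\cong\Lscr_2$ then $R_0$ is forced to contain the linear-quadric relations and the quintuple is linear. Condition (b) is likewise characterized (Lemma \ref{ref-5.4.9-46}) as the absence of zero-divisor-type relations $uv=0$ with $u$ or $v$ of degree one, which is what makes the quintuple geometric rather than a statement about surjectivity onto $B$.
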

This result  follows from Theorem \ref{ref-5.5.10-57} below.
\subsection{Going from quadrics to quintuples}
\label{ref-5.2-27}
From now on we follow closely \cite{Bondal}.  We start by defining a
prequadric as a pair $(A,(W_i)_i)$ with the following properties.
\begin{enumerate} 
\item $A$ is a connected $\ZZ$-algebra, generated in degree one. Put
  $V_i=A_{i,i+1}$ and $R_i=\ker (V_i\otimes V_{i+1}\otimes V_{i+2}\r
  A_{i,i+2})$. We require that $\dim V_i=\dim R_i=2$ and furthermore
  that the $R_i$ generate the relations of $A$.
\item For all $i\in \ZZ$, $W_i=k w_i$ is a one dimensional subspace of
  $V_i\otimes R_{i+1}\cap R_i\otimes V_{i+3}$ such that $w_i$ is
  non-degenerate, both as a tensor in $V_i\otimes R_i$ and as a tensor
  in $R_i\otimes V_{i+3}$. 
\end{enumerate}
It is clear that if $A$ is a quadric then it is also a prequadric in a
unique way. Fix a prequadric $(A,(W_i)_i)$ together with
non-zero elements $w_i$ in the one-dimensional spaces $W_i$. By
hypotheses $w_i$ can be written as $\sum_j r_{ij}\otimes v_{i+3,j}$ and
as $\sum v_{ij}'\otimes r'_{i+1,j}$ where $(r_{ij})_j$,
$(v_{i+3,j})_j$, $(v'_j)_{ij}$, $(r'_{i+1,j})_j$ are bases of $R_i$,
  $V_{i+3}$, $V_i$, $R_{i+1}$ respectively. There are unique
  invertible linear maps $\theta_i:V_i\r V_{i+4}$ and $\theta'_i:R_i\r
  R_{i+4}$ with the following properties.
\begin{align*}
w_{i+3}&=\sum_j v_{i+3,j}\otimes \theta'_i(r_{ij}) \\
w_{i+1}&=\sum_j r_{i+1,j}\otimes \theta_i(v_{ij})
\end{align*}

We may use $\theta_i$, $\theta'_i$ to identify $V_{i+4}$ with $V_i$ and $R_{i+4}$
 with $R_i$. In this way $\theta_i$, $\theta_i'$  become the identity.

 For $v_i\in V_i$ define 
 $\Rscr(v_i\otimes v_{i+1}\otimes v_{i+2}\otimes v_{i+3})=v_{i+1}\otimes
 v_{i+2}\otimes v_{i+3}\otimes v_i$ and extend this to a linear map
$V_i\otimes V_{i+1}\otimes V_{i+2}\otimes V_{i+3}\r V_{i+1}\otimes
V_{i+2}\otimes V_{i+3}\otimes V_i$.

Then for $n\in \ZZ$ we clearly have 
\begin{equation}
\label{ref-5.2-28}
w_{i+n}=\Rscr^n w_i
\end{equation}
Since the $w_i$ determine the relations it follows (with the current identifications) that
$A=A(4)$.  Thus in particular $\theta=(\theta_i)_i$ defines an
automorphism of degree 4 of $A$.

Let us define a ``quintuple'' as a quintuple of vector space
$Q=(V_0,V_1,V_2,V_3,W)$ where the $V_i$'s are two dimensional
vector spaces and $W$ is a one-dimensional subspace of
$V_0\otimes\cdots \otimes V_3$.  In the sequel we will sometimes
identify a quintuple by a non-zero element $w$ of $W$.

We say that an element $w$ of $V_0\otimes\cdots
\otimes V_3$ is strongly non-degenerate if $w$ is a non-degenerate tensor when
considered as an element of $V_j\otimes (V_0\otimes\cdots\otimes
\hat{V}_j\otimes\cdots\otimes V_3)$ for $j=0,\ldots,3$. We say that
$Q$ is non-degenerate if $W=kw$ with $w$ a strongly non-degenerate
tensor. Note the following trivial lemma.
\begin{lemmas}
\label{ref-5.2.1-29}
  Let $Q=(V_0,V_1,V_2,V_3,W)$ be a non-degenerate quintuple. Write
  $W=kw$, $w=r_1\otimes v_1+r_2\otimes v_2$ where 
  $v_1,v_2$ is a basis for $V_3$. Then $R=kr_1+kr_2$ is a
  two-dimensional subspace of $V_0\otimes V_1\otimes V_2$, independent
  of the choice of $w,v_1,v_2$ and $Q$ is up to isomorphism
  determined by $R\subset V_0\otimes V_1\otimes V_2 $.
\end{lemmas}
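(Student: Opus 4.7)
The plan is to interpret the tensor $w$ as a linear map and read off $R$ as its image. Via the natural isomorphism
\[
V_0\otimes V_1\otimes V_2\otimes V_3 \cong \Hom(V_3^\ast,\, V_0\otimes V_1\otimes V_2),
\]
the element $w$ corresponds to a linear map $\widetilde{w}\colon V_3^\ast\to V_0\otimes V_1\otimes V_2$. Writing $w=r_1\otimes v_1+r_2\otimes v_2$ with $v_1,v_2$ a basis of $V_3$ amounts to evaluating $\widetilde{w}$ on the dual basis $v_1^\ast,v_2^\ast$, so $\widetilde{w}(v_j^\ast)=r_j$ and thus $R=\widetilde{w}(V_3^\ast)$. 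The strong non-degeneracy of $w$ in the $V_3$-slot is precisely the statement that $\widetilde{w}$ is injective, so $\dim R=\dim V_3^\ast = 2$. This coordinate-free description of $R$ makes it manifest that $R$ does not depend on the choice of basis $v_1,v_2$ (a change of basis replaces $\widetilde{w}$ by $\widetilde{w}\circ\alpha$ for an invertible $\alpha\in\End(V_3^\ast)$, which does not change the image) nor on the choice of $w\in W$ (rescaling $w$ rescales $\widetilde{w}$ by the same scalar and thus preserves $\im\widetilde{w}$).

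Next I would show that $Q$ is determined up to isomorphism by the subspace $R\subset V_0\otimes V_1\otimes V_2$. To do this, construct the following canonical quintuple from $R$: set $V_3^{\mathrm{can}}:=R^\ast$ and let $w^{\mathrm{can}}\in R\otimes R^\ast\subset V_0\otimes V_1\otimes V_2\otimes R^\ast$ be the element corresponding to the identity map under $R\otimes R^\ast\cong\End(R)$; put $W^{\mathrm{can}}:=kw^{\mathrm{can}}$. Given any quintuple $Q=(V_0,V_1,V_2,V_3,W)$ yielding the subspace $R$, the map $\widetilde{w}\colon V_3^\ast\to V_0\otimes V_1\otimes V_2$ is an injection with image $R$ and so induces an isomorphism $\phi\colon V_3^\ast\xrightarrow{\sim} R$, i.e.\ $\phi^\vee\colon R^\ast\xrightarrow{\sim} V_3$. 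Under the induced isomorphism $V_0\otimes V_1\otimes V_2\otimes R^\ast\xrightarrow{\sim} V_0\otimes V_1\otimes V_2\otimes V_3$ the element $w^{\mathrm{can}}$ is sent to $w$, and hence $W^{\mathrm{can}}$ is sent to $W$. This furnishes an isomorphism of quintuples $Q(R)\cong Q$ that is the identity on $V_0,V_1,V_2$.

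There is no genuine obstacle: the whole content of the lemma is the translation of the non-degeneracy condition on $w$ into the injectivity of $\widetilde{w}$, and then the universal property of $R$ as the image of $\widetilde{w}$. The only point that merits care is the interpretation of ``determined up to isomorphism'', which one should take to mean isomorphism of quintuples over the fixed factors $V_0,V_1,V_2$; once this is fixed, the recovery of $V_3$ as $R^\ast$ and of $W$ as the line $k\cdot\mathrm{id}_R\in R\otimes R^\ast$ is canonical.
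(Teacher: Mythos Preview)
Your proof is correct and follows essentially the same approach as the paper. The paper's proof is a two-line sketch: it declares the independence of $R$ ``clear'' and then observes that $Q$ is isomorphic to $(V_0,V_1,V_2,R^\ast,\, r_1\otimes r_1^\ast + r_2\otimes r_2^\ast)$, which is exactly your canonical quintuple $(V_0,V_1,V_2,R^\ast,\mathrm{id}_R)$; you have simply unpacked the ``clear'' step via the linear-map interpretation $\widetilde{w}\colon V_3^\ast\to V_0\otimes V_1\otimes V_2$ and made the isomorphism with the canonical model explicit.
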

\begin{proof}
That $R$ is independent of the choice of $w,v_1,v_2$ is
clear. Furthermore it is also clear that $Q$ is isomorphic to
$(V_0,V_1,V_2,R^\ast, r_1\otimes r_1^\ast+r_2\otimes r_2^\ast)$ (where $(-)^\ast$
denotes the $k$-dual).
\end{proof}
\begin{theorems}
\label{ref-5.2.2-30} Let ${{F}}$ be the functor which associates to a prequadric
  $(A,(W_i)_i)$ the quintuple $(V_0,V_1,V_2,V_3,W_0)$. Then ${{F}}$ defines an
  equivalence of categories between prequadrics and non-degenerate quintuples
(both equipped with isomorphisms as maps).
\end{theorems}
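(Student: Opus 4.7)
The plan is to construct a quasi-inverse functor $G$ to ${F}$, leveraging the 4-periodicity developed in the preamble. Given a non-degenerate quintuple $Q=(V_0,V_1,V_2,V_3,W)$, first extend the indexing by setting $V_{i+4n}:=V_i$ for all $i\in\{0,1,2,3\}$ and $n\in\ZZ$. Pick $w_0\in W\setminus\{0\}$, define $w_i:=\Rscr^i w_0\in V_i\otimes V_{i+1}\otimes V_{i+2}\otimes V_{i+3}$ for every $i\in\ZZ$, and extract $R_i\subset V_i\otimes V_{i+1}\otimes V_{i+2}$ from $w_i$ via Lemma~\ref{ref-5.2.1-29}. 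Let $A$ be the free $\ZZ$-algebra on the generators $V_i$ (placed in degree $(i,i+1)$) modulo the two-sided ideal generated by $\bigcup_i R_i$, and set $W_i:=kw_i$. Define $G(Q):=(A,(W_i)_i)$.

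The heart of the argument is checking that $G(Q)$ is a prequadric. Connectedness, generation in degree one, and $\dim V_i=2$ are immediate. Since the ideal of relations lives in degree three, $A_{i,i+1}=V_i$ and $A_{i,i+2}=V_i\otimes V_{i+1}$, so the kernel of $V_i\otimes V_{i+1}\otimes V_{i+2}\r A_{i,i+3}$ is precisely $R_i$, of dimension two. The containment $w_i\in R_i\otimes V_{i+3}$ is the definition of $R_i$, while $w_i\in V_i\otimes R_{i+1}$ follows from the rotation identity: writing $w_i=\sum_j v'_{ij}\otimes r'_{i+1,j}$ with $v'_{ij}\in V_i$ and $r'_{i+1,j}\in V_{i+1}\otimes V_{i+2}\otimes V_{i+3}$, the rotation gives $w_{i+1}=\sum_j r'_{i+1,j}\otimes v'_{ij}$, so by Lemma~\ref{ref-5.2.1-29} applied to $w_{i+1}$ the span of the $r'_{i+1,j}$ coincides with $R_{i+1}$. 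Non-degeneracy of each $w_i$ in both tensor factorizations follows from strong non-degeneracy of $w_0$ together with the fact that $\Rscr$ preserves rank in every tensor slot.

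The identity $F(G(Q))=Q$ holds on the nose. For $G\circ F\cong \Id$, given a prequadric $(A,(W_i)_i)$, the 4-periodicity data $\theta_i:V_{i+4}\r V_i$ and $\theta'_i:R_{i+4}\r R_i$ described in the preamble assemble into an isomorphism between $(A,(W_i)_i)$ and $G(F(A,(W_i)_i))$, compatible with $w_{i+n}=\Rscr^n w_i$ of \eqref{ref-5.2-28}. Faithfulness of $F$ is immediate because an isomorphism of $\ZZ$-algebras generated in degree one is determined by its values on the $V_i$, which by 4-periodicity reduce to $V_0,\ldots,V_3$; fullness follows by extending an isomorphism of quintuples to all $V_i$ via 4-periodicity and using equivariance of $\Rscr$ to see that each $W_i$ is sent to $W'_i$ and hence each $R_i$ to $R'_i$.

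The step I expect to be the main obstacle is reconciling the non-degeneracy hypotheses on the two sides: a prequadric only requires non-degeneracy of $w_0$ as a tensor in $V_0\otimes R_1$ and in $R_0\otimes V_3$, whereas a non-degenerate quintuple demands strong non-degeneracy, i.e.\ rank two in each of the four slots $V_0,V_1,V_2,V_3$. Establishing the equivalence of these two sets of non-degeneracy conditions---using the rotation identity $w_{i+1}=\Rscr w_i$ to transport rank information from the outer slots to the inner ones---is the one non-formal step in the argument.
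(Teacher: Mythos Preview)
Your proposal is correct and follows essentially the same approach as the paper: extend the $V_i$ periodically modulo $4$, set $w_i=\Rscr^i w_0$, build $A$ from the relations $R_i$ extracted via Lemma~\ref{ref-5.2.1-29}, and use the rotation identity \eqref{ref-5.2-28} to match the non-degeneracy conditions on both sides. One small clarification to your final paragraph: the prequadric axiom imposes non-degeneracy of $w_i$ in the two outer slots for \emph{every} $i$, not just $i=0$, and it is precisely this---non-degeneracy of $w_0,w_1,w_2,w_3$ in their respective outer slots---that, via rotation, yields strong non-degeneracy of $w_0$ in all four slots; the paper states this in one line, and your ``transport via $\Rscr$'' is exactly the mechanism.
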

\begin{proof} Let $(A,(W_i)_i)$ be a prequadratic. We have $W_i=kw_i$ 
  and $w_i=\Rscr^iw_0$ (after choosing suitable  bases for the $V_i$). The
  non-degeneracy of $w_i$  implies
  the  strong non-degeneracy of $w_0$.
  
  Conversely assume that we are given a non-degenerate quintuple
  $Q=(V_0,V_1,V_2,V_3,kw)$.  We look for a prequadric $(A,(W_i)_i)$ such
  that ${{F}}(A)=Q$. If $i\in \ZZ$ then let us denote by $\bar{\imath}$
  the unique element of $\{0,\ldots,3\}$ congruent to $i$ modulo $4$.
  We put $V_i=V_{\bar{\imath}}$, $w_i=\Rscr^i w$ and $W_i=k w_i$. Then the
  strong non-degeneracy of $w$ implies the non-degeneracy of $w_i$. It
  is clear from the above discussion that any other prequadratic
  yielding $Q$ will be isomorphic to the prequadric we have constructed.
\end{proof} 
\begin{corollarys}
\label{ref-5.2.3-31}
A quadric $A$ is determined up to isomorphism by its ``truncation''
$A'=\oplus_{i,j=0,\ldots,3} A_{ij}$. 
\end{corollarys}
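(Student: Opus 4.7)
The plan is to combine Theorem \ref{ref-5.2.2-30} with Lemma \ref{ref-5.2.1-29}. By Theorem \ref{ref-5.2.2-30}, $A$ is determined up to isomorphism by its associated quintuple $Q=(V_0,V_1,V_2,V_3,W_0)$ where $V_i=A_{i,i+1}$. By Lemma \ref{ref-5.2.1-29}, $Q$ is in turn determined up to isomorphism by the two-dimensional subspace $R\subset V_0\otimes V_1\otimes V_2$ obtained by expressing a generator $w_0\in W_0$ as $r_1\otimes v_1+r_2\otimes v_2$ with $v_1,v_2$ a basis of $V_3$. So the content of the corollary reduces to extracting $R$ from the truncation $A'$.

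First I would identify $R$ with $R_0:=\ker(V_0\otimes V_1\otimes V_2\to A_{03})$. Since $w_0$ lies in $R_0\otimes V_3$ by the definition of a prequadric, any decomposition $w_0=r_1\otimes v_1+r_2\otimes v_2$ with $v_1,v_2$ a basis of $V_3$ forces $r_1,r_2\in R_0$ (change-of-basis in $V_3$ keeps the coefficients inside $R_0$). The non-degeneracy of $w_0$ as a tensor in $R_0\otimes V_3$ (item (Q4)) says $w_0$ has rank two there, so $r_1,r_2$ are linearly independent in $R_0$; since $\dim R_0=2$ by (Q3), the $r_i$ form a basis of $R_0$, and hence $R=R_0$.

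Now $V_0=A_{01}$, $V_1=A_{12}$, $V_2=A_{23}$ together with the multiplication map $V_0\otimes V_1\otimes V_2\to A_{03}$ are all encoded in $A'$, so $R_0=R$ is recovered directly from $A'$. Consequently an isomorphism $A'\cong B'$ of truncations induces an isomorphism of the associated pairs $(V_0\otimes V_1\otimes V_2,R)$, and therefore, via Lemma \ref{ref-5.2.1-29} and Theorem \ref{ref-5.2.2-30}, an isomorphism $A\cong B$ of the full quadrics. The only step that requires any real verification is the identification $R=R_0$; everything else is essentially formal once Theorem \ref{ref-5.2.2-30} is in hand.
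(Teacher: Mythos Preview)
Your proof is correct and follows the same approach as the paper: reduce to the quintuple via Theorem \ref{ref-5.2.2-30}, then to $R$ via Lemma \ref{ref-5.2.1-29}, then observe that $R=R_0$ is read off from $A'$. The paper's proof is terser and simply asserts that the $R$ of Lemma \ref{ref-5.2.1-29} is $R_0$; your explicit verification of $R=R_0$ using (Q3) and (Q4) fills in exactly the step the paper leaves implicit.
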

\begin{proof}
By Theorem \ref{ref-5.2.2-30} we already know that $A$ is uniquely
determined by a generator $w$ of $W_0$. Now according to lemma
\ref{ref-5.2.1-29} $w$ (or rather its associated quintuple) is up to
isomorphism determined by $R_0$. Clearly $R_0$ is determined by $A'$.
\end{proof}
 If $A$ is a quadric then in the sequel we will write ${{F}}(A)$ for
${{F}}(A,(W_i)_i)$ for $(A,(W_i)_i)$ the unique prequadric associated to
$A$. We will say that a quintuple $Q$ is \emph{linear} if it is of the form
${{F}}(A)$ where $A$ is a linear quadric.

From Example \ref{ref-5.1.1-25} we obtain.
\begin{lemmas} $Q$ is linear if and only if we may choose bases
  $x_i,y_i$ for
  $V_i$ such that $w$ is given by the tensor
\begin{equation}
\label{ref-5.3-32}
w=x_0  x_1   y_2 y_3-    y_0  x_1  x_2 y_3
-x_0  y_1   y_2 x_3+    y_0  y_1  x_2 x_3.
\end{equation}
\end{lemmas}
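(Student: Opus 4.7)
The plan is to treat the two directions separately, leveraging the equivalence between prequadrics and non-degenerate quintuples established in Theorem \ref{ref-5.2.2-30}, together with the description of a non-degenerate quintuple by its two-dimensional subspace $R \subset V_0 \otimes V_1 \otimes V_2$ given in Lemma \ref{ref-5.2.1-29}.

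For the ``only if'' direction, assume $A$ is linear and use the bases $(x_i, y_i)$ of $V_i$ together with the relations recorded in Example \ref{ref-5.1.1-25}. These relations give
\[
R_0 = k(x_0 x_1 y_2 - y_0 x_1 x_2) + k(x_0 y_1 y_2 - y_0 y_1 x_2),
\]
\[
R_1 = k(x_1 x_2 y_3 - y_1 x_2 x_3) + k(x_1 y_2 y_3 - y_1 y_2 x_3),
\]
and the two expansions
\begin{align*}
w &= (x_0 x_1 y_2 - y_0 x_1 x_2) \otimes y_3 - (x_0 y_1 y_2 - y_0 y_1 x_2) \otimes x_3 \\
  &= x_0 \otimes (x_1 y_2 y_3 - y_1 y_2 x_3) - y_0 \otimes (x_1 x_2 y_3 - y_1 x_2 x_3)
\end{align*}
place the displayed $w$ in $(R_0 \otimes V_3) \cap (V_0 \otimes R_1) = W_0$. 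Since $\dim W_0 = 1$, this $w$ spans $W_0$, and therefore ${{F}}(A)$ is isomorphic to the quintuple with defining tensor of the stated shape.

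For the ``if'' direction, suppose $Q = (V_0, V_1, V_2, V_3, kw)$ is a non-degenerate quintuple whose generator $w$ has the displayed form in some bases. The first expansion above rewrites $w = r_1 \otimes y_3 - r_2 \otimes x_3$ with $r_1, r_2$ as above, so by Lemma \ref{ref-5.2.1-29} the isomorphism class of $Q$ is determined by $R = k r_1 + k r_2 \subset V_0 \otimes V_1 \otimes V_2$. This $R$ coincides with the relation subspace of a linear quadric, so applying the equivalence of Theorem \ref{ref-5.2.2-30} in reverse, the prequadric giving rise to $Q$ is isomorphic to a linear quadric, whence $Q$ is linear.

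The only subtlety is verifying that the explicit tensor $w$ is strongly non-degenerate, so that Theorem \ref{ref-5.2.2-30} and Lemma \ref{ref-5.2.1-29} indeed apply; this is immediate from inspecting each of the four factorisations $V_j \otimes \bigl(\bigotimes_{i \neq j} V_i\bigr)$ and observing that the two coefficient tensors appearing are visibly linearly independent --- no genuine obstacle arises, and the argument reduces to elementary linear algebra once the structural results of Section \ref{ref-5.2-27} are invoked.
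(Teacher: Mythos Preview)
Your proof is correct. The paper treats this lemma as an immediate consequence of Example \ref{ref-5.1.1-25} and supplies no further argument, so you are filling in details the paper omits rather than deviating from its approach.

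One remark on the ``if'' direction: invoking Lemma \ref{ref-5.2.1-29} and Theorem \ref{ref-5.2.2-30} is valid but heavier than necessary. A quintuple is nothing more than the data $(V_0,V_1,V_2,V_3,kw)$, so once you know (from the ``only if'' direction, or directly from Example \ref{ref-5.1.1-25}) that ${{F}}(\text{linear quadric})$ has a generator of the displayed shape in suitable bases, the linear maps matching the two sets of bases already give an isomorphism of quintuples $Q\cong {{F}}(\text{linear quadric})$. No passage through prequadrics or the subspace $R$ is needed. Your route works, but the shortcut explains why the paper is content to say ``from Example \ref{ref-5.1.1-25} we obtain''.
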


\subsection{Geometric quintuples}
\label{ref-5.3-33}
Our next aim is to recognize among the non-degenerate quintuples those
that correspond to quadrics, and not only to prequadrics. Following
\cite{Bondal} we introduce the following definition. 
\begin{definitions} 
  Let $V_i$, $i=0,\ldots,3$ be two-dimensional vector spaces and let
  $w$ be a non-zero element of $V_0\otimes \cdots \otimes V_3$. Then
  we say that $w$ is \emph{geometric} if for all $j\in \{0,\ldots,3\}$ and for
  all non-zero $\phi_j\in V_{j}^\ast$, $\phi_{j+1}\in V_{j+1}^\ast$
  the tensor $\langle \phi_j\otimes \phi_{j+1},w\rangle$ is non-zero.
  Indices are taken modulo $4$ here.  A quintuple
  $(V_0,V_1,V_2,V_3,kw)$ is geometric if $w$ is geometric.
\end{definitions}
It is easy to see that a linear quintuple is geometric. A geometric
quintuple that it not linear will be called \emph{elliptic}.

Note the following:
\begin{lemmas} If $w\in V_0\otimes\cdots \otimes V_3$ is geometric
  then it is strongly non-degenerate.
\end{lemmas}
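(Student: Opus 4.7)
The statement is essentially an elementary tensor algebra observation, so the plan is short and I expect no real obstacle — the main thing is to unwind the definitions correctly and invoke the right contraction.

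I will argue by contrapositive: suppose $w$ is \emph{not} strongly non-degenerate, and produce non-zero $\phi_j, \phi_{j+1}$ (with $j+1$ read mod $4$) such that $\langle \phi_j\otimes\phi_{j+1}, w\rangle = 0$, which contradicts the geometric hypothesis.

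By hypothesis there exists some $j\in\{0,\ldots,3\}$ such that, viewing $w$ as an element of
\[
V_j \otimes \bigl(V_0\otimes\cdots\otimes \hat V_j\otimes\cdots\otimes V_3\bigr),
\]
the associated linear map $V_j^\ast \to V_0\otimes\cdots\otimes \hat V_j\otimes\cdots\otimes V_3$ fails to be injective. Choose a non-zero $\phi_j\in V_j^\ast$ in the kernel, so that $\langle\phi_j, w\rangle = 0$ in $V_0\otimes\cdots\otimes \hat V_j\otimes\cdots\otimes V_3$. Now pick any non-zero $\phi_{j+1}\in V_{j+1}^\ast$ (possible since $\dim V_{j+1}=2$). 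Then
\[
\langle \phi_j\otimes\phi_{j+1}, w\rangle \;=\; \langle \phi_{j+1}, \langle \phi_j, w\rangle\rangle \;=\; \langle \phi_{j+1}, 0\rangle \;=\; 0,
\]
so the geometric condition fails at this $j$, a contradiction. (For the wrap-around case $j=3$, the pair $(\phi_3,\phi_0)$ is used, and the computation is identical.)

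I do not foresee a hard step: the whole argument is just the tautology that partial contractions of the zero tensor are zero. The only thing to be careful about is matching the index conventions of the definition (the pairing is only taken along \emph{consecutive} slots $j, j+1$ modulo $4$); this is harmless because for any $j$ at which strong non-degeneracy fails, both $j+1$ and $j-1$ are available as neighbours and each gives the required pair of covectors.
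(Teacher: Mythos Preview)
Your proof is correct and follows essentially the same contrapositive strategy as the paper. The paper's version uses that $\dim V_j=2$ to write the degenerate $w$ explicitly as a simple tensor $u\otimes v$ (after rotating to $j=0$) and then picks $\phi_0$ annihilating $u$; your version bypasses the rank-one decomposition and works directly with a $\phi_j$ in the kernel of the contraction map, which is marginally cleaner but amounts to the same thing.
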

\begin{proof} Assume that $w$ is not strongly non-degenerate. By rotating
  $w$ we may assume that $w=u\otimes v$ where $u\in V_0$ and $v\in
  V_1\otimes V_2\otimes V_3$. Choose $\phi_0\in V_0^\ast$ such that
  $\phi_0(u)=0$ and $\phi_1\in V_1^\ast$ arbitrary. Then clearly
  $\langle \phi_0\otimes \phi_1,w\rangle=0$. Hence $w$ is not
  geometric.
\end{proof}
We will eventually show (see Theorem \ref{ref-5.5.10-57}) that
quadrics are classified by geometric quintuples. In this section we
start by proving one direction.
\begin{lemmas} Assume that $A$ is a quadric. Then ${{F}}(A)$ is geometric.
\end{lemmas}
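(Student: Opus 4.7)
The plan is to argue by contradiction. Suppose $w_0$ is not geometric, so there exist some $j \in \{0,1,2,3\}$ and nonzero $\phi_j \in V_j^*$, $\phi_{j+1} \in V_{j+1}^*$ (indices mod $4$) with $\langle \phi_j \otimes \phi_{j+1}, w_0 \rangle = 0$. My first step is to unpack this vanishing using the two canonical decompositions recalled in \S\ref{ref-5.2-27}. Writing $w_0 = \sum_i r_{0,i}\otimes v_{3,i} \in R_0\otimes V_3$ with $r_{0,i}$ a basis of $R_0$ and $v_{3,i}$ a basis of $V_3$, the non-degeneracy of $w_0$ in $R_0\otimes V_3$ yields an isomorphism $V_3^*\xrightarrow{\sim} R_0$; similarly $w_0\in V_0\otimes R_1$ gives $V_0^*\xrightarrow{\sim} R_1$. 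Contracting in each of the four cases and using these isomorphisms, the vanishing $\langle \phi_j \otimes \phi_{j+1}, w_0\rangle=0$ is equivalent to the existence of a nonzero \emph{factored} element in $R_0$ or $R_1$. For $j=2$ this reads: there is a nonzero $r=s\otimes u\in R_0$ with $s\in V_0\otimes V_1$ and $u\in V_2$ spanning $\ker\phi_2$. The other three cases produce analogous factorizations in either $R_0$ or $R_1$, and they all reduce to one another by applying the $4$-periodic rotation $\Rscr$ (equivalently, by passing to the shifted algebras $A(i)$), so it suffices to treat the case $j=2$.

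So assume $0\ne s\otimes u\in R_0$, meaning $su=0$ in $A_{0,3}$. The idea is that such a factored relation forces an abnormal submodule of $P_0$, contradicting Artin-Schelter regularity. Consider $M=sA\subset P_0$. There are two sub-cases. If $s\otimes V_2\subset R_0$ (necessarily $R_0 = s\otimes V_2$ by dimension), then $sV_2=0$ in $A_{0,3}$, so $M=ks$ is a simple submodule of $P_0$ isomorphic to $S_2$; but then $\Hom(S_2,P_0)\ne 0$, violating the AS-regularity condition $\sum_{j,k}\dim\Ext^j(S_k,P_0)=1$, whose unique nonvanishing contribution is $\Ext^4(S_4,P_0)$ dual to the top $P_4$ of the resolution \eqref{ref-4.2-9}.

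In the remaining case, $R_0\cap(ks\otimes V_2)=k(s\otimes u)$ is one-dimensional, and I would propagate the factorization through the resolution using the other independent relation $r'\in R_0$ and the constraint that $w_0\in R_0\otimes V_3$ simultaneously lies in $V_0\otimes R_1$. Expanding $w_0 = (s\otimes u)\otimes v + r'\otimes v'$ with $v,v'$ a basis of $V_3$ and comparing with the decomposition along $V_0\otimes R_1$ forces linear constraints that, upon iteration, either propagate a factored form into $R_1$ (and thence by rotation back to the previous sub-case) or produce an extra syzygy at $(P_3^2)_4\to (P_1^2)_4$ that is not in the image of $(P_4)_4\to (P_3^2)_4$, again contradicting exactness of \eqref{ref-4.2-9}.

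The main obstacle is this second sub-case: the mere relation $su=0$ does not by itself kill AS-regularity, and the contradiction must come from the rigidity of the whole minimal resolution. The technical heart of the argument is to show that a single factored relation $s\otimes u\in R_0$ propagates, via the unique element $w_0\in R_0\otimes V_3\cap V_0\otimes R_1$ and the $4$-periodic identifications coming from the prequadric structure, to the extreme factorization $R_0=ks\otimes V_2$ (or to an analogous degeneracy elsewhere), at which point the simple-submodule argument of the previous paragraph applies.
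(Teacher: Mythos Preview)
Your reduction to a single case via the rotation $\Rscr$ and your Case~1 argument are correct (modulo the minor slip that the unique nonvanishing contribution is $\Ext^3(S_{-4},P_0)$, not $\Ext^4(S_4,P_0)$): if $R_0 = ks\otimes V_2$ then $s\in A_{0,2}$ satisfies $sV_2=0$, so $sA\cong S_2\hookrightarrow P_0$, and together with $\Ext^3(S_{-4},P_0)=k$ this violates the AS condition.

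The genuine gap is Case~2, and neither of your proposed mechanisms works. First, the ``extra syzygy in degree $4$'' is impossible: for \emph{any} prequadric the degree-$4$ slice of \eqref{ref-5.9-55} reads
\[
0\to W_0 \to R_0\otimes V_3 \to V_0\otimes A_{1,4},
\]
and the kernel of the right-hand map is $(R_0\otimes V_3)\cap (V_0\otimes R_1)=W_0$ \emph{by definition}. So exactness at $R_0\otimes P_3$ in degree $4$ is automatic and cannot fail. Second, the ``propagation to $R_0=ks\otimes V_2$'' is not a theorem but a hope, and it is almost certainly false generically: the non-geometric locus has twelve free parameters after normalizing (as the paper's computation makes explicit), whereas $R_0=ks\otimes V_2$ is a codimension-two degeneration inside it. In the generic non-geometric prequadric $R_0$ contains exactly one factored relation and no more, and the contradiction with being a quadric only materializes in the Hilbert function at a later degree.

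This is exactly what the paper does, by a completely different route: after reducing by rotation to the case $\langle\phi_0\otimes\phi_1,w_0\rangle=0$ and choosing coordinates so that four of the sixteen coefficients of $f,g$ vanish, a Gr\"obner basis computation over the generic field $\QQ(a,b,e,f,g,h,a',b',e',f',g',h')$ gives $\dim A_{1,8}=22$, whereas a quadric has $\dim A_{1,8}=20$; semi-continuity then gives $\dim A_{1,8}\ge 22$ over any field. There is no structural or representation-theoretic argument in the paper for Case~2 --- the obstruction lives purely in the Hilbert function, several degrees beyond where your syzygy argument is looking. If you want a conceptual proof you would need to show directly that a single factored relation forces $\dim A_{i,j}$ to exceed the quadric value for some $j-i$; your current outline does not do this.
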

\begin{proof} We use the notations $(V_i)_i$, $(R_i)_i$, etc\dots
  with their usual interpretations. 
Assume $w_0$ is not geometric. Replacing $A$ by $A(n)$ for some
$n\in\ZZ$ and using \eqref{ref-5.2-28} we may assume that there exist non-zero $\phi_0\in
V_0^\ast$,
$\phi_1\in V_1^\ast$ such that $\langle \phi_0\otimes
\phi_1,w\rangle=0$. 

We choose bases for $x_i,y_i$ for $V_i$. Then $w_0=f x_3+g
y_3$. Changing coordinates we may assume that $\phi_0$ and $\phi_1$
represent the points $(1,0)$, $(0,1)$. Thus $f((1,0),(0,1),-)=0$
and $g((1,0),(0,1),-)=0$.

Writing
\begin{align*}
f&=ax_0x_1x_2+bx_0x_1y_2+cx_0y_1x_2+dx_0y_1y_2+ey_0x_1x_2+fy_0x_1y_2+
gy_0y_1x_2+hy_0y_1y_2\\ 
g&=a'x_0x_1x_2+b'x_0x_1y_2+c'x_0y_1x_2+d'x_0y_1y_2+e'y_0x_1x_2+f'y_0x_1y_2+
g'y_0y_1x_2+h'y_0y_1y_2
\end{align*}
this yields $c=d=c'=d'=0$. Let $k=\QQ(a,b,e,f,g,h,a',b',e',f',g',h')$.
A Groebner basis computation 
yields that in this case $\dim A_{1,8}=22$ whereas the correct value
for a quadric is $20$. Using semi-continuity we deduce that $\dim A_{1,8}\ge 22$
over any base field. This yields a contradiction.
\end{proof}
\subsection{Going from quintuples to quadruples}
\label{ref-5.4-34}
Below a \emph{quadruple} will be a quadruple
$U=(C,\Lscr_0,\Lscr_1,\Lscr_2)$ where $C$ is a $k$-scheme and $\Lscr_i$
are line bundles on $C$. An isomorphism of quadruples
$U=(C,\Lscr_0,\Lscr_1,\Lscr_2)\r U'=(C',\Lscr'_0,\Lscr'_1,\Lscr'_2)$
will be a quadruple $(\psi,t_0,t_1,t_2)$ where $\psi:C\r C'$ is an isomorphism
and the $t_i$ are isomorphisms $\Lscr_i\r \psi^\ast(\Lscr'_i)$.

We will say that $U$ is \emph{linear} if it is isomorphic to 
$(\PP^1\times\PP^1,\Oscr(1,0),\Oscr(0,1),\Oscr(1,0))$. We
will say that $U$ is \emph{elliptic} if $C$ is a curve of arithmetic genus
one, the $(\Lscr_i)_i$ are line bundles whose global sections define
morphisms $p_i:C\r \PP(V_i^\ast)$ of degree two such that   the pairs
$(p_i,p_{i+1})$ for $i=0,1$ define closed embeddings of $C$ in
$\PP^1\times \PP^1$.

Deriving the properties of an elliptic quadruple depends on the
Riemann-Roch theorem. However, as is pointed out in \cite{ATV1}, if
$C$ is not irreducible, then there will often be non-trivial line bundles
$\Lscr$ on $C$ such that both $H^0(\Lscr)$ and $H^1(\Lscr)$ are
non-zero. This complicates the application of the Riemann-Roch
theorem. In order to circumvent this difficulty one introduces as in
\cite{ATV1} the notion of a \emph{tame} sheaf.
\begin{definitions}
\label{ref-5.4.1-35}
Assume that $C$ is a divisor of bidegree $(2,2)$ in $\PP^1\times
\PP^1$. Then a line bundle $\Mscr$ on $C$ is tame if one of the following three conditions holds:
$H^0(\Mscr)=0$, $H^1(\Mscr)=0$ or $\Mscr\cong \Oscr_C$.
\end{definitions}
The usefulness of this definition stems from the fact that various
criteria can be obtained for showing that a line bundle is tame. See
\cite[Proposition 7.12]{ATV1}. The following proposition is an extract of
that proposition.
\begin{propositions} \label{ref-5.4.2-36} \cite{ATV1} Let $C$ be as in Definition
  \ref{ref-5.4.1-35} and let $\Mscr$ be a line bundle on $C$. If $\Mscr$ has
  non-negative degree on every component (for example if $\Mscr$ is
  generated by global sections) then $\Mscr$ is tame.
\end{propositions}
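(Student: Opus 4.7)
My plan is to deduce the result from the combination of adjunction, Serre duality, and a careful vanishing argument on the (possibly reducible) curve $C$. First, since $C$ is a Cartier divisor of bidegree $(2,2)$ in the smooth surface $\PP^1\times\PP^1$ whose canonical class is $\Oscr(-2,-2)$, the adjunction formula gives $\omega_C \cong \Oscr_C(K_{\PP^1\times\PP^1}+C)|_C\cong \Oscr_C$, and moreover $C$ is a Gorenstein curve of arithmetic genus one. In particular Serre duality on $C$ takes the clean form $H^1(\Mscr)\cong H^0(\Mscr^{-1})^{\ast}$ for every line bundle $\Mscr$ on $C$.

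With this in hand I would perform a case analysis on the total degree $d:=\deg\Mscr=\sum_i\deg(\Mscr|_{C_i})$, where $C=\bigcup_i C_i$ is the decomposition of $C$ into irreducible components. If $d>0$, then $\Mscr^{-1}$ has non-positive degree on every component and strictly negative degree on at least one; the key lemma (stated next) shows $H^0(\Mscr^{-1})=0$, and Serre duality then yields $H^1(\Mscr)=0$, so $\Mscr$ is tame. If $d=0$, then by hypothesis every component has degree zero; if $H^0(\Mscr)=0$ we are done, and otherwise any nonzero $s\in H^0(\Mscr)$ cannot vanish on any component (for the same reason as in the key lemma below), so $s$ is a nowhere-vanishing section, giving $\Mscr\cong\Oscr_C$. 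The case $d<0$ is excluded by the hypothesis.

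The crux of the argument is therefore the following vanishing lemma, which is essentially the content of \cite[Proposition 7.12]{ATV1}: \emph{if $\Nscr$ is a line bundle on $C$ with $\deg(\Nscr|_{C_i})\le 0$ for every $i$ and $\deg(\Nscr|_{C_j})<0$ for some $j$, then $H^0(\Nscr)=0$.} To prove this, suppose $s\neq 0$ is a global section. Its restriction $s|_{C_j}$ is a section of a negative-degree bundle on an irreducible proper curve, hence zero, so $s$ vanishes identically on $C_j$. But then on any component $C_i$ meeting $C_j$, the restriction $s|_{C_i}$ must vanish at every point of $C_i\cap C_j$; since $\deg(\Nscr|_{C_i})\le 0$, this forces $s|_{C_i}=0$ as well. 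Iterating along the (connected) dual graph of $C$ shows $s=0$ on every component, contradicting $s\neq 0$.

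The main obstacle is the local analysis hidden in the phrase ``$s$ vanishes at every point of $C_i\cap C_j$'': the singularities of $C$ can be fairly elaborate (nodes, tacnodes, and the various degenerations permitted by a $(2,2)$ divisor of arithmetic genus one), so one must verify that the restriction-to-a-branch map behaves as claimed at each singularity, and that the bookkeeping of vanishing orders at intersection points is compatible with the degree bound on each component. This is exactly the technical content that \cite{ATV1} develops via the partial normalization of $C$ and an enumeration of the possible singularity types, which I would invoke directly rather than redo from scratch.
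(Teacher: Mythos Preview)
The paper does not supply a proof of this proposition; it is quoted directly from \cite{ATV1} (as an extract of \cite[Proposition~7.12]{ATV1}), so there is no argument in the paper to compare yours against.

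That said, your outline is the correct one and is essentially how the result is established in \cite{ATV1}. Adjunction on the $(2,2)$ divisor gives $\omega_C\cong\Oscr_C$, so Serre duality on the Gorenstein curve $C$ converts $H^1(\Mscr)=0$ into the vanishing of $H^0(\Mscr^{-1})$, and the componentwise-degree hypothesis then feeds into the propagation lemma you state. Your treatment of the $d=0$ case is also right: a nonzero section cannot vanish identically on any component (by the same propagation along the connected dual graph), and on each reduced irreducible component a nonzero section of a degree-zero bundle has no zeros, so the section is nowhere vanishing and $\Mscr\cong\Oscr_C$. You have correctly flagged the one place where real care is needed --- the local bookkeeping at singular and possibly non-reduced points when passing vanishing from one component to a neighbour --- and it is entirely appropriate to invoke \cite{ATV1} for that rather than reproduce it.
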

One easily deduces (see \cite{ATV1}).
\begin{lemmas} \label{ref-5.4.3-37} Let $C$ be as in Definition \ref{ref-5.4.1-35}. If $\Lscr,\Mscr$
  are line bundles on $C$ such that $\Lscr$ is tame of  non-negative
  total degree and $\Mscr$ is generated by global sections then
  $\Lscr\otimes \Mscr$ is tame of non-negative total degree.
\end{lemmas}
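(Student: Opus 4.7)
The plan is to upgrade the tameness hypothesis on $\Lscr$ to the concrete vanishing $H^1(\Lscr)=0$ via Riemann--Roch, and then propagate this vanishing to $\Lscr\otimes\Mscr$ by tensoring a short exact sequence produced from a regular section of $\Mscr$.

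Total degrees are additive, so $\deg(\Lscr\otimes\Mscr)=\deg(\Lscr)+\deg(\Mscr)\ge 0$; this settles the degree part. For tameness, the case $\Lscr\cong\Oscr_C$ is trivial: $\Lscr\otimes\Mscr\cong\Mscr$, which is tame by Proposition~\ref{ref-5.4.2-36}. In the remaining cases, I would use that $C$ is a Cartier divisor in the smooth surface $\PP^1\times\PP^1$, hence a local complete intersection, hence Gorenstein, of arithmetic genus $p_a(C)=(2-1)(2-1)=1$ by adjunction. Riemann--Roch then reads $\chi(\Lscr)=\deg(\Lscr)\ge 0$, so $h^1(\Lscr)\le h^0(\Lscr)$; in the remaining tame subcase $H^0(\Lscr)=0$ this forces $H^1(\Lscr)=0$ as well. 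Thus we may and do assume $H^1(\Lscr)=0$ from the outset.

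Next I would choose a regular section $s\in H^0(\Mscr)$, meaning one that does not vanish on any irreducible component $E_i$ of $C$. Such a section exists by prime avoidance: since $\Mscr$ is globally generated and $\Mscr|_{E_i}$ is a nonzero line bundle, the subspace of sections vanishing on $E_i$ is proper in $H^0(\Mscr)$ for each $i$, and a general $s$ avoids all these finitely many subspaces. Because $C$ is Cohen--Macaulay, its associated points are precisely the generic points of the $E_i$, so multiplication by $s$ is injective and yields
\[
0\to\Oscr_C\xrightarrow{\,s\,}\Mscr\to\Qscr\to 0
\]
with $\Qscr$ a finite-length torsion sheaf. Tensoring with the flat $\Lscr$ gives
\[
0\to\Lscr\to\Lscr\otimes\Mscr\to\Lscr\otimes\Qscr\to 0,
\]
and since $\Lscr\otimes\Qscr$ is zero-dimensional, $H^1(\Lscr\otimes\Qscr)=0$. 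The long exact sequence then propagates the vanishing $H^1(\Lscr)=0$ to $H^1(\Lscr\otimes\Mscr)=0$, establishing tameness. The only nontrivial step is the existence of the regular section, but this is the standard prime-avoidance argument once one observes that $C$ has no embedded points; the real content of the argument is the Riemann--Roch reduction in the previous paragraph.
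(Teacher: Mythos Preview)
Your overall strategy---reduce to $H^1(\Lscr)=0$ via Riemann--Roch on the genus-one curve $C$, then propagate this vanishing along a short exact sequence built from $\Mscr$---is correct, and since the paper gives no proof here (it just cites \cite{ATV1}) there is nothing to compare at the level of detail.

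There is one step that does not work as stated. Your prime-avoidance argument (``a general $s$ avoids all these finitely many subspaces'') presumes that a finite-dimensional vector space over $k$ is not the union of finitely many proper subspaces, which can fail over finite fields; the paper assumes only that $k$ is a field. A $(2,2)$ divisor may have up to four irreducible components, so over $\FF_2$ or $\FF_3$ you cannot simply assert that a regular section exists. The fix is easy: either base-change to an infinite extension (vanishing of $H^1$ descends), or better, bypass the regular section altogether. Tensoring the surjection $H^0(\Mscr)\otimes_k\Oscr_C\twoheadrightarrow\Mscr$ with $\Lscr$ yields a surjection $\Lscr^{\oplus n}\twoheadrightarrow\Lscr\otimes\Mscr$; since $\dim C=1$ the functor $H^1$ is right exact, so $H^1(\Lscr)=0$ immediately gives $H^1(\Lscr\otimes\Mscr)=0$. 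This is shorter, avoids the Cohen--Macaulay discussion of associated points, and works over any field.
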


\begin{lemmas} \label{ref-5.4.4-38} Let $C$ be as in Definition
  \ref{ref-5.4.1-35}. Then $\Oscr_C(1,-1)$ is tame. More generally if
  $\Lscr,\Mscr$ are line bundles such that the degrees of
  $\Lscr,\Mscr$, when restricted to the irreducible components of $C$
  are the same as the degrees of $\Oscr_C(1,0)$ and $\Oscr_C(0,1)$
  then $\Lscr\otimes \Mscr^{-1}$ is also tame.
\end{lemmas}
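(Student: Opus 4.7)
The first assertion is a direct cohomology computation. Since $C \in |\Oscr_X(2,2)|$ where $X = \PP^1 \times \PP^1$, we have the exact sequence
\[
0 \to \Oscr_X(-1,-3) \to \Oscr_X(1,-1) \to \Oscr_C(1,-1) \to 0.
\]
Using Künneth together with $H^\ast(\PP^1, \Oscr_{\PP^1}(-1)) = 0$, all cohomology of both sheaves on $X$ vanishes, so the long exact sequence forces $H^0(\Oscr_C(1,-1)) = H^1(\Oscr_C(1,-1)) = 0$, making $\Oscr_C(1,-1)$ tame.

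For the general statement, put $\Nscr = \Lscr \otimes \Mscr^{-1}$. By hypothesis, on each component $E$ of bidegree $(a_E, b_E)$ one has $\deg(\Nscr|_E) = b_E - a_E = \deg(\Oscr_C(1,-1)|_E)$; summing (and using $\sum a_E = \sum b_E = 2$) gives $\deg \Nscr = 0$. By adjunction $\omega_C = (\omega_X \otimes \Oscr_X(C))|_C = \Oscr_C$, so $C$ has arithmetic genus $1$, and Riemann--Roch gives $\chi(\Nscr) = 0$; combined with Serre duality we obtain $h^0(\Nscr) = h^1(\Nscr) = h^0(\Nscr^{-1})$.

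To conclude tameness it remains to rule out $h^0(\Nscr) > 0$ and $h^0(\Nscr^{-1}) > 0$ simultaneously, except when $\Nscr \cong \Oscr_C$. Take non-zero $s \in H^0(\Nscr)$ and $t \in H^0(\Nscr^{-1})$. The product $st$ lies in $H^0(\Oscr_C) = k$ (the equality follows from the structure sequence and the vanishing of $H^i(\Oscr_X(-2,-2))$ for $i = 0, 1$, which also shows $C$ is connected). If $st \neq 0$, then $s$ is a nowhere-vanishing section, so $\Nscr \cong \Oscr_C$; this forces $b_E = a_E$ on every component and tameness is tautological. If $st = 0$, every component of $C$ lies in $Z(s) \cup Z(t)$: components of strictly negative $\Nscr$-degree must lie in $Z(s)$, those of strictly positive degree in $Z(t)$, and each degree-zero component in at least one of them. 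Writing $E' \subset C$ for the subcurve on which $s$ does not identically vanish (so $\Nscr|_{E'}$ has non-negative degree on each component), the section $s|_{E'}$ is forced to vanish at every intersection point of $E'$ with its complement. A case analysis over the finitely many bidegree decompositions of $(2,2)$ then shows $s|_{E'}$ has too many forced zeros to be non-zero, contradicting our assumption.

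\textbf{Main obstacle.} The hard step is the final case analysis in the reducible setting: one must enumerate the decompositions of $C$ as a union of components with bidegrees summing to $(2,2)$, including non-reduced structures, and verify in each that the imposed vanishing at intersection points kills any would-be section of $\Nscr$. A tempting alternative via upper semicontinuity of $h^0$ on $\Pic(C)$, combined with the concrete vanishing from Part~1, does not immediately succeed because semicontinuity controls jumps upward, not the generic value; one would need an additional input bounding the jumping locus, essentially equivalent to the case analysis above.
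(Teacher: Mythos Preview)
Your direct computation for $\Oscr_C(1,-1)$ via the ideal sequence on $\PP^1\times\PP^1$ and K\"unneth is correct and cleaner than what the paper does. The paper does not argue this way: it simply refers to the proof of \cite[Lemma 7.18]{ATV1}, where the tameness of $\Oscr_C(1,-1)$ is established by a case-by-case analysis over the possible component configurations of a $(2,2)$-divisor, and then makes the single observation that this argument is \emph{purely numerical}---it uses only the degree of the line bundle on each irreducible component. Since by hypothesis $\Lscr\otimes\Mscr^{-1}$ has the same degree as $\Oscr_C(1,-1)$ on every component, the identical argument applies verbatim. That one sentence is the paper's entire proof of the general statement.

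Your route to the general statement (Riemann--Roch, Serre duality, the $st$ dichotomy, then a component enumeration) is in fact the beginning of exactly the numerical argument carried out in \cite{ATV1}, and you correctly isolate the residual case analysis as the crux. The trade-off is this: because your proof of the special case is cohomological rather than numerical, it does not transfer to $\Lscr\otimes\Mscr^{-1}$, and you are forced to redo the enumeration from scratch---which you then leave undone. Had you proved the special case by the same numerical case analysis, the general case would have been free, which is precisely the paper's point. As it stands your proposal is an outline rather than a proof: the enumeration (including non-reduced configurations such as a double $(1,1)$-line or a $(2,0)+(0,2)$ splitting) is finite and not deep, but it must actually be written out, with some care about restricting sections to components carrying multiplicity.
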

\begin{proof}
 The proof of the  lemma in the case of $\Oscr_C(1,-1)$ is contained
 in the 
proof of \cite[Lemma 7.18]{ATV1}. If we look at that proof then we see
 that it is purely numerical. Hence it is also valid for $\Lscr\otimes
 \Mscr^{-1}$.
\end{proof}
Tameness of $\Lscr\otimes \Mscr^{-1}$ is important as can be seen from
the following lemma. 
\begin{lemmas} \label{ref-5.4.5-39} Let $C$ be as in Definition
  \ref{ref-5.4.1-35} and let $\Lscr$, $\Mscr$ be line bundles of degree two
  on $C$, the second one generated by global sections. Assume in
  addition 
  that $\Lscr\otimes \Mscr^{-1}$ is tame. Then the natural map
\[
H^0(\Lscr)\otimes H^0(\Mscr)\r H^0(\Lscr\otimes \Mscr)
\]
is an isomorphism if $\Lscr\not\cong \Mscr$ and otherwise it has one
dimensional kernel.
\end{lemmas}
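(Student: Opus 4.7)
The plan is to derive the multiplication map from a short exact sequence coming from the two generating sections of $\Mscr$ (a Koszul-type resolution), and then extract the needed information from the associated long exact sequence in cohomology.

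First, since $\Mscr$ is generated by global sections, Proposition \ref{ref-5.4.2-36} gives that $\Mscr$ is tame; combined with $\deg(\Mscr)=2>0$ and $\Mscr\not\cong \Oscr_C$, tameness forces $H^1(\Mscr)=0$, so by Riemann--Roch on the arithmetic-genus-one curve $C$, $h^0(\Mscr)=\chi(\Mscr)=2$. Apply Lemma \ref{ref-5.4.3-37} to $\Lscr\otimes\Mscr^{-1}$ (tame, total degree $0$) and $\Mscr$ to deduce that $\Lscr$ is tame of total degree $2$; the same argument as for $\Mscr$ then gives $h^0(\Lscr)=2$. Applying Lemma \ref{ref-5.4.3-37} once more shows $\Lscr\otimes\Mscr$ is tame of total degree $4$, hence $h^0(\Lscr\otimes\Mscr)=4$.

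Next, fix a basis $s_1,s_2$ of $H^0(\Mscr)$. Because $\Mscr$ is generated by global sections, $(s_1,s_2)$ defines a surjection $\Oscr_C^{\,2}\twoheadrightarrow \Mscr$; a determinant computation identifies its kernel with $\Mscr^{-1}$, yielding the short exact sequence
\[
0\r \Mscr^{-1}\r \Oscr_C^{\,2}\r \Mscr\r 0.
\]
Tensor with the line bundle $\Lscr$ (flat) and take the long exact sequence in cohomology to obtain
\[
0\r H^0(\Lscr\otimes\Mscr^{-1})\r H^0(\Lscr)^{2}\r H^0(\Lscr\otimes\Mscr)\r H^1(\Lscr\otimes\Mscr^{-1})\r H^1(\Lscr)^{2}.
\]
Under the identification $H^0(\Lscr)^{2}\cong H^0(\Lscr)\otimes H^0(\Mscr)$ given by $(a,b)\mapsto a\otimes s_1+b\otimes s_2$, the middle map becomes exactly the multiplication map $H^0(\Lscr)\otimes H^0(\Mscr)\r H^0(\Lscr\otimes\Mscr)$ of the lemma.

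It remains to read off the cohomology of $\Lscr\otimes\Mscr^{-1}$, which has total degree $0$ and is tame by hypothesis. If $\Lscr\not\cong\Mscr$, then $\Lscr\otimes\Mscr^{-1}\not\cong\Oscr_C$; tameness forces $H^0=0$ or $H^1=0$, and since $\chi=0$ both vanish. The long exact sequence then shows the multiplication map is injective with cokernel $0$, hence an isomorphism (compatible with the dimension count $2\cdot 2=4=h^0(\Lscr\otimes\Mscr)$). If $\Lscr\cong\Mscr$, then $\Lscr\otimes\Mscr^{-1}\cong\Oscr_C$ has $h^0=h^1=1$ (using $p_a(C)=1$), and the long exact sequence displays a one-dimensional kernel, as claimed. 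The main point to be careful about is the identification of the kernel in the Koszul sequence with $\Mscr^{-1}$ (which requires that $s_1,s_2$ never simultaneously vanish, i.e.\ global generation) and the verification that the boundary map in the cohomology sequence really is the multiplication map, but both are routine once set up correctly.
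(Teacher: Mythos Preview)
Your proof is correct and follows essentially the same approach as the paper: both use the Koszul-type sequence $0\to\Mscr^{-1}\to\Oscr_C^{\,2}\to\Mscr\to 0$ arising from a basis of $H^0(\Mscr)$, tensor with $\Lscr$, and read off the result from the long exact cohomology sequence together with the tameness of $\Lscr\otimes\Mscr^{-1}$. You are simply more explicit than the paper (which compresses the final step into a single sentence), in particular by computing the dimensions $h^0(\Lscr)$, $h^0(\Lscr\otimes\Mscr)$ via Lemma~\ref{ref-5.4.3-37} and by spelling out the case analysis on $\Lscr\otimes\Mscr^{-1}$.
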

\begin{proof} Since $\Mscr$ is generated by global sections it is tame
  by Proposition \ref{ref-5.4.2-36} and hence by Riemann-Roch $\dim
  H^0(\Mscr)=2$. We have a surjective map $H^0(\Mscr)\otimes_k
  \Oscr_C\r\Mscr$. Looking at exterior powers we find that its kernel
  is given by $\Mscr^{-1}$. Tensoring with $\Lscr$  yields an exact
  sequence 
\[
0\r \Lscr\otimes\Mscr^{-1}\r \Lscr\otimes_k H^0(\Mscr)\r
\Lscr\otimes\Mscr\r 0
\]
Now applying the long exact sequence for $H^\ast$ and using the
tameness of $\Lscr\otimes\Mscr^{-1}$ yields what we want.
\end{proof}

The following result is a partial converse to  lemma \ref{ref-5.4.4-38}.
\begin{lemmas}
\label{ref-5.4.6-40}
 Let $C$ be as in Definition
  \ref{ref-5.4.1-35}. Let $\Lscr$, $\Mscr$ be distinct line bundles on $C$ of degree
 two which are generated by global sections and which have in addition
  the following properties.
\begin{enumerate}
\item 
$\Lscr\otimes \Mscr^{-1}$ is tame. 
\item
$\Lscr\otimes\Mscr$ is ample.
\end{enumerate}
Then $(\Lscr,\Mscr)$
  defines an embedding of $C$ in $\PP^1\times \PP^1$.
\end{lemmas}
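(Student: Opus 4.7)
The plan is to reduce the statement to the very-ampleness of $\Lscr \otimes \Mscr$ and then establish this by a cohomological vanishing argument. Since $\Lscr$ and $\Mscr$ are globally generated of degree two they are tame by Proposition \ref{ref-5.4.2-36}, and Riemann--Roch, using $\omega_C \cong \Oscr_C$ (by adjunction on the $(2,2)$-divisor $C$, so $p_a(C)=1$), yields $h^0(\Lscr) = h^0(\Mscr) = 2$. Thus $|\Lscr|$ and $|\Mscr|$ define morphisms $p_1,p_2: C \to \PP^1$, and I aim to show that their product $\phi=(p_1,p_2): C \to \PP^1 \times \PP^1$ is a closed embedding.

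Next I would apply Lemma \ref{ref-5.4.5-39}, whose hypotheses---namely $\Lscr \not\cong \Mscr$ and tameness of $\Lscr \otimes \Mscr^{-1}$---hold by assumption, to deduce that $H^0(\Lscr) \otimes H^0(\Mscr) \to H^0(\Lscr \otimes \Mscr)$ is an isomorphism; in particular $h^0(\Lscr \otimes \Mscr) = 4$. Consequently the composition of $\phi$ with the Segre embedding $\PP^1 \times \PP^1 \hookrightarrow \PP^3$ coincides with the morphism $\psi: C \to \PP(H^0(\Lscr \otimes \Mscr)^\ast)$ induced by the complete linear system of $\Lscr \otimes \Mscr$. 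Since Segre is a closed embedding, $\phi$ is a closed embedding if and only if $\psi$ is, which in turn happens if and only if $\Lscr \otimes \Mscr$ is very ample. This is what the remainder of the proof would establish.

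To verify very-ampleness I would check, for every length-two closed subscheme $Z \subset C$, that the restriction $H^0(\Lscr \otimes \Mscr) \to H^0((\Lscr \otimes \Mscr)|_Z)$ is surjective, equivalently that $H^1(\Iscr_Z \otimes \Lscr \otimes \Mscr) = 0$. By Serre duality (using $\omega_C \cong \Oscr_C$, which holds since the complete intersection $C$ is Gorenstein) this group is dual to $\Hom_C(\Iscr_Z, (\Lscr \otimes \Mscr)^{-1})$. In the smooth irreducible case the vanishing is immediate: such a nonzero morphism would produce an effective divisor of negative total degree.

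The hard part will be the reducible/singular case, where $\Iscr_Z$ need not be invertible and where a line bundle of negative total degree can still possess global sections by being supported on components of positive degree. Here the tameness hypothesis becomes essential: combining Lemmas \ref{ref-5.4.3-37} and \ref{ref-5.4.4-38} with the ampleness of $\Lscr \otimes \Mscr$, one tracks the degrees of $(\Lscr \otimes \Mscr)^{-1}$ and its twists across the irreducible components of $C$ to rule out the offending maps. This follows the pattern of the analogous statement for cubic divisors in $\PP^2$ in \cite{ATV1}.
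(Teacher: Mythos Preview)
Your reduction to the very-ampleness of $\Lscr\otimes\Mscr$ via the Segre embedding matches the paper exactly. The divergence is in how very-ampleness is established. The paper does not use the length-two subscheme criterion; instead, since $\Lscr\otimes\Mscr$ is ample, it argues that it suffices to show the section ring $\bigoplus_n H^0((\Lscr\otimes\Mscr)^{\otimes n})$ is generated in degree one. This follows by iterating the surjectivity of multiplication maps such as $H^0(\Lscr^{\otimes n}\otimes\Mscr^{\otimes m})\otimes H^0(\Mscr)\to H^0(\Lscr^{\otimes n}\otimes\Mscr^{\otimes m+1})$, which in turn follows (as in the proof of Lemma~\ref{ref-5.4.5-39}) from the tameness of $\Lscr^{\otimes n}\otimes\Mscr^{\otimes m-1}$ for $n\ge 1$, $m\ge 0$ --- and that tameness is precisely what Lemmas~\ref{ref-5.4.3-37} and~\ref{ref-5.4.4-38} supply. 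This route stays entirely within the tameness/multiplication-map framework and handles reducible and singular $C$ uniformly, with no case analysis.

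Your route via Serre duality and the vanishing of $\Hom(\Iscr_Z,(\Lscr\otimes\Mscr)^{-1})$ is fine in the smooth irreducible case, but the final paragraph is not a proof. Lemmas~\ref{ref-5.4.3-37} and~\ref{ref-5.4.4-38} concern tameness of \emph{line bundles} and say nothing directly about $\Hom(\Iscr_Z,-)$ when $\Iscr_Z$ fails to be invertible (e.g.\ $Z$ meets $C_{\mathrm{sing}}$) or when $Z$ lies entirely on a single component $E$ on which $\Iscr_Z^\vee\otimes(\Lscr\otimes\Mscr)^{-1}$ could have positive degree. Completing your argument would require a component-by-component analysis over the possible configurations of $(2,2)$-divisors, which you have not supplied; the paper's section-ring argument sidesteps this entirely.
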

\begin{proof}
  By Proposition \ref{ref-5.4.2-36} $\Lscr$, $\Mscr$ are tame, whence by
  Riemann-Roch $\dim H^0(\Lscr)=H^0(\Mscr)=2$. So $\Lscr$, $\Mscr$
  define maps $p,q:C\r \PP^1$. We have to show that $(p,q)$ defines a
  closed embedding $C\r \PP^1\times \PP^1$. Note that $\PP^1\times
  \PP^1$ is itself embedded in $\PP^3$ by the global sections of
  $\Oscr(1,1)$.  The composed morphism $C\r \PP^3$ is given by the
  global sections of $H^0(\Lscr\otimes\Mscr)$ which are in the image
  $H^0(\Lscr)\otimes H^0(\Mscr)$. Now it follows from lemma
  \ref{ref-5.4.5-39} that actually
  $H^0(\Lscr\otimes\Mscr)=H^0(\Lscr)\otimes H^0(\Mscr)$.

Thus
  it suffices to show that $H^0(\Lscr\otimes \Mscr)$ generates
  $\oplus_n H^0(\Lscr^{\otimes n}\otimes\Mscr^{\otimes n} )$ as a $k$-algebra.

  By Proposition \ref{ref-5.4.2-36} and Lemmas \ref{ref-5.4.3-37}, \ref{ref-5.4.4-38}  we
  find that $\Lscr^{\otimes n}\otimes \Mscr^{\otimes m-1}$ is tame
  when $n\ge 1$, $m\ge 0$.  A variant of the proof of Lemma
  \ref{ref-5.4.5-39} shows that $H^0(\Lscr^{\otimes n}\otimes
  \Mscr^{\otimes m})\otimes H^0(\Mscr)\r H^0(\Lscr^{\otimes
    n}\otimes \Mscr^{\otimes m+1})$ is surjective for $n\ge 1$, $m\ge
  0$ and a similar statement with $\Lscr$ and $\Mscr$
  exchanged. Iterating this we easily find that $H^0(\Lscr)^{\otimes n}
  \otimes H^0(\Mscr)^{\otimes n}\r H^0(\Lscr^{\otimes n}\otimes
  \Mscr^{\otimes n})$ is surjective.

This finishes the proof.
\end{proof}

The following result is another useful addition to our toolkit.
\begin{propositions} \label{ref-5.4.7-41} \cite[Proposition 7.13]{ATV1} Let $C$ be
  as in Definition \ref{ref-5.4.1-35} and let $\Mscr$ be a line bundle on
  $C$. If $\deg\Mscr\ge 2$ and furthermore $\Mscr$ has positive degree
  on every component of $C$ then $\Mscr$ is generated by global
  sections.
\end{propositions}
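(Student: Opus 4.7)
The plan is to verify that $\Mscr$ is generated by global sections locally at every closed point $p\in C$. Since $\Mscr$ is a line bundle the fiber $\Mscr\otimes k(p)$ is one-dimensional, so this amounts to exhibiting a global section of $\Mscr$ not vanishing at $p$. Twisting the ideal sequence $0\to J_p\to \Oscr_C\to k(p)\to 0$ by the flat sheaf $\Mscr$ and passing to cohomology, the desired surjectivity of $H^0(\Mscr)\to \Mscr\otimes k(p)$ is implied by $H^1(\Mscr\otimes J_p)=0$. So the whole proof reduces to this vanishing, point by point.

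I would first dispatch the case where $p$ is a smooth point of $C$. Then $J_p$ is invertible and $\Mscr\otimes J_p=\Mscr(-p)$ is a line bundle whose degree drops by one on the component $E$ containing $p$ and is unchanged on every other component. By the positivity hypothesis $\deg(\Mscr|_E)\ge 1$, so $\Mscr(-p)$ has non-negative degree on every irreducible component of $C$ and is therefore tame by Proposition \ref{ref-5.4.2-36}. Tameness gives three alternatives: $H^1(\Mscr(-p))=0$, $H^0(\Mscr(-p))=0$, or $\Mscr(-p)\cong \Oscr_C$. Applying Gorenstein Riemann--Roch on the arithmetic genus one curve $C$ yields $\chi(\Mscr(-p))=\deg\Mscr-1\ge 1$, so $H^0(\Mscr(-p))\ne 0$; and $\Mscr(-p)\cong \Oscr_C$ would force $\deg\Mscr=1$, contradicting $\deg\Mscr\ge 2$. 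The only remaining possibility is $H^1(\Mscr(-p))=0$.

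The main obstacle is the case where $p$ is a (nodal) singular point lying on irreducible components $E_1$ and $E_2$, since then $J_p$ is not invertible and the previous line-bundle reasoning does not directly apply. My plan is to pass to the partial normalization $\pi\colon \widetilde{C}\to C$ separating $p$ into two smooth points $p_1,p_2$ above $p$. A direct local computation at a node identifies $J_p\cong \pi_\ast \Oscr_{\widetilde{C}}(-p_1-p_2)$ as subsheaves of $\pi_\ast\Oscr_{\widetilde{C}}$, and the projection formula then gives
\[
\Mscr\otimes J_p\cong \pi_\ast\bigl(\pi^\ast\Mscr(-p_1-p_2)\bigr).
\]
Since $\pi$ is finite this yields $H^1(\Mscr\otimes J_p)=H^1(\pi^\ast\Mscr(-p_1-p_2))$. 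The line bundle $\pi^\ast\Mscr(-p_1-p_2)$ has non-negative degree on every component of $\widetilde{C}$ by the same bookkeeping as in the smooth case, so the tameness plus Riemann--Roch argument above applies on $\widetilde{C}$ and produces the required vanishing. Combining the two cases gives generation of $\Mscr$ by global sections at every closed point, completing the proof.
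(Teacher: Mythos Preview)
The paper does not prove this proposition; it simply quotes \cite[Proposition~7.13]{ATV1}. So there is no in-paper argument to compare against, and I will comment only on the soundness of your proposal.

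Your smooth-point case is fine. The gap is in the singular case. You assume that every singular point of $C$ is an ordinary node lying on two \emph{distinct} irreducible components $E_1,E_2$. A $(2,2)$ divisor in $\PP^1\times\PP^1$ need not have this form: it can be an irreducible rational curve with a self-node or a cusp; two $(1,1)$ curves meeting tangentially in a tacnode; or a non-reduced configuration such as a double $(1,1)$ curve or a doubled fiber together with other components. None of these is covered by your partial-normalization step, and for cusps or non-reduced components the identification $J_p\cong\pi_\ast\Oscr_{\widetilde{C}}(-p_1-p_2)$ simply does not hold.

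Even in the one case you do treat, the final appeal to ``tameness plus Riemann--Roch on $\widetilde{C}$'' is not justified by the paper: Proposition~\ref{ref-5.4.2-36} and Definition~\ref{ref-5.4.1-35} are stated for $(2,2)$ divisors in $\PP^1\times\PP^1$, whereas your $\widetilde{C}$ has arithmetic genus~$0$ and may even be disconnected. One can of course prove the required $H^1$-vanishing on such a $\widetilde{C}$ directly (a line bundle of non-negative degree on every component of a genus-zero nodal curve has vanishing $H^1$), but that is a separate lemma, not an instance of tameness on $C$. So as written the singular-point case is incomplete; to make the argument go through you would need either a uniform treatment of all Gorenstein singularities of a $(2,2)$ curve, or a different device (for instance Serre duality on $C$, reducing $H^1(\Mscr\otimes J_p)=0$ to $\Hom(\Mscr,k(p))=0$, which is immediate).
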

Below we write $\Gamma(-)$ for $H^0(-)$. 
\begin{lemmas} \label{ref-5.4.8-42} If $U$ is an elliptic quadruple then $\Lscr_0\not\cong
  \Lscr_1\not\cong\Lscr_2$. In addition there are  isomorphisms 
\begin{align}
\label{ref-5.4-43}
\Gamma(\Lscr_0)\otimes \Gamma(\Lscr_1)&\r \Gamma(\Lscr_0\otimes
\Lscr_1)\\
\label{ref-5.5-44}
\Gamma(\Lscr_1)\otimes \Gamma(\Lscr_2)&\r \Gamma(\Lscr_1\otimes
\Lscr_2)
\end{align}
and furthermore there is a surjection:
\begin{equation}
\label{ref-5.6-45}
\Gamma(\Lscr_0)\otimes \Gamma(\Lscr_1)\otimes \Gamma(\Lscr_2)\r
\Gamma(\Lscr_0\otimes \Lscr_1\otimes \Lscr_2)
\end{equation}
\end{lemmas}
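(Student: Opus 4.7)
My plan is to handle the three assertions separately, each time exploiting the tameness machinery developed in Lemmas~\ref{ref-5.4.3-37}--\ref{ref-5.4.5-39}.

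For the non-isomorphism claim, I would argue geometrically. Since $U$ is elliptic, $(p_0,p_1):C\to\PP^1\times\PP^1$ is a closed embedding onto a divisor of bidegree $(2,2)$. If $\Lscr_0\cong\Lscr_1$, then after choosing matching bases of global sections one has $p_1=\phi\circ p_0$ for some $\phi\in\Aut(\PP^1)$, so the image of $(p_0,p_1)$ lies in the graph of $\phi$, a curve of bidegree $(1,1)$; this contradicts the bidegree $(2,2)$ of the embedding. The argument for $\Lscr_1\not\cong\Lscr_2$ is identical, using the embedding $(p_1,p_2)$.

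For the isomorphism \eqref{ref-5.4-43}, I would apply Lemma~\ref{ref-5.4.5-39} to the pair $(\Lscr_0,\Lscr_1)$. Both are line bundles of degree $2$ (each defining a degree-$2$ morphism to $\PP^1$) and both are generated by global sections; via the embedding $(p_0,p_1)$ we have $\Lscr_0=\Oscr_C(1,0)$ and $\Lscr_1=\Oscr_C(0,1)$, so Lemma~\ref{ref-5.4.4-38} yields that $\Lscr_0\otimes\Lscr_1^{-1}$ is tame. Combined with $\Lscr_0\not\cong\Lscr_1$ from the previous step, Lemma~\ref{ref-5.4.5-39} produces the stated isomorphism, and \eqref{ref-5.5-44} follows by the symmetric argument with $(p_1,p_2)$.

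For the surjection \eqref{ref-5.6-45}, I would factor through \eqref{ref-5.4-43}, reducing to surjectivity of
\[
\Gamma(\Lscr_0\otimes\Lscr_1)\otimes\Gamma(\Lscr_2)\to\Gamma(\Lscr_0\otimes\Lscr_1\otimes\Lscr_2).
\]
Using that $\Lscr_2$ is globally generated with two-dimensional space of global sections (Proposition~\ref{ref-5.4.2-36} plus Riemann--Roch), one writes the standard exact sequence
\[
0\to \Lscr_2^{-1}\to \Gamma(\Lscr_2)\otimes_k \Oscr_C\to \Lscr_2\to 0.
\]
Tensoring with $\Lscr_0\otimes\Lscr_1$ and taking cohomology, surjectivity reduces to $H^1(\Lscr_0\otimes\Lscr_1\otimes\Lscr_2^{-1})=0$. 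Via the embedding $(p_1,p_2)$ we have $\Lscr_1\otimes\Lscr_2^{-1}\cong\Oscr_C(1,-1)$, which is tame of total degree $0$ by Lemma~\ref{ref-5.4.4-38}; then Lemma~\ref{ref-5.4.3-37} shows that $\Lscr_0\otimes\Lscr_1\otimes\Lscr_2^{-1}$ is tame of total degree $2$. Riemann--Roch on the genus-one curve $C$ gives $\chi=2$, ruling out both the $H^0=0$ and the $\cong \Oscr_C$ alternatives of tameness, so we are forced into $H^1=0$. The only subtle point is bookkeeping: keeping straight which embedding identifies which adjacent pair of $\Lscr_i$ with $\Oscr_C(1,0)$ and $\Oscr_C(0,1)$, so that the tameness lemmas of the preceding subsection can be applied at each step.
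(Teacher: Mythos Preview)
Your proof is correct and follows essentially the same route as the paper's. The paper's proof is very terse---it just notes that $\Lscr_0\cong\Lscr_1$ would prevent $(\Lscr_0,\Lscr_1)$ from defining an embedding, invokes Lemma~\ref{ref-5.4.5-39} directly for \eqref{ref-5.4-43} and \eqref{ref-5.5-44}, and says \eqref{ref-5.6-45} is proved ``in a similar (but easier) way''---and your argument simply unpacks each of these steps: the graph-of-$\phi$ picture for the first claim, the explicit verification of the tameness hypothesis via Lemma~\ref{ref-5.4.4-38} for the second, and the standard Koszul sequence plus $H^1$-vanishing for the third, which is exactly the technique behind Lemma~\ref{ref-5.4.5-39} itself.
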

\begin{proof}
If $\Lscr_0\cong\Lscr_1$ then $(\Lscr_0,\Lscr_1)$ does \emph{not}
  define an inclusion $C\hookrightarrow \PP^1\times \PP^1$. The same
  is true if $\Lscr_1\cong\Lscr_2$.

\eqref{ref-5.4-43} and \eqref{ref-5.5-44} follow directly from lemma
  \ref{ref-5.4.5-39} and \eqref{ref-5.6-45} is proved in a similar (but easier) way.  
\end{proof} 
If $U$ is elliptic then we will say that $U$ is \emph{prelinear} if
$\Lscr_0\cong \Lscr_2$. We will say that 
 $U$ is \emph{regular} if for all components $C_1$ of $C$ we have
 $\deg(\Lscr_0\mid C_1)=\deg(\Lscr_2\mid C_1)$.

We will say that $U$ is admissible if it is  elliptic, regular and not
prelinear. 
\begin{lemmas} 
\label{ref-5.4.9-46}
Assume that $U=(C,\Lscr_0,\Lscr_1,\Lscr_2)$ is an
  elliptic quadruple. Then the following are equivalent. 
\begin{enumerate}
\item $U$ is not regular.
\item
\begin{enumerate}
\item 
There are non-zero $u\in \Gamma(\Lscr_0)$, $v\in
  \Gamma(\Lscr_1)\otimes \Gamma(\Lscr_2)$ with $uv=0$ in
  $\Gamma(\Lscr_0\otimes\Lscr_1\otimes\Lscr_2)$ or 
\item
there are non-zero $s\in
  \Gamma(\Lscr_0)\otimes \Gamma(\Lscr_1)$, $t\in \Gamma(\Lscr_2)$ with
  $st=0$ in $\Gamma(\Lscr_0\otimes\Lscr_1\otimes\Lscr_2)$.
\end{enumerate}
\end{enumerate}
\end{lemmas}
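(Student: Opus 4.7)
The plan is to use the isomorphisms from Lemma \ref{ref-5.4.8-42} to convert the tensor--product conditions in (2) into ones about honest products of sections, and then exploit the fact that a non-zero section of a line bundle on an irreducible curve vanishes at only finitely many points.

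Concretely, under the isomorphism (\ref{ref-5.5-44}) a non-zero $v\in\Gamma(\Lscr_1)\otimes\Gamma(\Lscr_2)$ corresponds to a unique non-zero $v'\in\Gamma(\Lscr_1\otimes\Lscr_2)$ and the image of $u\otimes v$ in $\Gamma(\Lscr_0\otimes\Lscr_1\otimes\Lscr_2)$ is the genuine product $uv'$. So (2a) is equivalent to the existence of non-zero $u\in\Gamma(\Lscr_0)$ and non-zero $v'\in\Gamma(\Lscr_1\otimes\Lscr_2)$ with $uv'=0$, and similarly for (2b).

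For the direction (2)$\Rightarrow$(1), I would argue that if $uv'=0$ while $u,v'$ are non-zero, then on each irreducible component $E$ of $C$ either $u|_E$ or $v'|_E$ vanishes identically: otherwise each would have only finitely many zeros on $E$ and the product would be generically non-zero. This forces $C$ to be reducible with a non-trivial splitting $C = C_1\cup C_2$ (as a union of components), where $u|_{C_1}=0$ and $v'|_{C_2}=0$. The existence of a non-zero section $u\in\Gamma(\Lscr_0)$ vanishing on $C_1$ combined with Riemann--Roch (using tameness of $\Lscr_0$ from Proposition \ref{ref-5.4.2-36} and of $\Lscr_0|_{C_2}$ suitably twisted) bounds $\deg(\Lscr_0|_{C_1})$ strictly below what it should be, and symmetrically for $v'$ and $\Lscr_1\otimes\Lscr_2$. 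Feeding these degree inequalities back through $\deg\Lscr_0=\deg\Lscr_2=2$ with non-negativity on each component (again from global generation plus Proposition \ref{ref-5.4.2-36}) forces $\deg(\Lscr_0|_{C_1})\neq\deg(\Lscr_2|_{C_1})$ on at least one component, so $U$ is not regular. The case (2b) is handled by the same argument with $\Lscr_0$ and $\Lscr_2$ interchanged.

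For (1)$\Rightarrow$(2), suppose without loss of generality that there is a component $E^\ast$ with $\deg(\Lscr_0|_{E^\ast})<\deg(\Lscr_2|_{E^\ast})$. Let $C_1$ be the union of components on which $\deg\Lscr_0<\deg\Lscr_2$ and $C_2$ the union of the remaining components; then $\deg(\Lscr_0|_{C_1})<2$ and $\deg(\Lscr_2|_{C_2})<2$ (the deficit on $C_1$ for $\Lscr_0$ must be compensated on $C_2$). A Riemann--Roch and tameness argument applied to the short exact sequence
\[
0\to \Lscr_0\otimes \mathcal{I}_{C_1}\to \Lscr_0\to \Lscr_0|_{C_1}\to 0
\]
produces a non-zero $u\in\Gamma(\Lscr_0)$ with $u|_{C_1}=0$; symmetrically, a non-zero $t\in\Gamma(\Lscr_2)$ with $t|_{C_2}=0$. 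Then for any non-zero $\alpha\in\Gamma(\Lscr_1)$ the element $v=\alpha\otimes t\in\Gamma(\Lscr_1)\otimes\Gamma(\Lscr_2)$ is non-zero, and the section $u\cdot\alpha\cdot t\in\Gamma(\Lscr_0\otimes\Lscr_1\otimes\Lscr_2)$ vanishes on $C_1$ (because $u$ does) and on $C_2$ (because $t$ does), hence is identically zero, yielding (2a). (If instead $\deg(\Lscr_0|_{E^\ast})>\deg(\Lscr_2|_{E^\ast})$, the same construction with roles exchanged gives (2b).)

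The main technical hurdle is the dimension count that produces a non-zero global section of $\Lscr_0$ vanishing identically on a chosen union of components: when $\Lscr_0|_E$ has degree zero one needs to know whether it is trivial or not, and the correct statement comes out only after a careful application of Riemann--Roch together with the tameness package (Proposition \ref{ref-5.4.2-36} and Lemmas \ref{ref-5.4.3-37}, \ref{ref-5.4.4-38}) to the relevant ideal-sheaf twist.
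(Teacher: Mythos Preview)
Your argument for $(1)\Rightarrow(2)$ has a genuine gap: the set $C_1$ you choose---the union of all components with $\deg\Lscr_0<\deg\Lscr_2$---can contain a component $E$ with $\deg(\Lscr_0\mid E)=1$, and then no non-zero section of $\Lscr_0$ vanishes on $E$ (since $\Gamma(\Lscr_0)\to\Gamma(\Lscr_0\mid E)\cong\Gamma(\PP^1,\Oscr(1))$ is an isomorphism). Concretely, take $C=E\cup E'$ with $\deg\Lscr_0=(1,1)$, $\deg\Lscr_1=(1,1)$, $\deg\Lscr_2=(2,0)$; this satisfies all the elliptic-quadruple conditions, yet your $C_1=\{E\}$ and there is no $u\in\Gamma(\Lscr_0)$ with $u\mid E=0$. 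The paper avoids this by first observing that non-regularity is equivalent to the existence of a single component $C_1$ with $\deg(\Lscr_0\mid C_1)=0$ or $\deg(\Lscr_2\mid C_1)=0$ (but not both); such a $C_1$ is then a fibre of $p_0$ (resp.\ $p_2$), so the desired $u$ (resp.\ $t$) is obtained by pulling back a linear form on $\PP^1$. The complementary piece is then $C_2=C-C_1$ as an effective \emph{divisor} on the ambient surface, not a union of the remaining components, and the partner section $v$ is constructed as a section of $\Lscr_1\otimes\Lscr_2\otimes\Oscr_C(-C_2)$, a sheaf supported on $C_1\cong\PP^1$ of non-negative degree.

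There is a second issue: your conclusion ``$u\alpha t$ vanishes on $C_1$ and on $C_2$, hence is zero'' requires $\Iscr_{C_1}\cdot\Iscr_{C_2}=0$ in $\Oscr_C$, which holds when $C_1+C_2=C$ as Cartier divisors but fails for your set-theoretic decomposition when $C$ is non-reduced. The paper's divisorial complement $C_2=C-C_1$ makes this automatic. Similarly, in $(2)\Rightarrow(1)$ your degree bookkeeping is too vague to reach $\deg(\Lscr_0\mid E)\neq\deg(\Lscr_2\mid E)$, and the case where $C_1=V(u)$ is a double component (so $v\mid C_1$ may or may not vanish) needs separate treatment---the paper handles this by peeling off one copy of $C_1$ and repeating the argument with $v'=v/u$.
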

\begin{proof} 
We start by observing that $U$ is not regular if and only there exists
a component $C_1$ such that   either $\deg(\Lscr_0\mid
C_1)=0$ or  $\deg(\Lscr_2\mid
C_1)=0$ but not both. One direction is trivial. For the other
direction assume that there is a component $C'\subset C$ such that for
example $\deg(\Lscr_0\mid C')=1$ and $\deg(\Lscr_2\mid C')=2$. Then
$C'$ must have another component $C''$ with  $\deg(\Lscr_0\mid C'')=1$
and now $\deg(\Lscr_2\mid C'')=0$. So we take $C_1=C''$.
\begin{itemize}
\item[$(1)\Rightarrow (2)$] Assume that $C_1\subset C$ is such that
  $\deg(\Lscr_0\mid C_1)=0$ and
  $\deg(\Lscr_2\mid C_1)\ge 1$. Then $C_1$ is in a fiber of
  $p_0$. Since $(p_0,p_1)$ defines a closed embedding of $C$ in
  $\PP^1\times\PP^1$ it follows that $C_1$ is isomorphic to $\PP^1$
  and has bidegree $(0,1)$. In other words $\deg(\Lscr_1\mid C_1)=1$.
  
  Pulling back a non-zero section of $\Oscr_{\PP^1}(1)$ vanishing on
  $p_0(C_1)$ yields a non-zero $u\in \Gamma(\Lscr_0)$ such that $u\mid
  C_1=0$. Let $C_2=C-C_1$ (as divisors). Then $C_1\cdot C_2= 2$ and
  $\deg(\Lscr_1\otimes\Lscr_2\mid C_1)\ge 2$. Hence $\Lscr_1\otimes\Lscr_2
\otimes \Oscr_C(-C_2)$ is supported on $C_1$ and has non-negative degree.
Thus
$\Gamma(\Lscr_1\otimes\Lscr_2
\otimes \Oscr_C(-C_2))\neq 0$ which yields
a 
  non-zero section $v$ of $\Lscr_1\otimes\Lscr_2$, vanishing on $C_2$.
  Combining this with lemma \ref{ref-5.4.8-42} yields that (2a) holds. 
\item[$(2)\Rightarrow (1)$]  Assume that there exist $u,v$ such as in
  (2a). Put $C_1=V(u)$, $C_2=V(v)$. Again $C_1$ has bidegree $(0,1)$
  if we consider $C$ as being embedded in $\PP^1\times\PP^1$ by
  $(p_0,p_1)$. Since as divisors we have $C=C_1+C_2$ we find that $C_2$ has
bidegree $(1,2)$ and thus $C_1\cdot C_2=2$.

Assume now that $C_1$ is not a
  double component. Since $v$ is non-vanishing on $C$ it follows $v\mid C_1$
  is a non-zero section of $\Lscr_1\otimes\Lscr_2 \mid C_1$. Hence
  $\deg(\Lscr_1\otimes \Lscr_2\mid C_1)\ge C_1\cdot C_2= 2$. Since
  $\deg(\Lscr_1\mid C_1)=1$ it follows that $\deg(\Lscr_2\mid C_1)\ge
  1$. Since $\deg(\Lscr_0\mid C_1)=0$ this case is done. 

  If $C_1$ is a double component but $v\mid C_1\neq 0$ then we may use
  the same argument. If on the other hand $v\mid C_1=0$ then $v=v'u$
  where $v'$ is now a section of  $\Lscr_1\otimes \Lscr_2$ which is
  non-vanishing on $C_1$.  We now use the same argument with $v'$ replacing
$v$. \qed
\end{itemize}
\def\qed{}\end{proof}

  Let $Q=(V_0,V_1,V_2,V_3,kw)$ be a geometric quintuple. Choose bases
  $x_i,y_i$ for $V_i$. Then $w=f x_3+g y_3$.
  To $Q$ we associate the variety $\Gamma_{012}\subset
  \PP(V_0^\ast)\times \PP(V_1^\ast)\times \PP(V_2^\ast)$ defined by
  the equations $\{f,g\}$. Let $p_i:\Gamma_{012}\r \PP(V_i^\ast)$ be
  the projections.  We write $\Lscr_i=p^\ast_i(\Oscr(1))$ and
  $\Gamma_{i,i+1}=(p_i,p_{i+1})(\Gamma_{012})$ for $i=0,1$. Note that
  the geometricity of $Q$ implies that $(p_0,p_1)$ and $(p_1,p_2)$ are
  closed embeddings.

We write $E$ for the functor which associates to $Q$ the quadruple
$U=(\Gamma_{012},\Lscr_0,\Lscr_1,\Lscr_2)$.  It is clear that there
are two cases.  In the first case we have $\Gamma_{012}\cong \Gamma_{01}\cong
\PP^1\times \PP^1$ and with this identification $\Lscr_0\cong
\Oscr(1,0)$ and $\Lscr_1\cong \Oscr(0,1)$. Furthermore
$(\Lscr_1,\Lscr_2)$ defines an isomorphism $\Gamma_{012}\cong
\PP^1\times \PP^1$. Inspecting the Picard group of $\Gamma_{012}$ this
will only be true if $\Lscr_2=\Oscr(1,0)$. Whence $U$ is linear.

In the second case $\Gamma_{01}$ is defined by the vanishing of a
$2\times 2$ determinant with bilinear entries. So it is a curve of
bidegree $(2,2)$ in $\PP^1\times \PP^1$. It follows from the
adjunction formula that $\Gamma_{012}\cong\Gamma_{01}$ has arithmetic genus
one. Furthermore by Proposition \ref{ref-5.4.2-36} together with
Riemann-Roch $\dim \Gamma(\Lscr_i)=2$ and hence the projection maps
$\Gamma_{012}\r \PP^1$ are given by the global sections of $\Lscr_i$.  In
addition the projections define inclusions
$\Gamma_{012}\cong\Gamma_{i,i+1}\hookrightarrow \PP^1\times \PP^1$,
$i=0,1$. Hence it follows that $U$ is a elliptic quadruple.
\begin{lemmas} 
\label{ref-5.4.10-47}
$Q$ is linear if and only if $E(Q)$ is linear.
\end{lemmas}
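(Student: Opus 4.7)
The strategy rests on Lemma~\ref{ref-5.2.1-29}, which says that a non-degenerate quintuple $Q$ is determined up to isomorphism by the two-dimensional subspace $R=kf+kg\subset V_0\otimes V_1\otimes V_2$ (together with the $V_i$). So it suffices to pin down $R$ intrinsically from $E(Q)$ in the linear case.

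For the direction $Q$ linear $\Rightarrow$ $E(Q)$ linear, I would just compute. Starting from the normal form \eqref{ref-5.3-32} and collecting the $x_3$- and $y_3$-coefficients yields
\[
f=-y_1\,q,\qquad g=x_1\,q,\qquad q:=x_0y_2-y_0x_2\in V_0\otimes V_2.
\]
Since $(x_1,y_1)$ has no common zero in $\PP(V_1^\ast)$, the scheme $\Gamma_{012}\subset\PP(V_0^\ast)\times\PP(V_1^\ast)\times\PP(V_2^\ast)$ cut out by $f=g=0$ coincides with $\{q=0\}=\Delta\times\PP(V_1^\ast)$, where $\Delta\subset\PP(V_0^\ast)\times\PP(V_2^\ast)$ is the diagonal. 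Under the resulting identification $\Gamma_{012}\cong\PP^1\times\PP^1$ the projections $p_0$ and $p_2$ both become the first projection while $p_1$ becomes the second, so $\Lscr_0\cong\Lscr_2\cong\Oscr(1,0)$ and $\Lscr_1\cong\Oscr(0,1)$, as required.

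For the reverse direction, the key observation is that $R$ always lies in the kernel of the multiplication map
\[
\mu\colon V_0\otimes V_1\otimes V_2\r \Gamma(\Lscr_0\otimes\Lscr_1\otimes\Lscr_2),
\]
since $f$ and $g$ vanish on $\Gamma_{012}$. When $E(Q)$ is linear, the codomain is $\Gamma(\Oscr(2,1))$, of dimension $6$, and $\mu$ is surjective by the standard fact that products of sections of the line bundles $\Oscr(1,0),\Oscr(0,1),\Oscr(1,0)$ generate $\Gamma(\Oscr(2,1))$ on $\PP^1\times\PP^1$. Hence $\dim\ker\mu=8-6=2=\dim R$, which forces $R=\ker\mu$. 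Fixing a standard linear quintuple $Q_0$, any isomorphism of quadruples $E(Q)\cong E(Q_0)$ induces, via pullback of sections and the canonical identifications $V_i\cong\Gamma(\Lscr_i)$ (canonical because $\Lscr_i=p_i^\ast\Oscr(1)$ with $p_i$ surjective and the two spaces of equal dimension), isomorphisms $V_i\cong V_i^0$ intertwining $\mu$ with $\mu^0$, and therefore carrying $R=\ker\mu$ to $R^0=\ker\mu^0$. Lemma~\ref{ref-5.2.1-29} then gives $Q\cong Q_0$. There is no real obstacle beyond this bookkeeping; the only mildly delicate points are the surjectivity of $\mu$ and the canonicity of the identifications $V_i\cong\Gamma(\Lscr_i)$, both of which are routine.
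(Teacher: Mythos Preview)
Your proof is correct and follows essentially the same approach as the paper. For the forward direction you make explicit the computation that the paper leaves as ``easy to check''; for the reverse direction both arguments identify $R$ with the kernel of the restriction map $V_0\otimes V_1\otimes V_2\to\Gamma(\Lscr_0\otimes\Lscr_1\otimes\Lscr_2)$, the only difference being that the paper writes this kernel down explicitly as $V_1\cdot(x_0y_2-y_0x_2)$ after identifying $\Gamma_{012}$ with $\{(p,q,p)\}$, whereas you obtain $R=\ker\mu$ by a dimension count and then transport along the isomorphism of quadruples.
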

\begin{proof} It is easy to check that the $w$ defined by \eqref{ref-5.3-32} yields a linear
  quadruple. Conversely assume that $E(Q)\cong(\PP^1\times
  \PP^1,\Oscr(1,0),\Oscr(0,1), \Oscr(1,0))$. Under this isomorphism we
  have $\Gamma_{01}=\Gamma_{12}=\PP^1\times \PP^1$ and
\[
\Gamma_{012}=\{(p,q,p)\mid p,q\in \PP^1\}
\]
We know that $w$ is of the form $f x_3+g y_3$ where $f,g\in
 \Gamma(\Oscr(1,1,1))$
 vanish on
$\Gamma_{012}$. All such functions are multiples of
 $x_0y_2-y_0x_2$. From this we deduce what we want.
\end{proof}
\begin{lemmas} 
\label{ref-5.4.11-48}
Assume that $Q$ is a geometric quintuple. Then $Q$ is
 determined up to isomorphism by $E(Q)$.
\end{lemmas}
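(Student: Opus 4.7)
The plan is to reconstruct the data of $Q$ canonically from $E(Q)$. By Lemma \ref{ref-5.4.10-47}, $Q$ is linear if and only if $E(Q)$ is linear, so the question splits into two cases. In the linear case, every linear quintuple admits bases for which $w$ takes the standard form \eqref{ref-5.3-32}; hence any two linear quintuples are isomorphic and there is nothing to prove. The remainder of the argument concerns the elliptic case.

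In the elliptic case, I would first recover the vector spaces $V_0,V_1,V_2$. Since $(p_0,p_1)$ and $(p_1,p_2)$ embed $\Gamma_{012}$ as a divisor of bidegree $(2,2)$ in $\PP^1\times\PP^1$, each individual projection $p_i\colon\Gamma_{012}\to\PP(V_i^\ast)$ is surjective. This forces the natural restriction $V_i\to\Gamma(\Lscr_i)$ to be injective, and by Proposition \ref{ref-5.4.2-36} together with Riemann-Roch both sides are two-dimensional, so this restriction is an isomorphism for $i=0,1,2$.

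Next I would recover the two-dimensional subspace $R\subset V_0\otimes V_1\otimes V_2$ spanned by the defining equations $f,g$ of $\Gamma_{012}$ as the kernel of the multiplication map $\Gamma(\Lscr_0)\otimes\Gamma(\Lscr_1)\otimes\Gamma(\Lscr_2)\to\Gamma(\Lscr_0\otimes\Lscr_1\otimes\Lscr_2)$. The key ingredient is the surjectivity of this map provided by Lemma \ref{ref-5.4.8-42}, combined with a Riemann-Roch dimension count that uses the tameness results of Lemmas \ref{ref-5.4.3-37} and \ref{ref-5.4.4-38} to see that the target has dimension $6$; the kernel must then have dimension exactly $2$. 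The non-degeneracy of $w$ (which follows from geometricity) ensures that the map $V_3^\ast\to V_0\otimes V_1\otimes V_2$ induced by $w$ is injective with image $R$, giving a canonical identification $V_3\cong R^\ast$; under this identification the line $kw\subset V_0\otimes V_1\otimes V_2\otimes V_3=(V_0\otimes V_1\otimes V_2)\otimes R^\ast$ is precisely the line through the canonical element of $R\otimes R^\ast=\End(R)$.

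The final step is to observe that all of these reconstructions are functorial: an isomorphism of quadruples $(\psi,t_0,t_1,t_2)\colon E(Q)\to E(Q')$ induces compatible isomorphisms $\Gamma(\Lscr_i')\to\Gamma(\Lscr_i)$ that respect multiplication, and therefore transport the reconstructed data of $Q'$ to that of $Q$. The main technical hurdle is the dimension count for $R$ — equivalently, the surjectivity in Lemma \ref{ref-5.4.8-42} together with the underlying tameness analysis — but with those tools in hand the rest of the argument is essentially linear algebra.
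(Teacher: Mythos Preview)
Your proof is correct and follows essentially the same route as the paper: reduce to the elliptic case via Lemma~\ref{ref-5.4.10-47}, identify $V_i$ with $\Gamma(\Lscr_i)$, and recover $R$ as the kernel of the multiplication map using the surjectivity in Lemma~\ref{ref-5.4.8-42} together with a dimension count. The paper shortcuts your explicit reconstruction of $V_3$ and $w$ by simply invoking Lemma~\ref{ref-5.2.1-29}, but otherwise the arguments coincide.
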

\begin{proof} Put  $U=E(Q)=(C,\Lscr_0,\Lscr_1,\Lscr_2)$. If $U$ is linear
  then this lemma follows from lemma \ref{ref-5.4.10-47}. So we assume
  that $U$ is elliptic. 
By lemma \ref{ref-5.4.8-42}
  $\Lscr_0\not\cong\Lscr_1$ and $\Lscr_1\not\cong\Lscr_2$.

Let $Q=(V_0,V_1,V_2,V_3,W)$. We already know by lemma \ref{ref-5.2.1-29}
that $Q$ is determined up to isomorphism by the corresponding two dimensional
subspace $R\subset V_0\otimes V_1\otimes V_2$.

Now from the construction of $U$ it follows that there is a complex
\begin{equation}
\label{ref-5.7-49}
0\r R\r \Gamma(\Lscr_0)\otimes \Gamma(\Lscr_1)\otimes
\Gamma(\Lscr_2)\r\Gamma(\Lscr_0\otimes \Lscr_1\otimes \Lscr_2)
\end{equation}
and by dimension counting as well as  surjectivity of the right most
map (lemma \ref{ref-5.4.8-42}) we
obtain that this complex is actually an exact sequence. Hence $R$ is
uniquely determined by $U$. This finishes the proof.
\end{proof}

\begin{lemmas} Assume that $Q$ is an elliptic geometric quintuple.
 Then $E(Q)$ is an admissible
  quadruple.
\end{lemmas}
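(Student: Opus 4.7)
The plan is to verify each of the three defining properties of admissibility---elliptic, regular, not prelinear---in turn, exploiting geometricity of $w$ at each step. Ellipticity of $U := E(Q)$ is immediate: since $Q$ is elliptic (hence not linear), Lemma~\ref{ref-5.4.10-47} forces $U$ to be not linear, and the dichotomy established in the paragraphs preceding that lemma then places $U$ in the elliptic case.

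For regularity I would argue by contradiction. If $U$ is not regular, then Lemma~\ref{ref-5.4.9-46}, combined with the identification of $R$ with $\ker(V_0\otimes V_1\otimes V_2\to\Gamma(\Lscr_0\otimes\Lscr_1\otimes\Lscr_2))$ coming from the exact sequence \eqref{ref-5.7-49} in the proof of Lemma~\ref{ref-5.4.11-48}, produces a non-zero decomposable element of $R$; say $r_1 = u\otimes v$ with $u\in V_0$ and $v\in V_1\otimes V_2$ both non-zero (the companion case $s\otimes t$ with $s\in V_0\otimes V_1$, $t\in V_2$ is entirely symmetric). Complete to a basis $\{r_1,r_2\}$ of $R$ and choose a basis $\{v_{3,1},v_{3,2}\}$ of $V_3$ so that $w=r_1\otimes v_{3,1}+r_2\otimes v_{3,2}$. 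Pick $\phi_3\in V_3^*$ with $\phi_3(v_{3,1})=1$, $\phi_3(v_{3,2})=0$ and $\phi_0\in V_0^*$ non-zero with $\phi_0(u)=0$. Then
\[
\langle\phi_3\otimes\phi_0,w\rangle = \phi_3(v_{3,1})\phi_0(u)\,v + \phi_3(v_{3,2})\langle\phi_0,r_2\rangle = 0,
\]
contradicting geometricity of $w$ at the consecutive index pair $(3,0)$ (in the companion case the same trick produces a contradiction at the pair $(2,3)$).

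For non-prelinearity, suppose for contradiction that $\Lscr_0\cong\Lscr_2$. By Lemma~\ref{ref-5.4.5-39} the multiplication $V_0\otimes V_2\to\Gamma(\Lscr_0\otimes\Lscr_2)$ has one-dimensional kernel; commutativity of section multiplication identifies this kernel with the antisymmetric line $\wedge^2 V_0\subset V_0\otimes V_0\cong V_0\otimes V_2$, so it is spanned by a rank-two tensor $q = e_1\otimes f_2 - e_2\otimes f_1$ for bases $e_\bullet, f_\bullet$ of $V_0, V_2$ matched under the isomorphism. Commutativity again shows that for every $v_1\in V_1$ the element $e_1\otimes v_1\otimes f_2 - e_2\otimes v_1\otimes f_1$ lies in $R$, yielding an injection $V_1\hookrightarrow R$ which must be an isomorphism by dimension. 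In dual coordinates $(x_i:y_i)$ on each $\PP(V_i^*)$, with $x_1, y_1$ dual to a chosen basis of $V_1$, the space $R$ is therefore spanned by $x_1(x_0y_2-y_0x_2)$ and $y_1(x_0y_2-y_0x_2)$. Consequently $\Gamma_{012} = \{x_0y_2 - y_0x_2 = 0\} \cong \PP^1\times\PP^1$, forcing $U$ to be linear and contradicting the ellipticity just established.

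The main obstacle is the isolation trick in the regularity step: the key insight is that the decomposable summand $r_1$ of $w$ can be singled out by contracting against any functional in $V_3^*$ that annihilates $v_{3,2}$, after which the decomposability of $r_1$ hands us a second functional (in $V_0^*$ or $V_2^*$) that kills the contracted tensor and breaks geometricity. The non-prelinearity step is less slick and depends on the concrete identification of $R$ inside $V_0\otimes V_1\otimes V_2$, but proceeds smoothly once commutativity of multiplication is exploited to pin down the kernel of $V_0\otimes V_2\to\Gamma(\Lscr_0\otimes\Lscr_2)$.
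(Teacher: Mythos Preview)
Your proof is correct and follows essentially the same approach as the paper's: you establish ellipticity via the linear/elliptic dichotomy, derive non-prelinearity by showing that $\Lscr_0\cong\Lscr_2$ forces $R$ to be spanned by $x_1(x_0y_2-y_0x_2)$ and $y_1(x_0y_2-y_0x_2)$ (the paper writes the same two elements and concludes $Q$ is linear directly, while you pass through $U$ linear---equivalent by Lemma~\ref{ref-5.4.10-47}), and handle regularity by contracting $w$ against a $\phi_3$ that isolates the decomposable relation and a $\phi_0$ (resp.\ $\phi_2$) that kills its rank-one factor. Your write-up is slightly more explicit about why $w$ admits the expansion $r_1\otimes v_{3,1}+r_2\otimes v_{3,2}$ with the prescribed $r_1$, but the substance is identical.
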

\begin{proof} As in the proof of lemma   \ref{ref-5.4.11-48} we have
  $U=E(Q)=(C,\Lscr_0,\Lscr_1,\Lscr_2)$. Again
  $\Lscr_0\not\cong\Lscr_1$ and $\Lscr_1\not\cong\Lscr_2$.

We use again the exact sequence \eqref{ref-5.7-49}.
If $\Lscr_0\cong \Lscr_2$ and if $x,y$ is a basis for
$\Gamma(\Lscr_0)\cong\Gamma(\Lscr_2)$ then $R$ contains $x x_1 y-y 
x_1 x$ and $x y_1 y-y 
y_1 x$. From this one deduces that $Q$ is linear, contradicting the
hypotheses. So $U$ is not prelinear.

Assume now that $U$ is not regular. Then according lemma
\ref{ref-5.4.9-46}, $R$ contains a relation of the form $uv=0$ with $\deg u=1$
or $\deg v=1$. The two cases being similar, we assume that we are in
the first case. The defining tensor $w$ for $Q$ is now of the form
$uvx_3+hy_3$ (after choosing suitable bases). We now choose $\phi_0$,
$\phi_3$ in such a way that $\phi_0(u)=0$, $\phi_3(y_3)=0$. Then we
have $\langle \phi_0\otimes \phi_3,w\rangle=0$, contradicting the fact
that $U$ is geometric.
\end{proof}
\subsection{Twisted homogeneous coordinate algebras} 
\label{ref-5.5-50}
Let us introduce the following ad hoc terminology inspired
\cite{Bondal}. If $C$ is a (proper) curve of
arithmetic genus one then a \emph{cubic elliptic helix} on $C$ is a sequence of
line-bundles $(\Lscr_i)_i$ of degree two on $C$ satisfying the
relation
\begin{equation}
\label{ref-5.8-51}
\Lscr_i\otimes \Lscr^{-1}_{i+1}\otimes
  \Lscr^{-1}_{i+2}\otimes \Lscr_{i+3}=\Oscr_C
\end{equation}
Below we will always use cubic elliptic helices so we will drop the
adjective ``cubic''.

The quadruple associated to  an elliptic helix will be
$(C,\Lscr_0,\Lscr_1,\Lscr_2)$. Note that an elliptic helix is determined up
to isomorphism by its associated quadruple.

If we start with the linear quadruple $(\PP^1\times
\PP^1,\Oscr(1,0),\Oscr(0,1), \Oscr(1,0))$ then \eqref{ref-5.8-51} defines a
sequence 
\[
\begin{cases}
\Lscr_{i}=\Oscr(1,0)&\text{if $i$ is even}\\
\Lscr_{i}=\Oscr(0,1)&\text{if $i$ is odd}
\end{cases}
\]
For uniformity we refer to this sequence as the  elliptic helix on
$\PP^1\times\PP^1$.

If $U$ is a linear or elliptic quadruple and $(C,(\Lscr_i)_i)$  its elliptic helix
we  define $B=B(U)$ as the $\ZZ$-algebra given by
$B_{ij}=\Gamma(C,\Lscr_i\otimes\cdots\otimes \Lscr_{j-1})$. If $U$ is
linear then a direct computation shows that $B$ is a linear quadric.

We denote by $R^iU$ the quadruple
$(C,\Lscr_i,\Lscr_{i+1},\Lscr_{i+2})$. 
First note the following. 
\begin{lemmas}
\label{ref-5.5.1-52}
\begin{enumerate}
\item If $U$ is elliptic regular then so is
  $R^iU$ for all $i$. 
\item If $U$ is admissible then so is
  $R^iU$ for all $i$.
\end{enumerate}
\end{lemmas}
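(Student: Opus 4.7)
The plan is to reduce both assertions to proving that $RU := R^1 U$ has the claimed property, then induct on $|i|$. The central computation is that the helix relation
\[
\Lscr_{j+3}\otimes\Lscr_{j+2}^{-1}=\Lscr_{j+1}\otimes\Lscr_j^{-1}
\]
telescopes to $\Lscr_{j+2}\otimes\Lscr_j^{-1}=\Lscr_2\otimes\Lscr_0^{-1}$ for every $j\in\ZZ$. From this single identity both (i) regularity and (ii) non-prelinearity of $R^iU$ drop out at once: the multidegrees of $\Lscr_j$ and $\Lscr_{j+2}$ agree on every component precisely when those of $\Lscr_0$ and $\Lscr_2$ do, and $\Lscr_j\cong\Lscr_{j+2}$ if and only if $\Lscr_0\cong\Lscr_2$. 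In particular every $\Lscr_{2m}$ shares the multidegree of $\Lscr_0\cong\Oscr_C(1,0)$ (in the embedding given by $(p_0,p_1)$) and every $\Lscr_{2m+1}$ that of $\Lscr_1\cong\Oscr_C(0,1)$. The same telescoping also produces the cleaner identity $\Lscr_2\otimes\Lscr_3^{-1}=\Lscr_0\otimes\Lscr_1^{-1}$.

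With regularity handled, only the ellipticity of $RU=(C,\Lscr_1,\Lscr_2,\Lscr_3)$ remains. Since $(\Lscr_1,\Lscr_2)$ already defines a closed embedding by the hypothesis on $U$, it suffices to show the same for $(\Lscr_2,\Lscr_3)$. I would apply Lemma \ref{ref-5.4.6-40}: distinctness of $\Lscr_2,\Lscr_3$ and tameness of $\Lscr_2\otimes\Lscr_3^{-1}$ follow from the identity above together with Lemmas \ref{ref-5.4.8-42} and \ref{ref-5.4.4-38} respectively; total degree two is clear; and ampleness of $\Lscr_2\otimes\Lscr_3$ is immediate once one notes that its multidegree matches that of $\Oscr_C(1,1)$, which is positive on every component of $C$. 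The one genuine issue is that $\Lscr_3$ must be globally generated.

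I expect this last point to be the main obstacle. To address it I would split on whether $\Lscr_3\cong\Lscr_0$. If so, $\Lscr_3$ inherits global generation from $\Lscr_0$. Otherwise, Lemma \ref{ref-5.4.5-39} applies to $(\Lscr_3,\Lscr_0)$ — the needed tameness of $\Lscr_3\otimes\Lscr_0^{-1}$ following from Lemma \ref{ref-5.4.4-38} together with symmetry of tameness under inversion on degree-zero bundles (valid since $\omega_C\cong\Oscr_C$) — and yields an isomorphism
\[
H^0(\Lscr_3)\otimes H^0(\Lscr_0)\r H^0(\Lscr_3\otimes\Lscr_0)=H^0(\Lscr_1\otimes\Lscr_2).
\]
A base point $p\in C$ of $\Lscr_3$ would force every element in the left-hand side, hence every section of $\Lscr_1\otimes\Lscr_2$, to vanish at $p$; but $\Lscr_1\otimes\Lscr_2$, being a tensor product of globally generated bundles, clearly admits a section nonvanishing at $p$, contradicting surjectivity. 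Hence $\Lscr_3$ is globally generated. Iterating this argument (with $R^{i-1}U$ playing the role of $U$) and treating $i<0$ by the symmetric argument finishes both parts of the lemma.
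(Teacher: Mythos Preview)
Your proof is correct and follows essentially the same architecture as the paper's: reduce to $i=\pm 1$ by induction, read off regularity (and, for part (2), non-prelinearity) directly from the helix relation, and then verify the embedding condition for $(\Lscr_2,\Lscr_3)$ via Lemma~\ref{ref-5.4.6-40}, checking tameness of $\Lscr_2\otimes\Lscr_3^{-1}$ through the identity $\Lscr_2\otimes\Lscr_3^{-1}\cong\Lscr_0\otimes\Lscr_1^{-1}$ and Lemma~\ref{ref-5.4.4-38}.

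The one substantive difference is in how you establish that $\Lscr_3$ is globally generated. The paper simply invokes Proposition~\ref{ref-5.4.7-41}, using that $\deg(\Lscr_3\mid D)=\deg(\Lscr_1\mid D)$ for every component $D$. Your route---splitting on whether $\Lscr_3\cong\Lscr_0$, and otherwise using Lemma~\ref{ref-5.4.5-39} together with the identity $\Lscr_3\otimes\Lscr_0\cong\Lscr_1\otimes\Lscr_2$ to transfer base-point-freeness from $\Lscr_1\otimes\Lscr_2$---is more elaborate but has a genuine advantage: it sidesteps the strict-positivity hypothesis in Proposition~\ref{ref-5.4.7-41}. Note that $\Lscr_1=\Oscr_C(0,1)$ (and hence $\Lscr_3$) can have degree zero on a component of bidegree $(0,1)$, so the paper's citation of Proposition~\ref{ref-5.4.7-41} is not literally justified as stated. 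Your detour actually patches this point. Otherwise the two arguments coincide.
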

\begin{proof} (2) follows from (1) so we concentrate on (1). By induction it suffices to do the cases $i=1$
  and $i=-1$. Both these cases are similar so we do $i=1$. We have
  $\Lscr_3=\Lscr_2\otimes\Lscr_1\otimes \Lscr_0^{-1}$. Since $U$ is
  regular we have that $\deg(\Lscr_3\mid D)=\deg(\Lscr_1\mid D)\ge 0$
  for every component  $D$ of $C$. So provided $R^1U$ is elliptic, it
  is clearly regular.
  
  Since $\deg\Lscr_3=2$ it follows from Proposition \ref{ref-5.4.7-41} that
  $\Lscr_3$ is generated by global sections. 
So by Proposition
  \ref{ref-5.4.2-36} $\Lscr_3$ is tame, whence by Riemann-Roch $\dim
  H^0(\Lscr_3)=2$. It follows that $\Lscr_3$ defines a map $p_3:C\r
  \PP^1$. We are left with showing that $(p_2,p_3)$ defines a closed
  embedding $C\r \PP^1\times \PP^1$. Now $\Lscr_2\otimes
  \Lscr_3^{-1}=\Lscr_0^{-1}\otimes \Lscr_1$ is clearly tame by lemma \ref{ref-5.4.4-38}. Furthermore
  by the discussion in the first paragraph of this proof, the degrees
  of the restrictions of $\Lscr_2\otimes \Lscr_3$ are the same as
  those of $\Lscr_0\otimes\Lscr_1$, whence they are strictly
  positive. We can now invoke lemma \ref{ref-5.4.6-40}.
\end{proof}
Assume now that $U$ is an admissible quadruple and let $B=B(U)$
 Put
$V_i=B_{i,i+1}$ and let $T$ be the ``tensor algebra'' of the
 $V_i$. That is
\[
T_{ij}=
\begin{cases}
V_i\otimes \cdots \otimes V_j&\text{if $j>i$}\\
k&\text{if $i=j$}\\
0&\text{otherwise}
\end{cases}
\]
There is a canonical map $T\r B$. Let $J$ be the kernel of this
map. 
From lemma \ref{ref-5.4.8-42} we obtain
\begin{lemmas} $J_{ij}=0$ for $j\le i+2$ and $\dim J_{i,i+3}=2$.
\end{lemmas}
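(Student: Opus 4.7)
The plan is to split the statement by the value of $j-i$ and handle each case using the admissibility tools assembled in \S\ref{ref-5.4-34}, together with the fact (Lemma \ref{ref-5.5.1-52}) that $R^iU$ is admissible whenever $U$ is.

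The cases $j<i$ and $j=i$ are immediate: $T_{ij}=0=B_{ij}$ and $T_{ii}=k=B_{ii}$ (the latter since $C$ is connected of arithmetic genus one). For $j=i+1$ the tensor algebra gives $T_{i,i+1}=V_i=\Gamma(\Lscr_i)=B_{i,i+1}$, and the canonical map is the identity. For $j=i+2$ the relevant map is the multiplication
\[
\Gamma(\Lscr_i)\otimes \Gamma(\Lscr_{i+1})\r \Gamma(\Lscr_i\otimes \Lscr_{i+1}).
\]
Since $R^iU$ is admissible, Lemma \ref{ref-5.4.8-42} (applied to $R^iU$, cf.\ \eqref{ref-5.4-43}) says this map is an isomorphism, so $J_{i,i+2}=0$.

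The only substantive case is $j=i+3$. Here the domain $V_i\otimes V_{i+1}\otimes V_{i+2}$ has dimension $8$. For the target, note that each $\Lscr_\ell$ is generated by global sections (Proposition \ref{ref-5.4.7-41} applies to every $R^iU$), hence is tame by Proposition \ref{ref-5.4.2-36}; iterating Lemma \ref{ref-5.4.3-37} shows $\Lscr_i\otimes \Lscr_{i+1}\otimes \Lscr_{i+2}$ is tame of total degree $6$, so Riemann--Roch gives $\dim B_{i,i+3}=6$. Surjectivity of the multiplication map is provided by Lemma \ref{ref-5.4.8-42}, equation \eqref{ref-5.6-45}, applied again to $R^iU$. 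A direct dimension count therefore yields $\dim J_{i,i+3}=8-6=2$.

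The main (very minor) obstacle is keeping track that the hypotheses of the lemmas in \S\ref{ref-5.4-34} are available not only for $U$ itself but for every shift $R^iU$; this is exactly what Lemma \ref{ref-5.5.1-52} supplies, and everything else is an application of Riemann--Roch together with tameness.
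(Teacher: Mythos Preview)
Your proof is correct and follows essentially the same approach as the paper, which simply records ``From Lemma \ref{ref-5.4.8-42} we obtain'' and leaves the details implicit; you have just spelled out the case analysis and the dimension count (the $\dim B_{i,i+3}=6$ step is exactly the argument the paper uses in the lemma immediately following). One small remark: invoking Proposition \ref{ref-5.4.7-41} for global generation of the $\Lscr_\ell$ is slightly roundabout, since positivity on every component is not obvious; it is cleaner to note that once $R^iU$ is elliptic, each $\Lscr_\ell$ is by definition the pullback of $\Oscr_{\PP^1}(1)$ along a morphism to $\PP^1$ and hence globally generated.
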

\begin{lemmas}
One has $\dim B_{ii+n}=2n$ (for $n>0$).
\end{lemmas}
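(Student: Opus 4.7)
The plan is to compute $\dim H^0(\Mscr)$ for $\Mscr = \Lscr_i \otimes \cdots \otimes \Lscr_{i+n-1}$ by applying Riemann--Roch on the arithmetic genus one curve $C$, after first verifying tameness so that $H^1(\Mscr) = 0$. The underlying strategy is the same one already used in the proof of Lemma~\ref{ref-5.5.1-52}, simply iterated.

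First I would pin down the degree and global generation of each individual $\Lscr_k$. For $k = 0, 1, 2$, $\Lscr_k$ has total degree $2$ and is generated by global sections because by hypothesis it defines a morphism $p_k : C \to \PP^1$ arising from an embedding into $\PP^1 \times \PP^1$ of bidegree $(2,2)$. For $k \geq 3$ or $k < 0$, I would invoke Lemma~\ref{ref-5.5.1-52} iteratively: since every translate $R^j U$ is admissible, the argument given there (which uses the helix relation, regularity, and Proposition~\ref{ref-5.4.7-41}) shows that every $\Lscr_k$ has total degree $2$ and is generated by global sections.

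Next I would observe that $\Mscr$, as a tensor product of globally generated line bundles, is itself globally generated, and hence tame by Proposition~\ref{ref-5.4.2-36}. Its total degree is $\sum_{k=i}^{i+n-1} \deg \Lscr_k = 2n$. Since $\deg \Mscr > 0$ we have $\Mscr \not\cong \Oscr_C$, and since $\Mscr$ is globally generated and nonzero we have $H^0(\Mscr) \neq 0$; the defining trichotomy of tameness (Definition~\ref{ref-5.4.1-35}) then forces $H^1(\Mscr) = 0$. Riemann--Roch on $C$, of arithmetic genus one, therefore gives
\[
\dim B_{i,i+n} \;=\; \dim H^0(\Mscr) \;=\; \chi(\Mscr) \;=\; \deg \Mscr + 1 - p_a(C) \;=\; 2n.
\]

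The only substantive issue is the potential reducibility of $C$, which could in principle make $H^1$ of a positive-degree line bundle nonzero and thus break the naive Riemann--Roch count; this is precisely what the notion of tameness was introduced to control, and all the tools (Propositions~\ref{ref-5.4.2-36} and \ref{ref-5.4.7-41}) have been set up upstream for exactly this purpose. So I expect no real obstacle beyond carefully invoking the right lemma at each step.
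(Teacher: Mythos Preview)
Your proposal is correct and follows essentially the same route as the paper's proof: global generation of each $\Lscr_k$ implies global generation of the tensor product, hence tameness by Proposition~\ref{ref-5.4.2-36}, hence Riemann--Roch applies. You have simply filled in details that the paper leaves implicit, in particular the justification (via Lemma~\ref{ref-5.5.1-52} and Proposition~\ref{ref-5.4.7-41}) that every $\Lscr_k$ in the helix is globally generated, and the explicit degree and genus computation.
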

\begin{proof}
As all the $(\Lscr_i)_i$ are generated
by global sections so are their tensor products and hence these tensor products
are tame by Proposition \ref{ref-5.4.2-36}. Hence the dimensions of the global section spaces of such
tensor products can be computed with the Riemann-Roch theorem, yielding the
result. 
\end{proof}

An easy modification of the proof of Lemma \ref{ref-5.4.5-39} (see also the proof
of Lemma \ref{ref-5.4.6-40}) yields
\begin{lemmas} $B$ is generated in degree one.
\end{lemmas}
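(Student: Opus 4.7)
The plan is to prove by induction on $n = j - i$ that the multiplication map $V_i \otimes V_{i+1} \otimes \cdots \otimes V_{j-1} \to B_{ij}$ is surjective. By associativity it is enough to show that $V_i \otimes B_{i+1,j} \to B_{ij}$ is surjective for all $i < j$. The case $n=1$ is trivial, and $n=2$ is \eqref{ref-5.4-43} of Lemma \ref{ref-5.4.8-42} applied to the admissible quadruple $R^iU$, which is admissible by Lemma \ref{ref-5.5.1-52}.

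For the inductive step assume $n \ge 3$ and set $\Mscr = \Lscr_{i+1}\otimes\cdots\otimes\Lscr_{j-1}$. Since $\Lscr_i$ is a degree $2$ line bundle generated by its $2$-dimensional space of global sections, there is an evaluation short exact sequence
\[
0 \to \Lscr_i^{-1} \to V_i \otimes_k \Oscr_C \to \Lscr_i \to 0.
\]
Tensoring with $\Mscr$ and taking cohomology gives an exact sequence
\[
V_i \otimes_k \Gamma(\Mscr) \to \Gamma(\Lscr_i \otimes \Mscr) \to H^1(\Lscr_i^{-1}\otimes \Mscr),
\]
where $\Gamma(\Mscr) = B_{i+1,j}$ by the inductive hypothesis (which already gives a surjection onto it from the tensor product of the $V_k$'s). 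Hence it suffices to show that $\Nscr := \Lscr_i^{-1}\otimes\Mscr = \Lscr_i^{-1}\otimes\Lscr_{i+1}\otimes\cdots\otimes\Lscr_{j-1}$ satisfies $H^1(\Nscr) = 0$.

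The strategy is to show $\Nscr$ is tame and has strictly positive total degree. For tameness, admissibility of $R^iU$ means $(p_i,p_{i+1})$ embeds $C$ in $\PP^1\times\PP^1$ so that $\Lscr_i,\Lscr_{i+1}$ play the role of $\Oscr(1,0),\Oscr(0,1)$; thus Lemma \ref{ref-5.4.4-38} gives that $\Lscr_i^{-1}\otimes\Lscr_{i+1}$ is tame of total degree $0$. Each subsequent $\Lscr_k$ for $k = i+2,\ldots,j-1$ is generated by global sections (Proposition \ref{ref-5.4.7-41}, as each has degree $2$ and the positivity of component degrees propagates along the helix thanks to the regularity part of admissibility), so iterating Lemma \ref{ref-5.4.3-37} shows $\Nscr$ is tame of non-negative total degree. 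A direct count gives $\deg\Nscr = -2 + 2(j-1-i) = 2(n-2) \ge 2 > 0$, so in particular $\Nscr \not\cong \Oscr_C$. Tameness then forces $\Gamma(\Nscr) = 0$ or $H^1(\Nscr) = 0$, and Riemann--Roch on the arithmetic genus one curve $C$ gives $\chi(\Nscr) = \deg\Nscr > 0$, ruling out the first alternative.

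The main obstacle is bookkeeping: one has to verify both the tameness and the positivity of component degrees at every stage of the iterated tensoring, and in particular to check that each $\Lscr_k$ is genuinely generated by global sections. This follows from the fact that the component degrees of $\Lscr_k$ depend only on the parity of $k$ (as a consequence of the helix relation \eqref{ref-5.8-51} together with regularity), so they coincide with those of $\Lscr_0$ or $\Lscr_1$, for which positivity is built into admissibility via the embedding into $\PP^1\times\PP^1$.
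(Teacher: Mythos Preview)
Your argument is correct and is precisely the approach the paper indicates (``an easy modification of the proof of Lemma~\ref{ref-5.4.5-39}, see also the proof of Lemma~\ref{ref-5.4.6-40}''): strip off one $V_i$ via the evaluation sequence and control the $H^1$ obstruction by tameness, iterating Lemmas~\ref{ref-5.4.3-37} and~\ref{ref-5.4.4-38}.

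One small slip in the bookkeeping: you claim the component degrees of $\Lscr_0,\Lscr_1$ are strictly positive, but they are only non-negative in general --- a $(2,2)$-curve in $\PP^1\times\PP^1$ may contain a fibre of one of the projections, on which $\Oscr(1,0)$ or $\Oscr(0,1)$ has degree zero. So Proposition~\ref{ref-5.4.7-41} does not literally apply. Global generation of each $\Lscr_k$ is more cleanly obtained from the fact you already invoke: by Lemma~\ref{ref-5.5.1-52} every $R^kU$ is admissible, hence elliptic, and the definition of an elliptic quadruple includes that the global sections of each of its line bundles define a morphism to $\PP^1$, i.e.\ each $\Lscr_k$ is generated by global sections. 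With this substitution the rest of your argument goes through unchanged.
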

The following theorem encodes more subtle properties of $B$. It is proved in
the same way as \cite[Theorem 6.6]{ATV1} except that one must replace
$\Lscr^{\sigma^i}$ by $\Lscr_i$. 
\begin{theorems} Let $U$ be an admissible quadruple and put $B=B(U)$.
\begin{enumerate}
\item If $b\in B_{ij}$ is such that $V_{i-1}b=0$ or $bV_j=0$ then $b=0$.
\item For $j-i\ge 5$ one has $J_{i,j}=T_{i,i+1}J_{i+1,j}+
          J_{i,i+3}T_{i+3,j}= T_{i,j-3}J_{j-3,j}+J_{i,j-1} T_{j-1,j}$.
\item Let $U_i=J_{i,i+4}/(T_{i,i+1}J_{i+1,i+4}+J_{i,i+3} T_{i+3,i+4})$
  and $W_i=T_{i,i+1}J_{i+1,i+4}\cap J_{i,i+3} T_{i+3,i+4}$. Then $\dim
  U_i=\dim W_i=1$. 
\item $W_i$ is a non-degenerate subspace of both
  $T_{i,i+1}J_{i+1,i+4}$ and of $J_{i,i+3} T_{i+3,i+4}$.
\end{enumerate}
\end{theorems}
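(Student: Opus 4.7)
The plan is to adapt the proof of \cite[Theorem 6.6]{ATV1} to the present $\ZZ$-algebra setting by replacing the sequence $\Lscr^{\sigma^i}$ coming from a single ample line bundle and an automorphism with our helix $(\Lscr_i)_i$. That proof relies only on the following geometric inputs: each $\Lscr_i$ has degree two and is globally generated; Riemann--Roch applies to the relevant tame line bundles; and $H^0$ of tensor products of the $\Lscr_i$ equals the tensor product of the individual $H^0$'s with controllable kernel. All of these are supplied by Propositions \ref{ref-5.4.2-36} and \ref{ref-5.4.7-41} together with Lemmas \ref{ref-5.4.3-37}--\ref{ref-5.4.8-42}. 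Crucially, Lemma \ref{ref-5.5.1-52} guarantees that every shift $R^iU$ is again admissible, so these hypotheses are available uniformly in $i$; this is what lets the ATV1 argument (originally formulated in terms of a single $\sigma$) be carried out index-by-index.

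For part (1), suppose $b\in B_{ij}$ satisfies $V_{i-1}b=0$. Viewing $b$ as a section of $\Lscr_i\otimes\cdots\otimes\Lscr_{j-1}$ and using that multiplication in $B$ is multiplication of global sections, the hypothesis reads $vb=0$ as a section of $\Lscr_{i-1}\otimes\cdots\otimes\Lscr_{j-1}$ for every $v\in H^0(\Lscr_{i-1})$. Because $\Lscr_{i-1}$ is globally generated (Proposition \ref{ref-5.4.7-41}), its global sections have no common zero on $C$, so $b$ must vanish at every point and hence $b=0$. The case $bV_j=0$ is symmetric. Part (2) is then handled by induction on $j-i$, using part (1) together with Lemma \ref{ref-5.4.5-39}-style surjectivity for multiplication of sections, following the template in \cite{ATV1}.

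For part (3) a dimension count does most of the work: $\dim B_{i,i+n}=2n$ yields $\dim J_{i,i+3}=2$ and $\dim J_{i,i+4}=8$; since $J_{i,i+3}\subset V_i\otimes V_{i+1}\otimes V_{i+2}$ is a subspace and tensoring with a vector space is exact, $\dim T_{i,i+1}J_{i+1,i+4}=\dim J_{i,i+3}T_{i+3,i+4}=4$. Inclusion--exclusion then gives $\dim U_i=\dim W_i$, so it suffices to pin down either number. A nonzero element of $W_i$ can be written down explicitly by taking the tensor that simultaneously represents the generators of $J_{i,i+3}$ multiplied on the right by $V_{i+3}$ and those of $J_{i+1,i+4}$ multiplied on the left by $V_i$, which shows $\dim W_i\ge 1$.

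The step I expect to be hardest is the opposite inequality $\dim W_i\le 1$ (equivalently $\dim U_i\le 1$). This is where admissibility, in particular the non-prelinearity condition $\Lscr_0\not\cong\Lscr_2$ and its shifts, is indispensable: without it an extra element of $W_i$ would appear and the argument would collapse. In the ATV1 setting this rigidity was provided by the nontriviality of $\sigma$; here the analogous role is played by the alternating pattern of the helix, which admissibility forbids from degenerating. Finally, part (4) is a formal consequence of part (1): a rank-one factorization of $w_i$ in either slot would produce, after contracting with a suitable covector, a nonzero element of $B$ annihilated by a full degree-one space, contradicting (1).
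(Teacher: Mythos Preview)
Your approach is exactly the paper's: it simply states that the result ``is proved in the same way as \cite[Theorem~6.6]{ATV1} except that one must replace $\Lscr^{\sigma^i}$ by $\Lscr_i$,'' with no further details. You have in fact supplied more of the argument than the paper does, correctly identifying Lemma~\ref{ref-5.5.1-52} as the reason the ATV1 hypotheses hold uniformly in $i$, and your treatment of (1)--(3) follows the ATV1 template accurately.

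One caution on your sketch of (4): the claim that degeneracy of $w_i$ yields ``a nonzero element of $B$ annihilated by a full degree-one space'' is not quite what falls out. If $w_i=v\otimes r$ in $V_i\otimes R_{i+1}$, contracting with a covector vanishing on $v$ kills $w_i$ entirely, and the information you extract from the $R_i\otimes V_{i+3}$ description only shows that certain relations in $R_i$ lie in $v\otimes(V_{i+1}\otimes V_{i+2})$; this produces elements of $B$ annihilated by the \emph{single} vector $v$, not by all of $V_i$, so part~(1) does not apply directly. The actual ATV1 argument for non-degeneracy is a bit more structural (it uses the interplay between the two decompositions of $w_i$ together with the specific form of $R_i$ and $R_{i+1}$), and you should follow it rather than shortcut through (1).
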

If $U$ is admissible then we define $A(U)$ as the quotient of $T$ by
the ideal generated by $(R_i)_i$ where $R_i=J_{i,i+3}$. If $U$ is
linear then we put $A(U)=B(U)$. 
By construction $A(U)$ is connected. 

Following \cite[Theorem 6.8]{ATV1} one can now prove the following results.
\begin{theorems} 
\label{ref-5.5.6-53}
Let $U=(C,\Lscr_0,\Lscr_1,\Lscr_2)$ be an admissible
  quadruple. Put $A=A(U)$, $B=B(U)$.
\begin{enumerate}
\item
The canonical map $A\r B$ is surjective. Let $K$ be the kernel of this
map. Then $K_{i,i+4}$ is one dimensional. Furthermore if $g_i$ are
non-zero elements of $K_{i,i+4}$ then $K$ is generated by these
$g_i$'s as left and as right ideal.
\item $A$ is an elliptic quadric. 
\item
The elements $(g_i)_i$ defined
  in (1) are non-zero divisors in $A$ in the sense that left
  multiplication by $g_i$ defines injective maps $A_{i+4,j}\r A_{i,j}$
  and right multiplication by $g_j$ defines injective maps $A_{i,j}\r
  A_{i,j+4}$.
\end{enumerate}
\end{theorems}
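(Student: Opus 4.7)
The plan is to follow the pattern of \cite[Theorem 6.8]{ATV1}, using assertions (1)--(4) of the previous theorem (which describe the ideal $J$ of relations of $B$ inside the tensor algebra $T$) as the main structural inputs.

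For (1), surjectivity of $A\to B$ is immediate: both rings agree in degree one and $B$ is generated in degree one. To describe $K:=\ker(A\to B)$, observe first that $K_{i,j}=0$ for $j\le i+3$, since $A_{i,i+3}=T_{i,i+3}/R_i=T_{i,i+3}/J_{i,i+3}=B_{i,i+3}$. In degree $(i,i+4)$,
\[
K_{i,i+4} \;=\; J_{i,i+4}/(V_i\, J_{i+1,i+4} \,+\, J_{i,i+3}\, V_{i+3}) \;=\; U_i
\]
is one-dimensional by assertion (3), so fix $g_i\in K_{i,i+4}$ non-zero. For $j-i\ge 5$, the decomposition $J_{i,j}=V_i\,J_{i+1,j}+J_{i,i+3}\, T_{i+3,j}$ of assertion (2) becomes, after passing to $A$ (where $J_{i,i+3}=R_i$ vanishes), the identity $K_{i,j}=V_i\cdot K_{i+1,j}$. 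Induction on $j-i$ combined with the generation of $A$ in degree one gives $K_{i,j}=A_{i,j-4}\cdot g_{j-4}$, so $K$ is left-generated by the $g_i$. The symmetric decomposition $J_{i,j}=T_{i,j-3}\,J_{j-3,j}+J_{i,j-1}\,V_{j-1}$ gives the right analogue $K_{i,j}=g_i\cdot A_{i+4,j}$, completing~(1).

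For (2) and (3) I would build and verify the expected minimal projective resolution
\begin{equation*}
0 \longrightarrow P_{i+4} \xrightarrow{\;g_i\;} P_{i+3}^{\,2} \xrightarrow{\;\alpha\;} P_{i+1}^{\,2} \xrightarrow{\;\beta\;} P_i \longrightarrow S_i \longrightarrow 0,
\end{equation*}
where $\beta$ is induced by a basis of $V_i$, $\alpha$ by the inclusion $R_i\hookrightarrow V_i\otimes A_{i+1,i+3}$ (using $\dim R_i=2$), and the leftmost map by the inclusion $W_i\hookrightarrow R_i\otimes V_{i+3}$ together with the non-degeneracy of $w_i$ from assertion (4). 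Exactness at $P_i$ and $P_{i+1}^{\,2}$ follows from the definition of $A$ by generators and relations.

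The main obstacle is proving exactness at $P_{i+3}^{\,2}$ and injectivity of the leftmost map (which is precisely assertion (3)), and I would treat these together via a Hilbert series bootstrap. The right-generation $K_{i,j}=g_i\cdot A_{i+4,j}$ from (1) gives $\dim A_{i,j}\le \dim A_{i+4,j}+\dim B_{i,j}$, with equality iff left multiplication by $g_i$ is injective. The Hilbert series of $B$ is computable by Riemann-Roch on $C$ thanks to the tameness results for admissible quadruples. Meanwhile, the presentation $A=T/(R)$ together with the higher-order relations $W_i$ coming from assertion (4) yields a lower bound on $\dim A_{i,j}$ via a truncated Koszul-type complex; here assertion (4) is decisive, as the non-degeneracy of $w_i$ in both factorizations $V_i\otimes R_{i+1}$ and $R_i\otimes V_{i+3}$ guarantees that no further hidden relations inflate the ideal beyond what the $W_i$ already force. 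Matching the two bounds forces both equalities, which simultaneously establishes that each $g_i$ is a non-zero divisor and that the displayed resolution is exact (the latter by a five-lemma comparison with the sequence $0\to P_{i+4,A}\xrightarrow{g_i}P_{i,A}\to P_{i,B}\to 0$). The AS-regularity axioms for $A$ then follow from a routine $\Ext$ computation against this resolution, which completes (2).
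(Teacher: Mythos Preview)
Your proposal is correct and follows precisely the route the paper indicates: the paper gives no proof of its own beyond citing \cite[Theorem~6.8]{ATV1} and remarking that Theorem~\ref{ref-5.5.6-53} is proved jointly with Theorem~\ref{ref-5.5.7-54}, which is exactly the joint Hilbert-series bootstrap you outline using assertions (1)--(4) of the preceding theorem. One small notational point: the leftmost arrow in your displayed resolution is the map induced by a generator $w_i$ of $W_i\subset R_i\otimes V_{i+3}$, not by $g_i\in K_{i,i+4}=U_i$ (these are different one-dimensional spaces), though your prose description of that map is correct.
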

In \cite{ATV1} Theorem \ref{ref-5.5.6-53} is proved jointly with the following
one.
\begin{theorems} 
\label{ref-5.5.7-54}
Let $U$ be either an admissible quadruple,
  or a linear quadruple. Put $A=A(U)$. Let $V_i,R_i,W_i$ be as above
  and let $P_i=P_{i,A}$, $S_i=S_{i,A}$ have their usual meaning.
\begin{enumerate}
\item The complexes of right modules with obvious maps
\begin{equation}
\label{ref-5.9-55}
0\r W_i\otimes P_{i+4}\r R_i\otimes P_{i+3}\r V_{i} \otimes P_{i+1} \r P_i
\r S_i\r 0
\end{equation}
are exact.
\item One has
\[
\Ext^n(S_i,P_j)=
\begin{cases}
k&\text{if $n=3$ and $j=i+4$}\\
0&\text{otherwise}
\end{cases}
\]
\end{enumerate}
\end{theorems}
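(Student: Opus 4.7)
The plan is to follow \cite[Theorem 6.8]{ATV1}, proving Theorem \ref{ref-5.5.6-53} and Theorem \ref{ref-5.5.7-54} jointly by a single induction covering both cases. In the linear case $A=A(U)=B(U)$ is the twisted homogeneous coordinate algebra of $\PP^1\times\PP^1$ for the sequence $(\Lscr_i)_i$, and (5.9) is a (rotated) Beilinson--Koszul resolution of $S_i$; its exactness reduces, via Riemann--Roch and the tameness statements from Proposition \ref{ref-5.4.2-36} and Lemma \ref{ref-5.4.3-37}, to standard cohomology vanishing on $\PP^1\times\PP^1$.

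For the admissible case I would establish by induction on $n=j-i$ the following three assertions simultaneously: (a) $A_{i,j}\to B_{i,j}$ is surjective with $\dim K_{i,j}$ given by the Euler characteristic predicted by (5.9); (b) the complex (5.9), truncated at degree $j$, is exact; (c) the elements $g_i\in K_{i,i+4}$ act as non-zero divisors in the degrees already covered. The base of the induction exploits the prequadric data: $A_{i,i+2}=V_i\otimes V_{i+1}$, the relations in degree $i+3$ are generated by $R_i$, and the relations in degree $i+4$ are generated on each side by $W_i$. Combined with $\dim B_{i,i+n}=2n$, this pins down $\dim A_{i,i+4}$ and forces $\dim K_{i,i+4}=1$, which identifies the generator $g_i$. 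The inductive step uses the preceding theorem on the structure of $J$ (the ``subtle relations'' statement giving $J_{i,j}=T_{i,i+1}J_{i+1,j}+J_{i,i+3}T_{i+3,j}$ and its mirror, together with $\dim U_i=\dim W_i=1$) to propagate the defining relations of $B$, and uses non-degeneracy of $w_i$ as a tensor in both $V_i\otimes R_{i+1}$ and $R_i\otimes V_{i+3}$ to propagate injectivity of the leftmost map in (5.9).

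The main obstacle I expect is assertion (c): without the $g_i$ being non-zero divisors, the dimension bounds for $K_{i,j}$ collapse beyond degree $i+4$ and the induction fails. Following \cite{ATV1}, the way through is that the non-degeneracy of $w_i$ in both factorizations, combined with one-dimensionality of $W_i$, forces any annihilator of $g_i$ to produce via the $W_i$-relations an element violating the inductive hypothesis on $B$. Once part (1) is in hand, Theorem \ref{ref-5.5.7-54}(2) follows by applying $\Hom_{\Gr(A)}(-,P_j)$ to (5.9): using $\Hom(P_k,P_j)=A_{j,k}$, the resulting complex of $k$-vector spaces is computed directly from the dimension formulas (Q1), and one sees that its cohomology vanishes except in degree $n=3$ at $j=i+4$, where a one-dimensional contribution isolates $W_i$.
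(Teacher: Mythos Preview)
Your proposal is correct and matches the paper's approach exactly: the paper gives no independent proof of this theorem, stating only that it is proved jointly with Theorem~\ref{ref-5.5.6-53} by following \cite[Theorem~6.8]{ATV1}, and your sketch is a faithful expansion of precisely that joint induction. One small refinement: for part~(2), dimension formulas alone do not compute the cohomology of $\Hom_A(-,P_j)$ applied to \eqref{ref-5.9-55}; the point (as in \cite{ATV1}) is that the non-degeneracy of $w_i$ in both $V_i\otimes R_{i+1}$ and $R_i\otimes V_{i+3}$ makes the dualized complex isomorphic to a shift of the left-module analogue of \eqref{ref-5.9-55} for $S_{i+4}$, whence the Gorenstein condition follows.
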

In particular one deduces:
\begin{corollarys} Let $U$ be as in the previous theorem. Then $A(U)$
  is a 3-dimensional cubic AS-regular $\ZZ$-algebra. 
\end{corollarys}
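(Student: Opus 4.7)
The plan is to verify, one by one, the four axioms of Definition \ref{ref-4.1.1-7} (and the cubic-shape requirement \eqref{ref-4.2-9}) for $A=A(U)$, reading all of the structural information off the resolution and $\Ext$ computation in Theorem \ref{ref-5.5.7-54}.

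First I would record connectedness. By construction $A$ is a quotient of the tensor algebra $T$ generated by the $V_i$ with $V_i=B_{i,i+1}$ of dimension $2$, modulo the relations $R_i\subset V_i\otimes V_{i+1}\otimes V_{i+2}$ in degree three; in particular $A$ is positively graded with $A_{ii}=k$, each $A_{ij}$ finite dimensional, and generated in degree one, so $A$ is connected in the sense of Section \ref{ref-3-2}.

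Next I would read the Hilbert dimensions and the cubic shape directly from \eqref{ref-5.9-55}. Since each term in the resolution is a finite direct sum of the projectives $P_j$ and $\dim(P_j)_l=\dim A_{jl}$, taking alternating sums in each column yields a recursion for $\dim A_{ij}$. The initial cases up to $j-i\le 2$ follow from the presentation (where $A_{i,i+1}=V_i$ and $A_{i,i+2}=V_i\otimes V_{i+1}$), and the recursion then forces the polynomial growth (Q1), giving condition (2). More importantly, because $\dim W_i=\dim U_i=1$ and $\dim V_i=\dim R_i=2$, the resolution \eqref{ref-5.9-55} has precisely the shape
\[
0\to P_{i+4}\to P^2_{i+3}\to P^2_{i+1}\to P_i\to S_i\to 0,
\]
which is \eqref{ref-4.2-9}; this simultaneously takes care of the cubic-shape requirement and of condition (3), since the projective dimension of $S_i$ is therefore exactly $3$ independently of $i$.

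Finally, for condition (4) I would appeal directly to Theorem \ref{ref-5.5.7-54}(2): the only non-vanishing $\Ext$ from an $S_k$ into $P_i$ occurs for $k=i-4$ in degree $n=3$, and is then one-dimensional. Hence for each fixed $i$,
\[
\sum_{j,k}\dim \Ext^j_{\Gr(A)}(S_{k,A},P_{i,A})=\dim\Ext^3_{\Gr(A)}(S_{i-4,A},P_{i,A})=1,
\]
which is (4). Thus $A(U)$ is a three-dimensional cubic AS-regular $\ZZ$-algebra.

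The only step that is not completely formal is the Hilbert series / polynomial growth bound (2); however this is immediate once \eqref{ref-5.9-55} is known to be exact, so no real obstacle remains beyond the content already packaged into Theorems \ref{ref-5.5.6-53} and \ref{ref-5.5.7-54}. The reduction of the corollary to those theorems is essentially a bookkeeping exercise.
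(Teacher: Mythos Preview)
Your argument is correct and is precisely the unpacking of the paper's one-line proof ``This follows easily from the previous theorem'': you verify the four axioms of AS-regularity and the cubic shape \eqref{ref-4.2-9} directly from the exact complex \eqref{ref-5.9-55} and the $\Ext$ computation in Theorem \ref{ref-5.5.7-54}(2). Two cosmetic remarks: the mention of $\dim U_i$ is irrelevant (only $W_i$, $R_i$, $V_i$ enter the resolution of $S_i$), and you could note explicitly that \eqref{ref-5.9-55} is automatically \emph{minimal} since the differentials raise the index and $A$ is connected, so the cubic shape in Definition \ref{ref-4.1.1-7} is indeed met.
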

\begin{proof} This follows easily from the previous theorem.
\end{proof}
\begin{corollarys} 
\label{ref-5.5.9-56} Let $U$ be either an admissible quadruple,
  or a linear quadruple. Then 
\begin{enumerate}
\item $A(U)$ and $B(U)$ are noetherian;
\item $\qgr(B(U))\cong \coh(C)$;
\item $\qgr(A(U))$ is $\Ext$-finite. 
\end{enumerate}
\end{corollarys}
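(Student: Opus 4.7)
The plan is to deduce all three claims from Theorems \ref{ref-5.5.6-53} and \ref{ref-5.5.7-54} using standard techniques from non-commutative projective geometry, in the order (2), (1), (3). Throughout, the tameness technology of Section \ref{ref-5.4-34} plays the same role that plain Riemann--Roch would play on a smooth irreducible curve.

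First I would prove (2). Set $\Mscr_{ij} = \Lscr_i \otimes \cdots \otimes \Lscr_{j-1}$, so that $B_{ij} = \Gamma(C, \Mscr_{ij})$ by construction. Since each $\Lscr_k$ has degree $2$ and is positive on every component (admissibility plus Lemma \ref{ref-5.5.1-52}), Proposition \ref{ref-5.4.7-41} shows that each $\Lscr_k$ is generated by global sections. By Lemma \ref{ref-5.4.3-37} the tensor products $\Mscr_{ij}$ are therefore tame of strictly positive total degree, so by Proposition \ref{ref-5.4.2-36} and Riemann--Roch $H^1(\Mscr_{ij}) = 0$ for $j > i+1$ and $\dim B_{ij} = 2(j-i)$. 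This vanishing, together with the surjectivity statements for multiplication maps (a straightforward iterated extension of Lemma \ref{ref-5.4.8-42}, using tameness of $\Mscr_{ij} \otimes \Lscr_j^{-1}$ via Lemma \ref{ref-5.4.4-38}), establishes that $(\Mscr_{0j})_j$ and its shifts form an ample sequence on $C$ in the sense of Polishchuk. The $\ZZ$-algebra twisted homogeneous coordinate ring formalism (see \cite{AZ} and the Polishchuk generalization alluded to in Section \ref{ref-5.1-47}) then identifies $\qgr(B) \cong \coh(C)$, with the equivalence sending $\Fscr \in \coh(C)$ to $\bigoplus_i \Gamma(C, \Fscr \otimes \Mscr_{0i})$.

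Next I deduce (1). For $B$: the equivalence $\qgr(B) \cong \coh(C)$ together with the observation that $\tors(B)$ is locally noetherian (objects in $\tors(B)$ are right bounded with finite-dimensional graded components because $B_{ij}$ is finite dimensional and $B$ is connected) gives $\gr(B)$ locally noetherian, whence $B$ is noetherian. For $A$: Theorem \ref{ref-5.5.6-53} provides a short exact sequence of $A$-bimodules $0 \to K \to A \to B \to 0$ where $K$ is generated by the regular normal elements $g_i \in A_{i,i+4}$. A standard Hilbert basis style lifting argument then transfers noetherianness from $B$ to $A$: given an ascending chain of noetherian submodules of $P_{i,A}$, its image in $B$ eventually stabilises, and then one iterates using the filtration by the two-sided ideal generated by the $g_i$'s, whose associated graded is a sum of shifts of $B$. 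Finiteness of the projective dimension of the simples (Theorem \ref{ref-5.5.7-54}) ensures this filtration has finite length on any noetherian module.

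Finally, for (3), the resolution (\ref{ref-5.9-55}) exhibits each simple $S_i$ as having a finite projective resolution in which each term is a finite direct sum of projectives $P_j$. Since $\Hom_A(P_j, N) = N_j$ is finite dimensional for any $N \in \gr(A)$ (because $A$ is connected and $N$ is noetherian), this gives $\dim \Ext^n_A(S_i, N) < \infty$ for all $i, n$ and noetherian $N$, and then by devissage on a filtration of a noetherian $M$ with simple subquotients, $\dim \Ext^n_A(M, N) < \infty$ for all noetherian $M, N$. Passing to $\qgr(A) = \gr(A)/\tors(A)$ preserves this finiteness by the standard Artin--Zhang argument, since $\tors$ is a localising subcategory closed under subs and quotients and the global dimension is finite.

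The main obstacle is step (2). The classical twisted coordinate ring story of \cite{ATV1} applies to \emph{one} line bundle with an automorphism, not to a sequence of genuinely different $\Lscr_i$'s on a possibly reducible, non-reduced curve of arithmetic genus one. One must carefully package the tameness results of Section \ref{ref-5.4-34} into an ample-sequence statement for $\ZZ$-algebras, verifying that the relevant $\Ext^1$ and multiplication-surjectivity conditions continue to hold uniformly in $i, j$. Once (2) is pinned down the remaining implications are essentially formal.
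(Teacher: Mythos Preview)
Your overall strategy matches the paper's: deduce (2) from a $\ZZ$-algebra twisted-coordinate-ring / Artin--Zhang argument, lift noetherianness from $B$ to $A$ via the regular normalizing sequence $(g_i)_i$ of Theorem~\ref{ref-5.5.6-53}, and obtain (3) from AS-regularity as in \cite{AZ}. Your treatment of (2) is more explicit than the paper's about the tameness inputs, which is fine.

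Two of your detail-level justifications are wrong, however. First, in the lifting of noetherianness to $A$: the sentence ``finiteness of the projective dimension of the simples \dots\ ensures this filtration has finite length on any noetherian module'' is false --- the filtration of $P_i$ by powers of the ideal generated by the $g_j$ is infinite, since the $g_j$ are regular --- and projective dimension plays no role here. What makes the standard Hilbert-basis lifting work is that each $g_j$ has strictly positive degree (namely $4$), so on any fixed graded component the filtration terminates; this is what lets the leading-term / associated-graded bookkeeping stabilise. Second, in (3): a noetherian $A$-module is not in general of finite length ($P_i$ is again a counterexample), so ``d\'evissage on a filtration of a noetherian $M$ with simple subquotients'' is unavailable. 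The Artin--Zhang route to $\Ext$-finiteness of $\qgr(A)$ instead passes through the $\chi$-condition and the identification of $\Ext_{\qgr}$ with a direct limit of $\Ext_{\gr}$'s over truncations $M_{\ge n}$; AS-regularity is precisely what guarantees $\chi$.
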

\begin{proof} Let $(C,(\Lscr_i)_i)$ be the elliptic helix associated
to $U$. That $B(U)$ is noetherian and $\qgr(B(U))\cong \coh(C)$
can e.g.\ be proved like in
\cite{AVdB}, replacing $\Lscr^{\sigma^i}$ by $\Lscr_i$. Alternatively
we can invoke a $\ZZ$-algebra version of the Artin-Zhang theorem \cite{AZ}. 

Since $B(U)=A(U)/((g_i)_i)$ the fact that $B(U)$ is noetherian easily implies
that $A(U)$ is noetherian. 

That $\qgr(A(U))$ is $\Ext$-finite follows from the AS-regular
property in the same way as in the graded case \cite{AZ}.
\end{proof}
We now finish the classification of quadrics.
\begin{theorems}
\label{ref-5.5.10-57}
All functors in the following diagram are equivalences
$$
\xymatrix{
&\{\text{\textrm{quadrics}}\}\ar[dr]^{{F}}&\\
\bigl\{
\txt{\textrm{admissible quadruples}\\\textrm{and linear quadruples}}
\bigr\}
\ar[ur]^A&&
\{
\text{\textrm{geometric quintuples}}
\}
\ar[ll]^E
}
$$
In this diagram we have only indicated the objects of the categories
in question. It is understood that the only homomorphisms we admit are
isomorphisms. 
\end{theorems}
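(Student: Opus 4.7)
The plan is to reduce the theorem to a single identification, namely the natural isomorphism $E \circ F \circ A \cong \id$ on the category of admissible and linear quadruples. Almost all ingredients are already in place. By Theorem \ref{ref-5.2.2-30}, $F$ is an equivalence between prequadrics and non-degenerate quintuples; since every quadric is uniquely a prequadric and $F(A)$ is geometric whenever $A$ is a quadric, the restriction of $F$ to quadrics is fully faithful into geometric quintuples. By Lemma \ref{ref-5.4.11-48}, $E$ is injective on isomorphism classes. By the corollary to Theorem \ref{ref-5.5.7-54}, $A(U)$ is a quadric for every admissible or linear quadruple $U$, and Lemma \ref{ref-5.4.10-47} together with the subsequent results of \S\ref{ref-5.4-34} confirm that $E$ lands in the correct category.

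Granting $E \circ F \circ A \cong \id$, all three equivalences fall out of a short chase in the groupoid of isomorphisms. For admissible or linear $U$, the quintuple $Q := F(A(U))$ satisfies $E(Q) \cong U$, so $E$ is essentially surjective; combined with Lemma \ref{ref-5.4.11-48} this makes $E$ an equivalence. For a geometric quintuple $Q$, set $U := E(Q)$; then $E(F(A(U))) \cong U = E(Q)$, so Lemma \ref{ref-5.4.11-48} gives $F(A(U)) \cong Q$, proving $F$ essentially surjective, hence (with Theorem \ref{ref-5.2.2-30}) an equivalence. For a quadric $B$, set $U := E(F(B))$; then $F(A(U)) \cong F(B)$ by the previous step, and full faithfulness of $F$ lifts this to $A(U) \cong B$.

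It therefore remains to prove $E(F(A(U))) \cong U$. The linear case is immediate from Example \ref{ref-5.1.1-25} and Lemma \ref{ref-5.4.10-47}. In the admissible case, with $U = (C, \Lscr_0, \Lscr_1, \Lscr_2)$, the definition of $A(U)$ gives $V_i = \Gamma(C, \Lscr_i)$ and degree-three relations $R_0 = J_{0,3}$, the two-dimensional kernel of the multiplication $V_0 \otimes V_1 \otimes V_2 \to \Gamma(C, \Lscr_0 \otimes \Lscr_1 \otimes \Lscr_2)$. The quintuple $F(A(U)) = (V_0, V_1, V_2, V_3, W_0)$ satisfies $W_0 \subset R_0 \otimes V_3$, so the two generators of $R_0$ appear as tridegree-$(1,1,1)$ equations cutting out the scheme $\Gamma_{012} = E(F(A(U)))$ in $\PP(V_0^\ast) \times \PP(V_1^\ast) \times \PP(V_2^\ast)$. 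The image of $C$ under $(p_0, p_1, p_2)$ lies in $\Gamma_{012}$ because elements of $R_0$ vanish there by definition, and the line bundles $p_i^\ast \Oscr(1)$ agree with $\Lscr_i$ by construction.

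The main obstacle is the reverse inclusion: showing $\Gamma_{012}$ coincides scheme-theoretically with $(p_0, p_1, p_2)(C)$. I would handle this by intersection theory in $\PP^1 \times \PP^1 \times \PP^1$. The class of $(p_0, p_1, p_2)(C)$ is $2(h_0 h_1 + h_0 h_2 + h_1 h_2)$ since $\deg \Lscr_i = 2$ for each $i$, while the class of a complete intersection of two $(1,1,1)$-divisors is $(h_0 + h_1 + h_2)^2 = 2(h_0 h_1 + h_0 h_2 + h_1 h_2)$. The classes match, and together with the known inclusion and the non-degeneracy properties of $W_0$ furnished by Theorem \ref{ref-5.5.6-53} (which prevent $\Gamma_{012}$ from acquiring embedded or spurious components), this forces the two subschemes to coincide. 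Once this identification is made, the quadruple data $(\Lscr_i)$ on $C$ is recovered as $p_i^\ast \Oscr(1)$, completing the proof.
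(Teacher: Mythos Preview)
Your overall strategy---reduce to showing $E\circ F\circ A\cong\id$, then deduce that all three functors are equivalences by a short groupoid chase---is exactly the paper's, and your treatment of the chase and of the linear case is fine.

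The difference is in the admissible case, where you must show $C=\Gamma_{012}$. You argue by intersection theory in $\PP^1\times\PP^1\times\PP^1$: both cycles have class $2(h_0h_1+h_0h_2+h_1h_2)$, so with the inclusion $C\subset\Gamma_{012}$ and ``no spurious components'' you are done. The paper's argument is shorter and avoids this computation entirely. From \S\ref{ref-5.4-34} it is already known that $E$ applied to any geometric quintuple yields either the linear quadruple or an elliptic one, in which case $\Gamma_{012}$ projects isomorphically via $(p_0,p_1)$ onto a $(2,2)$ divisor $\Gamma_{01}\subset\PP(V_0^\ast)\times\PP(V_1^\ast)$. Since $C$ is also embedded there as a $(2,2)$ divisor and $C\subset\Gamma_{01}$, equality is immediate. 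The linear possibility is excluded because $A(U)$ is an \emph{elliptic} quadric by Theorem~\ref{ref-5.5.6-53}(2), so $F(A(U))$ is not a linear quintuple, and Lemma~\ref{ref-5.4.10-47} then says $E(F(A(U)))$ is not linear.

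Your route is not wrong, but the justification you give for excluding a two-dimensional $\Gamma_{012}$ or embedded components---``non-degeneracy properties of $W_0$ furnished by Theorem~\ref{ref-5.5.6-53}''---is misplaced: that theorem says nothing about $W_0$ or about the geometry of $\Gamma_{012}$. What actually rules out $\dim\Gamma_{012}=2$ is precisely the dichotomy just described (equivalently, Lemma~\ref{ref-5.4.10-47} plus Theorem~\ref{ref-5.5.6-53}(2)); once $\Gamma_{012}$ is one-dimensional, it is a complete intersection in a smooth threefold and hence Cohen--Macaulay, so there are no embedded components and your class argument goes through. In short, your proof is salvageable, but the cleanest way to plug the gap is the paper's own argument, at which point the intersection-theory detour becomes unnecessary.
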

\begin{proof}
By Theorem \ref{ref-5.2.2-30} and lemma \ref{ref-5.4.11-48} we already know that
$E$ and $F$ are fully faithful.
So it suffices to show that $EFA$ is naturally equivalent to the identity
functor. This is trivial in the linear case so let
$Q=(C,\Lscr_0,\Lscr_1,\Lscr_2)$ be an admissible quadruple. Then
$Q'=EFA(Q)=(C', \Lscr_0',\Lscr'_1,\Lscr'_2)$ where $C'\hookrightarrow
\PP(V_0^\ast)\times \PP(V_1^\ast)\times \PP(V_2^\ast)$ with
$V_i=H^0(C,\Lscr_i)$ is defined by $R=\ker(H^0(C,\Lscr_0)\otimes
H^0(C,\Lscr_1)\otimes H^0(C,\Lscr_2)\r H^0(C,\Lscr_0\otimes
\Lscr_1\otimes \Lscr_2)$ and $\Lscr_i'$ is the inverse image of the
projections $C'=\PP(V_i^\ast)$. Thus obviously $C\subset C'$ and
$\Lscr_i=(\Lscr'_i)_{C'}$. Hence we need to show that $C=C'$. The only
possible problem is that perhaps $Q'$ is linear. But if $EFA(Q)$ is
linear then so is $Q$, contradicting the hypotheses.
\end{proof}
\begin{corollarys}
There is a commutative diagram
$$
\xymatrix{
&\{\text{\textrm{elliptic quadrics}}\}\ar[dr]^{{F}}&\\
\{
\text{\textrm{admissible quadruples}}
\}
\ar[ur]^A&&
\{
\text{\textrm{non-linear geometric quintuples}}
\}
\ar[ll]^E
}
$$
in which all functors are equivalences.
\end{corollarys}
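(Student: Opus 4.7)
The approach is to deduce this corollary directly from Theorem~\ref{ref-5.5.10-57}. The three categories appearing there decompose according to the dichotomy between linear and non-linear (i.e.\ elliptic/admissible) objects, and it suffices to verify that each of the equivalences $A$, $E$, $F$ preserves this dichotomy. Once this is established, the restricted functors are automatically equivalences between the complementary (non-linear) subcategories, and commutativity of the restricted diagram is inherited from that of the full diagram.

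First I would verify that $E$ sends non-linear geometric quintuples to admissible quadruples. This is essentially immediate from results already in the text: Lemma~\ref{ref-5.4.10-47} shows that $E$ sends linear quintuples to linear quadruples (and conversely), while the last lemma of Section~\ref{ref-5.4-34} shows that if $Q$ is a non-linear (equivalently, elliptic) geometric quintuple then $E(Q)$ is admissible. Hence $E$ restricts to a functor from non-linear geometric quintuples to admissible quadruples.

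For the other two functors I would argue by transport through the commutative diagram. Suppose $U$ is an admissible quadruple; if $A(U)$ were a linear quadric, then $F(A(U))$ would be the linear geometric quintuple associated with a linear quadric under the equivalence $F$. Applying Lemma~\ref{ref-5.4.10-47} to this linear quintuple, together with the isomorphism $EFA\cong\id$ established in the proof of Theorem~\ref{ref-5.5.10-57}, would force $U\cong EFA(U)$ to be a linear quadruple, contradicting admissibility. A symmetric argument using $AEF\cong\id$ shows that if $A$ is an elliptic quadric then $F(A)$ cannot be linear, since otherwise $A\cong AEF(A)$ would equal the linear quadric.

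Combining these three verifications, the restrictions of $A$, $E$, $F$ to the stated subcategories are well defined and remain mutually inverse up to natural isomorphism, giving the desired commutative triangle of equivalences. The argument is essentially bookkeeping once Theorem~\ref{ref-5.5.10-57} and Lemma~\ref{ref-5.4.10-47} are available; the only step requiring genuine content beyond the full diagram is the observation that the image of a non-linear quintuple under $E$ is admissible, not merely elliptic, and this is precisely what the final lemma of Section~\ref{ref-5.4-34} provides.
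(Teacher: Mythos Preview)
Your proposal is correct and follows the same approach as the paper: the paper's proof is simply ``This follows from Theorem~\ref{ref-5.5.10-57} if we take out the linear quadrics,'' and you have spelled out exactly what this entails, namely verifying that each of $A$, $E$, $F$ respects the linear/non-linear dichotomy. One minor simplification: for the functor $A$ you do not need the transport argument, since Theorem~\ref{ref-5.5.6-53}(2) already states directly that $A(U)$ is an elliptic quadric whenever $U$ is admissible.
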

\begin{proof}
This follows from Theorem \ref{ref-5.5.10-57} if we take out
the linear quadrics.
\end{proof}
\subsection{Comparison with the graded case}
\label{ref-5.6-58}
We now ask ourselves if we could have defined non-commutative quadrics
using only graded algebras. After all this is what happened for
non-commutative projective $\PP^2$'s (see Theorem \ref{ref-4.2.2-16}).

So the  question is when a quadric $B$ is of the form $\check{A}$ for a
graded algebra $A$. Following the same strategy as in the 
proof of Theorem \ref{ref-4.2.2-16} we
see that this will be the case if and only if $B\cong B(1)$ if and
only if
$(C,\Lscr_0,\Lscr_1,\Lscr_2)\cong (C,\Lscr_1,\Lscr_2,\Lscr_3)$ where
$(C,(\Lscr_i)_i$ is the elliptic helix associated to $B$. This is then
equivalent to the following condition on the quadruple associated to
$B$: there exists $\sigma\in \Aut(C)$ such that
\begin{equation}
\label{ref-5.10-59}
\begin{split}
&\Lscr_1=\sigma^\ast(\Lscr_0)\\
&\Lscr_2=\sigma^{\ast 2}(\Lscr_0)\\
&\sigma^{\ast 3}\Lscr_0\otimes (\sigma^{\ast 2}\Lscr_0)^{-1}\otimes
 (\sigma^{\ast}\Lscr_0)^{-1} \otimes \Lscr_0=\Oscr_C
\end{split}
\end{equation}
It is now easy to see that if $C$ is a smooth elliptic curve and
$\Lscr_0,\Lscr_1,\Lscr_2$ are generic line-bundles then there will be
no $\sigma$ satisfying \eqref{ref-5.10-59}. Thus there is no
analogue for Theorem \ref{ref-4.2.2-16} and hence our quadrics are genuinely
more general that cubic three-dimensional regular algebras.

On the other hand the following is true.
\begin{propositions}
\label{ref-5.6.1-60} Assume that $B$ is a quadric and $k$ is algebraically closed of
  characteristic 
  different from two. Then $B\cong B(2)$. 
\end{propositions}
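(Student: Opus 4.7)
The plan is to mimic closely the proof of Theorem \ref{ref-4.2.2-16}, replacing the quadratic triple machinery by the cubic quadruple classification of Proposition \ref{ref-5.1.2-26} / Theorem \ref{ref-5.5.10-57} and replacing "division by $3$" by "division by $2$".

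First I would establish a sublemma analogous to the one in the quadratic proof: two cubic three-dimensional regular $\ZZ$-algebras are isomorphic if and only if their associated quadruples $(C,\Lscr_0,\Lscr_1,\Lscr_2)$ are isomorphic. This is immediate from Theorem \ref{ref-5.5.10-57}, which shows that the assignment $A \mapsto (C,\Lscr_0,\Lscr_1,\Lscr_2)$ is an equivalence of categories. Applying this, the claim $B \cong B(2)$ reduces to producing an automorphism $\sigma \in \Aut(C)$ such that $\sigma^\ast\Lscr_i \cong \Lscr_{i+2}$ for $i=0,1,2$, where $(\Lscr_i)_i$ is the elliptic helix of $B$, since by construction the quadruple of $B(2)$ is $(C,\Lscr_2,\Lscr_3,\Lscr_4)$.

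Next I would handle the linear case trivially: if $C = \PP^1 \times \PP^1$ then the helix is $2$-periodic on the nose ($\Lscr_{2i} = \Oscr(1,0)$, $\Lscr_{2i+1} = \Oscr(0,1)$) so $\sigma = \Id_C$ works. For the elliptic case I would take $\sigma$ of the form $\eta(\Ascr)$ from Theorem \ref{ref-4.2.3-17} for a suitable $\Ascr \in \Pic^0(C)$. Because each $\Lscr_i$ has total degree $2$, formula \eqref{ref-4.6-18} gives
\[
\eta(\Ascr)^\ast \Lscr_i = \Lscr_i \otimes \Ascr^{\otimes -2},
\]
so the equation $\sigma^\ast\Lscr_0 = \Lscr_2$ becomes $\Ascr^{\otimes 2} = \Lscr_0 \otimes \Lscr_2^{-1}$. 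A direct check using the elliptic helix relation \eqref{ref-5.8-51} shows that once $\sigma^\ast\Lscr_0 = \Lscr_2$ holds, the identities $\sigma^\ast\Lscr_1 = \Lscr_3$ and $\sigma^\ast\Lscr_2 = \Lscr_4$ follow automatically, so only the one equation on $\Ascr$ needs to be solved.

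The main obstacle is then to show $\Lscr_0 \otimes \Lscr_2^{-1}$ admits a square root in $\Pic^0(C)$. By the regularity part of Proposition \ref{ref-5.1.2-26}(2)(b), $\Lscr_0$ and $\Lscr_2$ have the same degree on every component of $C$, so $\Lscr_0 \otimes \Lscr_2^{-1} \in \Pic^0(C)$. Since $k$ is algebraically closed and $\Pic^0(C)$ is a commutative algebraic group (an elliptic curve or a form of $\GG_m$ or $\GG_a$), the multiplication-by-$2$ map is surjective in every case once $\operatorname{char} k \neq 2$; the only potentially problematic situation is $\Pic^0(C) \cong \GG_a$ in characteristic two, which is excluded by hypothesis. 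An appropriate $\Ascr$ therefore exists, yielding the required $\sigma$ and hence $B \cong B(2)$.
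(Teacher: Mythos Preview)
Your proof is correct and follows essentially the same route as the paper's: handle the linear case trivially via the $2$-periodicity of the helix, then in the elliptic case reduce to finding $\sigma\in\Aut(C)$ with $\sigma^\ast\Lscr_i\cong\Lscr_{i+2}$, take $\sigma=\eta(\Ascr)$ from Theorem \ref{ref-4.2.3-17}, and solve $\Ascr^{\otimes 2}=\Lscr_0\otimes\Lscr_2^{-1}$ in $\Pic^0(C)$ using the hypothesis $\operatorname{char}k\neq 2$. The paper phrases the reduction slightly differently (it asks for $\sigma^\ast\Bscr=\Bscr\otimes\Lscr_2\otimes\Lscr_0^{-1}$ for all $\Bscr\in\Pic^2(C)$, which is equivalent once $\sigma=\eta(\Ascr)$), but the content is identical. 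One small remark: your description of $\Pic^0(C)$ as ``an elliptic curve or a form of $\GG_m$ or $\GG_a$'' is only literally accurate when $C$ is irreducible; for reducible $(2,2)$-divisors $\Pic^0(C)$ can be an extension of such groups, but your conclusion that multiplication by $2$ is surjective for $\operatorname{char}k\neq 2$ remains valid for any connected commutative algebraic group over an algebraically closed field.
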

\begin{proof} If $B$ is linear then the $B(n)$ are twisted homogeneous
  coordinate algebras obtained from the helix of $B$ and this helix is $2$-periodic
  (see the discussion at the beginning of \S\ref{ref-5.5-50}). Hence
  in this case the claim is trivial.

Assume now that $B$ is elliptic. Let $(C,(\Lscr_i)_i)$ be the elliptic helix associated
  to $C$. Now it should be true that  $(C,\Lscr_0,\Lscr_1,\Lscr_2)\cong
  (C,\Lscr_2,\Lscr_3,\Lscr_4)$. We have
  $\Lscr_3=\Lscr_1\otimes\Lscr_2\otimes \Lscr_0^{-1}$ and
  $\Lscr_4=\Lscr_2^{\otimes 2}\otimes \Lscr^{-1}_0$. Since in addition we
  have $\Lscr_2=\Lscr_0\otimes \Lscr_2\otimes \Lscr^{-1}_0$ it follows that
  it is sufficient to find $\sigma \in \Aut(C)$ such that
  $\sigma^\ast(\Bscr)= \Bscr\otimes \Lscr_2\otimes \Lscr^{-1}_0$ for
  all $\Bscr\in \Pic^2(C)$. 

Since $(C,\Lscr_0,\Lscr_1,\Lscr_2)$ is a quadruple associated to a
helix we have that $\Lscr_2\otimes \Lscr_0^{-1}\in \Pic^0(C)$. Hence
if we take $\Ascr$ in $\Pic^0(C)$ such that $\Ascr^{\otimes
  -2}=\Lscr_2\otimes \Lscr^{-1}_0$ (this is possible since the
characteristic is not two) then $\eta(\Ascr)^\ast(\Bscr)=\Bscr\otimes
\Ascr^{\otimes -2}=\Bscr\otimes \Lscr_2\otimes\Lscr_0^{-1}$ (see
Theorem \ref{ref-4.2.3-17}). This finishes the proof.
\end{proof}
We will denote the Veronese of $A$ associated to the subset
$2\ZZ\subset \ZZ$ by $A^\epsilon$ (``$\epsilon$''=even). We now have the following result. 
\begin{corollarys} \label{ref-5.6.2-61} Assume that  $k$ is algebraically closed of characteristic different
  from two and 
let $A$ be a quadric. Then there exists a $\ZZ$-graded algebra $B$
such that $\check{B}\cong A^\epsilon$. In particular $\QGr(A)\cong \QGr(B)$.
\end{corollarys}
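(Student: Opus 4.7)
The plan is to reduce the corollary to the two key ingredients already assembled in the paper: Proposition~\ref{ref-5.6.1-60}, which provides a degree-$2$ automorphism of any quadric $A$, and Lemma~\ref{ref-3.4-4}, which says that a $1$-periodic $\ZZ$-algebra comes from a $\ZZ$-graded ring. The last step, the equivalence of $\QGr$-categories, will be a direct application of Lemma~\ref{ref-3.5-5} to the Veronese inclusion $2\ZZ\subset\ZZ$.

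Concretely, I would first invoke Proposition~\ref{ref-5.6.1-60} to obtain an isomorphism of $\ZZ$-algebras $\phi:A\r A(2)$. Unwinding the definitions of $A(2)$ and of the even Veronese $A^\epsilon$, one has for all $i,j\in\ZZ$
\[
A^\epsilon(1)_{ij}=A^\epsilon_{i+1,j+1}=A_{2i+2,2j+2}=A(2)_{2i,2j}.
\]
Hence the components $\phi_{2i,2j}:A_{2i,2j}\r A_{2i+2,2j+2}$ assemble into an isomorphism of $\ZZ$-algebras $A^\epsilon\r A^\epsilon(1)$; in other words $A^\epsilon$ is $1$-periodic. Lemma~\ref{ref-3.4-4} then produces a $\ZZ$-graded algebra $B$ with $\check{B}\cong A^\epsilon$, giving the first assertion.

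For the second assertion, I would apply Lemma~\ref{ref-3.5-5} to $A$ with $J=2\ZZ$. This requires that $A$ be noetherian and generated in degree one: the first holds by Corollary~\ref{ref-5.5.9-56}, the second is property (Q2) from \S\ref{ref-5.2-27}. The subset $2\ZZ\subset\ZZ$ is infinite and not bounded above, so the lemma yields $\QGr(A)\cong\QGr(A^\epsilon)=\QGr(\check{B})=\QGr(B)$, as required.

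There is really no serious obstacle here; the corollary is a clean consequence of results already proved. The only point that deserves a moment of care is the bookkeeping in the identification $A^\epsilon(1)_{ij}=A(2)_{2i,2j}$, since one must be attentive to the two different ``shift'' conventions (shift as a $\ZZ$-algebra on $A$ versus shift as a $\ZZ$-algebra on the reindexed Veronese $A^\epsilon$); once this is checked, the argument assembles mechanically from Proposition~\ref{ref-5.6.1-60}, Lemma~\ref{ref-3.4-4} and Lemma~\ref{ref-3.5-5}.
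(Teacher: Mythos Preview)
Your proof is correct and follows the same path as the paper: invoke Proposition~\ref{ref-5.6.1-60} to get $A\cong A(2)$, deduce that $A^\epsilon$ is $1$-periodic, apply Lemma~\ref{ref-3.4-4} to obtain $B$, and then use Lemma~\ref{ref-3.5-5} for the equivalence $\QGr(A)\cong\QGr(A^\epsilon)$. You have simply unpacked the index bookkeeping and cited the noetherianity and degree-one generation hypotheses explicitly, which the paper leaves implicit.
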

\begin{proof} Since $A\cong A(2)$, we also have $A^\epsilon\cong
  A^\epsilon(1)$. Hence the first claim follows from Proposition \ref{ref-5.6.1-60}. The second claim
  follows from lemmas \ref{ref-3.4-4} and \ref{ref-3.5-5}.
\end{proof}
\section{Non-commutative quadrics as hypersurfaces.}
In this section we show that a non-commutative quadric can be obtained
as a hypersurface in a non-commutative $\PP^4$.

We will say that an AS-regular $\ZZ$-algebra is \emph{standard of
  dimension $n$} if the
minimal resolution of the simples has the form
\[
0\l S_i \l P_i\l P_{i+1}^n\l  P_{i+2}^{\left(\begin{smallmatrix}n\\2
  \end{smallmatrix}\right)}\l \cdots \l P_{i+n-1}^{\left(\begin{smallmatrix}n\\2
  \end{smallmatrix}\right)} \l P_{i+n}\l 0
\]
We employ the same terminology for graded algebras. As in Definition
\ref{ref-4.1.2-10} it makes sense to think of $\QGr(A)$, with $A$ a standard
noetherian AS-regular $\ZZ$-algebra of dimension $n$, as a
non-commutative $\PP^{n-1}$.

Let $\alpha$ be an automorphism of degree $-n$ of a $\ZZ$-algebra $A$.
A \emph{sequence of normalizing elements inducing $\alpha$} is a
sequence of regular elements $C_i\in A_{i,i+n}$ such that we have
$C_{i} x=\alpha(x) C_{j}$ for all $i,j$ and for every $x\in A_{i,j}$.
We say that $(C_i)_i$ is regular if left and right multiplication by
$C_i$ define injections $A_{i+n,j}\r A_{ij}$ and $A_{ji}\r A_{j,i+n}$.

Assume now that $A$ is a $k$-$\ZZ$-algebra. If $\lambda_i\in k$ are
arbitrary non-zero scalars then sending $a\in A_{ij}$ to
$\lambda_i\lambda^{-1}_ja$ defines an automorphism of $A$. We call
such automorphisms scalar. Two automorphisms of degree $n$ are said to
be equivalent if they differ by a scalar automorphism.

Now let $A$ be a quadric
 We define a hull of $A^\epsilon$  as a surjective homomorphism of $2\ZZ$-algebras $\phi:D\r A^\epsilon$ where
 $D$ is a four-dimensional standard
AS-regular $2\ZZ$-algebra and
the kernel of $\phi$ is generated by 
sequence of regular normalizing elements $(C_{2i})_i\in D_{2i,2i+4}$.

If $\phi:D\r A^\epsilon$ is a hull then the $C_{2i}$ induce an automorphism $\alpha$ of
degree $-2$
 of $D$. Since the 
$C_{2i}$ are only determined up to a scalar, $\alpha$ is only
determined up to equivalence. We will write $a(\phi)$ for the
equivalence class of $\alpha$.

An interesting problem is to classify the hulls of $A^\epsilon$. 
This problem was partially solved in \cite{BVdB} in the graded case and it turns
out that the approach in loc.\ cit.\ generalizes in a straightforward way to
$\ZZ$-algebras.  On obtains the following
\begin{theorem} \label{ref-6.1-62} (See \cite[Prop.\ 3.2, Rem.\
  3.3]{BVdB}) Assume that $\alpha$ is an automorphism of degree $-4$
  of $A$ and denote its restriction to $A^\epsilon$ by the same letter. Then
  up to isomorphism there is at most one hull $\phi$ of $A^\epsilon$ such
  that $a(\phi)\sim\alpha$. If we take $\alpha=\theta^{-1}$ (where
  $\theta$ is as defined in \S\ref{ref-5.2-27}) then an associated
  hull exists.
\end{theorem}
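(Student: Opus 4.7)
The strategy is to transplant the proof of \cite[Prop.~3.2, Rem.~3.3]{BVdB}, which treats the graded case, to the $\ZZ$-algebra setting. Both parts reduce to manipulations with the relation spaces of $D$ inside the tensor $\ZZ$-algebra on the $V_i:=D_{2i,2i+2}$.

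For uniqueness, suppose $\phi:D\to A^\epsilon$ is a hull with $a(\phi)\sim\alpha$. The Hilbert series of $D$ forces $\dim D_{2i,2i+2}=4$, which matches $\dim A^\epsilon_{2i,2i+2}$ by property (Q1); hence $\phi$ is an isomorphism in the first graded slice and we identify $V_i=A^\epsilon_{2i,2i+2}$. The relation spaces $R^D_i\subset V_i\otimes V_{i+1}$ have dimension $6$ and sit inside $R^\epsilon_i:=\ker(V_i\otimes V_{i+1}\to A^\epsilon_{2i,2i+4})$ of dimension $7$, hence as codimension-one subspaces. Choosing a lift $\tilde{C}_{2i}\in R^\epsilon_i$ of the normalizing element $C_{2i}$, the identity $C_{2i}\cdot x=\alpha(x)\cdot C_{2i+2}$ in $D$ (for $x\in V_{i+2}$) translates to
\[
\tilde{C}_{2i}\otimes x-\alpha(x)\otimes\tilde{C}_{2i+2}\in R^D_i\otimes V_{i+2}+V_i\otimes R^D_{i+1}.
\]
The linear-algebra analysis of \cite[Prop.~3.2]{BVdB} then shows that the lines $k\tilde{C}_{2i}\subset R^\epsilon_i$ are uniquely determined by $\alpha$, and that the $R^D_i$ are forced by the inclusion above together with the requirement $\tilde{C}_{2i}\notin R^D_i$. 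Hence any two hulls with $a(\phi)\sim\alpha$ are isomorphic.

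For existence with $\alpha=\theta^{-1}$, the automorphism $\theta$ from \S\ref{ref-5.2-27} is built from canonical identifications $V_{i+4}\cong V_i$ that respect the defining tensors $w_i$; it is precisely this compatibility that guarantees that the linear system from the uniqueness step admits a solution, producing canonical lifts $\tilde{C}_{2i}\in R^\epsilon_i$ and a canonical codimension-one subspace $R^D_i\subset R^\epsilon_i$. One then takes $D:=T/(R^D_i)_i$, with $T$ the tensor $\ZZ$-algebra on the $(V_i)_i$, and the surjection $\phi:D\to A^\epsilon$ is induced by $R^D_i\subset R^\epsilon_i$.

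The principal obstacle, and the technical core of \cite{BVdB}, is the verification that $D$ is indeed a four-dimensional standard AS-regular $\ZZ$-algebra and that $(C_{2i})_i$ is a regular normalizing sequence with $D/(C_{2i})\cong A^\epsilon$. This is achieved by lifting the minimal resolution of $S_{i,A}$ from Theorem~\ref{ref-5.5.7-54} to a minimal resolution of $S_{i,D}$ of the required shape, by splicing in one extra free term corresponding to $C_{2i}$; the Hilbert series $\dim D_{2i,2j}=\binom{j-i+3}{3}$ and the regularity of the $C_{2i}$ both follow. The arguments of \cite{BVdB} transplant from the graded to the $\ZZ$-algebra setting with only bookkeeping changes, the only new element being the need to track the index $i$ of the normalizing elements and of the automorphism throughout.
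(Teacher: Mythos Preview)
The paper does not actually supply a proof of this theorem: it states the result with the citation ``See \cite[Prop.~3.2, Rem.~3.3]{BVdB}'' and, just before the statement, remarks that ``the approach in loc.\ cit.\ generalizes in a straightforward way to $\ZZ$-algebras.'' Your proposal is exactly a sketch of that transplantation, so it is in complete agreement with the paper's own treatment; the dimension counts ($\dim V_i=4$, $\dim R^\epsilon_i=7$, $\dim R^D_i=6$) and the reduction to the linear-algebra analysis of \cite{BVdB} are the right ingredients.
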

\begin{remark}
Note that since $A$ is noetherian (see Corollary \ref{ref-5.5.9-56}) so is
$A^\epsilon$ and from this we deduce that any hull is noetherian as well.
\end{remark}
From Theorem \ref{ref-6.1-62} we easily obtain the following result. 
\begin{corollary}
Let $X=\QGr(A)$ be a non-commutative quadric. Then $X$ can be embedded
as a divisor \cite{jorgensen} in a non-commutative $\PP^3$.
\end{corollary}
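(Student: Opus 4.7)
The plan is to derive this corollary directly from Theorem \ref{ref-6.1-62}. Take $\alpha=\theta^{-1}$, where $\theta$ is the degree-$4$ automorphism of $A$ constructed in Section \ref{ref-5.2-27}. Theorem \ref{ref-6.1-62} then produces a hull $\phi:D\twoheadrightarrow A^\epsilon$ in which $D$ is a four-dimensional standard AS-regular $2\ZZ$-algebra and $\ker\phi$ is generated by a regular sequence of normalizing elements $(C_{2i})_i\in D_{2i,2i+4}$. By the convention established at the beginning of this section, $\QGr(D)$ is by definition a non-commutative $\PP^3$.

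Next I would argue that $\phi$ induces a divisor embedding $\QGr(A^\epsilon)\hookrightarrow \QGr(D)$ in the sense of \cite{jorgensen}. The surjection of $2\ZZ$-algebras gives, in the standard way, a fully faithful exact inclusion of quotient categories, and because $\ker\phi$ is cut out by a regular normalizing sequence, this is precisely the setup in which Jorgensen defines a hypersurface/divisor: each $C_{2i}$ is a normal non-zero-divisor, so the quotient $D/(C_{2i})_i = A^\epsilon$ plays the role of a divisor in $\QGr(D)$.

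Finally, I would pass from $A^\epsilon$ back to $A$ by invoking Lemma \ref{ref-3.5-5}. Since $A$ is noetherian (Corollary \ref{ref-5.5.9-56}) and generated in degree one by property (Q2), and since $2\ZZ\subset \ZZ$ is infinite and unbounded above, the restriction functor and its left adjoint induce an equivalence $\QGr(A)\cong \QGr(A^\epsilon)$. Composing this equivalence with the divisor embedding above yields
\[
X=\QGr(A)\hookrightarrow \QGr(D)
\]
realizing $X$ as a divisor in a non-commutative $\PP^3$.

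The only point that requires real care is the passage from ``$\ker\phi$ is generated by a regular normalizing sequence of $2\ZZ$-algebra elements'' to ``the image is a divisor in Jorgensen's sense''. Jorgensen's setup is usually formulated for $\ZZ$-graded rings, so one must translate the hull data into a form compatible with his framework; this translation is formal (normality plus regularity of each $C_{2i}$ is exactly what the definition of a hull supplies), but it is the one step that is not automatic and should be spelled out, possibly by first replacing $D$ and $A^\epsilon$ by associated $\ZZ$-graded rings where appropriate.
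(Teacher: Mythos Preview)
Your argument is correct and is exactly the approach the paper intends: the paper simply states that the corollary follows ``easily'' from Theorem \ref{ref-6.1-62}, and you have spelled out precisely those easy steps (existence of the hull with $\alpha=\theta^{-1}$, noetherianity of $D$ from the Remark, and the identification $\QGr(A)\cong\QGr(A^\epsilon)$ via Lemma \ref{ref-3.5-5}). Your caveat about matching J{\o}rgensen's graded framework to the $\ZZ$-algebra setting is fair, and the paper does not address it either.
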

Note following result.
\begin{lemma}
\label{ref-6.4-63}
Let $\alpha$, $\beta$ be an automorphisms of $A$ of degrees $-4$, $n$
respectively, which commute up to equivalence. Let $\phi:D\r A^\epsilon$ be
a hull associated to $\alpha$. Then there is an automorphism
$\beta':D\r D(4)$ and a commutative diagram
\[
\begin{CD}
D @>\phi >> A^\epsilon\\
@V\beta' VV @VV\beta V\\
D(n) @>>\phi(n)> A^\epsilon(n)
\end{CD}
\]
\end{lemma}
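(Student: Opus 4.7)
The plan is to exhibit two different hulls of $A^\epsilon(n)$ and invoke the uniqueness half of Theorem~\ref{ref-6.1-62}. Since $\beta$ has degree $n$ and commutes up to equivalence with $\alpha$ (which has even degree $-4$), $\beta$ restricts to a map $A^\epsilon\r A^\epsilon(n)$, which I still denote by $\beta$; implicitly $n$ is even, so $D(n)$ remains a $2\ZZ$-algebra.

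First, shift the given hull to obtain $\phi(n)\colon D(n)\r A^\epsilon(n)$. Its defining normalizing sequence is the shift of $(C_{2i})_i$, and shifting preserves the scalar-equivalence class, so $a(\phi(n))\sim\alpha$. Second, form $\psi:=\beta\circ\phi\colon D\r A^\epsilon(n)$. Since $\beta$ is an isomorphism, $\psi$ is surjective with $\ker\psi=\ker\phi$, a two-sided ideal generated by the same regular normalizing sequence $(C_{2i})_i$. Hence $\psi$ is also a hull of $A^\epsilon(n)$.

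Now compute $a(\psi)$. Let $\alpha_D$ denote the automorphism of $D$ induced by $(C_{2i})_i$; by the definition of $a(\phi)$ one has $\phi\circ\alpha_D=\alpha\circ\phi$ (up to a scalar). For any $y\in A^\epsilon(n)$ and any lift $\widetilde y\in D$ with $\psi(\widetilde y)=y$,
\[
\psi(\alpha_D(\widetilde y))=\beta(\phi(\alpha_D(\widetilde y)))=\beta(\alpha(\phi(\widetilde y)))=(\beta\alpha\beta^{-1})(y),
\]
so the automorphism of $A^\epsilon(n)$ corresponding to $\psi$ via its normalizing sequence is the conjugate $\beta\alpha\beta^{-1}$. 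The hypothesis that $\alpha$ and $\beta$ commute up to equivalence yields $\beta\alpha\beta^{-1}\sim\alpha$, hence $a(\psi)\sim a(\phi(n))$.

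By the uniqueness clause of Theorem~\ref{ref-6.1-62} (applied to $A^\epsilon(n)$ with associated class $\alpha$), the two hulls $\phi(n)$ and $\psi$ are isomorphic, and the isomorphism of their sources is an isomorphism $\beta'\colon D\r D(n)$ of $2\ZZ$-algebras satisfying $\phi(n)\circ\beta'=\beta\circ\phi$, exactly the commutative square of the lemma. The main obstacle is the bookkeeping in the computation of $a(\psi)$: one has to verify that postcomposing $\phi$ with $\beta$ on the target conjugates the induced automorphism by $\beta$ (rather than, say, multiplying it by $\beta$), which is what forces the commutation hypothesis to enter in precisely the form ``up to a scalar automorphism''.
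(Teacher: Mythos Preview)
Your proof is correct and is precisely a detailed expansion of the paper's one-line argument ``This follows from the uniqueness of $\phi$, up to isomorphism.'' You have spelled out what the paper leaves implicit: that $\psi=\beta\circ\phi$ is a second hull of $A^\epsilon(n)$, that its associated automorphism class is the conjugate $\beta\alpha\beta^{-1}$, and that the commutation hypothesis then forces $a(\psi)\sim a(\phi(n))$, so Theorem~\ref{ref-6.1-62} yields the desired $\beta'$. (Note that the ``$D(4)$'' in the statement is a typo for $D(n)$, as the diagram makes clear and as you wrote.)
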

\begin{proof} This follows from the uniqueness of $\phi$, up to isomorphism.
\end{proof}
We combine this with the following lemma.
\begin{lemma} Let $\gamma$ be an arbitrary automorphism of degree
  $n$ of $A$. 
Then $\gamma$
commutes with $\theta$, up to equivalence.
\end{lemma}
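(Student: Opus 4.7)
The plan is to use the fact that $\theta$ is defined canonically from the one-dimensional spaces $W_i\subset V_i\otimes R_{i+1}\cap R_i\otimes V_{i+3}$ and that $\gamma$, being an isomorphism of $\ZZ$-algebras, automatically respects every piece of this datum.

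First, since $\gamma\colon A\to A(n)$ sends $V_i=A_{i,i+1}$ to $V_{i+n}$ and carries relations to relations, it also restricts to isomorphisms $R_i\to R_{i+n}$, and hence $\gamma^{\otimes 4}$ identifies $V_i\otimes R_{i+1}\cap R_i\otimes V_{i+3}$ with its shift by $n$, i.e.\ $\gamma(W_i)=W_{i+n}$. Because each $W_i$ is one-dimensional this yields scalars $\mu_i\in k^\ast$ with $\gamma(w_i)=\mu_i w_{i+n}$.

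Next I would apply $\gamma$ term-by-term to the defining identities
\[
w_i=\sum_j v_{ij}\otimes r_{i+1,j}\in V_i\otimes R_{i+1},\qquad w_{i+1}=\sum_j r_{i+1,j}\otimes\theta_i(v_{ij})\in R_{i+1}\otimes V_{i+4},
\]
and, after dividing by $\mu_i$ and $\mu_{i+1}$ respectively, read the results as expansions of $w_{i+n}$ and $w_{i+n+1}$ in the transported bases $(\mu_i^{-1}\gamma(v_{ij}))_j$ of $V_{i+n}$ and $(\gamma(r_{i+1,j}))_j$ of $R_{i+n+1}$. Comparing with the equation that characterises $\theta_{i+n}$ in those bases gives the pointwise identity
\[
\theta_{i+n}\circ\gamma|_{V_i}=\frac{\mu_i}{\mu_{i+1}}\,\gamma\circ\theta_i|_{V_i}.
\]

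Finally, let $s$ be the scalar automorphism of $A$ of degree $0$ defined by $s_i=\mu_i$, so that $s$ acts on $V_i=A_{i,i+1}$ as multiplication by $\mu_i/\mu_{i+1}$. The identity above then reads $\theta\gamma=s\cdot(\gamma\theta)$ on each generator space, and since $\theta\gamma$ and $s\cdot(\gamma\theta)$ are both morphisms $A\to A(n+4)$ of $\ZZ$-algebras and $A$ is generated in degree one by property (Q2), the identity extends to all of $A$. This proves $\theta\gamma\sim\gamma\theta$. The main bookkeeping hurdle is to check that the pointwise factors $\mu_i/\mu_{i+1}$ really do glue into a single scalar automorphism rather than producing incompatible local scalings on the $V_i$; what makes it work is that $\theta$ is built from consecutive pairs $(w_i,w_{i+1})$, so the factor $\mu_i$ produced at index $i-1$ cancels the one produced at index $i$, leaving exactly the telescoping pattern of a degree-zero scalar automorphism.
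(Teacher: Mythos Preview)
Your proposal is correct and is precisely the ``easy verification'' that the paper alludes to but does not write out: the paper's entire proof reads ``This is of course because $\theta$ is canonical, up to equivalence. The actual proof is an easy verification.'' You have supplied that verification, and your bookkeeping with the scalars $\mu_i$ and the resulting scalar automorphism $s$ given by $\lambda_i=\mu_i$ is exactly what is needed.
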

\begin{proof} This is of course because $\theta$ is canonical, up
  to equivalence. The actual proof is an easy verification. 
\end{proof}
Combining everything we obtain the following result
\begin{proposition} \label{ref-6.6-64}
  Assume that $k$ is an algebraically closed field of characteristic
  different from two. Let $A$ be a cubic 3-dimensional regular
  $\ZZ$-algebra. Then there exists a $\ZZ$-graded algebra $B$
such that $\check{B}\cong A^\epsilon$. Furthermore there exists a 4-dimensional
Artin-Schelter regular algebra $D$ with Hilbert series $1/(1-t)^4$ together
with a regular normal element $C\in D_2$ such that $B\cong D/(C)$. 
\end{proposition}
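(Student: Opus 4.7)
The plan is to combine the existence of the $\ZZ$-graded structure on $A^\epsilon$ already obtained in Corollary \ref{ref-5.6.2-61} with the uniqueness of the hull from Theorem \ref{ref-6.1-62}, and to show that the $2$-periodicity of $A^\epsilon$ lifts to the hull $D$. The central element $C$ will then materialize as the single ``untwisted'' version of the normalizing sequence $(C_{2i})_i$ inside $D$.

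First I would invoke Proposition \ref{ref-5.6.1-60} to obtain an isomorphism $\beta\colon A\to A(2)$, which restricts to an isomorphism (still denoted $\beta$) of $A^\epsilon$ with $A^\epsilon(2)$, viewing $A^\epsilon$ as a $2\ZZ$-algebra. Next, take $\alpha=\theta^{-1}$ and apply Theorem \ref{ref-6.1-62} to produce a hull $\phi\colon D\to A^\epsilon$ whose associated automorphism satisfies $a(\phi)\sim \theta^{-1}$. By the unnamed lemma immediately preceding Proposition \ref{ref-6.6-64} (and its obvious analogue for $\theta^{-1}$), the automorphism $\beta$ commutes with $\theta^{-1}$ up to equivalence. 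Hence Lemma \ref{ref-6.4-63} (applied with $n=2$) produces an automorphism $\beta'\colon D\to D(2)$ covering $\beta$ through $\phi$.

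At this point I identify $2\ZZ$ with $\ZZ$ by $2i\leftrightarrow i$; under this identification $D$ is a $\ZZ$-algebra, $\beta'$ becomes an automorphism of degree one, and Lemma \ref{ref-3.4-4} yields a $\ZZ$-graded algebra $D_0$ with $\check{D_0}\cong D$. Since $D$ is four-dimensional standard AS-regular, the Hilbert series property (Q1-style dimension count applied to the standard resolution) gives $\dim(D_0)_n=\binom{n+3}{3}$, so the Hilbert series of $D_0$ is $1/(1-t)^4$, and $D_0$ is AS-regular in the usual graded sense because $\check{D_0}$ is AS-regular as a $\ZZ$-algebra.

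The remaining and most delicate step is the construction of the single normal element $C\in (D_0)_2$. The kernel of $\phi$ is generated as a two-sided ideal by the regular normalizing sequence $(C_{2i})_i\subset D_{2i,2i+4}$, and each $\ker\phi\cap D_{2i,2i+4}$ is one-dimensional by Theorem \ref{ref-6.1-62} (this is where the standard AS-regularity of $D$ enters). Because $\beta'$ descends along $\phi$ to $\beta$, it preserves $\ker\phi$, hence sends $C_{2i}$ to a non-zero scalar multiple of $C_{2i+2}$. I would then rescale the $C_{2i}$ inductively so that $\beta'(C_{2i})=C_{2i+2}$ for all $i$; after this rescaling the sequence $(C_{2i})_i$ is $\beta'$-invariant and hence, under the identification $D\cong\check{D_0}$, corresponds to a single element $C\in(D_0)_2$. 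The fact that $(C_{2i})_i$ is a normalizing regular sequence translates directly into the fact that $C$ is a regular normal element of $D_0$, and $B:=D_0/(C)$ satisfies $\check{B}\cong D/(C_{2i})_i\cong A^\epsilon$, as required.

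The main obstacle is that last rescaling argument: one must check that the scalars $\lambda_i$ defined by $\beta'(C_{2i})=\lambda_i C_{2i+2}$ can be simultaneously trivialized by replacing $C_{2i}$ with $\mu_i C_{2i}$, and that this does not conflict with the normality relations $C_{2i}x=\alpha(x)C_{2j}$. This amounts to solving a multiplicative cocycle condition, which is straightforward because the scalars $\mu_i$ range over the whole punctured affine line while the ambiguity is only a coboundary; still, one has to verify carefully that everything is compatible with both $\alpha$ and $\beta'$.
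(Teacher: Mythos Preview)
Your approach is essentially the same as the paper's: invoke the $2$-periodicity of $A$ from Proposition~\ref{ref-5.6.1-60}, take the hull associated to $\alpha=\theta^{-1}$ via Theorem~\ref{ref-6.1-62}, use Lemma~\ref{ref-6.4-63} to lift the periodicity to the hull, and then apply Lemma~\ref{ref-3.4-4}. The paper compresses all of this into four lines and dismisses the final step (producing the single normal element $C$) with ``one easily verifies''.

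Your detailed treatment of that final step is correct, but you are making it slightly harder than necessary. The key point is that Lemma~\ref{ref-6.4-63} gives you $\beta'$ making the square with $\phi$ and $\beta$ commute, so $\phi$ is a morphism of $1$-periodic $\ZZ$-algebras (after identifying $2\ZZ$ with $\ZZ$). Running the construction of Lemma~\ref{ref-3.4-4} on both sides simultaneously, $\phi$ becomes a graded surjection $D_0\to B$, and its kernel is automatically a graded two-sided ideal $I$. Since $I_{2}$ is one-dimensional (it corresponds to $kC_0\subset D_{0,4}$), any nonzero $C\in I_2$ generates $I$ as a left ideal and as a right ideal (because the $C_{2i}$ do so in the $\ZZ$-algebra), hence $CD_0=D_0C$ and $C$ is normal and regular. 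There is no cocycle to solve: the commutativity of the square in Lemma~\ref{ref-6.4-63} already forces the rescaling you want, up to one global scalar which is harmless.
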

\begin{proof}  By Corollary \ref{ref-5.6.2-61} $A^\epsilon=\check{B}$.
 Let $E$ be the hull of $A^\epsilon$ given by $\alpha=\theta^{-1}$
(see Theorem \ref{ref-6.1-62}).  By Lemma \ref{ref-6.4-63} $E$ is $1$-periodic,
i.e.\ it is of the form $\check{D}$ by Lemma \ref{ref-3.4-4}. One easily
verifies that the surjective map $E\r A^\epsilon$ yields a surjective map of
$\ZZ$-graded algebra
$D\r B$ and that its kernel is given by a normalizing element $C$.
\end{proof}
\begin{corollary} 
\label{ref-6.7-65}
Assume that $k$ is an algebraically closed field of
  characteristic different from two  and let $X=\QGr(A)$ be a
  quadric. Then there exists a four-dimensional standard
  noetherian Artin-Schelter graded algebra 
  $D$ together with a regular normalizing element $C\in D_2$ such that
  $X=\QGr(D/(C))$.
\end{corollary}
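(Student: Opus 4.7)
The corollary is essentially a restatement of Proposition \ref{ref-6.6-64} in the language of $\QGr$, so the plan is to chain the $\QGr$-equivalences already established and then verify the two remaining adjectives---\emph{standard} and \emph{noetherian}---on $D$.

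First I would invoke Proposition \ref{ref-6.6-64} to produce a $\ZZ$-graded algebra $B$ with $\check B\cong A^\epsilon$, a four-dimensional AS-regular graded algebra $D$ with Hilbert series $1/(1-t)^4$, and a regular normal element $C\in D_2$ such that $B\cong D/(C)$. Then the required identification
\[
\QGr(A)\;\cong\; \QGr(A^\epsilon)\;=\;\QGr(\check B)\;=\;\QGr(B)\;\cong\; \QGr(D/(C))
\]
follows by combining Lemma \ref{ref-3.5-5} applied to the Veronese on $2\ZZ\subset\ZZ$ (using that $A$ is generated in degree one by property (Q2) and noetherian by Corollary \ref{ref-5.5.9-56}, and that $2\ZZ$ is unbounded above) with the tautology $\Gr(\check B)=\Gr(B)$ and the isomorphism $B\cong D/(C)$.

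Second, I would verify that $D$ is standard in the sense defined at the start of this section. Since $D$ is connected graded AS-regular of global dimension four with Hilbert series $1/(1-t)^4$, the minimal graded free resolution of $k$ has total Hilbert series $(1-t)^4$; the AS condition pins down the last term as $D(-4)$, and $\dim D_1=4$ forces the first two terms to be $D\leftarrow D(-1)^4$. The Euler relation on Hilbert series, together with the symmetry imposed by AS duality (reflecting the resolution), then forces the remaining Betti numbers into the pattern $1,4,6,4,1$ in shifts $0,1,2,3,4$, as required.

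The main technical step is noetherianity of $D$. By Corollary \ref{ref-5.5.9-56}, $A$ is noetherian, hence so is $A^\epsilon$ by Lemma \ref{ref-3.5-5}, and therefore so is $B$ via $\Gr(\check B)=\Gr(B)$. Since $C\in D_2$ is a regular normalizing element with $D/(C)\cong B$ noetherian, $D$ is noetherian by a standard filtered-ring argument: the $C$-adic filtration on $D$ has associated graded isomorphic to an Ore extension $B[\bar C;\bar\sigma]$ in which $\bar C$ is a regular central-up-to-twist element of degree $2$, which is noetherian because $B$ is, and a standard Artin--Rees / filtered-to-graded lifting then transfers noetherianity back to $D$. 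This is the only substantive point; the rest of the corollary is formal.
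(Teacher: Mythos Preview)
Your proof is correct and follows the paper's approach. The paper's own proof is the single line $\QGr(A)=\QGr(A^\epsilon)=\QGr(B)=\QGr(D/(C))$, invoking Lemma \ref{ref-3.5-5} and Proposition \ref{ref-6.6-64}, exactly as you do. You additionally spell out the ``standard'' and ``noetherian'' claims; the paper treats noetherianity as already established in the Remark following Theorem \ref{ref-6.1-62} (``since $A$ is noetherian so is $A^\epsilon$ and from this we deduce that any hull is noetherian as well''), and treats ``standard'' as implicit in the Hilbert series $1/(1-t)^4$ stated in Proposition \ref{ref-6.6-64}. Your filtered-ring argument for noetherianity via $\gr_C D\cong B[t;\sigma]$ is the standard one and is what the paper's Remark is pointing to without writing it out.
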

\begin{proof}  With notations as in the previous proposition we have (using
Lemma \ref{ref-3.5-5})
$\QGr(A)=\QGr(A^\epsilon)=\QGr(B)=\QGr(D/(C))$.
\end{proof}

\section{The translation principle}
In this section we prove the following theorem.
\begin{theorem} 
\label{ref-7.1-66} 
Let $A$ be a quadric with quadruple
  $U=(C,\Lscr_0,\Lscr_1,\Lscr_2)$. Fix $n\in \ZZ$ and assume that
  $\Lscr_{0}\not\cong\Lscr_{2n+1}\not\cong \Lscr_2$. Then 
$(C,\Lscr_0,\Lscr_{2n+1},\Lscr_2)$ is admissible or linear. Denote
the associated quadric by $T^nA$.  

Assume that for all odd $m$ between $1$ and $2n+1$ (inclusive) we have
$\Lscr_{0}\not\cong\Lscr_{m}\not\cong \Lscr_2$
Then $\QGr(T^nA)\cong
\QGr(A)$.
\end{theorem}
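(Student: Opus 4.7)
My plan has two main parts: verifying that $(C,\Lscr_0,\Lscr_{2n+1},\Lscr_2)$ is admissible or linear, and constructing the equivalence $\QGr(T^nA)\cong \QGr(A)$.

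For admissibility, I verify the hypotheses of Proposition \ref{ref-5.1.2-26}. The helix relation \eqref{ref-5.8-51} gives $\Lscr_{2k+1}=\Lscr_1\otimes\Nscr^{\otimes k}$ with $\Nscr=\Lscr_2\otimes\Lscr_0^{-1}$, and regularity of $U$ forces $\Nscr\in\Pic^0(C)$. Thus $\Lscr_{2n+1}$ has the same component-wise degrees as $\Lscr_1$, so conditions (b) and (c) on the new quadruple are inherited from $U$ (since $\Lscr_0$ and $\Lscr_2$ are unchanged). For condition (a), $\Lscr_0\otimes\Lscr_{2n+1}^{-1}$ has the same component-wise degrees as $\Lscr_0\otimes\Lscr_1^{-1}$, so it is tame by Lemma \ref{ref-5.4.4-38}; combined with ampleness of $\Lscr_0\otimes\Lscr_{2n+1}$ and the hypothesis $\Lscr_0\not\cong\Lscr_{2n+1}$, Lemma \ref{ref-5.4.6-40} then provides the required embedding of $C$ via $(\Lscr_0,\Lscr_{2n+1})$. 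The pair $(\Lscr_{2n+1},\Lscr_2)$ is handled identically.

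For the equivalence, I proceed by induction on $|n|$. The extra hypothesis that $\Lscr_0\not\cong\Lscr_m\not\cong\Lscr_2$ for all odd $m$ between $1$ and $2n+1$ guarantees that each intermediate quadruple $(C,\Lscr_0,\Lscr_{2k+1},\Lscr_2)$ for $0\le k\le n$ is admissible or linear and defines a quadric $T^kA$; the case $n<0$ is symmetric. It therefore suffices to establish $\QGr(T^kA)\cong\QGr(T^{k+1}A)$ for a single unit shift and compose. To prove the unit-shift case, I would construct a common ``enrichment'' $\tilde A$: a noetherian $\ZZ$-algebra generated in degree one whose degree-one pieces interleave the two sequences of line bundles (even line bundles unchanged, but both $\Lscr_{2k+1}$ and $\Lscr_{2k+3}$ appearing as odd generators in distinct positions), with relations coming from the tame sections of appropriate tensor products on $C$. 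Two different Veronese subalgebras of $\tilde A$, indexed by selecting the old or the new odd generator in each odd position, would recover $T^kA$ and $T^{k+1}A$ up to reindexing, and then Lemma \ref{ref-3.5-5} yields the desired equivalences $\QGr(T^kA)\cong\QGr(\tilde A)\cong\QGr(T^{k+1}A)$.

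The main obstacle is producing $\tilde A$ with these structural properties: noetherianity, generation in degree one, and the correct truncated Hilbert behavior so that Lemma \ref{ref-3.5-5} applies. This requires an $I$-algebra analogue of the regularity arguments behind Theorems \ref{ref-5.5.6-53} and \ref{ref-5.5.7-54}, with dimension counts carried out via Riemann-Roch and tameness as in Section \ref{ref-5.4-34}. The non-isomorphism hypotheses enter precisely to ensure that the appropriate variants of Lemma \ref{ref-5.4.5-39} remain isomorphisms and that the relation spaces attain their expected dimensions at each stage of the interleaving.
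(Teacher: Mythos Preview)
Your treatment of admissibility is correct and essentially coincides with the paper's Lemma \ref{ref-7.2-67}: the helix relation forces $\Lscr_{2n+1}$ to have the same componentwise degrees as $\Lscr_1$, so regularity and non-prelinearity are inherited, and Lemmas \ref{ref-5.4.4-38} and \ref{ref-5.4.6-40} give the embeddings.

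For the equivalence $\QGr(T^nA)\cong\QGr(A)$, however, your plan diverges from the paper and the divergence is a real gap. You propose building an interleaved ``enrichment'' $\tilde A$ containing both $T^kA$ and $T^{k+1}A$ as Veroneses, and you correctly identify producing such an $\tilde A$ with the required noetherianity and Hilbert behaviour as the main obstacle. But you do not construct $\tilde A$, and doing so would amount to redeveloping the regularity theory of \S\ref{ref-5.5-50} for a new class of $I$-algebras; this is substantial and it is not clear it goes through without further hypotheses.

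The paper avoids this entirely. Instead of enlarging, it compares the $2$-Veroneses directly. Given $A$ with helix $(\Lscr_i)_i$, it introduces the auxiliary quadric $A^\omega$ associated to the quadruple $(C,\Lscr_{-1},\Lscr_2,\Lscr_1)$, whose helix is obtained from that of $A$ by shifting the odd part one place right and the even part one place left. The key computation (Lemma \ref{ref-7.3-70}) is that $A^\epsilon\cong A^{\omega,\epsilon}$: one writes down explicit isomorphisms
\[
A_{2i,2i+2}\cong H^0(\Lscr_{2i})\otimes H^0(\Lscr_{2i+1})\cong H^0(\Lscr_{2i}\otimes\Lscr_{2i+1})\cong H^0(\Lscr_{2i-1}\otimes\Lscr_{2i+2})\cong A^\omega_{2i,2i+2},
\]
the middle identification coming from the helix relation \eqref{ref-5.8-51}, and checks they carry the degree-four relations of $A^\epsilon$ to those of $A^{\omega,\epsilon}$. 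Since $TA=A(1)^\omega$, one gets $(TA)^\epsilon\cong A(1)^\epsilon=A^o$, and Lemma \ref{ref-3.5-5} then gives $\QGr(TA)\cong\QGr(A^o)\cong\QGr(A)$. Iterating handles general $n$. This is both shorter and stays entirely within the class of quadrics already analysed, so no new structural theory is needed.
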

If $(C,(\Lscr_i)_i)$ is the elliptic helix associated to $A$ then
the elliptic helix associated to $T^nA$ is given by
\[
(C,\ldots, \Lscr_{2n-1},\Lscr_0,\Lscr_{2n+1},\Lscr_2,\ldots)
\]
with $\Lscr_0$ occurring in position zero. In other words we have shifted the odd part of $A$'s elliptic helix 
$2n$ places to the left. This is the translation principle alluded to
in the title of this section.

Assume that the conditions of the theorem are satisfied and assume
that $k$ is algebraically closed of characteristic different from $2$.
By Corollary \ref{ref-5.6.2-61} we have $A^\epsilon=\check{B}$ for a
$\ZZ$-graded algebra $B$. Similarly we have $(T^nA)^\epsilon=\check{C}$ for a
$\ZZ$-graded algebra $B$. It follows 
from Lemma \ref{ref-3.5-5}
that 
$\QGr(B)=\QGr(C)$. However
one may show that in general $B\not\cong C$. This is similar to the
situation \cite{VdB11}. The exact relation between the translation
principle in \cite{VdB11} and the current one will be discussed elsewhere.

Theorem  \ref{ref-7.1-66} is trivial to prove in the linear case so we assume
first that 
$A$ is elliptic.
\begin{lemma} 
\label{ref-7.2-67}
Let $U=(C,\Lscr_0,\Lscr_1,\Lscr_2)$ be an admissible
  quadruple and let $V=(C,\Mscr_0,\Mscr_1,\Mscr_2)$ be a quadruple such
  that
\begin{enumerate}
\item $\Mscr_0\not\cong\Mscr_1\not\cong \Mscr_2\not\cong\Mscr_0$
\item $\deg (\Lscr_i\mid E)=\deg (\Mscr_i\mid E)$ for $i=0,1,2$ and
  for every irreducible component $E$ of $C$.
\end{enumerate}
Then $V$ is admissible.
\end{lemma}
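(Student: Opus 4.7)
The plan is to verify, in turn, the three conditions in the definition of admissibility: not prelinear, regular, and elliptic. The first two are essentially immediate. Condition (1) of the lemma gives $\Mscr_0\not\cong\Mscr_2$, so $V$ is not prelinear. Combining condition (2) with the regularity of $U$ yields
\[
\deg(\Mscr_0\mid E)=\deg(\Lscr_0\mid E)=\deg(\Lscr_2\mid E)=\deg(\Mscr_2\mid E)
\]
for every irreducible component $E$ of $C$, so $V$ is regular.

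The substantive part is showing that $V$ is elliptic. Since $C$ is inherited from $U$, it already has arithmetic genus one, and I would fix the embedding of $C$ in $\PP^1\times \PP^1$ as a $(2,2)$-divisor coming from the admissibility of $U$, under which $\Lscr_0\cong\Oscr_C(1,0)$ and $\Lscr_1\cong\Oscr_C(0,1)$. By condition (2) the bundles $\Mscr_0$ and $\Mscr_1$ then have the same degree profiles on the irreducible components of $C$ as $\Oscr_C(1,0)$ and $\Oscr_C(0,1)$ respectively. I would then apply Lemma \ref{ref-5.4.6-40} to $(\Mscr_0,\Mscr_1)$. Its four hypotheses reduce to: distinctness (from condition (1)); tameness of $\Mscr_0\otimes\Mscr_1^{-1}$, which follows from Lemma \ref{ref-5.4.4-38} since the degree profile matches that of $\Oscr_C(1,-1)$; ampleness of $\Mscr_0\otimes\Mscr_1$, a numerical condition on degrees matching the ample bundle $\Oscr_C(1,1)$; and generation of each $\Mscr_i$ by global sections. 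Lemma \ref{ref-5.4.6-40} then gives a closed embedding $C\hookrightarrow\PP^1\times\PP^1$ of bidegree $(2,2)$; the same argument applied to $(\Mscr_1,\Mscr_2)$, using the other embedding of $C$ via $(\Lscr_1,\Lscr_2)$, yields the second embedding required for ellipticity. By tameness and Riemann-Roch one also obtains $\dim\Gamma(\Mscr_i)=2$, so each $\Mscr_i$ indeed defines a degree-two morphism $C\to\PP^1$.

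The main obstacle is verifying that each $\Mscr_i$ is generated by global sections. When $\Mscr_i$ has strictly positive degree on every irreducible component of $C$, this follows immediately from Proposition \ref{ref-5.4.7-41}. In the remaining case, some component $E$ has $\deg(\Mscr_i\mid E)=0$; such an $E$ is then forced to be a rational component of the reducible $(2,2)$-divisor $C$, so $\Mscr_i\mid E\cong\Oscr_E$, matching the corresponding restriction of $\Lscr_i$. Combining this with generation on the other components together with the tameness of $\Mscr_i$ (Proposition \ref{ref-5.4.2-36}) should give the required global generation, at the cost of a short case analysis of the degenerate configurations.
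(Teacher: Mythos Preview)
Your approach is essentially the same as the paper's: dispose of ``not prelinear'' and ``regular'' directly from the hypotheses, then establish ellipticity by applying Lemma~\ref{ref-5.4.6-40} to each of the pairs $(\Mscr_0,\Mscr_1)$ and $(\Mscr_1,\Mscr_2)$, checking distinctness (condition~(1)), tameness of the ratio via Lemma~\ref{ref-5.4.4-38}, and ampleness of the product as a purely numerical condition.

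The one place you diverge is global generation of the $\Mscr_i$. The paper dispatches this in a single line by invoking Proposition~\ref{ref-5.4.7-41} without further comment. You are right to pause here: Proposition~\ref{ref-5.4.7-41} as stated requires strictly positive degree on every component, whereas condition~(2) only transfers the degrees of the $\Lscr_i$, and these can vanish on a fiber component of $C$. Your diagnosis of the degenerate case is correct --- such a component is forced to be a $(1,0)$ or $(0,1)$ line, hence a copy of $\PP^1$, and $\Mscr_i|_E\cong\Oscr_E$ --- but the gluing sketch (``should give the required global generation'') is not yet a proof: you still need to exhibit a global section of $\Mscr_i$ whose restriction to $E$ is nonzero, which takes a short cohomological argument (e.g.\ via the ideal sheaf sequence of $E$ in $C$). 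The paper, both here and in the proof of Lemma~\ref{ref-5.5.1-52}, applies Proposition~\ref{ref-5.4.7-41} as though non-negative degree suffices; this is either a slight mis-transcription of the hypothesis from \cite{ATV1} or a tacit strengthening. Your extra care is warranted, but the overall architecture of the two arguments is identical.
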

\begin{proof}
That $V$ is regular and not prelinear is clear from the hypotheses. So we
only need to show that $V$ is elliptic, i.e.
$(\Mscr_0,\Mscr_1)$ and $(\Mscr_1,\Mscr_2)$ define embeddings
$C\hookrightarrow \PP^1\times \PP^1$. We already know by Proposition \ref{ref-5.4.7-41}
that all $\Mscr_i$ are generated by global sections. It now suffices
to verify
the hypothesis for lemma \ref{ref-5.4.6-40} for $(\Mscr_0,\Mscr_1)$ and
$(\Mscr_1,\Mscr_2)$. Both cases are similar so we only look at the
first one. We already have $\Mscr_0\not\cong\Mscr_1$ by hypotheses.
 Next we need that $\Mscr_0\otimes
\Mscr_2^{-1}$ is tame. This follows from lemma
\ref{ref-5.4.4-38}. Finally we need that $\Mscr_0\otimes \Mscr_2$ is
ample but this is clear since the condition for ampleness on a curve
is purely numerical.
\end{proof}
Now starting from the quadric $A$ we will construct another quadric $A^\omega$
with the property $A^\epsilon\cong A^{\omega,\epsilon}$. Thus in particular $\QGr(A)\cong
\QGr(A^\omega)$. This will be the first step in the proof of Theorem
\ref{ref-7.1-66}.

Let us assume that the elliptic helix of $A$ satisfies
\begin{equation}
\label{ref-7.1-68}
\Lscr_{-1}\not\cong \Lscr_2
\end{equation}
Then by lemma \ref{ref-7.2-67} combined with lemma \ref{ref-5.5.1-52} we easily see
that the quadruple $U^\omega\overset{\text{def}}{=}(C,\Lscr_{-1},\Lscr_2,\Lscr_1)$ is admissible. 
We define $A^\omega$ as the quadric associated to $U^\omega$. A direct
verification using \eqref{ref-5.8-51} shows that the elliptic helix associated
to $A^\omega$ is of the form 
\begin{equation}
\label{ref-7.2-69}
(C,\ldots,\Lscr_{-2},\Lscr_{-3},\Lscr_0,\Lscr_{-1},\Lscr_2,\Lscr_1,\ldots)
\end{equation}
with $\Lscr_{-1}$ occurring in position zero. I.e the odd part of the
elliptic helix of $A$ is shifted one place to the right and the even
part is shifted one place to the left. 
\begin{lemma}
\label{ref-7.3-70}
$A^\epsilon\cong A^{\omega,\epsilon}$. 
\end{lemma}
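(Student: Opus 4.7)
The plan is to exhibit a canonical isomorphism $\Phi : A^\epsilon \to A^{\omega,\epsilon}$ of $2\ZZ$-algebras, by matching degree-one generators and then verifying that the relations transport correctly. Throughout, write $\Nscr_k := \Lscr_{2k}\otimes\Lscr_{2k+1}$, which by the elliptic helix relation $\Lscr_i\otimes\Lscr_{i+3}\cong \Lscr_{i+1}\otimes\Lscr_{i+2}$ coincides with $\Lscr_{2k-1}\otimes\Lscr_{2k+2}=\Mscr_{2k}\otimes\Mscr_{2k+1}$. The hypothesis \eqref{ref-7.1-68} together with Lemma \ref{ref-7.2-67} ensures that $U^\omega$ is admissible, so both $A$ and $A^\omega$ give admissible quadruples and in particular one has $\Lscr_i\not\cong\Lscr_{i+1}$ and $\Mscr_i\not\cong\Mscr_{i+1}$ for all $i$ (these follow inductively from the cases $i=0,1$ using the helix relation).

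\textbf{Step 1 (degree one).} Using Lemma \ref{ref-5.4.5-39} and the above non-isomorphisms, the multiplications
\[
V^\epsilon_k := V_{2k}\otimes V_{2k+1} \xrightarrow{\ \sim\ } \Gamma(\Nscr_k),\qquad
V^{\omega,\epsilon}_k := V^\omega_{2k}\otimes V^\omega_{2k+1} = \Gamma(\Lscr_{2k-1})\otimes\Gamma(\Lscr_{2k+2}) \xrightarrow{\ \sim\ } \Gamma(\Nscr_k)
\]
are isomorphisms, yielding canonical iso's $\phi_k : V^\epsilon_k \xrightarrow{\sim} V^{\omega,\epsilon}_k$. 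In the same way, splitting consecutive pairs and applying the helix relation on each pair identifies the tensor products $\Lscr_{2i}\otimes\Lscr_{2i+1}\otimes\cdots$ and $\Mscr_{2i}\otimes\Mscr_{2i+1}\otimes\cdots$ at every level, so $B(U)^\epsilon$ and $B(U^\omega)^\epsilon$ both coincide with the twisted homogeneous coordinate $2\ZZ$-algebra $\bar B$ built from $(\Nscr_k)_k$. In particular, under $\phi=\oplus_k\phi_k$ the surjections $A^\epsilon\twoheadrightarrow \bar B$ and $A^{\omega,\epsilon}\twoheadrightarrow \bar B$ from Theorem \ref{ref-5.5.6-53} become compatible.

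\textbf{Step 2 (degree two).} Since both Veroneses are generated in degree one, extending $\phi$ to an algebra map amounts to checking that $(\phi_k\otimes\phi_{k+1})$ carries the relations of $A^\epsilon$ to those of $A^{\omega,\epsilon}$. By Q3--Q4 and dimension counting, the relations
\[
R^\epsilon_k \,=\, R_{2k}\otimes V_{2k+3} + V_{2k}\otimes R_{2k+1}
\qquad\text{and}\qquad
R^{\omega,\epsilon}_k \,=\, R^\omega_{2k}\otimes V^\omega_{2k+3} + V^\omega_{2k}\otimes R^\omega_{2k+1}
\]
are each $7$-dimensional and sit as codimension-one subspaces inside the common $8$-dimensional intrinsic space
\[
K \,:=\, \ker\bigl(\Gamma(\Nscr_k)\otimes\Gamma(\Nscr_{k+1}) \longrightarrow \Gamma(\Nscr_k\otimes\Nscr_{k+1})\bigr),
\]
the extra $1$-dimensional quotient being spanned, on the $A$-side, by the normal element $g_{2k}\in K_{2k,2k+4}\subset A^\epsilon_{k,k+2}$ from Theorem \ref{ref-5.5.6-53}, and on the $A^\omega$-side by $g^\omega_{2k}$.

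\textbf{Step 3 (intrinsic characterization of the distinguished line).} The crux is to show that the two codimension-one subspaces $R^\epsilon_k$ and $R^{\omega,\epsilon}_k$ of $K$ coincide. The plan is to give an intrinsic description of the $1$-dimensional quotient $K/R^\epsilon_k$ in terms of $(C,\Nscr_k,\Nscr_{k+1})$ only, hence independent of the factorization $\Nscr_k=\Lscr_{2k}\otimes\Lscr_{2k+1}$ versus $\Nscr_k=\Lscr_{2k-1}\otimes\Lscr_{2k+2}$. Concretely, tensor the tautological short exact sequence
\[
0 \longrightarrow \Kscr_k \longrightarrow \Gamma(\Nscr_k)\otimes_k \Oscr_C \longrightarrow \Nscr_k \longrightarrow 0
\]
by $\Nscr_{k+1}$ and take cohomology: this realizes $K\cong H^0(\Kscr_k\otimes\Nscr_{k+1})$, with a canonical cohomology connecting map whose kernel equals the image of the relations space. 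The resulting line $K/R^\epsilon_k$ depends only on $(C,\Nscr_k,\Nscr_{k+1})$, so the same line is produced starting from either $A$ or $A^\omega$, giving $R^\epsilon_k=R^{\omega,\epsilon}_k$ inside $K$. A more algebraic route, which I expect also works, is to invoke the uniqueness of hulls in Theorem \ref{ref-6.1-62}: both $A^\epsilon$ and $A^{\omega,\epsilon}$ are hulls of $\bar B$ associated to $\theta^{-1}$ and $(\theta^\omega)^{-1}$, and the two canonical degree-$4$ automorphisms agree on the common $2$-Veronese by tracing through the construction of \S\ref{ref-5.2-27}.

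Once Step 3 is established, $\phi$ extends to a surjective morphism $\Phi:A^\epsilon\to A^{\omega,\epsilon}$ of $2\ZZ$-algebras; bijectivity follows because both sides have the same Hilbert series (they are quotients of the same tensor algebra by relations of the same dimensions). The main obstacle is Step 3: aligning the two different $\Lscr$-tensor factorizations of the same $16$-dimensional space so that their $7$-dimensional relation subspaces are seen to coincide; the intrinsic Koszul description of $K$ together with the fact that everything on the quotient side is determined by $\bar B$ is what makes this work.
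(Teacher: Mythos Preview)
Your Steps 1 and 2 are correct and match the paper: the degree-one isomorphisms $\phi_k$ are built exactly as in the paper, and the identification of the two relation spaces as $7$-dimensional subspaces of the same $8$-dimensional kernel $K$ is accurate.

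The gap is in Step 3. Neither of your proposed routes is actually carried out. For the ``intrinsic Koszul'' route: you correctly identify $K\cong H^0(\Kscr_k\otimes\Nscr_{k+1})$, but then invoke ``a canonical cohomology connecting map whose kernel equals the image of the relations space'' without saying what this map is or why its kernel is $R^\epsilon_k$. There is no obvious map out of $K$ depending only on $(\Nscr_k,\Nscr_{k+1})$ that singles out the $7$-dimensional subspace; this is precisely the content of the lemma, so asserting it is circular. For the hull route: Theorem~\ref{ref-6.1-62} concerns hulls of $A^\epsilon$, which are four-dimensional standard AS-regular $\ZZ$-algebras surjecting onto $A^\epsilon$. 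The algebra $A^\epsilon$ itself is not a hull of $\bar B$ in this sense, so the uniqueness statement does not apply; and even reinterpreting ``hull'' loosely as ``extension of $\bar B$ by a regular normalizing sequence'', you would still need to check that the induced automorphisms of $\bar B$ agree, which is again the substance of the lemma.

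The paper's argument for Step 3 is different and more concrete. Rather than showing $R^\epsilon_k=R^{\omega,\epsilon}_k$ as subspaces of $K$, it shows directly that $\phi$ \emph{swaps} the two summands:
\[
(\phi_{2i}\otimes\phi_{2i+2})(R_{2i}\otimes V_{2i+3})=V^\omega_{2i}\otimes R^\omega_{2i+1},\qquad
(\phi_{2i}\otimes\phi_{2i+2})(V_{2i}\otimes R_{2i+1})=R^\omega_{2i}\otimes V^\omega_{2i+3}.
\]
The key trick is to apply only ``half'' of $\phi$ at a time. Under $\phi_{2i}\otimes 1$ one passes from $H^0(\Lscr_{2i})\otimes H^0(\Lscr_{2i+1})\otimes H^0(\Lscr_{2i+2})$ to $H^0(\Lscr_{2i-1})\otimes H^0(\Lscr_{2i+2})\otimes H^0(\Lscr_{2i+2})$, and by the exact sequence \eqref{ref-5.7-49} the image of $R_{2i}$ is the kernel of multiplication into $H^0(\Lscr_{2i-1}\otimes\Lscr_{2i+2}^{\otimes 2})$. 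Since the repeated factor $\Lscr_{2i+2}$ now appears twice, Lemma~\ref{ref-5.4.5-39} (with $\Lscr\cong\Mscr$) identifies this kernel as $V_{2i-1}\otimes\wedge^2 V_{2i+2}$. The same calculation applied to $(1\otimes\phi_{2i+2}^{-1})(R^\omega_{2i+1})$ yields the same space, and the claim follows. The appearance of the $\wedge^2$ factor, forced by the doubling of $\Lscr_{2i+2}$ under the partial isomorphism, is the missing idea.
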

\begin{proof}
To prove this we let the notations $V_i,R_i$
have their usual meaning and we use the corresponding notations
$V_i^\omega$, $R^\omega_i$ for $A^\omega$. Note that $A^\epsilon$ is generated
by $(A_{2i,2i+2})_i$ with relations $R_{2i}\otimes
V_{2i+3}+V_{2i}\otimes R_{2i+1}$. A similar statement holds for $A^{\omega,\epsilon}$. 

We start by defining maps $\phi_{2i}:A_{2i}\r A^\omega_{2i}$ as
the composition
\begin{align*}
A_{2i,2i+2}&\cong H^0(C,\Lscr_{2i})\otimes H^0(C,\Lscr_{2i+1})\\
&\overset{(1)}{\cong} H^0(C,\Lscr_{2i}\otimes \Lscr_{2i+1})\\
&\overset{(2)}{\cong} H^0(C,\Lscr_{2i-1}\otimes \Lscr_{2i+2})\\
&\overset{(3)}{\cong} H^0(C,\Lscr_{2i-1})\otimes H^0(C,\Lscr_{2i+2})\\
&\cong A^\omega_{2i,2i+2}
\end{align*}
The canonical isomorphisms marked (1),(3)  are obtained from Lemma
\ref{ref-5.4.8-42}. The isomorphism marked (2) is obtained from
\eqref{ref-5.8-51}. To make it canonical we assume that we have fixed
explicit isomorphisms in \eqref{ref-5.8-51}.

To prove that $(\phi_{2i})_i$ defines an isomorphism between $A$ and $A^\omega$ we have to show
\[
(\phi_{2i}\otimes\phi_{2i+2})(R_{2i}\otimes
V_{2i+3}+V_{2i}\otimes R_{2i+1})=(R^\omega_{2i}\otimes
V^\omega_{2i+3}+V^\omega_{2i}\otimes R^\omega_{2i+1})
\]
To this end it is sufficient to show
\[
(\phi_{2i}\otimes \phi_{2i+2})(R_{2i}\otimes
V_{2i+3})=V^\omega_{2i}\otimes R^\omega_{2i+1}
\]
and 
\[
(\phi_{2i}\otimes \phi_{2i+2})(V_{2i}\otimes R_{2i+1})= V^\omega_{2i}\otimes R^\omega_{2i+1}
\]
Both equalities are similar, so we only look at the first one. This
 equality is equivalent to 
\[
(\phi_{2i}\otimes 1)(R_{2i})\otimes
V_{2i+3}=V^\omega_{2i}\otimes (1\otimes \phi_{2i+2}^{-1}) (R^\omega_{2i+1})
\]
Now note that $\phi_{2i}\otimes 1$ defines an isomorphism
\[
H^0(C,\Lscr_{2i})\otimes H^0(C,\Lscr_{2i+1})\otimes H^0(C,\Lscr_{2i+2})
\cong
H^0(C,\Lscr_{2i-1})\otimes H^0(C,\Lscr_{2i+2})\otimes
H^0(C,\Lscr_{2i+2})
\]
We claim that the image of $R_{2i}$ under this isomorphism is given by
$V_{2i-1}\otimes \wedge^2 V_{2i+2}$. Admitting this claim we find that
\[
(\phi_{2i}\otimes 1)(R_{2i})\otimes
V_{2i+3}=V_{2i-1}\otimes \wedge^2 V_{2i+2}\otimes V_{2i+3}
\]
and a dual argument shows that we get the same result for 
$(1\otimes \phi_{2i+2}^{-1}) (R^\omega_{2i+1})$. 

To prove our claim we consider the following commutative diagram.

{\tiny 
\[
\scriptscriptstyle
\strut\hskip -1cm\begin{CD}
0@>>> R_{2i} @>>> H^0(C,\Lscr_{2i})\otimes H^0(C,\Lscr_{2i+1})\otimes
H^0(C,\Lscr_{2i+2}) @>>> H^0(C, \Lscr_{2i}\otimes \Lscr_{2i+1}\otimes
\Lscr_{2i+2})@>>> 0\\
@. @VVV @V\phi_{2i}\otimes 1 \otimes 1 V\cong V @V\cong VV\\
0 @>>> V_{2i-1}\otimes \wedge^2 V_{2i+2}@>>> H^0(C,
\Lscr_{2i-1})\otimes H^0(C,\Lscr_{2i+2})\otimes H^0(C,
\Lscr_{2i+2}) @>>> H^0(C, \Lscr_{2i-1}\otimes \Lscr_{2i+2}\otimes
\Lscr_{2i+2}) @>>> 0
\end{CD}
\]
}
The rows in this diagram is are exact. Hence the left most
arrow is an isomorphism. This proves
our claim.
\end{proof}
\begin{corollary}
\label{ref-7.4-71}
Assume that $(C,\Lscr_0,\Lscr_3,\Lscr_2)$ is admissible. Then
$\QGr(TA)\cong \QGr(A)$. 
\end{corollary}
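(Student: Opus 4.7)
The plan is to apply Lemma \ref{ref-7.3-70} to $TA$ itself (rather than to $A$) and then identify the resulting quadric $(TA)^\omega$ with a shift of $A$. The linear case is trivial, so I assume $A$ is elliptic, with associated helix $(\Lscr_i)_i$.

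First I would compute the elliptic helix of $TA$ from its defining quadruple $(C,\Lscr_0,\Lscr_3,\Lscr_2)$ by iterating $\Lscr_{i+3} = \Lscr_{i+1} \otimes \Lscr_{i+2} \otimes \Lscr_i^{-1}$ in both directions; the result should be
\[
\Lscr^{TA}_{2k} \cong \Lscr_{2k}, \qquad \Lscr^{TA}_{2k+1} \cong \Lscr_{2k+3},
\]
so the even part of the helix is unchanged while the odd part is shifted by two positions. In particular $\Lscr^{TA}_{-1} \cong \Lscr_1 \not\cong \Lscr_2 \cong \Lscr^{TA}_2$, the non-isomorphism holding by ellipticity of $(C,\Lscr_0,\Lscr_1,\Lscr_2)$ via Lemma \ref{ref-5.4.8-42}. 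Thus the hypothesis \eqref{ref-7.1-68} holds for $TA$. The quadruple of $(TA)^\omega$ is $(C, \Lscr^{TA}_{-1}, \Lscr^{TA}_2, \Lscr^{TA}_1) = (C,\Lscr_1,\Lscr_2,\Lscr_3) = R^1 U$, which is admissible by Lemma \ref{ref-5.5.1-52}. So Lemma \ref{ref-7.3-70} applies to $TA$ and, combined with Lemma \ref{ref-3.5-5}, yields $\QGr(TA) \cong \QGr((TA)^\omega)$.

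Next I would identify $(TA)^\omega$ with $A(1)$. Applying the $\omega$-rule ($\omega(\Mscr)_{2k} = \Mscr_{2k-1}$, $\omega(\Mscr)_{2k+1} = \Mscr_{2k+2}$) to the helix of $TA$ computed above gives
\[
\Lscr^{(TA)^\omega}_{2k} \cong \Lscr_{2k+1}, \qquad \Lscr^{(TA)^\omega}_{2k+1} \cong \Lscr_{2k+2},
\]
which is precisely the helix of the shifted quadric $A(1)$. Since an elliptic quadric is determined by its helix (via the quadruple, by Theorem \ref{ref-5.5.10-57}), this forces $(TA)^\omega \cong A(1)$. Combined with the obvious equivalence $\QGr(A(1)) \cong \QGr(A)$ coming from the shift of $\ZZ$-algebras, we assemble
\[
\QGr(TA) \cong \QGr((TA)^\omega) \cong \QGr(A(1)) \cong \QGr(A).
\]

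The main obstacle is the helix bookkeeping in the first step: one must iterate the helix relation carefully enough to see that the even part of $TA$'s helix coincides with $A$'s while the odd part is translated by two, and then verify that applying $\omega$ to this partially twisted helix reassembles everything into the shifted helix of $A(1)$. Once these explicit identifications are in hand, the remaining ingredients (Lemmas \ref{ref-5.5.1-52}, \ref{ref-7.3-70}, \ref{ref-3.5-5} and Theorem \ref{ref-5.5.10-57}) slot in mechanically and no new geometric input is required.
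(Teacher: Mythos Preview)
Your argument is correct and uses the same ingredients as the paper, just with the $\omega$-operation applied in the opposite direction. The paper observes in one line that $TA=A(1)^\omega$ (both have quadruple $(C,\Lscr_0,\Lscr_3,\Lscr_2)$, computed directly from the definition of $U^\omega$ applied to the shifted quadruple $(C,\Lscr_1,\Lscr_2,\Lscr_3)$), and then Lemma~\ref{ref-7.3-70} applied to $A(1)$ gives $(TA)^\epsilon\cong A(1)^\epsilon=A^o$, whence Lemma~\ref{ref-3.5-5} finishes. You instead compute the full helix of $TA$, apply $\omega$ to $TA$, and identify the result with $A(1)$; since $\omega$ is an involution on helices this is the same relationship read backwards. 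Your route requires the extra helix bookkeeping you flag as the ``main obstacle,'' whereas the paper avoids it by working only at the level of the defining quadruple, but nothing is lost or gained mathematically.
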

\begin{proof}
This is clear by lemma \ref{ref-7.3-70} since $TA=A(1)^\omega$ and hence
$(TA)^\epsilon\cong A^o$ (``$o$''=odd). It suffices to invoke Lemma \ref{ref-3.5-5}.
\end{proof}
\begin{proof}[Proof of Theorem \ref{ref-7.1-66}] The linear case is clear.
  The elliptic case follows from repeated application of Corollary
  \ref{ref-7.4-71}.
\end{proof}
\section{Non-commutative quadrics as deformations of commutative quadrics}
\subsection{Introduction}
We start by giving a convenient definition of certain ``families''.
\begin{definition}
Let $R$ be a commutative noetherian ring.  An $R$-family of three dimensional
quadratic regular algebras is an $R$-flat noetherian $\ZZ$-algebra $A$
such that for any map to a field $R\r K$ we have that $A_K$ is a three
dimensional quadratic regular algebra in the sense of Definition
\ref{ref-4.1.1-7}. An $R$-family of three dimensional cubic regular algebras
is defined similarly.
\end{definition}

Let $(R,m)$ be a commutative noetherian complete local ring with
$k=R/m$.  Our aim in this section is to prove the following results.
\begin{theorems} 
\label{ref-8.1.1-72}
Put $\Cscr=\coh(\PP^2_k)$ and let $\Dscr$ be an $R$-deformation of
$\Cscr$. Then $\Dscr=\qgr(\Ascr)$ where $\Ascr$ is an $R$-family of
three dimensional quadratic regular algebras.
\end{theorems}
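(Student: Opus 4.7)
The plan is to construct $\Ascr$ directly as the endomorphism $\ZZ$-algebra of a lift of the ample sequence $(\Oscr_{\PP^2_k}(-n))_{n\in\ZZ}$ from $\Cscr$ to $\Dscr$, and then to reconstruct $\Dscr$ from $\Ascr$ via a $\ZZ$-algebra version of the Artin--Zhang theorem. Since $R$ is complete local, the limiting procedure sketched in the introduction (with details in \cite{VdBdef}) reduces the construction to the infinitesimal setting, so I can work one square-zero extension $R'\twoheadrightarrow R''$ with kernel $I\subset R'$ at a time, assuming inductively that a compatible family of lifts $\Oscr_{R''}(-i)\in\Dscr_{R''}$ has already been produced.

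At each such step I must lift every object $\Oscr_{R''}(-i)$ to an object of $\Dscr_{R'}$, and lift the Hom-spaces compatibly. The deformation theory of abelian categories from \cite{lowenvdb1,lowenvdb2} identifies the obstruction to lifting a single object with a class in $\Ext^2_\Cscr(\Oscr(-i),\Oscr(-i))\otimes_k I=H^2(\PP^2_k,\Oscr)\otimes_k I=0$, and identifies the obstruction to flatness of the lift of $\Hom_\Dscr(\Oscr_R(-j),\Oscr_R(-i))$ with a class controlled by $\Ext^1_\Cscr(\Oscr(-j),\Oscr(-i))\otimes_k I=H^1(\PP^2_k,\Oscr(j-i))\otimes_k I$, which also vanishes for every $i,j$. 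Thus the lifts $\Oscr_R(-i)\in\Dscr$ exist compatibly in the limit, and setting
\[
\Ascr_{ij}:=\Hom_{\Dscr}(\Oscr_R(-j),\Oscr_R(-i))
\]
with composition gives an $R$-flat noetherian $\ZZ$-algebra whose special fibre $\Ascr_k$ is precisely the linear three-dimensional quadratic regular $\ZZ$-algebra attached to $(\PP^2_k,\Oscr(1),\Oscr(1))$ by Proposition \ref{ref-4.2.1-12}.

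To upgrade $\Ascr$ to an $R$-family in the sense of the definition, I must check that $\Ascr_K$ is three-dimensional quadratic regular for every map $R\to K$ to a field. Since the Hilbert series $\dim_K\Ascr_{ij,K}$ is locally constant under $R$-flatness and the minimal resolution \eqref{ref-4.1-8} of $S_{i,\Ascr_k}$ exists with the predicted shape, upper semi-continuity of $\dim\Ext^j$ together with rigidity of the Euler characteristic forces the same resolution at every fibre, giving AS-regularity of $\Ascr_K$ for all $K$. The connectedness $\Ascr_{ii}=R$ follows from $\Hom_\Cscr(\Oscr(-i),\Oscr(-i))=k$ and flatness.

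Finally I identify $\Dscr$ with $\qgr(\Ascr)$. The lifts $(\Oscr_R(-n))_n$ constitute an ample sequence in $\Dscr$ in Polishchuk's sense, and the functor $\Escr\mapsto \bigoplus_n\Hom_{\Dscr}(\Oscr_R(-n),\Escr)$ produces an $R$-linear functor $\Dscr\to\qgr(\Ascr)$; at the closed fibre it recovers the classical equivalence $\coh(\PP^2_k)\cong\qgr(A_0)$. By flatness, Nakayama's lemma, and the $R$-linear $\ZZ$-algebra analogue of Artin--Zhang used in Corollary \ref{ref-5.5.9-56}, this equivalence propagates from the special fibre to all of $R$. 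The main obstacle is precisely this last step: one needs the non-infinitesimal deformation-of-abelian-category formalism of \cite{VdBdef} together with a sufficiently robust $R$-linear $\ZZ$-algebra Artin--Zhang theorem to carry out the reconstruction uniformly over $R$. The lifting and the construction of $\Ascr$ itself are comparatively routine because the relevant $\Ext^1$ and $\Ext^2$ on $\PP^2_k$ all vanish.
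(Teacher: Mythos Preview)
Your proposal follows the same overall strategy as the paper: lift the ample sequence $(\Oscr_{\PP^2}(n))_n$ to $R$-flat objects $(\Lscr_n)_n$ in $\Dscr$ using the vanishing of $\Ext^1$ and $\Ext^2$, set $\Ascr_{ij}=\Hom_\Dscr(\Lscr_{-j},\Lscr_{-i})$, and recover $\Dscr\cong\qgr(\Ascr)$ by an Artin--Zhang type argument.  The paper packages the lifting and the reconstruction via the black-boxed Propositions~\ref{ref-8.2.5-75}--\ref{ref-8.2.7-77} and Theorem~\ref{ref-8.2.10-81} (lifting of strongly ample sequences), which is exactly the ``robust Artin--Zhang over $R$'' you flag as the main obstacle; so on this point you and the paper agree completely.

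The one substantive difference is in how you check that $\Ascr$ is an $R$-family, i.e.\ that $\Ascr_K$ is quadratic AS-regular for every residue field $K$.  You invoke upper semi-continuity of $\dim\Ext^j$ together with constancy of the Hilbert series.  This is correct in principle, but to make it rigorous you must in any case know that $\Ascr_K$ is generated in degree one (so that the minimal resolution is linear) and that the higher $\Tor$'s vanish at all fibres; both facts ultimately come down to Nakayama-type arguments over $R$.  The paper bypasses the semi-continuity formalism entirely and simply constructs the Koszul complex \eqref{ref-4.1-8} directly over $R$: one shows by Nakayama that $\Vscr_i$, $\Rscr_i=\ker(\Vscr_i\otimes_R\Vscr_{i+1}\to\Ascr_{i,i+2})$ and $\Wscr_i=\Rscr_i\otimes\Vscr_{i+2}\cap\Vscr_i\otimes\Rscr_{i+1}$ are finitely generated projective $R$-modules of ranks $3,3,1$, that the resulting complex is exact (again by Nakayama, since it is exact modulo $m$), and hence is $R$-split.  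Base-changing then gives the required resolution at every fibre in one stroke.  Your approach and the paper's are equivalent, but the paper's explicit construction is shorter and avoids having to justify semi-continuity for $\Tor$ over a non-commutative $\ZZ$-algebra.
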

\begin{theorems}
\label{ref-8.1.2-73}
Put $\Cscr=\coh(\PP^1_k\times \PP^1_k)$ and let $\Dscr$ be an
$R$-deformation of $\Cscr$. Then $\Dscr=\qgr(\Ascr)$ where $\Ascr$ is
an $R$-family of three dimensional cubic regular algebras.
\end{theorems}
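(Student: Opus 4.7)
The strategy mirrors the proof of Theorem \ref{ref-8.1.1-72}, with $\PP^1_k\times\PP^1_k$ replacing $\PP^2_k$ and the cubic ample sequence (\ref{ref-5.1-24}) replacing the quadratic one. The foundations are the deformation-theoretic results of \cite{lowenvdb1,lowenvdb2,VdBdef} together with the classification of cubic regular $\ZZ$-algebras in \S\ref{ref-5-23}.

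First, I would lift the ample sequence $(\Oscr_X(n))_{n\in\ZZ}$ of (\ref{ref-5.1-24}) to $\Dscr$. Working with the truncations $\Dscr_n=\Dscr\otimes_R R/m^{n+1}$, the obstruction to lifting an object along $\Dscr_{n+1}\to\Dscr_n$ lies in $\Ext^2_{\Cscr}(\Oscr_X(p),\Oscr_X(p))\otimes_k m^{n+1}/m^{n+2}$. For $X=\PP^1\times\PP^1$ we have $H^i(X,\Oscr_X)=0$ for $i>0$, so each such obstruction vanishes and a compatible system of lifts $(\Escr_p^{(n)})_n$ exists. Passing to the inverse limit (as formalized by the Jouanolou-type framework in \cite{Joua,VdBdef}) produces an $R$-family $(\Escr_p)_{p\in\ZZ}$ of objects of $\Dscr$. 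Next I form the $\ZZ$-algebra $\Ascr_{ij}=\Hom_{\Dscr}(\Escr_{-j},\Escr_{-i})$. The analogous vanishing of $\Ext^1_{\Cscr}$ together with base change forces $\Ascr_{ij}\otimes_R R/m\cong A_{ij}$, where $A$ is the linear cubic regular $\ZZ$-algebra attached to $\coh(\PP^1_k\times\PP^1_k)$ via (\ref{ref-5.1-24}). By Nakayama the $\Ascr_{ij}$ are free $R$-modules of the ranks dictated by (Q1), so $\Ascr$ is $R$-flat, and for every $R\to K$ the fiber $\Ascr_K$ has the Hilbert function (Q1).

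To conclude I must check two things. First, that $\Ascr_K$ is a three-dimensional cubic AS-regular $\ZZ$-algebra for each $K$. Connectedness and Hilbert series (Q1) are in place; generation in degree one and the shape of the minimal resolution (\ref{ref-4.2-9}) then follow by a standard Nakayama-in-families argument, lifting the resolution of $S_i$ degree by degree from the central fiber, where the cubic (not quadratic) shape (\ref{ref-4.2-9}) is already forced by the Hilbert function. Second, that $\Dscr\cong\qgr(\Ascr)$: this is a relative/$\ZZ$-algebra version of Polishchuk's ampleness criterion (as used in Corollary \ref{ref-5.5.9-56} and \cite{AZ}), applied to the lifted sequence $(\Escr_p)_p$. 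The main obstacle is precisely this last step: one must verify that the ample-sequence properties of $(\Oscr_X(n))_n$ on $\Cscr$ persist for $(\Escr_p)_p$ on $\Dscr$ in the $R$-linear setting (vanishing of higher $\Ext$'s, cofinality, and a relative Artin--Zhang type $\chi$-condition), which is the nontrivial deformation-theoretic content provided by \cite{VdBdef}. Once ampleness is in hand, the reconstruction yields $\Dscr\cong\qgr(\Ascr)$ and completes the proof.
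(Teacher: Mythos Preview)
Your proposal is correct and follows essentially the same route as the paper: lift the strongly ample sequence \eqref{ref-5.1-24} to $R$-flat objects in $\Dscr$ using the $\Ext^1$/$\Ext^2$-vanishing for $\Oscr_X(n)$, form $\Ascr$ from the $\Hom$-spaces, invoke the lifting-of-ampleness result from \cite{VdBdef} (Theorem \ref{ref-8.2.10-81} here) to get $\Dscr\cong\qgr(\Ascr)$, and then verify the cubic Koszul resolution. The only point where your write-up is slightly less sharp than the paper is the verification that each fiber $\Ascr_K$ is cubic regular: rather than arguing fiberwise, the paper builds the canonical complex $0\to\Wscr_i\otimes_R\Pscr_{i+4}\to\Rscr_i\otimes_R\Pscr_{i+3}\to\Vscr_i\otimes_R\Pscr_{i+1}\to\Pscr_i\to\Sscr_i\to 0$ once over $R$, checks via Nakayama (Lemma \ref{ref-8.2.8-78}) that it is exact because it is exact modulo $m$, and then observes that since every term is $R$-projective the sequence is $R$-split and hence remains exact after any base change $R\to K$.
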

To make sense of these statements we must understand what we mean by a
``deformation'' of $\Cscr$. Infinitesimal deformations of abelian
categories where defined in~\cite{lowenvdb1}.  From this one may
define non-infinitesimal deformations by a suitable limiting
procedure. The theoretical foundation of this is Jouanolou's expos\'e
in SGA 5 \cite{Joua}.  The details are provided in \cite{VdBdef}. In
the next section we state the results we need. If the reader is
willing to accept that deformations of abelian categories behave in
the ``expected way'' he/she should go directly to \S\ref{ref-8.3-82}.

\medskip

The proofs of Theorems \ref{ref-8.1.1-72} and \ref{ref-8.1.2-73} are based on
the fact that $\PP^2$ and $\PP^1\times \PP^1$ have ample sequences
consisting of exceptional objects. Such sequences can be lifted
to any deformation (see \S\ref{seclb}). This idea is basically due to
Bondal and Polishchuk and is described explicitly in \cite[\S
11.2]{VdBSt}. See also the recent paper \cite{LDed}.

\subsection{Deformations of abelian categories}
\label{seclb}
For the convenience of the reader we will repeat the main statements
from \cite{VdBdef}. We first recall briefly some notions from \cite{lowenvdb1}. Throughout $R$ will be a commutative
noetherian ring and $\mod(R)$ is its category of finitely generated modules.

Let $\Cscr$ be an $R$-linear abelian category.  Then we have bifunctors $-\otimes_R-:\Cscr\times \mod(R)\r
\Cscr$, $\Hom_R(-,-):\mod(R)\times \Cscr\r \Cscr$ defined in the usual
way.  These functors may be derived in their $\mod(R)$-argument to
yield bi-delta-functors $\Tor^R_i(-,-)$, $\Ext_R^i(-,-)$.  An object $M\in \Cscr$ is
\emph{$R$-flat} if $M\otimes_R -$ is an exact functor, or equivalently if $\Tor_i^R(M,-)=0$ for
$i>0$.

By
definition (see \cite[\S3]{lowenvdb1}) $\Cscr$ is $R$-\emph{flat} if
$\Tor^R_i$ or equivalently $\Ext^i_R$ is effaceble in its
$\Cscr$-argument for $i>0$.  This implies that $\Tor^R_i$ and
$\Ext_R^i$ are universal $\partial$-functors in both arguments.

If $f:R\r S$ is a morphism of commutative noetherian rings such that
$S/R$ is finitely generated and $\Cscr$ is an $R$-linear abelian
category then $\Cscr_S$ denotes the (abelian) category of objects in
$\Cscr$ equipped with an $S$-action.  If $f$ is surjective then
$\Cscr_S$ identifies with the full subcategory of $\Cscr$ given by the
objects annihilated by $\ker f$.  The inclusion functor $\Cscr_S\r
\Cscr$ has right and left adjoints given respectively by $\Hom_R(S,-)$
and $-\otimes_R S$.

\medskip

Now assume that $J$ is an ideal in $R$ and let $\widehat{R}$ be the
$J$-adic completion of $R$. Recall that an abelian category $\Dscr$ is
said to be \emph{noetherian} if it is essentially small and all
objects are noetherian. Let $\Dscr$ be an $R$-linear noetherian
category and let $\Pro(\Dscr)$ be its category of pro-objects.  We
define $\widehat{\Dscr}$ as the full subcategory of $\Pro(\Dscr)$
consisting of objects $M$ such that $M/MJ^n\in \Dscr$ for all $n$ and
such that in addition the canonical map $M\r \invlim_n M/MJ^n$ is an
isomorphism. The category $\widehat{\Dscr}$ is $\widehat{R}$-linear.
The following is basically a reformulation of Jouanolou's results \cite{Joua}. 
\begin{propositions} (see \cite[Prop.\ \ref{ref-2.2.4-6}]{VdBdef}) $\widehat{\Dscr}$ is a noetherian abelian subcategory of $\Pro(\Dscr)$. 
\end{propositions}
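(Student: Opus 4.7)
My approach is to reduce the claim to the classical Mittag--Leffler machinery of \cite{Joua} by identifying $\widehat{\Dscr}$ with a manageable subcategory of inverse systems. First, I would show that the assignment $M\mapsto (M/MJ^n)_n$ induces an equivalence between $\widehat{\Dscr}$ and the full subcategory $\Tscr$ of inverse systems $(M_n,\pi_n)_{n\ge 1}$ in $\Dscr$ satisfying $M_n\in \Dscr_{R/J^n}$ and whose transition maps $\pi_n:M_{n+1}\to M_n$ are surjections inducing isomorphisms $M_{n+1}/M_{n+1}J^n\xrightarrow{\sim}M_n$. A quasi-inverse is $(M_n)\mapsto \invlim M_n$, computed in $\Pro(\Dscr)$. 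Once this identification is in place, every subsequent construction is carried out at the level of towers, where the arguments are explicit.

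Second, I would verify that $\Tscr$ is abelian and that the embedding $\Tscr\hookrightarrow \Pro(\Dscr)$ is exact. For a morphism $f:(M_n)\to(N_n)$ in $\Tscr$, the natural candidates for kernel and cokernel are $(\ker f_n)$ and $(\coker f_n)$, and each level lands in $\Dscr_{R/J^n}$ automatically. The real issue is preserving the completeness condition $K_{n+1}/K_{n+1}J^n\xrightarrow{\sim}K_n$ for $K\in\{\ker,\coker\}$. For kernels this follows from a direct diagram chase using the snake lemma on the comparison of short exact sequences. For cokernels, one constructs a natural surjection $\coker f_{n+1}/\coker f_{n+1}J^n\twoheadrightarrow \coker f_n$ from right-exactness, and then checks injectivity by a further chase on the tower. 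This cokernel step is where the Artin--Rees/Mittag--Leffler input of \cite{Joua} enters decisively, and I expect it to be the main obstacle.

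Third, I would prove that $\Tscr$ is noetherian. Given an ascending chain $L^{(1)}\subseteq L^{(2)}\subseteq\cdots$ of subobjects of $M\in \Tscr$, reducing modulo $J^n$ yields, for each fixed $n$, an ascending chain of subobjects of $M_n$ in $\Dscr_{R/J^n}$. Since $\Dscr$ is noetherian, so is $\Dscr_{R/J^n}$ (its objects are the $J^n$-annihilated objects of $\Dscr$), so the chain stabilizes at some index $i(n)$. A standard diagonal argument, again modelled on the Mittag--Leffler formalism of \cite{Joua}, upgrades the family of levelwise stabilizations to a single index at which the chain in $\widehat{\Dscr}$ stabilizes globally. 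Finally, exactness of $\invlim:\Tscr\to \Pro(\Dscr)$, which one checks from the explicit description above together with the vanishing of $\invlim^1$ on short exact sequences in $\Tscr$ guaranteed by the imposed surjectivity condition, ensures that $\widehat{\Dscr}$ is genuinely a noetherian abelian \emph{subcategory} of $\Pro(\Dscr)$ rather than only abstractly equivalent to one.
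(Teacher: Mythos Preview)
The paper does not give a proof here; it simply cites \cite{VdBdef} and remarks that the statement is a reformulation of Jouanolou's results in \cite{Joua}. Your outline is in the same spirit --- reduce to towers and invoke an Artin--Rees/Mittag--Leffler argument --- but you have the roles of kernels and cokernels reversed, and this matters.

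Cokernels are the easy step. Right-exactness of $X\mapsto X/XJ^n$ applied to $M_{n+1}\to N_{n+1}\to \coker f_{n+1}\to 0$ immediately gives $M_n\to N_n\to (\coker f_{n+1})/(\coker f_{n+1})J^n\to 0$, so $(\coker f_{n+1})/(\coker f_{n+1})J^n\cong\coker f_n$ on the nose; no ``further chase'' and no Artin--Rees is needed. The genuine difficulty is with kernels, and your ``direct diagram chase using the snake lemma'' does not work. Applying the snake lemma to the transition squares identifies the kernel of the induced map $\ker f_{n+1}\to\ker f_n$ with $(\ker f_{n+1})\cap M_{n+1}J^n$, which in general strictly contains $(\ker f_{n+1})J^n$; likewise, surjectivity of $\ker f_{n+1}\to\ker f_n$ would require lifting elements of $(\im f_{n+1})\cap N_{n+1}J^n$ back to $M_{n+1}J^n$, which fails without further input. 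What rescues both points is precisely Jouanolou's abelian-category Artin--Rees lemma: for a noetherian object $M$ in an $R$-linear abelian category and a subobject $K\subset M$, the filtrations $(KJ^n)_n$ and $(K\cap MJ^n)_n$ are cofinal. This is the real content behind the proposition (and it is also what makes the noetherian step go through, since your levelwise stabilization indices $i(n)$ can a priori tend to infinity). So your plan is salvageable, but you must relocate the Artin--Rees input from the cokernel step, where it is not needed, to the kernel and noetherianity steps, where it is essential.
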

There is an exact functor 
\begin{equation}
\label{ref-8.1-74}
\Phi:\Dscr\r \widehat{\Dscr}:M\mapsto \invlim_n M/MJ^n
\end{equation}
and we say that $\widehat{\Dscr}$ is complete if $\Phi$ is an equivalence of categories.
In addition we say that $\Dscr$ is \emph{formally flat} if $\Dscr_{R/J^n}$ is $R/J^n$-flat
for all $n$. 
\begin{definitions}
Assume that $\Cscr$ is an $R/J$-linear noetherian flat abelian
category. Then an \emph{$R$-deformation} of $\Cscr$ is a formally
flat complete $R$-linear abelian category $\Dscr$ together with an
equivalence $\Dscr_{R/J}\cong \Cscr$. 
\end{definitions} 
In general, to simplify the notations, we will pretend
that the equivalence $\Dscr_{R/J}\cong \Cscr$ is just the identiy.

Thus below we consider the case that $\Dscr$ is complete and formally flat and
$\Cscr=\Dscr_{R/J}$.   The
following definition turns out to be natural.
\begin{definitions} (see \cite[\eqref{new-1}]{VdBdef}) Assume that
  $\Escr$ is a formally flat noetherian $R$-linear abelian category. Let
  $\Escr_t$ be the full subcategory of $\Escr$ consisting of objects
  annihilated by a power of $J$. Let $M,N\in \Escr$.  Then the
  \emph{completed $\Ext$-groups} between $M$, $N$ are defined as
\begin{align*}
`\Ext_{\widehat{\Escr}}^i(M,N)&=\Ext_{\Pro(\Escr_t)}^i(M,N)
\end{align*}
\end{definitions}
An $R$-linear category $\Escr$ is said to be \emph{$\Ext$-finite} if $\Ext^i_\Escr(M,N)$ is a finitely
generated $R$-module for all $i$ and all objects $M$, $N\in \Escr$. Assuming $\Ext$-finiteness the completed $\Ext$-groups become computable. 
\begin{propositions} \cite[Prop.\ \ref{ref-2.5.3-22}]{VdBdef} Assume that $\Escr$ is a formally flat noetherian $R$-linear abelian category and that
  $\Escr_{R/J}$ is $\Ext$-finite.  Then  $`\Ext_{\widehat{\Escr}}^i(M,N)\in \mod(\hat{R})$  for $M,N\in
  \widehat{\Escr}$
and furthermore
\begin{align*}
`\Ext_{\widehat{\Escr}}^i(M,N)
&=\invlim_k \dirlim_l \Ext_{\Escr_{R/J^l}}^i(M/MJ^l,N/NJ^k)
\end{align*}
If $M$ is in addition $R$-flat then
\[
`\Ext_{\widehat{\Escr}}^i(M,N)=\invlim_k  \Ext_{\Escr_{R/J^k}}^i(M/MJ^k,N/NJ^k)
\]
\end{propositions}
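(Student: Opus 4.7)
The plan is to reduce everything to a computation inside $\Pro(\Escr_t)$, since by definition $`\Ext^i_{\widehat{\Escr}}(M,N)=\Ext^i_{\Pro(\Escr_t)}(M,N)$ and each object of $\widehat{\Escr}$ is presented as the strict pro-limit of the tower $\{M/MJ^l\}_l$ of objects in $\Escr_t$. I would first establish the Hom-version of the asserted formula, namely
\[
\Hom_{\Pro(\Escr_t)}(M,N)=\invlim_k\dirlim_l\Hom_{\Escr_{R/J^l}}(M/MJ^l,N/NJ^k),
\]
which is the standard description of morphisms between strict pro-objects. Deriving this in the $N$-variable by a carefully chosen injective resolution in $\Pro(\Escr_t)$ compatible with the $J$-adic filtration then yields the corresponding formula for $\Ext^i$. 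The main technical obstacle is exactly this compatibility step: one must know that formal flatness permits injective resolutions $N\r I^\bullet$ whose reductions $I^\bullet/I^\bullet J^k$ remain injective resolutions of $N/NJ^k$ inside $\Escr_{R/J^k}$, so that the limits above commute termwise with the passage to cohomology.

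Granted this, the finite generation claim follows by induction on $l$ from the short exact sequences $0\r J^m/J^{m+1}\r R/J^{m+1}\r R/J^m\r 0$. Formal flatness converts these into long exact sequences of $\Ext$-groups, and together with the $\Ext$-finiteness of $\Escr_{R/J}$ they imply that each $\Ext^i_{\Escr_{R/J^l}}(M/MJ^l,N/NJ^k)$ is finitely generated over $R/J^l$. For fixed $k$ the inner direct system thus takes values in finitely generated $R/J^k$-modules, so its colimit is finitely generated over $R/J^k$, and the outer inverse limit along the $J$-adic tower is then finitely generated over $\hat{R}$.

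For the $R$-flat case the inner direct limit collapses to its first term. Concretely, when $M$ is $R$-flat the identity $M/MJ^l\otimes_{R/J^l}R/J^k=M/MJ^k$ holds without any derived correction; using the adjunction between restriction $\Escr_{R/J^k}\hookrightarrow\Escr_{R/J^l}$ and $-\otimes_{R/J^l}R/J^k$, together with the fact that $N/NJ^k$ is already killed by $J^k$, the canonical map
\[
\Ext^i_{\Escr_{R/J^k}}(M/MJ^k,N/NJ^k)\r\Ext^i_{\Escr_{R/J^l}}(M/MJ^l,N/NJ^k)
\]
is an isomorphism for all $l\ge k$. Substituting this into the double-limit formula yields the simpler expression $\invlim_k\Ext^i_{\Escr_{R/J^k}}(M/MJ^k,N/NJ^k)$.
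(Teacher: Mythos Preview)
The paper does not actually prove this proposition; it is quoted verbatim from \cite{VdBdef} with the citation \cite[Prop.\ 2.5.3]{VdBdef}, and no argument is supplied here. So there is no ``paper's own proof'' to compare your attempt against. That said, your sketch has two genuine gaps that would need to be filled before it could stand on its own.

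First, the step you yourself flag as ``the main technical obstacle'' --- the existence of an injective resolution $N\to I^\bullet$ in $\Pro(\Escr_t)$ whose reductions $I^\bullet/I^\bullet J^k$ remain injective resolutions of $N/NJ^k$ in $\Escr_{R/J^k}$ --- is not something formal flatness hands you for free, and you do not indicate how to construct such a resolution. In general $\Pro(\Escr_t)$ need not have enough injectives, and even when injectives exist there is no reason they should be compatible with all the truncation functors simultaneously. The actual argument in \cite{VdBdef} proceeds differently, and this compatibility is precisely the content that has to be supplied rather than assumed.

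Second, your finiteness argument does not close. You show each $\Ext^i_{\Escr_{R/J^l}}(M/MJ^l,N/NJ^k)$ is finitely generated over $R/J^l$, but then assert that the colimit over $l$ is finitely generated over $R/J^k$. A filtered colimit of finitely generated modules is not finitely generated in general; you would need to know that the direct system stabilises (or at least becomes essentially constant) for $l\gg 0$, and you give no reason for this. Nor is it clear a priori that these $\Ext$-groups, computed in the $R/J^l$-linear category, are even $R/J^k$-modules. Your treatment of the $R$-flat case via derived adjunction is correct in outline, but note that it already shows the transition maps are isomorphisms for $l\ge k$ in that case --- exactly the stabilisation you are missing in the general case.
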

The results below allow one to lift properties from $\Cscr$ to $\Dscr$.  They
follow easily from the corresponding infinitesimal results  (\cite[Prop.\ 6.13]{lowenvdb1}, \cite[Theorem A]{lowen4}, \cite{VdBdef}).
\begin{propositions} 
\label{ref-8.2.5-75}
  Let $M\in \Cscr$ be a flat object such that
  $\Ext^i_{\Cscr}(M,M\otimes_{R/J} J^n/J^{n+1})=0$ for $i=1,2$ and $n\ge 1$.
  Then there exists a unique  $R$-flat object (up to non-unique
  isomorphism) $\overline{M}\in \Dscr$  such that $\overline{M}/\overline{M}J\cong M$.
\end{propositions}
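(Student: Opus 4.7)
The plan is to lift $M$ order-by-order through the filtration by $R/J^n$ and then pass to an inverse limit, using the completeness of $\Dscr$ to land back inside $\Dscr$ itself.

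First, I would invoke the infinitesimal lifting result cited from \cite{lowenvdb1} (and its flat/non-split refinement in \cite{lowen4,VdBdef}) to construct inductively a compatible system of objects $M_n \in \Dscr_{R/J^{n+1}}$ which are $R/J^{n+1}$-flat and satisfy $M_n \otimes_{R/J^{n+1}} R/J^n \cong M_{n-1}$, with $M_0 = M$. The step from $M_{n-1}$ to $M_n$ is governed by a two-term obstruction/classification package: the obstruction class lives in $\Ext^2_{\Cscr}(M, M \otimes_{R/J} J^n/J^{n+1})$ and is killed by the assumption, and once one lift exists, the set of lifts is a torsor under $\Ext^1_{\Cscr}(M, M \otimes_{R/J} J^n/J^{n+1})$, which is also zero. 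Thus at each stage a lift exists and is unique up to (non-unique) isomorphism, and a standard diagram chase allows us to choose the isomorphisms so that the system $(M_n)_n$ is coherent.

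Next I would assemble these into $\overline{M} := \invlim_n M_n$, viewed a priori in $\Pro(\Dscr)$. By construction $\overline{M}/\overline{M}J^{n+1} \cong M_n \in \Dscr$, so $\overline{M}$ lies in the subcategory $\widehat{\Dscr}$. Since $\Dscr$ is complete, the functor $\Phi$ of \eqref{ref-8.1-74} is an equivalence and $\overline{M}$ lifts to a genuine object of $\Dscr$ (again denoted $\overline{M}$) with $\overline{M}/\overline{M}J \cong M_0 = M$. $R$-flatness of $\overline{M}$ follows from the flatness of each $M_n$ over $R/J^{n+1}$ together with the local criterion of flatness: it suffices to check that $\Tor_1^R(\overline{M}, R/J) = 0$, and this can be tested in $\Cscr = \Dscr_{R/J}$ using the formal flatness of $\Dscr$, which reduces the vanishing to the vanishing of $\Tor_1^{R/J^{n+1}}(M_n, R/J)$ for each $n$, a consequence of the $R/J^{n+1}$-flatness of $M_n$.

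For uniqueness, given another lift $\overline{M}'$, I would reduce modulo $J^{n+1}$ to obtain a second family of compatible lifts $M_n'$, and inductively build isomorphisms $M_n \cong M_n'$ restricting to the given isomorphism $M \cong M$ at $n = 0$: at each stage the obstruction to extending a chosen isomorphism $M_{n-1} \cong M'_{n-1}$ to $M_n \cong M_n'$ sits in $\Ext^1_{\Cscr}(M, M \otimes_{R/J} J^n/J^{n+1}) = 0$, and such an extension exists. Passing to the limit and using $\Phi$ gives $\overline{M} \cong \overline{M}'$.

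The main obstacle I anticipate is the flatness check for the limit object, since flatness does not obviously descend through $\Pro$-limits; one has to use formal flatness of $\Dscr$ together with the compatibility of $\Tor^R$ with the equivalence $\Phi$ (via the ``completed $\Ext$'' formalism recalled just before) to reduce to the finite-level statement. Everything else is a routine two-step deformation argument whose infinitesimal input is exactly the cited references.
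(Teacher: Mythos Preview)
The paper does not actually prove this proposition: it is stated without proof, with only the remark that it ``follows easily from the corresponding infinitesimal results'' in \cite[Prop.\ 6.13]{lowenvdb1}, \cite[Theorem A]{lowen4}, and \cite{VdBdef}. Your sketch is precisely the argument those references encode---infinitesimal lifting with obstruction in $\Ext^2$ and uniqueness from vanishing of $\Ext^1$, followed by passage to the inverse limit using completeness of $\Dscr$---so your proposal is correct and is exactly the expansion the paper leaves to the reader.
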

\begin{propositions} 
\label{ref-8.2.6-76}
Let $\overline{M},\overline{N}\in \Dscr$ be flat objects and put
$\overline{M}/\overline{M}J= M$, $\overline{N}/\overline{N}J= N$. Assume that for all $X$
in $\mod(R/J)$ we have $\Ext^i_\Cscr(M,N\otimes_{R/J} X)=0$ for a certain $i>0$. Then
we  have $`\Ext^i_\Dscr(\overline{M},\overline{N}\otimes_R X)=0$ for all $X\in \mod(R)$. 
\end{propositions}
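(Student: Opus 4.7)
My plan is to reduce the vanishing of $`\Ext^i_{\Dscr}(\overline{M},\overline{N}\otimes_R X)$ to the hypothesis by a two-step filtration argument, exploiting the flatness of both $\overline{M}$ and $\overline{N}$.

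First I would apply the flat-case formula from the preceding proposition \cite[Prop.\ \ref{ref-2.5.3-22}]{VdBdef}. Since $\overline{M}$ is $R$-flat, this gives
\[
`\Ext^i_{\widehat{\Dscr}}(\overline{M},\,\overline{N}\otimes_R X) \;=\; \invlim_k \Ext^i_{\Dscr_{R/J^k}}\!\bigl(\overline{M}/\overline{M}J^k,\; (\overline{N}\otimes_R X)/(\overline{N}\otimes_R X)J^k\bigr),
\]
so it suffices to show the finite-level $\Ext$-group on the right vanishes for every $k$.

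Next, since $\overline{N}$ is $R$-flat, the $J$-adic filtration on $\overline{N}/\overline{N}J^k$ has associated graded pieces of the form $N\otimes_{R/J}(J^i/J^{i+1})$. Combining this with the $J$-adic filtration on $X/J^kX$, I obtain a finite filtration of $(\overline{N}\otimes_R X)/(\overline{N}\otimes_R X)J^k$ whose successive subquotients lie in $\Cscr$ and have the form $N\otimes_{R/J} Y_\ell$ for various $Y_\ell\in\mod(R/J)$. The long exact sequence in $\Ext$ then reduces the problem to proving
\[
\Ext^i_{\Dscr_{R/J^k}}\!\bigl(\overline{M}/\overline{M}J^k,\; N\otimes_{R/J} Y\bigr) \;=\; 0 \qquad\text{for all } Y\in\mod(R/J).
\]

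Finally, since $\overline{M}/\overline{M}J^k$ is $R/J^k$-flat with reduction $M$, and $N\otimes_{R/J}Y$ is annihilated by $J$, I would invoke the infinitesimal base-change statement (the categorical analogue of $\RHom_{R/J^k}(P,Z)\cong \RHom_{R/J}(P\otimes^L_{R/J^k} R/J,\;Z)$ for flat $P$ and $J$-annihilated $Z$) to identify
\[
\Ext^i_{\Dscr_{R/J^k}}\!\bigl(\overline{M}/\overline{M}J^k,\; N\otimes_{R/J}Y\bigr) \;\cong\; \Ext^i_{\Cscr}(M,\; N\otimes_{R/J}Y),
\]
which vanishes by hypothesis. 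The main obstacle is precisely this last step: one has to verify that the flatness of $\overline{M}/\overline{M}J^k$ truly collapses the change-of-rings spectral sequence into an honest isomorphism of $\Ext$'s rather than merely supplying an edge map. This infinitesimal fact is the content of the cited results \cite[Prop.\ 6.13]{lowenvdb1} and \cite[Thm.\ A]{lowen4}, which the introductory remark to the proposition already flags as the required inputs.
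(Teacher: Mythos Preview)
The paper does not actually prove this proposition: it groups Propositions \ref{ref-8.2.5-75}--\ref{ref-8.2.7-77} together and simply asserts that they ``follow easily from the corresponding infinitesimal results'' in \cite[Prop.\ 6.13]{lowenvdb1}, \cite[Theorem A]{lowen4}, and \cite{VdBdef}. Your proposal is a correct and natural unpacking of precisely this hint---reduce to finite levels via the flat-case formula for completed $\Ext$, filter the target so that the subquotients lie in $\Cscr$, and then invoke the infinitesimal base-change isomorphism from \cite{lowenvdb1} to finish.

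One small point worth flagging: your first step uses the explicit inverse-limit formula for $`\Ext$, which in the paper is stated under the hypothesis that $\Cscr=\Dscr_{R/J}$ is $\Ext$-finite. This hypothesis is not explicitly repeated in the statement of Proposition~\ref{ref-8.2.6-76}, but it is certainly in force in the intended applications (where $\Cscr=\coh(\PP^2_k)$ or $\coh(\PP^1_k\times\PP^1_k)$), and is implicitly part of the standing setup of \S\ref{seclb}. Your identification of the final base-change step as the only genuinely nontrivial ingredient, and your pointer to the cited infinitesimal results for it, is exactly right.
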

\begin{propositions} 
\label{ref-8.2.7-77}
Let $\overline{M},\overline{N}\in \Dscr$ be flat objects and put
$\overline{M}/\overline{M}J= M$, $\overline{N}/\overline{N}J=
N$. Assume that for all $X$ in $\mod(R/J)$ we have
$\Ext^1_\Cscr(M,N\otimes_{R/J} X)=0$. Then
$\Hom_\Dscr(\overline{M},\overline{N})$ is $R$-flat and furthemore for
all $X$ in $\mod(R)$ we have
$\Hom_\Dscr(\overline{M},\overline{N}\otimes_R
X)=\Hom_\Dscr(\overline{M},\overline{N})\otimes_R X$.
\end{propositions}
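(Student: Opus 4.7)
The strategy is to combine Proposition \ref{ref-8.2.6-76} with a passage to the inverse limit over infinitesimal neighbourhoods $R/J^n$. Applying Proposition \ref{ref-8.2.6-76} with $i=1$ gives $`\Ext^1_\Dscr(\overline{M},\overline{N}\otimes_R X)=0$ for every $X\in\mod(R)$. Combined with the limit description of completed Ext in Proposition \ref{ref-2.5.3-22}, this specialises to $\Ext^1_{\Dscr_{R/J^n}}(M_n,N_n\otimes_{R/J^n} Y)=0$ for every $n\ge 1$ and every $Y\in\mod(R/J^n)$, where I write $M_n:=\overline{M}/\overline{M}J^n$ and $N_n:=\overline{N}/\overline{N}J^n$.

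Using this vanishing I would prove by induction on $n$ that $P_n:=\Hom_{\Dscr_{R/J^n}}(M_n,N_n)$ is $R/J^n$-flat and that there is a natural isomorphism $\Hom_{\Dscr_{R/J^n}}(M_n, N_n\otimes_{R/J^n} Y)\cong P_n\otimes_{R/J^n}Y$ for $Y\in\mod(R/J^n)$. For the tensor identity, pick a finite free presentation $(R/J^n)^a\to (R/J^n)^b\to Y\to 0$, tensor with the $R/J^n$-flat object $N_n$, and apply $\Hom_{\Dscr_{R/J^n}}(M_n,-)$; the long exact sequence collapses to a short exact sequence by the $\Ext^1$-vanishing, yielding the identity. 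Flatness of $P_n$ then follows by applying this identity to an arbitrary injection $Y'\hookrightarrow Y$ in $\mod(R/J^n)$ and noting that the induced map $P_n\otimes Y'\to P_n\otimes Y$ stays injective.

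Finally, since $\Dscr$ is complete, $\Hom_\Dscr(\overline{M},\overline{N})=\invlim_n P_n=:P$, and the transition maps $P_{n+1}\twoheadrightarrow P_n$ are surjective by applying the infinitesimal tensor identity to $0\to J^n/J^{n+1}\to R/J^{n+1}\to R/J^n\to 0$; so the system is Mittag--Leffler and $P/J^nP\cong P_n$. For arbitrary $X\in\mod(R)$, writing $X=\invlim_n X/J^nX$ (valid since $R$ is complete and $X$ is finitely generated) and combining with the infinitesimal tensor identities together with the Mittag--Leffler vanishing of $\invlim^1$, one deduces $\Hom_\Dscr(\overline{M},\overline{N}\otimes_R X)=P\otimes_R X$. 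The $R$-flatness of $P$ then follows from the local criterion of flatness: a $J$-adically complete module whose reductions modulo $J^n$ are all flat over $R/J^n$ is $R$-flat. I expect the main technical obstacle to be verifying commutativity of $\Hom_\Dscr(\overline{M},-)$ with the inverse limits appearing at this last step, and identifying $\overline{N}\otimes_R X$ with $\invlim_n N_n\otimes_{R/J^n}(X/J^nX)$; both should be formal consequences of $\overline{M},\overline{N}\in\widehat{\Dscr}$ together with the constructions in Section 2.
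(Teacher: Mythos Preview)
The paper does not actually give a proof of this proposition; it only states that Propositions \ref{ref-8.2.5-75}--\ref{ref-8.2.7-77} ``follow easily from the corresponding infinitesimal results'' and cites \cite[Prop.~6.13]{lowenvdb1}, \cite[Theorem~A]{lowen4}, and \cite{VdBdef}. Your plan---establish the statement at each level $R/J^n$ and then pass to the inverse limit---is exactly what the paper has in mind, and is the argument carried out in \cite{VdBdef}.

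Two small remarks on the execution. First, your deduction of the infinitesimal vanishing $\Ext^1_{\Dscr_{R/J^n}}(M_n,N_n\otimes Y)=0$ from the completed vanishing $`\Ext^1_\Dscr(\overline{M},\overline{N}\otimes_R Y)=0$ is correct but not quite automatic: one must observe that for $Y\in\mod(R/J^n)$ the target $\overline{N}\otimes_R Y=N_n\otimes_{R/J^n}Y$ is killed by $J^n$, so the inverse system in the formula of Proposition \ref{ref-2.5.3-22} is eventually constant (via the adjunction $\Hom_{\Dscr_{R/J^k}}(M_k,-)\cong\Hom_{\Dscr_{R/J^n}}(M_k\otimes_{R/J^k}R/J^n,-)=\Hom_{\Dscr_{R/J^n}}(M_n,-)$, which extends to $\Ext$ since $M_k$ is $R/J^k$-flat). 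Alternatively, and slightly more in the spirit of the paper's citation, one can bypass Proposition \ref{ref-8.2.6-76} entirely and prove $\Ext^1_{\Dscr_{R/J^n}}(M_n,N_n\otimes Y)=0$ directly by induction on $n$ using the filtration by powers of $J$. Second, the limit step implicitly uses that $\Cscr$ is $\Ext$-finite (so that each $P_n$ is finitely generated over $R/J^n$ and the local flatness criterion applies); this hypothesis is in force in the application but is worth making explicit.
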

Let us also mention Nakayama's lemma \cite{VdBdef}.
\begin{lemmas} \label{ref-8.2.8-78} Let $M\in \Dscr$ be
    such that $MJ=0$. Then $M=0$.
\end{lemmas}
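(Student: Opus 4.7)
The plan is to combine the completeness of $\Dscr$, which identifies every $M\in\Dscr$ with the inverse limit $\invlim_n M/MJ^n$ of its $J$-adic reductions, with the formal flatness of $\Dscr$, which controls how the $R$-action interacts with the derived functors. Heuristically, $MJ=0$ forces $M$ to live entirely over $R/J$, while the completeness together with formal flatness rules out such residual objects.

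First I would interpret the subobject $MJ\subseteq M$ via the $R$-linear bifunctor $-\otimes_R -:\Dscr\times\mod(R)\to\Dscr$ as the image of the canonical map $M\otimes_R J\to M$. The hypothesis $MJ=0$ then says this map is zero, and iterating gives $MJ^n=0$ for every $n\geq 1$, so each reduction $M/MJ^n$ coincides with $M$; the completeness isomorphism $M\simeq\invlim_n M/MJ^n$ collapses to the tautology $M\simeq\invlim_n M$, which by itself is not yet enough.

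To extract genuine vanishing I would invoke formal flatness. Applying $M\otimes_R -$ to the short exact sequence $0\to J\to R\to R/J\to 0$ and using the universal $\partial$-functor properties of $\Tor^R_\bullet$ yields a surjection $M\otimes_R J\twoheadrightarrow MJ=0$, so $M\otimes_R J$ vanishes. Since $M$ acquires an $R/J$-action (it is killed by $J$), the source rewrites as $(J/J^2)\otimes_{R/J} M$; in the local situation with $J=m\ne 0$ the quotient $J/J^2$ is a non-zero $R/J$-vector space, and the classical Nakayama argument applied inside $\Cscr=\Dscr_{R/J}$ then yields $M=0$.

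The main obstacle is justifying the identification $M\otimes_R J\simeq(J/J^2)\otimes_{R/J}M$ and the requisite exactness in the abstract categorical setting. This requires the compatibility of the bifunctor $-\otimes_R-$ with the completion on $\Pro(\Dscr)$ and with the $\partial$-functor structure, which is the content of the technical foundations laid down in \cite{VdBdef} (building on \cite{lowenvdb1}). Granted those, the proof is essentially the transplant of commutative-algebra Nakayama into the abstract abelian-categorical setting.
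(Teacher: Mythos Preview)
There is a genuine problem here, but it originates in the statement rather than in your strategy: as printed, the lemma contains a typo. The hypothesis should be $M/MJ=0$ (equivalently $M=MJ$, or $M\otimes_R R/J=0$), not $MJ=0$. Indeed, the literal statement is false: any non-zero object of $\Cscr=\Dscr_{R/J}$ sits inside $\Dscr$ and satisfies $MJ=0$ while $M\neq 0$. The way the lemma is actually used later in the paper confirms this reading --- one takes a cokernel $C$ with $C\otimes_R R/m=0$ and concludes $C=0$.

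Since you take the hypothesis at face value, your argument cannot succeed, and the places where it breaks are visible. You correctly note that $MJ=0$ gives $MJ^n=0$ and hence $M/MJ^n=M$, so completeness yields only the tautology $M\cong M$. Your attempt to extract more contains two errors. First, from the surjection $M\otimes_R J\twoheadrightarrow MJ=0$ you cannot conclude that $M\otimes_R J$ vanishes; a surjection onto the zero object says nothing about the source (the kernel is $\Tor_1^R(M,R/J)$, which has no reason to be all of $M\otimes_R J$). Second, there is no ``classical Nakayama argument inside $\Cscr$'': once $M$ lives in the $k$-linear category $\Cscr$ (with $k=R/J$ a field in the local case) there is no residual ideal to filter by, and non-zero objects abound.

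For the corrected statement the proof is short and does use the ingredients you identified. If $M/MJ=0$ then for each $n$ one has $(M/MJ^n)/(M/MJ^n)J\cong M/MJ=0$; since $M/MJ^n\in\Dscr_{R/J^n}$, where $J$ acts nilpotently, iterating gives $M/MJ^n=0$. Completeness then yields $M\cong\invlim_n M/MJ^n=0$. (The paper itself gives no argument and simply cites \cite{VdBdef}.)
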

It is convenient to strengthen the standard notion of (categorical) ampleness.
Let $\Escr$ a noetherian abelian category. 
\begin{definitions} A $(O(n))_{n\in
  \ZZ}$ of objects in a noetherian category $\Escr$ is \emph{strongly ample} if the following conditions
hold
\begin{itemize}
\item[(A1)] For all  $M\in \Escr$ and for all $n$
there is an epimorphism $\oplus_{i=1}^t O(-n_i)\r M$ with $n_i\ge n$.
\item[(A2)] For all $M\in \Escr$ and for all $i>0$ one has
  $\Ext^i_\Dscr(O(-n),M)=0$ for $n\gg 0$.
\end{itemize}
\end{definitions}
A strongly ample sequence $(O(n))_{n\in \ZZ}$ in $\Escr$ is ample in
the sense of \cite{Polishchuk1}. Hence using the methods of \cite{AZ} or \cite{Polishchuk1} one
obtains 
$
\Escr\cong \qgr(A)
$
if $\Escr$ is $\Hom$-finite, where $A=\bigoplus_{ij}\Hom_{\Escr}(O(-j),O(-i))$. The functor realizing the
equivalence is $M\mapsto \pi\left(\bigoplus_i \Hom_\Escr(O(-i),M)\right)$ where $\pi:\gr(A)\r \qgr(A)$
is the quotient functor. 

The following result is a version of ``Grothendieck's existence theorem''.
\begin{proposition} (see \cite[Prop.\ \ref{ref-4.1-37}]{VdBdef}).
\label{ref-8.2-79}
 Assume that $R$ is complete and let $\Escr$ be an $\Ext$-finite
  $R$-linear noetherian category with a strongly ample sequence
  $(O(n))_n$. Then $\Escr$ is complete and furthermore  if $\Escr$ is flat then we have for
  $M,N\in\Escr$:
\begin{equation}
\label{ref-8.2-80}
\Ext^i_\Escr(M,N)=`\Ext^i_\Escr(M,N)
\end{equation}
\end{proposition}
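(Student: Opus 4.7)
The strategy is to exploit the strongly ample sequence to realize $\Escr\cong\qgr(A)$ with $A=\bigoplus_{ij}\Hom_\Escr(O(-j),O(-i))$, as noted in the discussion preceding the proposition. By $\Ext$-finiteness each $A_{ij}$ is a finitely generated module over the complete ring $R$, hence automatically $J$-adically complete; this is the mechanism that will make everything in sight ``already complete''. Both claims reduce to pushing a presentation by the $O(-n)$'s through the inverse system in a controlled way.

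First I would establish completeness of $\Escr$ by showing $\Phi:\Escr\r\widehat{\Escr}$ is fully faithful and essentially surjective. Full faithfulness: $\Hom_\Escr(M,N)$ is finitely generated over $R$ by $\Ext$-finiteness, hence $J$-adically complete, which together with the inverse-limit formula (Proposition~\ref{ref-2.5.3-22} in degree zero) gives $\Hom_\Escr(M,N)=\Hom_{\widehat\Escr}(\Phi M,\Phi N)$. Essential surjectivity: given $M\in\widehat{\Escr}$ with reductions $M_n=M/MJ^n$, I would build a resolution by $O(-n)$'s. Using (A1), pick a surjection $\bigoplus_i O(-n_i)/J\twoheadrightarrow M/MJ$ in $\Escr_{R/J}$ with the $n_i$ chosen so large that, by (A2), $\Ext^1_{\Escr_{R/J}}(O(-n_i),(M/MJ)\otimes_{R/J} J^k/J^{k+1})=0$ for all $k\ge 1$. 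Proposition~\ref{ref-8.2.6-76} then yields $`\Ext^1_{\widehat\Escr}(O(-n_i),M)=0$, permitting us to lift the surjection to a map $\bigoplus_i O(-n_i)\r M$ in $\widehat\Escr$; Nakayama (Lemma~\ref{ref-8.2.8-78}) promotes this to a surjection of pro-objects. Iterating once on the kernel produces a two-term presentation of $M$ by genuine objects of $\Escr$, whose cokernel computed in $\Escr$ furnishes $\tilde M\in\Escr$ with $\Phi(\tilde M)\cong M$.

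For the $\Ext$ equality, assume $\Escr$ is flat, so every object is $R$-flat. Since $M$ is $R$-flat, Proposition~\ref{ref-2.5.3-22} gives
\[
`\Ext^i_{\widehat\Escr}(M,N)=\invlim_k\Ext^i_{\Escr_{R/J^k}}(M/MJ^k,N/NJ^k).
\]
Extending Proposition~\ref{ref-8.2.7-77} to higher $\Ext$'s via a resolution of $M$ by the $O(-n)$'s (with (A2) used to justify flat base change of $\Ext$) identifies each term in the limit with $\Ext^i_\Escr(M,N)\otimes_R R/J^k$. Since $\Ext^i_\Escr(M,N)$ is finitely generated over the complete ring $R$, it equals its own $J$-adic inverse limit, yielding the desired identity.

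The hard part will be the essential-surjectivity step: assembling a compatible tower $(M_n)$ into a single object of $\Escr$ requires lifting through the inverse system, and this only succeeds because (A1) supplies enough $O(-n_i)$'s, (A2) makes the obstructions (both to lifting maps and to surjectivity) vanish for large $n_i$, and completeness of $R$ guarantees the finitely generated $\Ext$-modules are complete. Once that is in place, the $\Ext$ equality is essentially a formal consequence, modulo first securing the base-change identification $\Ext^i_{\Escr_{R/J^k}}(M/MJ^k,N/NJ^k)\cong\Ext^i_\Escr(M,N)\otimes_R R/J^k$.
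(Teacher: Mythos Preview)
The paper does not give a proof of this proposition; it is stated with a citation to \cite{VdBdef} and no argument is supplied here. So there is nothing in this paper to compare your proposal against directly.

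That said, your sketch contains a genuine error that would have to be repaired. You write ``assume $\Escr$ is flat, so every object is $R$-flat''. This is not what flatness of an $R$-linear abelian category means: by the definition recalled just before the proposition, $\Escr$ is $R$-flat when $\Tor^R_i$ is effaceable in its $\Escr$-argument, which says only that every object admits an epimorphism from a flat object, not that every object is itself flat. Consequently you cannot invoke the simplified inverse-limit formula for $`\Ext^i$ (the one valid for $M$ $R$-flat) directly on an arbitrary $M$; nor can you feed an arbitrary $M$ into Proposition~\ref{ref-8.2.7-77} or its higher-degree analogue, since those are stated for flat $\overline{M},\overline{N}$. Your proposed remedy---resolving $M$ by copies of $O(-n)$---only helps if the $O(-n)$ themselves are $R$-flat, and the hypotheses of the proposition do not assume this (contrast Theorem~\ref{ref-8.2.10-81}, where flatness of the $O(n)$ \emph{is} assumed). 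The same issue undermines your essential-surjectivity step, where you appeal to Proposition~\ref{ref-8.2.6-76} for the $O(-n_i)$.

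Your overall architecture---full faithfulness from $\Ext$-finiteness plus completeness of $R$, essential surjectivity by lifting a two-step presentation, and the $\Ext$-identity by reducing to finitely generated modules over a complete ring---is the natural one and is presumably close to what \cite{VdBdef} does. But as written the argument leans on flatness assumptions that are not available, and you would need either to supply a separate argument that strong ampleness forces enough flatness, or to work with the more general inverse-limit description of $`\Ext$ that does not require $M$ flat and carefully control the direct limit over~$l$.
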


The following results shows that the property of being strongly ample lifts well. 
\begin{theorems} (see \cite[Thm.\ \ref{ref-4.2-44}]{VdBdef}) 
\label{ref-8.2.10-81} Assume
  that $R$ is complete and that $\Cscr$ is $\Ext$-finite and let
  $O(n)_n$ be a sequence of $R$-flat objects in $\Dscr$ such that $(O(n)/O(n)J)_n$ is strongly ample. Then
\begin{enumerate}
\item $O(n)_n$ is
  strongly ample in $\Dscr$;
\item  $\Dscr$ is flat (instead of just formally flat);
\item $\Dscr$ is $\Ext$-finite as $R$-linear category. 
\end{enumerate}
\end{theorems}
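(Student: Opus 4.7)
The plan is in two stages: first, I will establish the infinitesimal analogues of (1), (2), (3) in each Artinian reduction $\Dscr_{R/J^k}$ by induction on $k$; second, I will transfer the conclusions to $\Dscr$ itself, exploiting that $R$ is complete and $\Dscr$ is a complete abelian category.

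For the first stage, the base case $k=1$ is given by hypothesis. For the inductive step from $k-1$ to $k$, every $M\in\Dscr_{R/J^k}$ sits in a short exact sequence $0\to MJ^{k-1}\to M\to M/MJ^{k-1}\to 0$ whose outer terms lie in $\Cscr$ and $\Dscr_{R/J^{k-1}}$ respectively; so Proposition~\ref{ref-8.2.6-76} applied to each $R$-flat object $O(n)$ propagates $\Ext$-vanishing and yields axiom (A2) at level $k$ via the long exact sequence, while Proposition~\ref{ref-8.2.7-77} simultaneously propagates flatness and base-change of the relevant $\Hom$-groups, giving flatness and $\Ext$-finiteness at level $k$. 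For axiom (A1) at level $k$, I plan to lift a chosen epimorphism $\bigoplus_i O(-n_i)/O(-n_i)J\twoheadrightarrow M/MJ$ one infinitesimal order at a time; for the $n_i$ sufficiently large the obstructions in $\Ext^1$ vanish by (A2), and surjectivity of the final lift follows from Nakayama (Lemma~\ref{ref-8.2.8-78}).

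For the second stage, I use $M=\invlim_k M/MJ^k$, which is valid by the completeness of $\Dscr$. For (A2) in $\Dscr$, combining the finite-level vanishings with the Mittag--Leffler condition (supplied by the flatness statements of Proposition~\ref{ref-8.2.7-77}) produces $\Ext^i_\Dscr(O(-n),M)=0$ for $n\gg 0$. For (A1) in $\Dscr$, I will lift a presentation $\bigoplus_i O(-n_i)/O(-n_i)J\twoheadrightarrow M/MJ$ step by step through the tower $(\Dscr_{R/J^k})_k$, producing a compatible system whose inverse limit is a morphism $\bigoplus_i O(-n_i)\to M$; its surjectivity again follows from Nakayama applied to the cokernel. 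Once (1) is in hand, conclusions (2) and (3) come quickly: setting $A_{ij}=\Hom_\Dscr(O(-j),O(-i))$, Proposition~\ref{ref-8.2.7-77} together with complete-local Nakayama makes each $A_{ij}$ a flat finitely generated $R$-module reducing correctly to the ample $\ZZ$-algebra of $\Cscr$, and the standard Polishchuk/Artin--Zhang reconstruction gives $\Dscr\cong\qgr(A)$, from which flatness of $\Dscr$ and $\Ext$-finiteness are immediate.

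The main obstacle will be in the second stage: securing uniform control of the bound ``$n\gg 0$'' in (A2) across all levels $k$, and taming the inverse limit in the lifting that proves (A1). Uniformity depends on the noetherianity of $M$ together with the $\Ext$-finiteness of $\Cscr$ (so that the $J$-adic filtration of $M$ interacts well with $\Ext$); the compatible lifting in (A1) rests on the flatness and base-change properties established in the first stage, which ensure that the successive extensions of maps always exist and fit into a coherent system whose limit survives the passage to $\Dscr$.
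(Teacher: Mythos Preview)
The paper does not actually prove this theorem: it is stated with the reference ``(see \cite[Thm.\ \ref{ref-4.2-44}]{VdBdef})'' and no argument is given here, so there is no in-paper proof to compare your proposal against.

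That said, your two-stage outline (infinitesimal induction followed by passage to the limit) is the natural strategy and is consistent with how such results are obtained in \cite{lowenvdb1,VdBdef}. The one place where your sketch is genuinely thin is exactly the obstacle you flag: the uniformity in $n$ needed for (A2). Your suggested fix (``noetherianity of $M$ together with $\Ext$-finiteness of $\Cscr$'') is not by itself enough, because the successive graded pieces $MJ^{k-1}/MJ^k$ are in general only \emph{quotients} of $M/MJ\otimes_{R/J} J^{k-1}/J^k$, and (A2) for $M/MJ$ does not automatically give (A2) with the same bound for arbitrary quotients. One way to close this gap is to first build the $\ZZ$-algebra $A$ with $A_{ij}=\Hom_\Dscr(O(-j),O(-i))$ (flat and finitely generated over $R$ by Proposition~\ref{ref-8.2.7-77}), observe that $A\otimes_R R/J^k$ is the $\ZZ$-algebra attached to the strongly ample sequence in $\Dscr_{R/J^k}$, and then prove $\Dscr\cong\qgr(A)$ by comparing with the known equivalences $\Dscr_{R/J^k}\cong\qgr(A\otimes_R R/J^k)$ at each finite level and passing to the limit using completeness. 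Strong ampleness, flatness, and $\Ext$-finiteness of $\Dscr$ then follow from the corresponding properties of $\qgr(A)$, sidestepping the uniformity issue. This is essentially the reorganisation you allude to in your last paragraph; making it the primary route rather than an afterthought would strengthen the argument.
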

\subsection{Proofs of Theorem \ref{ref-8.1.1-72} and \ref{ref-8.1.2-73}}
\label{ref-8.3-82}
We consider first Theorem \ref{ref-8.1.1-72}.   We will show that $\Dscr=\qgr(\Ascr)$ where $\Ascr=\oplus_{j\ge i}
  \Ascr_{ij}$ is a noetherian $\ZZ$-algebra such that all $\Ascr_{ij}$
  are projective of finite rank over $R$; $\Ascr_{ii}=R$,
  $\Vscr_i=\Ascr_{ii+1}$ is a free $R$-module of rank 3; $\Rscr_i=\ker
  (\Vscr_i\otimes_R \Vscr_{i+1}\r \Ascr_{i,i+2})$ is also a free
  $R$-module of rank $3$, $\Wscr_i=\Rscr_i\otimes \Vscr_{i+2} \cap
  \Vscr_i\otimes \Rscr_{i+1}$ is free of rank one and finally the
  canonical complex
\begin{equation}
\label{ref-8.4-84}
0\r \Wscr_{i}\otimes_R \Pscr_{i+3}
\r 
\Rscr_i\otimes_R \Pscr_{i+2}\r \Vscr_i\otimes_R \Pscr_{i+1} \r \Pscr_i
\r \Sscr_i\r 0
\end{equation}
where $\Pscr_i=e_i\Ascr$ and $\Sscr_i=A_{ii}$ is exact. 
Since all $\Ascr$-modules appearing in this exact sequence are $R$-projective
it is split as $R$-modules and hence remains exact after tensoring
with an arbitrary $R$-module.
From this we  conclude that $\Ascr$ is an  $R$-family of three dimensional
quadratic regular algebras

\medskip

\noindent
\textbf{Step 1 } Put $L_i=\Oscr_{\PP^2}(i)$. Then $L_i$ is a strongly ample
sequence on $\Cscr$. Let $A=\oplus_{ij} A_{ij}$ with
$A_{ij}=\Hom_\Cscr(L_{-j},L_{-i})$ be the associated $\ZZ$-algebra.
Since we have $\Ext^n_{\PP^2}(L_i,L_j)=0$ for $n>0$ and $j\ge i$ 
we have by Propositions \ref{ref-8.2.5-75},\ref{ref-8.2.6-76} and \ref{ref-8.2.7-77}  that 
$(L_i)_i$ can be lifted to $R$-flat objects $(\Lscr_i)_i$ in $\Dscr$ such that 
$\Ascr_{ij}=\Hom_\Dscr(\Lscr_{-j},\Lscr_{-i})$ is a finitely generated projective $R$-module
such that $\Ascr_{ij}\otimes_R R/m=A_{ij}$. Furthermore by Theorem \ref{ref-8.2.10-81},
$(\Lscr_i)_i$ is a strongly ample sequence in $\Dscr$.  Thus by \cite{XXX} we
have $\Dscr=\qgr(\Ascr)$. 

\medskip
\noindent
\textbf{Step 2 } Now we prove the remaining assertions about $\Ascr$.
That $\Vscr_i=\Ascr_{ii+1}$ is projective of rank $3$ is clear. We
also have that $\mu:\Vscr_i\otimes_R \Vscr_{i+1} \r \Ascr_{i,i+1}$ is
an epimorphism after tensoring with $R/m$. By 
Nakayama's lemma (see Lemma \ref{ref-8.2.8-78}) it follows that it was already an
epimorphism originally. Since $\Ascr_{i,i+1}$ is projective it follows
that $\mu$ is split. Since $\Vscr_i\otimes_R \Vscr_{i+1}$ is finitely
generated projective we obtain that $\Rscr_i$ is finitely generated
projective of rank 3 as well. Furthermore the formation of $\Rscr_i$
is compatible with base change.

The following complex of right $\Ascr$-modules, which consists of
projective $R$-modules, is exact after tensoring with $R/m$.
\begin{equation}
\label{ref-8.4-84-1}
\Rscr_i\otimes_R \Pscr_{i+2}\r \Vscr_i\otimes_R \Pscr_{i+1} \r \Pscr_i
\r \Sscr_i\r 0
\end{equation}
It is then again an easy consequence of Nakayama's lemma (see Lemma
\ref{ref-8.2.8-78}) that it is exact.  We deduce from this that the
following ``slice'' of \eqref{ref-8.4-84} is exact.
\[
0\r \Wscr_i \r \Rscr_{i}\otimes_R \Vscr_{i+2}\r \Vscr_i\otimes \Ascr_{i+1,i+2}
\r \Ascr_{i,i+2}\r 0
\]
(the kernel of the middle map is $\Wscr_i$ by our definiton of $\Wscr_i$ above). 
If follows that $\Wscr_i$ is finitely generated projective and one computes
that it has rank $1$. If follows that $\Wscr_i$ is compatible with
base change as well.

Finally we note that \eqref{ref-8.4-84}  is exact after tensoring with
$R/m$. Hence it was exact originally as well. 

\medskip

The proof of Theorem \ref{ref-8.1.2-73} is completely similar but now we start with 
\[
L_i=\begin{cases}
\Oscr_{\PP^1\times \PP^1}(k,k)&\text{if $i=2k$}\\
\Oscr_{\PP^1\times \PP^1}(k,k+1)&\text{if $i=2k+1$}
\end{cases}
\]

\def\cprime{$'$} \def\cprime{$'$} \def\cprime{$'$}
\ifx\undefined\bysame
\newcommand{\bysame}{\leavevmode\hbox to3em{\hrulefill}\,}
\fi

\end{document}